\newtheorem{theorem}{Theorem}[section]
\newtheorem{corollary}[theorem]{Corollary}
\newtheorem{definition}[theorem]{Definition}
\newtheorem{example}[theorem]{Example}
\newtheorem{lemma}[theorem]{Lemma}
\newtheorem{problem}[theorem]{Problem}
\newtheorem{proposition}[theorem]{Proposition}
\newtheorem{remark}[theorem]{Remark}
\newcounter{ex}[section]
\newcommand{\E}{{\mathcal E}}
\newcommand{\G}{{\rm G}}
\newcommand{\Q}{{\mathbb Q}}
\newcommand{\Z}{{\mathbb Z}}
\renewcommand{\O}{{\mathcal O}}
\DeclareMathOperator{\Ima}{Im}
\newcommand{\M}{{\mathcal M}}
\newcommand{\Kr}{{\rm K}}
\newcommand{\GL}{{\rm GL}}
\newcommand{\SK}{{\rm SK}}
\newcommand{\Er}{{\rm E}}
\renewcommand{\mod}{\ {\rm mod}\ }
\def\thfill{\null\nobreak\hfill}
\def\endproof{\thfill\vbox{\hrule
  \hbox{\vrule\hbox to 5pt{\vbox to 5pt{\vfil}\hfil}\vrule}\hrule}}
\renewcommand{\to}{\longrightarrow}
\newcommand{\ov}{\overline}
\newcommand{\bF}{{\mathbb F}}
\begin{document}

\title[$\Kr_1$ of a $p$-adic group ring]{$\Kr_{1}$ of a $p$-adic group ring II. \\ The determinantal kernel $\SK_1$. }

\author[T. Chinburg]{T. Chinburg}\thanks{Chinburg is supported by NSF Grant \# DMS11-00355}
\address{Ted Chinburg, Dept. of Math\\Univ. of Penn.\\Phila. PA. 19104, U.S.A.}
\email{ted@math.upenn.edu}

\author[G. Pappas]{G. Pappas}
\thanks{Pappas is supported by NSF Grant \# DMS11-02208.}
\address{Georgios Pappas, Dept. of Math\\Michigan State Univ.\\E. Lansing, MI 48824, U.S.A.  }
\email{pappas@math.msu.edu}

\author[M. Taylor]{M. J. Taylor}
\thanks{2010 Mathematics Subject Classification. 19B28(primary), and 19C99, 16E20 (secondary)}
\address{Martin J. Taylor,  Merton College\\ Oxford University, Oxford OX1 4JE, U.K.}
\email{martin.taylor@merton.ox.ac.uk}

\date{\today}


\begin{abstract}
We describe the group $\SK_{1}( R[ G] ) $ for group
rings $R[ G] $ where $G$ is an arbitrary finite group and where
the coefficient ring $R$ is a $p$-adically complete Noetherian integral
domain of characteristic zero which admits a lift of Frobenius and which
also satisfies a number of further mild conditions. Our results extend
previous work of R.~Oliver who obtained such results for the valuation rings
of finite extensions of the $p$-adic field.
\end{abstract}

\maketitle

\section{Introduction}\label{sIntro}

For an arbitrary (unital) ring $S$, the group $\Kr_1( S) $ is defined as
\begin{equation*}
\Kr_{1}(S)=\GL(S)/\Er(S)
\end{equation*}
where $\GL( S) $ denotes the (infinite) general linear group of $S$ and $\Er( S) $ denotes the subgroup of elementary matrices over $S$.
In this article we continue our study of $\Kr_{1}$ of a group ring $R[ G] $ for a finite group $G$, where the coefficient ring $R$ is a $p$-adically complete ring. The present paper is the second in a series of two
papers: the first paper [CPT1] dealt with the determinantal image of $\Kr_{1}( R[ G] ) $, whereas this paper is concerned with
the kernel of the determinant map, $\SK_{1}( R[ G])$.
It is interesting to note that recently there has been considerable
resurgence of interest in $\Kr_{1}$ of group rings with higher dimensional
rings of coefficients in equivariant Iwasawa theory (see for instance [FK],
[K], [Ka1-3]  and [RW1,2]).

\par The conditions we impose on $R$ are slightly stronger than in [CPT1]: $R$ will always denote an Noetherian integral domain of finite Krull dimension with field of fractions, denoted $N$, of characteristic zero, and $N^{c}$ will denote a chosen algebraic closure of $N$. We then have a map, which we denote $\mathrm{Det}$,
\begin{equation}
\mathrm{Det}:\Kr_{1}( R[ G] ) \rightarrow \Kr_{1}(
N^{c}[ G] ) =\oplus_\chi N^{c\times }
\end{equation}
where the direct sum extends over the irreducible $N^{c}$-valued characters
of $G$. We write $\SK_{1}( R[ G] ) =\ker (\mathrm{Det})$, so that we have the exact sequence
\begin{equation}
1\rightarrow \SK_{1}(R[G]) \rightarrow \Kr_{1}(R[G]) \rightarrow \mathrm{Det}(\Kr_{1}(R[G]))\rightarrow 1.
\end{equation}
\par In this paper we complete our study by presenting a number of results on $\SK_{1}(R[G])$. One of our main motivations has been the generalization of Fr\"ohlich's theory to higher dimensional schemes over $\Z$; in particular, some of the results of this paper are used, in a crucial manner, in the proof
of the adelic Riemann-Roch theorem of [CPT2]. The results for $\mathrm{Det}(\Kr_{1}(R[G])$ were obtained in [CPT1] by a generalization of the group logarithm, as developed in [T] (see also [CR2] and [F]). Here, our results for $\SK_{1}(R[G]) $ are, to a large extent, obtained by developing the ideas of R.
Oliver as presented in his sequence of papers [O1-4] and his book [O5],
which also contain his own independent description of the group logarithm.
One of the key new achievements of this paper is the definition of a new
group logarithm which extends the previous group logarithm for $p$-groups to
all finite groups (see Sect. \ref{ss6c}). This is achieved by the use of Adams operations on the group $\mathrm{Det}( \Kr_{1}( R[ G] ) )$, which are
constructed by generalizing the work of P. Cassou-Nogu\`{e}s and the third
named author in [CNT].

\par Let $p$ be a prime number. Throughout this paper, unless explicitly
indicated to the contrary, we shall assume that:
\medskip

\textbf{Standing Hypotheses.}

(i) the natural map $R\rightarrow \varprojlim_n R/p^{n}R$ is an
isomorphism, so that $R$ is $p$-adically complete;

(ii) $R$ supports a Frobenius lift, i.e a $\Z_{p}$-algebra endomorphism $%
F=F_{R}:R\rightarrow R$ with the property that for all $r\in R$
\begin{equation*}
F( r) \equiv r^{p}\,\mod\, pR;
\end{equation*}
and that $R/(1-F)R$ is torsion free.

(iii)  $pR$ is a prime ideal of $R$;

(iv) $\SK_{1}( R\otimes _{\Z_{p}}W) =\{1\}$ for $W$ the
valuation ring of any finite unramified extension of $\Q_p$.
\medskip

Examples of such rings $R$ are: the valuation ring of a non-ramified
extension of the $p$-adic field $\Q_{p};$ the $p$-adic completion of
the polynomial ring $\Z_{p}[T_1,\ldots, T_n]$ over $\Z_{p} $
\begin{equation*}
\Z_{p} \langle \langle T_1,\ldots, T_n \rangle
 \rangle =\varprojlim_m\Z_{p}[
T_1,\ldots, T_n ] /( p^{m}) ;
\end{equation*}%
the $p$-adic completion of the ring of formal Laurent series
over $\Z_{p}$
\begin{equation*}
\Z_{p}\{\{T_1,\ldots, T_n \}\}=\varprojlim_m\Z_{p}(( T_1,\ldots, T_n ))/(
p^{m}).
\end{equation*}
Here $\Z_{p}(( T_1,\ldots, T_n ))=\Z_p[[T_1,\ldots, T_n ]][T^{-1}_1,\ldots, T_n^{-1} ]$.
In each of the latter two examples we may take $F(T)=T^{p}$. By Proposition 2.8 in [CPT2] we know that $\SK_{1} ( R\otimes_{\Z_p} W ) =\{1\}$,
when $R$ is either $\Z_{p}[T_1,\ldots, T_n]$ or $\Z_{p}(( T_1,\ldots, T_n ))$.  Now an approximation argument as in the proof of Theorem \ref{thm16} shows that $\SK_1(W\{\{T_1,\ldots, T_n\}\})=\{1\}$. Similarly, the result for the case when $R=W\langle\langle T_1,\ldots, T_n \rangle\rangle $ comes from the argument in Lemma 2.15 of [CPT2] that uses results of L. Gruson.
\medskip

Unless stated to the contrary we shall extend $F$ to an $R$-algebra
endomorphism of the group ring $R[ G] $ by setting $F(
rg) =F( r) g$ for all $r\in R$, $g\in G$.

Since $R$ is $p$-adically complete, $R[ G] $ is of course also $p$-adically
complete and the Jacobson radical of $R$ necessarily contains $pR$.
If $\{u_{n}\}$ is a $p$-adically convergent sequence of units converging to
$r$ in $R[ G] $, then $r$ will be congruent to  $u_{n}$ modulo $p$ for large $n$, and so $r$ is a unit, and hence $R[ G]
^{\times }$ is also seen to be $p$-adically complete. (See Remark
1.1 in [CPT1].)

The group $R[ G] ^{\times }=\GL_{1}( R[ G] )
$ embeds into $\GL( R[ G] ) $ as diagonal matrices with
all non-leading diagonal terms equal to 1. We recall Theorem 1.2 of [CPT1]:

\begin{theorem}\label{thm1}
Let $R$ be as in the Standing Hypotheses. The inclusion $R[ G]
^{\times }\subset \GL( R[ G] ) $ induces an equality%
\begin{equation*}
\mathrm{Det}( R[ G]^{\times }) =\mathrm{Det}(
\GL( R[ G]) ) =\mathrm{Det}( \Kr_{1}( R[ G] ) )
\end{equation*}%
in the following two circumstances:

(a) when $G$ is a $p$-group;

(b) when $G$ is an arbitrary finite group, if $R$ is in addition
 normal.
\end{theorem}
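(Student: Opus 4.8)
The plan is as follows. First, the inclusion $R[G]^{\times}\subseteq \GL(R[G])$ trivially gives $\mathrm{Det}(R[G]^{\times})\subseteq \mathrm{Det}(\GL(R[G]))$, and since an elementary matrix over $R[G]$ maps to an elementary matrix (hence to $1$) in $\Kr_{1}(N^{c}[G])$, the map $\mathrm{Det}$ annihilates $\Er(R[G])$, so that $\mathrm{Det}(\GL(R[G]))=\mathrm{Det}(\Kr_{1}(R[G]))$ with no hypotheses on $R$ at all. Thus the whole content is the reverse inclusion $\mathrm{Det}(\GL(R[G]))\subseteq \mathrm{Det}(R[G]^{\times})$: given $x\in \GL_{n}(R[G])$ one must produce $u\in R[G]^{\times}$ with $\mathrm{Det}(x)=\mathrm{Det}(u)$. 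I would do this in two stages. First, using the structure of $R[G]/pR[G]$, replace $x$ --- modulo $\Er$, modulo $\GL_{1}(R[G])$, and modulo the kernel of $\mathrm{Det}$ --- by an element of the principal congruence subgroup $\GL_{N}(R[G],pR[G])$. Second, on that congruence subgroup use the $p$-adic logarithm together with the matrix trace to reduce everything to $\GL_{1}$; this second stage is the matrix analogue of the group logarithm of [T] that underlies [CPT1].

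For part (a), with $G$ a $p$-group, $\Char(R/pR)=p$ forces the augmentation ideal $\bar I$ of $(R/pR)[G]$ to be nilpotent, so $(R/pR)[G]/\bar I=R/pR$. By the standard description of the relative $\Kr_{1}$ of a nilpotent ideal --- reduce to the square-zero case by d\'evissage, then clear off-diagonal entries by elementary operations and contract the diagonal via Whitehead's lemma --- $\Kr_{1}((R/pR)[G],\bar I)$ is the image of $1+\bar I\subseteq (R/pR)[G]^{\times}$. Combining this with the relative $\Kr_{1}$-sequence, the algebra splitting $R/pR\hookrightarrow (R/pR)[G]$, and the decomposition $\Kr_{1}(R/pR)=(R/pR)^{\times}\oplus \SK_{1}(R/pR)$, one finds that $\Kr_{1}((R/pR)[G])$ is generated by the image of $\GL_{1}((R/pR)[G])$ together with the image of $\bigcup_{m}{\rm SL}_{m}(R/pR)$, the latter embedded as matrices that are scalar on $G$. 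Now lift: a unit lifts to a unit of $R[G]$ (as $pR[G]\subseteq \mathrm{rad}(R[G])$), an element of ${\rm SL}_{m}(R/pR)$ lifts to $S\in {\rm SL}_{m}(R)$, and an elementary matrix lifts trivially. Because each $\rho_{\chi}$ is the identity on the centre $R$, one has $\mathrm{Det}_{\chi}(S)=(\det S)^{d_{\chi}}=1$, so the ${\rm SL}$-part is killed by $\mathrm{Det}$; hence, after multiplying $x$ by a unit $u_{0}\in R[G]^{\times}$ and an elementary matrix (and stabilising), $\mathrm{Det}(x)=\mathrm{Det}(u_{0})\cdot \mathrm{Det}(z)$ with $z\in \GL_{N}(R[G],pR[G])$.

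Still in part (a): for $p$ odd the series $\log z=\sum_{k\ge 1}(-1)^{k+1}(z-1)^{k}/k$ converges in $pM_{N}(R[G])$, so $\tau:=\Tr(\log z)\in pR[G]$ and, by $p$-adic completeness, $u_{1}:=\exp(\tau)\in 1+pR[G]\subseteq R[G]^{\times}$ satisfies $\log u_{1}=\tau$. The key facts are that $\rho_{\chi}$ commutes with $\log$ and that $\det=\exp\circ\Tr\circ\log$ on $1$-unipotent matrices, which give, for every $\chi$,
\begin{equation*}
\log\mathrm{Det}_{\chi}(z)=\Tr_{N^{c}}\bigl(\rho_{\chi}(\log z)\bigr)=\chi(\tau)=\chi(\log u_{1})=\log\mathrm{Det}_{\chi}(u_{1}),
\end{equation*}
where $\chi$ denotes the $\Q_{p}$-linear extension of the character; it annihilates $[R[G],R[G]]$, which is exactly why the matrix trace makes the common value independent of $N$ (the ``Morita invariance'' at the heart of the argument). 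Since $\mathrm{Det}_{\chi}(z)$ and $\mathrm{Det}_{\chi}(u_{1})$ lie in $1+p\,\mathcal O_{\chi}$ for a suitable module-finite $R$-order $\mathcal O_{\chi}$, on which $\log$ is injective, we get $\mathrm{Det}(z)=\mathrm{Det}(u_{1})$, hence $\mathrm{Det}(x)=\mathrm{Det}(u_{0}u_{1})\in \mathrm{Det}(R[G]^{\times})$. For $p=2$ one runs the same argument with $4R[G]$ in place of $pR[G]$, inserting one further square-zero reduction of the nilpotent-ideal type to pass from the $2R[G]$- to the $4R[G]$-congruence subgroup.

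For part (b), $R$ normal and $G$ arbitrary, the aim is to reduce to (a). A Brauer-type induction argument for the functor $G\mapsto \mathrm{Det}(\Kr_{1}(R[G]))$ reduces the claim to $p$-elementary groups $G=C\times P$, with $C$ cyclic of order prime to $p$ and $P$ a $p$-group. For such $G$, $|C|\in R^{\times}$, so $R[G]=R[C][P]$ and $R[C]=\prod_{j}R_{j}$ with each $R_{j}$ \'etale over $R$; since $R$ is normal, each $R_{j}$ is again a ring satisfying the Standing Hypotheses and is module-finite over $R$. Thus $R[G]=\prod_{j}R_{j}[P]$, and applying (a) to each factor $R_{j}[P]$ and reassembling (the decomposition of $R[G]$ matches the partition of the irreducible $N^{c}$-characters of $G$) gives the result. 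I expect the delicate point of (b) --- and the main obstacle --- to be making the induction argument compatible with the distinguished subgroup $R[G]^{\times}\subseteq \GL(R[G])$: as in Oliver's treatment one passes to Fr\"ohlich's Hom-description and shows that $\mathrm{Det}(R[G]^{\times})$ and $\mathrm{Det}(\GL(R[G]))$ are cut out inside $\oplus_{\chi}N^{c\times}$ by the same $\Gal$-equivariant congruences, and that these can be verified after restriction to $p$-elementary subgroups; normality of $R$ is used here once more, to control the integrality of the orders over which the $\rho_{\chi}$ are realised. Granting (a), the splitting of $p$-elementary group rings, and this Hom-description compatibility, the theorem follows.
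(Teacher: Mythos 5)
Your part (a) is essentially correct, but it takes a longer route than the statement requires, and a different one from the source: the paper only quotes this theorem from [CPT1], where (a) follows almost immediately from the fact that, for a $p$-group, $I(R[G])^{m}\subset pR[G]$ (cf. (\ref{eq3.9})), so the augmentation ideal already lies in the Jacobson radical of $R[G]$; Lemma \ref{le13} and Lemma \ref{le14}(b), applied over $R[G]$ itself with $I=I_{G}$, then write every class of $\Kr_{1}(R[G])$ as (a class coming from $\Kr_{1}(R)$)$\cdot$(the class of an element of $1+I_{G}$), and your own observation that $\mathrm{Det}_{\chi}(S)=(\det S)^{d_{\chi}}=1$ for $S\in{\rm SL}_{m}(R)$ disposes of the $\SK_{1}(R)$-part. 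Your mod-$p$ d\'evissage plus the congruence-subgroup/logarithm stage is a valid alternative: the identities $\log\mathrm{Det}_{\chi}(z)=\chi(\mathrm{Tr}(\log z))$ are sound, and the injectivity of $\log$ on $1+p(R\otimes_{\Z_p}\O_{\chi})$, which you assert, does hold (by Krull intersection, since $p$ lies in the radical of this module-finite $R$-algebra), so (a) stands, just by a more computational path.

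The genuine gap is in (b). Brauer induction over $\Q_{p}$ (Theorem \ref{thm49}) reduces only to $\Q_{p}$-$p$-\emph{elementary} subgroups $C\rtimes P$, with $P$ acting on $C$ through $\mathrm{Gal}(\Q_{p}(\zeta_{m})/\Q_{p})$, not to direct products $C\times P$; one cannot pass to elementary groups in the naive sense because $\mathrm{Det}(\Kr_{1}(R[\,\cdot\,]))$ is a Green module only over $\G_{0}(\Z_{p}[\,\cdot\,])\cong\G_{0}(\Q_{p}[\,\cdot\,])$. For a nontrivial action $R[G]$ decomposes into twisted group rings $R[m]\circ P$, not into $\prod_{j}R_{j}[P]$, so part (a) cannot simply be ``applied to each factor''; handling the twisted rings --- proving $r(\mathrm{Det}((R[m]\circ P)^{\times}))=\mathrm{Det}(R[m][H_{m}]^{\times})^{A_{m}}$ and $\mathrm{Det}(1+I_{P})=i_{\ast}(\mathrm{Det}(1+I_{H}))$, i.e. Theorem \ref{thm54} (Theorem 6.2 and Proposition 6.3 of [CPT1]) --- is the hard content, and it is also where normality of $R$ enters (Lemma 6.1 of [CPT1]). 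Moreover, even after the reduction, one must know that induction is compatible with the distinguished subgroup: $\mathrm{Ind}_{H}^{G}$ of a unit of $R[H]$ is represented by a matrix, not a unit, of $R[G]$, so $\mathrm{Ind}_{H}^{G}(\mathrm{Det}(R[H]^{\times}))\subset\mathrm{Det}(R[G]^{\times})$ is precisely what must be proved. Your fallback --- that $\mathrm{Det}(R[G]^{\times})$ and $\mathrm{Det}(\GL(R[G]))$ are cut out in $\oplus_{\chi}N^{c\times}$ by the same Galois-equivariant congruences, checkable on elementary subgroups --- begs the question: no Fr\"ohlich-style Hom-description of $\mathrm{Det}(R[G]^{\times})$ by congruences is available for these coefficient rings (describing $\mathrm{Det}(R[G]^{\times})$ is the main theorem of [CPT1]). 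So (b), as proposed, is a plan whose decisive step is missing.
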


For the purposes of this article we shall be particularly interested in the
completed K-groups
\begin{equation*}
\widehat{\Kr}_{i}( R[ G]) =\varprojlim_{n}\Kr_{i}( R[ G] /(p^{n}))
\end{equation*}%
for $i=1$, $2$. From Proposition 1.5.1 in [FK] we know that, if all the
quotient rings $R_{n}=R/p^{n}R$ are finite, then the natural map $\Kr_{1}( R[ G]) \rightarrow \widehat{\Kr}_{1}( R[ G]) $ is an isomorphism. (See also the work of C. T. C.
Wall in [W1] and [W2].) The following two results are a generalization of their
result. For these we only need to assume that $R$ is $p$-adically complete,
i.e. that $R\simeq\varprojlim_n R/p^nR$:

\begin{theorem}\label{thm2}Assume that $R$ is a $p$-adically complete Noetherian integral domain
 with fraction field of characteristic zero.
 Then the natural map $\Kr_{1}( R[ G]) \rightarrow \widehat{\Kr}_{1}( R[ G]) $ is an isomorphism.
\end{theorem}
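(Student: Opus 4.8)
The plan is to reduce the assertion to a question about elementary subgroups of a fixed size and then settle that using uniform stable-range bounds. Write $\Lambda=R[G]$ and $\Lambda_{n}=\Lambda/p^{n}\Lambda=(R/p^{n}R)[G]$. Since $p\Lambda$ lies in the Jacobson radical of $\Lambda$, and reduction modulo a radical ideal does not alter the stable rank, we have $\mathrm{sr}(\Lambda)=\mathrm{sr}(\Lambda_{n})=\mathrm{sr}\big((R/pR)[G]\big)$ for every $n$, and this common value is finite since $(R/pR)[G]$ is module-finite over the commutative Noetherian finite-dimensional ring $R/pR$ (Bass' stable range theorem). Fix an integer $N$ exceeding this bound. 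Then $\Kr_{1}(\Lambda)=\GL_{N}(\Lambda)/\Er_{N}(\Lambda)$ and $\Kr_{1}(\Lambda_{n})=\GL_{N}(\Lambda_{n})/\Er_{N}(\Lambda_{n})$, with $\Er_{N}$ normal in $\GL_{N}$ in each case, and $\Er_{N}(\Lambda_{n})=\Er(\Lambda_{n})\cap\GL_{N}(\Lambda_{n})$.

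Next I would run a $\varprojlim$/$\varprojlim^{1}$ comparison. As $\Lambda$ is $p$-adically complete and $p$ lies in its Jacobson radical, a compatible system of matrices over the $\Lambda_{n}$ assembles to a matrix over $\Lambda$ whose reduction mod $p$ is invertible, hence which is invertible; thus $\GL_{N}(\Lambda)=\varprojlim_{n}\GL_{N}(\Lambda_{n})$. The transition maps in the towers $\{\GL_{N}(\Lambda_{n})\}_{n}$ and $\{\Er_{N}(\Lambda_{n})\}_{n}$ are surjective (lift entries, resp. lift elementary factors), so both towers are Mittag--Leffler and their $\varprojlim^{1}$ is trivial. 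Passing to $\varprojlim$ in $1\to\Er_{N}(\Lambda_{n})\to\GL_{N}(\Lambda_{n})\to\Kr_{1}(\Lambda_{n})\to1$ yields a short exact sequence $1\to\varprojlim_{n}\Er_{N}(\Lambda_{n})\to\GL_{N}(\Lambda)\to\widehat{\Kr}_{1}(\Lambda)\to1$. Comparing with $1\to\Er_{N}(\Lambda)\to\GL_{N}(\Lambda)\to\Kr_{1}(\Lambda)\to1$ shows the natural map $\Kr_{1}(\Lambda)\to\widehat{\Kr}_{1}(\Lambda)$ is automatically surjective, and that it is injective precisely when $\Er_{N}(\Lambda)=\varprojlim_{n}\Er_{N}(\Lambda_{n})$, i.e. precisely when $\Er_{N}(\Lambda)$ is closed in $\GL_{N}(\Lambda)$ for the $p$-adic topology. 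Thus it is enough to prove this closedness.

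To do that, take $g\in\GL_{N}(\Lambda)$ that is elementary modulo $p^{n}$ for every $n$. Multiplying $g$ by a lift to $\Er_{N}(\Lambda)$ of an elementary factorisation of $g\bmod p$, one reduces to the case $g\equiv1\bmod p\Lambda$, so that $g\bmod p^{n}$ lies in the relative elementary subgroup $\Er_{N}(\Lambda_{n},p\Lambda_{n})=\Er_{N}(\Lambda_{n})\cap\GL_{N}(\Lambda_{n},p\Lambda_{n})$ for all $n$ (the identification being the Suslin--Vaserstein normality theorem, valid since $N$ exceeds the stable rank). The crucial input is that the uniform bound on $\mathrm{sr}(\Lambda_{n})$ yields, via bounded elementary generation (van der Kallen, Vaserstein), a single integer $L$ with $\Er_{N}(\Lambda_{n})=\{\text{products of }\le L\text{ elementary matrices}\}$ for all $n$ at once. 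One then assembles a compatible system of length-$\le L$ elementary factorisations of the matrices $g\bmod p^{n}$: at each level the set of such factorisations is nonempty, the position-patterns range over a finite set, and because $p^{n}\Lambda_{n+1}$ squares to zero the lifting of a factorisation from level $n$ to level $n+1$ reduces to a solvable linear problem in $M_{N}(\Lambda_{1})$, solvability following from the relative-elementary description. A compatible system then gives an honest length-$\le L$ factorisation of $g$ over $\Lambda$, whence $g\in\Er_{N}(\Lambda)$.

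The main obstacle is exactly this closedness step. The naive successive-approximation argument only exhibits $g$ as an infinite convergent product of (relative) elementary matrices, which a priori lies in $\overline{\Er_{N}(\Lambda)}$ but not in $\Er_{N}(\Lambda)$; the uniform bounded-generation estimate together with a Mittag--Leffler analysis of the tower of elementary factorisations (equivalently, a triviality statement for a $\varprojlim^{1}$ of relative elementary subgroups) is precisely what collapses the infinite product to a finite one, and it is here that $R$ being Noetherian of finite Krull dimension is used in an essential way. In the special case of [FK], where the $R/p^{n}R$ are finite, this step is automatic, since $\GL_{N}(\Lambda)$ is then profinite and $\Er_{N}(\Lambda)$, being a bounded-length product of compact sets, is closed; the content here is to reach the same conclusion without such finiteness.
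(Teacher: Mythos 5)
Your global frame --- fix $N$ beyond the stable range, identify $\Kr_{1}$ with $\GL_{N}/\Er_{N}$ at every level, use Mittag--Leffler surjectivity of the towers to get $1\to\varprojlim_{n}\Er_{N}(\Lambda_{n})\to\GL_{N}(\Lambda)\to\widehat{\Kr}_{1}(\Lambda)\to1$, and thereby reduce both surjectivity and injectivity to the $p$-adic closedness of $\Er_{N}(\Lambda)$ in $\GL_{N}(\Lambda)$ --- is exactly the skeleton of the paper's proof, which reduces Theorem \ref{thm2} to Theorem \ref{thm16}. The gap is in how you propose to prove closedness. Your ``crucial input'' is a uniform bounded elementary generation statement: that finiteness of the stable rank of the $\Lambda_{n}$ furnishes a single $L$ such that every element of $\Er_{N}(\Lambda_{n})$ is a product of at most $L$ elementary matrices, uniformly in $n$. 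No such theorem exists: finite stable rank does not imply bounded elementary generation. Van der Kallen's own example shows that $\mathrm{SL}_{3}(\mathbb{C}[x])=\Er_{3}(\mathbb{C}[x])$ has unbounded word length in the elementary generators even though $\mathbb{C}[x]$ has stable rank $2$, and nothing about the rings $(R/p^{n}R)[G]$ makes them better a priori. Without the bound $L$ the inverse-limit-of-factorizations argument has nothing to run on, and successive approximation only exhibits an infinite convergent product, i.e.\ an element of $\overline{\Er_{N}(\Lambda)}$, which is where you started. A second, smaller misstep: the identity $\Er_{N}(\Lambda_{n},p\Lambda_{n})=\Er_{N}(\Lambda_{n})\cap\GL_{N}(\Lambda_{n},p\Lambda_{n})$ is not the Suslin--Vaserstein normality theorem; in the stable range it is equivalent to injectivity of $\Kr_{1}(\Lambda_{n},p\Lambda_{n})\to\Kr_{1}(\Lambda_{n})$ (compare the discussion around (\ref{eq2.6})), i.e.\ to a relative statement of exactly the kind being proved, so it cannot be quoted as an off-the-shelf input to your lifting step, whose ``solvable linear problem'' is likewise unsubstantiated.

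The paper's mechanism for collapsing the infinite product into a finite one is specific to group rings and goes through $\mathrm{Det}$ and the integral group logarithm: a $p$-adic limit of elementary matrices has trivial $\mathrm{Det}$, and Lemma \ref{le17} (a consequence of the logarithm congruences of [CPT1], which is where the hypothesis $p^{k}>2$ enters) shows that an element of $(1+p^{k}R[G])\cap\ker(\mathrm{Det})$ is, modulo $p^{n+1}$, a product of exactly $q=|G|$ commutators $[g_{i},1+p^{k}\mu_{i,n}]$; the commutator identities of Lemmas \ref{le18} and \ref{le19} let the correction terms at each stage be absorbed into the same $q$ factors, so only the parameters $\mu_{i,n}$ change and they converge $p$-adically, yielding an exact finite factorization and hence $\Er(R[G],p^{k})={\rm SL}(R[G],p^{k})$. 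In other words, the uniform bound on the number of factors comes from the group order together with the logarithm, not from stable-range considerations; this ingredient (or a genuine substitute for it) is what your proposal is missing.
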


\begin{theorem}\label{thm3}Assume that $R$ is a $p$-adically complete Noetherian integral domain
 with fraction field of characteristic zero.
 For $p^{n}>2$ the reduction map $\Kr_{2}( R[ G] )
\rightarrow \Kr_{2}( R_{n}[ G]) $ is surjective.
\end{theorem}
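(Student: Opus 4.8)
The plan is to reduce the assertion, by an induction along the tower $R[G]\to\cdots\to R_{m+1}[G]\to R_m[G]\to\cdots$, to a single base case, each inductive step being handled by lifting Dennis--Stein symbols along a square-zero extension. First I would record the relative $K$-theory exact sequence for the surjection $R[G]\to R_n[G]$ with kernel $\mathfrak a=p^nR[G]$. Because $R[G]$ is $p$-adically complete we have $\mathfrak a\subseteq\mathrm{rad}(R[G])$, so any element of $R[G]$ that is a unit modulo $\mathfrak a$ is already a unit; in particular $1+\mathfrak a\subseteq R[G]^{\times}$. From
\[
\Kr_2(R[G])\to\Kr_2(R_n[G])\xrightarrow{\ \partial\ }\Kr_1(R[G],\mathfrak a)\to\Kr_1(R[G])
\]
surjectivity of the first map is equivalent to $\partial=0$. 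The basic mechanism is that Dennis--Stein symbols lift: if $\langle\bar a,\bar b\rangle\in\Kr_2(R_n[G])$ and $a,b\in R[G]$ lift $\bar a,\bar b$ with $ab\in\mathrm{rad}(R[G])$ (for instance $a\in pR[G]$), then $1+ab\in R[G]^{\times}$, so $\langle a,b\rangle$ is a well-defined element of $\Kr_2(R[G])$ mapping to $\langle\bar a,\bar b\rangle$.

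Next I would run the induction. For $m\ge 2$ the kernel $I_m$ of $R_m[G]\to R_{m-1}[G]$ is $p^{m-1}R[G]/p^{m}R[G]$ and satisfies $I_m^{2}=0$; by the standard description of the relative group $\Kr_2(R_m[G],I_m)$ of a square-zero ideal (and its noncommutative versions), $\ker\bigl(\Kr_2(R_m[G])\to\Kr_2(R_{m-1}[G])\bigr)$ is generated by the images of Dennis--Stein symbols $\langle\bar a,\bar b\rangle$ with $\bar a\in I_m$. Each such symbol lifts to $\Kr_2(R[G])$ by the previous paragraph (lift $\bar a$ to some $a\in p^{m-1}R[G]\subseteq\mathrm{rad}(R[G])$), so that kernel lies in the image of $\Kr_2(R[G])$; a short diagram chase then shows that surjectivity of $\Kr_2(R[G])\to\Kr_2(R_{m-1}[G])$ implies surjectivity of $\Kr_2(R[G])\to\Kr_2(R_m[G])$. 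Hence the whole statement comes down to the base case $\Kr_2(R[G])\to\Kr_2(R_{n_0}[G])$, where $n_0$ is the least integer with $p^{n_0}>2$ (thus $n_0=1$ for $p$ odd and $n_0=2$ for $p=2$; note $n\ge n_0$ under the hypothesis $p^n>2$).

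Finally, for the base case I would reduce, by the $p$-adic approximation argument used in the proof of Theorem~\ref{thm16}, to the case in which the residue rings $R_m$ are finite: then $R/pR$ is a finite field, $R_{n_0}[G]$ is semilocal, $\Kr_2(R_{n_0}[G])$ is generated by Steinberg symbols, and these lift as above. The hypothesis $p^n>2$ is used here through the $p$-adic exponential $\exp\colon p^{n_0}R[G]\xrightarrow{\ \sim\ }1+p^{n_0}R[G]$, which is a bijection precisely when $p^{n_0}>2$ and which is needed to pin down $\Kr_1(R[G],p^{n_0}R[G])$. I expect the main obstacle to be exactly this base case for a general $R$: when $R/pR$ is not Artinian the ring $R_{n_0}[G]$ is not semilocal, so $\Kr_2(R_{n_0}[G])$ is no longer generated by symbols and the ``non-symbolic'' part (involving the nil-$K$-theory of $(R/pR)[G]$ when $p\mid|G|$) must be controlled separately, via Oliver's group logarithm together with the approximation; this is also the point at which the excluded case $p=2$, $n=1$ breaks down.
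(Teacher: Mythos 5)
There is a genuine gap, and it sits exactly where you place it yourself: the base case. Your dévissage through the square-zero extensions $R_m[G]\to R_{m-1}[G]$ (lifting Dennis--Stein symbols with one entry in $p^{m-1}R[G]\subseteq\mathrm{rad}(R[G])$) is plausible modulo a citation for generation of relative $\Kr_2$ of a non-split radical ideal by Dennis--Stein symbols, but it only reduces the theorem to surjectivity of $\Kr_2(R[G])\to \Kr_2(R_{n_0}[G])$, and your treatment of that case does not work. There is no ``$p$-adic approximation argument'' that reduces a general $R$ satisfying the hypotheses to one with finite residue rings: the approximation in the proof of Theorem \ref{thm16} is of a completely different nature (it writes an element of ${\rm SL}(R[G],p^k)$ as a $p$-adically convergent product of commutators $[g_i,1+p^k\mu_i]$, successively improving congruences modulo $p^{n+1}, p^{n+2},\dots$), and it does not change the coefficient ring. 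For $R=\Z_p\langle\langle T_1,\dots,T_n\rangle\rangle$, say, $R/pR$ is a non-Artinian domain, $R_{n_0}[G]$ is not semilocal, and the semilocal generation of $\Kr_2$ by Steinberg symbols is simply unavailable; your closing remark that the ``non-symbolic part must be controlled separately, via Oliver's group logarithm together with the approximation'' is precisely the unproved core of the theorem, not a routine supplement.

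The paper's proof never touches $\Kr_2$-generators at all. By the long exact sequence and the remark before (\ref{eq2.6}), surjectivity of $\Kr_2(R[G])\to\Kr_2(R_n[G])$ is equivalent to the equality $\GL(R[G],p^n)\cap\Er(R[G])=\Er(R[G],p^n)$, and since $\Er(R[G])\subseteq{\rm SL}(R[G])$ this follows from the stronger statement ${\rm SL}(R[G],p^n)=\Er(R[G],p^n)$ of Theorem \ref{thm16}, proved by the commutator-congruence and convergence argument (Lemmas \ref{le17}--\ref{le19}); the hypothesis $p^n>2$ enters there through $\log(1+p^n\lambda)\equiv p^n\lambda \bmod p^{n+1}$. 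This handles every $n$ with $p^n>2$ uniformly, so the induction through square-zero layers is unnecessary, and it sidesteps both the symbol-generation question and any finiteness or semilocality assumption on $R/pR$. If you want to salvage your route, you would in effect have to prove an analogue of Theorem \ref{thm16} anyway, at which point the dévissage buys nothing.
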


The above leads us to formulate:

\begin{problem} Assume $R$ is as above.
Let $m$ be an arbitrary non-negative integer. Are the reduction maps $\Kr_{m}( R[ G])
\rightarrow \Kr_{m}( R_{n}[ G]) $   surjective? Is the natural map $\Kr_{m}( R[G])
\rightarrow \widehat{\Kr}_{m}( R[ G])$
an isomorphism?
\end{problem}

\noindent\textbf{Remark.} By the long exact sequence of K-theory (see for instance
(\ref{eq2.5}) in Sect. \ref{s2}), a positive answer to the first  question above is equivalent to the injectivity of
natural map $\Kr_{m-1}( R[ G] , p^{n}) \rightarrow
\Kr_{m-1}( R[ G])$.

Our first result for $\SK_{1}$ concerns the image of $\SK_{1}( R[ G]) $ in $\Kr_{1}( R_{n}[ G])$, which we
denote by $\SK_{1}( R[ G]) _{n}$; for the purposes of
this result we again only need $R$ to be $p$-adically complete:

\begin{theorem}\label{thm5} Assume that $R$ is a $p$-adically complete Noetherian integral domain
 with fraction field of characteristic zero.
If $p^{n}>2$, then $\SK_{1}( R[ G]) $ maps isomorphically
onto $\SK_{1}( R[ G]) _{n}$ under the natural map $\Kr_{1}( R[ G]) \rightarrow \Kr_{1}( R_{n}[ G])$.
\end{theorem}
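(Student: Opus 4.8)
The plan is to show that the kernel of the natural map $\SK_1(R[G]) \to \SK_1(R[G])_n$ is trivial, i.e. that no nontrivial element of $\SK_1(R[G])$ dies modulo $p^n$ when $p^n > 2$. First I would invoke Theorem \ref{thm2}, which identifies $\Kr_1(R[G])$ with $\widehat{\Kr}_1(R[G]) = \varprojlim_m \Kr_1(R_m[G])$; in particular an element of $\SK_1(R[G])$ is the same as a compatible system of classes $(x_m)$ with $x_m \in \Kr_1(R_m[G])$ all lying in the respective $\SK_1$'s (or at least mapping to $1$ in $\Kr_1(N^c[G])$). Using the long exact sequence of relative $K$-theory (the sequence labelled (\ref{eq2.5}) in Sect.~\ref{s2}), for each pair $m \ge n$ there is an exact piece
\begin{equation*}
\Kr_2(R_m[G]) \to \Kr_2(R_n[G]) \to \Kr_1(R_m[G], p^n) \to \Kr_1(R_m[G]) \to \Kr_1(R_n[G]),
\end{equation*}
so the kernel of $\Kr_1(R_m[G]) \to \Kr_1(R_n[G])$ is the image of $\Kr_1(R_m[G], p^n)$, the relative $K_1$ of the ideal $(p^n)$.

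The key point will then be to control this relative group. Since $R$ is $p$-adically complete with $pR$ in the Jacobson radical, the ideal $p^n R[G]$ is contained in the radical and $R_m[G]$ is $p$-adically complete; a standard computation (as in Oliver [O5], or via the exponential/logarithm for ideals in the radical when $p^n > 2$) shows that $\Kr_1(R_m[G], p^n)$ is generated by (images of) $1 + p^n a$ with $a \in R_m[G]$, and more precisely that it lies in the image of the units, hence maps into $\mathrm{Det}(\Kr_1)$ injectively on the relevant part. Concretely, the condition $p^n > 2$ is exactly what makes the truncated logarithm $\log(1 + p^n a) = p^n a - \tfrac{1}{2}p^{2n}a^2 + \cdots$ converge and be inverted by $\exp$ on the ideal $p^n R_m[G]$, giving an isomorphism of $1 + p^n R_m[G]$ onto $p^n R_m[G]$ as a group under a suitable operation, compatibly in $m$. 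Passing to the limit over $m$, the relative group $\widehat{\Kr}_1(R[G], p^n) := \varprojlim_m \Kr_1(R_m[G], p^n)$ is torsion-free (it is built from $\varprojlim_m p^n R_m[G]$, a $\Z_p$-module with no torsion since $R$ has characteristic zero and is a domain), so it meets $\SK_1$, which is finite (being a subquotient of the finite groups appearing in Oliver's description), only in the identity.

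Assembling: an element of $\SK_1(R[G])$ dying in $\SK_1(R[G])_n$ lies, after the identification of Theorem \ref{thm2}, in $\varprojlim_m \mathrm{im}\big(\Kr_1(R_m[G], p^n) \to \Kr_1(R_m[G])\big)$; by the previous paragraph this intersection with $\SK_1$ is trivial, giving injectivity. Surjectivity of $\SK_1(R[G]) \to \SK_1(R[G])_n$ is immediate from the definition of $\SK_1(R[G])_n$ as the image. I expect the main obstacle to be the bookkeeping in the second paragraph: one must check that the exponential/logarithm isomorphism on the relative terms is genuinely compatible with the transition maps $R_m[G] \to R_{m'}[G]$ and with the determinant, so that $\varprojlim$ behaves well and the relative pro-group is seen to be torsion-free; the inequality $p^n > 2$ enters precisely to guarantee that $p^{2n}/2, p^{3n}/3, \ldots \in p^n\Z_p$ so the series lie in the radical ideal and the two maps are mutually inverse. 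Everything else is a formal consequence of Theorem \ref{thm2}, the localization sequence, and the finiteness of $\SK_1$ coming from Oliver's theory.
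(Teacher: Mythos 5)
There is a genuine gap, and it sits exactly where you flagged the ``main obstacle'', plus one place you did not flag. First, your argument hinges on the assertion that $\SK_{1}(R[G])$ is finite ``being a subquotient of the finite groups appearing in Oliver's description''. Oliver's finiteness results apply to group rings over valuation rings of finite extensions of $\Q_{p}$; for the rings $R$ allowed here (only $p$-adically complete, Noetherian, domain, characteristic-zero fraction field) this is false, and in fact the main theorems of this very paper (Theorem \ref{thm7}, Corollary \ref{cor8}) identify $\SK_{1}(R[G])$ with $\overline{H}_{2}(G,\Z)\otimes R/(1-F)R$, which is typically infinite (e.g. for $R=\Z_{p}\langle\langle T\rangle\rangle$ or $W\{\{t\}\}$). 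Invoking that description would also be circular, since it is established later and in greater generality than Oliver's work. Without finiteness, the ``torsion element meets torsion-free relative group'' mechanism has nothing to act on. Second, the torsion-freeness claim itself is not established: each $\Kr_{1}(R_{m}[G],p^{n})$ is a group of $p$-power-torsion elements (every unit $1+p^{n}a$ over $R_{m}[G]=R[G]/p^{m}$ has $p$-power order), so only the pro-object could be torsion-free, and identifying $\varprojlim_{m}\Kr_{1}(R_{m}[G],p^{n})$ with a subquotient of $p^{n}R[G]$ via $\log$/$\exp$ is not a formality in the noncommutative setting: the relative $\Kr_{1}$ is $\GL(\cdot,p^{n})/\Er(\cdot,p^{n})$, not $(1+p^{n}R_{m}[G])$ modulo commutators, and the ``standard computation as in Oliver [O5]'' is proved there for orders over complete d.v.r.'s with finite residue field, which is precisely the restriction this paper is removing.

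The paper's actual proof avoids all of this and is much shorter: since $\SK_{1}(R[G])_{n}$ is by definition the image, only injectivity is at stake; if $x\in\SK_{1}(R[G])$ dies in $\Kr_{1}(R_{n}[G])$ then by Lemma \ref{le13} and Lemma \ref{le14}\,(b) it is represented by a single unit $x'\in(1+p^{n}R[G])\cap\ker(\mathrm{Det})={\rm SL}(R[G],p^{n})$, and Theorem \ref{thm16} (proved by the iterative commutator-approximation argument, valid for $p^{n}>2$) gives ${\rm SL}(R[G],p^{n})=\Er(R[G],p^{n})$, so $x=1$. If you want to pursue your route, the honest content you would have to supply is exactly this congruence-subgroup statement (or an equivalent control of the relative $\Kr_{1}$), at which point the detour through $\widehat{\Kr}_{1}$, Theorem \ref{thm2} and finiteness of $\SK_{1}$ becomes unnecessary.
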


The key to our study of $\SK_{1}( R[ G] ) $ is the
group logarithm (see Section 3 in [CPT1] for details). Suppose for the
moment that $G $ is a $p$-group. We let $I_G=I( R[ G] ) $
denote the augmentation ideal $ \ker ( R[ G] \rightarrow
R) $; let $\mathcal{A}( R[ G]) =\ker ( R[ G] \rightarrow R[ G^{\rm ab}])$, let $C_{G}$
denote the set of conjugacy classes of $G$ and let $\phi :R[ G]
\rightarrow R[ C_{G}] $ be the $R$-linear map obtained by mapping
each group element to its conjugacy class. We write $\Kr_{1}^{\prime }( R[ G] , I_{G}) $ for the
image of $\Kr_{1}( R[ G] ,I_{G}) $ in $\Kr_{1}( R[ G])$. We
denote the Whitehead group $\Kr_{1}^{\prime }( R[ G]
,I_{G}) /\mbox{Im}( G) $ by $\mathrm{Wh}_{G}( R)$, or $\mathrm{Wh}_{G}$ when $R$ is clear from the context. Using Theorems
3.15 and 3.17 in [CPT1] we obtain the exact sequence:
\begin{equation*}
1\rightarrow \frac{\mathrm{Det}( 1+I_{G}) }{\mathrm{Det}(
G) }\rightarrow \phi ( I_{G}) \overset{\omega }{\rightarrow
}G^{\mathrm{ab}}\otimes _{\Z}\frac{R}{( 1-F) R}%
\rightarrow 1
\end{equation*}%
where $\omega $ is the logarithmic derivative map of Proposition 3.18 loc.
cit.; using Theorem \ref{thm1} (a) we obtain the further exact sequence%
\begin{equation*}
1\rightarrow \SK_{1}( R[ G] ) \rightarrow \mathrm{Wh}%
_{G}( R) \rightarrow \frac{\mathrm{Det}( 1+I_{G}) }{%
\mathrm{Det}( G) }\rightarrow 1.
\end{equation*}
These two exact sequences may then be spliced together to give the four
term exact sequence%
\begin{equation}\label{eq1.3}
1\rightarrow \SK_{1}( R[ G] ) \rightarrow {\rm Wh}_{G}(
R) \rightarrow \phi ( I_{G}) \rightarrow G^{\mathrm{ab}%
}\otimes _{\Z}\frac{R}{( 1-F) R}\rightarrow 1.
\end{equation}%
Let $H_{2}^{\mathrm{ab}}( G, \Z) $ denote the subgroup of
the Schur multiplier $H_{2}( G,\Z) $ generated by the
images under corestriction of the $H_{2}( A,\Z) $ for all
abelian subgroups $A$ of $G,$\ and set $\overline{H}_{2}( G, \Z) =H_{2}( G,\Z) /H_{2}^{\mathrm{ab}}( G,
\Z)$. We may then use Oliver's construction (see Section 3
for details) to construct from this exact sequence a map
\begin{equation}\label{eq1.4}
\Theta _{R[G]}:\SK_1( R[ G]) \rightarrow \frac{R}{( 1-F) R}\otimes \overline{H}_{2}( G,\Z) .
\end{equation}%
We extend Oliver's proof to show:

\begin{theorem}\label{thm6}
For a $p$-group $G$ and for a ring $R$, which satisfies the Standing
Hypotheses, the map $\Theta _{R[G]}$ is an isomorphism.
\end{theorem}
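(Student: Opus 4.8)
The plan is to adapt Oliver's argument ([O1--5]) to a general coefficient ring $R$, substituting the group logarithm of [CPT1] and the completed $K$-groups for the tools available over $p$-adic valuation rings. Since $\overline{H}_{2}(G,\Z)$ is a finite $p$-group, fix $n$ with $p^{n}>2$ and $p^{n}\overline{H}_{2}(G,\Z)=0$. By Theorem~\ref{thm5} the reduction map identifies $\SK_{1}(R[G])$ with $\SK_{1}(R[G])_{n}\subseteq\Kr_{1}(R_{n}[G])$, and $\frac{R}{(1-F)R}\otimes\overline{H}_{2}(G,\Z)=\frac{R_{n}}{(1-F)R_{n}}\otimes\overline{H}_{2}(G,\Z)$, so it suffices to argue at the finite level $R_{n}=R/p^{n}R$. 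Since $G$ is a $p$-group, $I_{G}$ is nilpotent modulo $p$, so the [CPT1] group logarithm converges; by Theorem~\ref{thm1}(a) the splicing in (\ref{eq1.3}) then identifies $\SK_{1}(R[G])$ with the kernel of the map $\mathrm{Wh}_{G}(R)\to\phi(I_{G})$ appearing there, which is induced by Oliver's integral logarithm $\Gamma_{G}=(1-\tfrac1p\Psi)\circ\log$, $\Psi$ combining the Frobenius lift $F$ on $R$ with the $p$-power operation on conjugacy classes. The map $\Theta_{R[G]}$ of (\ref{eq1.4}) is built from this exact sequence by Oliver's construction of Section~3, and it remains to prove $\Theta_{R[G]}$ bijective.

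Surjectivity is the more explicit half: for a presentation $\pi\colon P\twoheadrightarrow G$ with $\mathcal N=\ker\pi$, Hopf's formula $H_{2}(G,\Z)\cong(\mathcal N\cap[P,P])/[\mathcal N,P]$ represents each class of $\overline{H}_{2}(G,\Z)$, and to it one attaches an explicit commutator-type unit of $R[G]$ built from elements $1+rg$ ($r\in R$, $g\in G$), shows it lies in $\SK_{1}$ and computes its image under $\Theta_{R[G]}$ --- the coefficient $r$ producing the tensor factor in $\frac{R}{(1-F)R}$. Injectivity is the assertion that such symbols exhaust $\SK_{1}(R[G])$ and that the only relations among their images are those already divided out in passing from $H_{2}(G,\Z)$ to $\overline{H}_{2}(G,\Z)$; this is proved by computing $\Gamma_{G}$ explicitly on a free presentation together with the relative $\Kr_{2}$--$\Kr_{1}$ exact sequence at the finite level, where the surjectivity $\Kr_{2}(R[G])\to\Kr_{2}(R_{n}[G])$ of Theorem~\ref{thm3} lets one lift the boundary computations back to $R[G]$, and where control of the $\SK_{1}$ of the auxiliary (``free'') coefficient object comes from the Standing Hypotheses, Bloch's theorem and an approximation argument as in the discussion of the examples in the introduction. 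A final dévissage over the abelian subgroups $A\le G$, using functoriality of (\ref{eq1.3}) and $\overline{H}_{2}(A,\Z)=0$, passes from $H_{2}(G,\Z)$ to $\overline{H}_{2}(G,\Z)$ and identifies the map obtained with $\Theta_{R[G]}$.

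The main obstacle is making this homological identification of $\ker\Gamma_{G}$ work for a coefficient ring of positive Krull dimension. Over a complete discrete valuation ring with finite residue field Oliver has at his disposal finiteness of $K$-groups, explicit Mennicke-symbol generators of $\SK_{1}$, and Galois descent from the maximal unramified extension; for rings such as $R=\Z_{p}\langle\langle T_{1},\dots,T_{n}\rangle\rangle$ none of these survives, and the replacements are precisely the results marshalled earlier: Theorem~\ref{thm5} to descend to $R_{n}$, Theorems~\ref{thm2} and~\ref{thm3} to pin down $\widehat{\Kr}_{1}$ and $\Kr_{2}$, the Standing Hypotheses (in particular (iv)) with Bloch's theorem for the auxiliary $\SK_{1}$-vanishings, and the [CPT1] group logarithm (Theorems 3.15 and 3.17 of [CPT1]) for the exact sequence (\ref{eq1.3}) and the shape of $\Gamma_{G}$. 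With these in hand the combinatorial core of Oliver's proof goes through; the only genuinely new work is the bookkeeping needed to keep the ring $R$ and its Frobenius $F$ present at every step, so that $\frac{R}{(1-F)R}\otimes\overline{H}_{2}(G,\Z)$ appears where Oliver obtains $\overline{H}_{2}(G,\Z)$.
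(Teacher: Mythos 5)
Your outline does not engage with the part of the argument that actually carries the weight, and the substitute you propose for it would not work. In the paper (following Oliver), surjectivity of $\Theta_{R[G]}$ costs nothing: by construction (\ref{eq3.51}) it factors through the surjection $\SK_{1}(R[G])\twoheadrightarrow \mathrm{coker}\,\alpha_{*}$ followed by isomorphisms, for a central extension $1\to H\to\widetilde G\to G\to 1$ with $\delta^{\alpha}$ injective (such extensions exist by Lemma \ref{le42}). The entire difficulty is to identify that cokernel and to prove injectivity, and this is done by (i) Proposition \ref{pro41}, computing $\mathrm{coker}(\SK_{1}(R[\widetilde G])\to\SK_{1}(R[G]))\cong \frac{R}{(1-F)R}\otimes H_{0}/H_{1}$, which rests on the logarithmic exact sequences and crucially on Lemma \ref{le26} (the case of a central element $c$ of order $p$ that is \emph{not} a commutator); (ii) the commutator-identity analysis of $\mathcal H=\ker(\SK_{1}(R[G])\to\SK_{1}(R[\overline G]))$ culminating in Theorem \ref{thm39} and Proposition \ref{pro40}; and (iii) the dévissage of Proposition \ref{pro44} and Lemmas \ref{le45}, \ref{le46}, assembled group-theoretically as in [O1, Theorem 3]. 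None of this appears in your sketch: you assert that "such symbols exhaust $\SK_{1}(R[G])$" and that the relations are exactly those defining $\overline H_{2}$, but that assertion \emph{is} the theorem, and your proposed mechanism for proving it — "computing $\Gamma_{G}$ explicitly on a free presentation" together with the relative $\Kr_{2}$--$\Kr_{1}$ sequence and Bloch's theorem — is not available. A free presentation $P$ of $G$ is an infinite group: the group logarithm, the Standing Hypotheses and all the $\SK_{1}$-control in this paper apply only to group rings of \emph{finite} ($p$-)groups, and Bloch's theorem concerns commutative regular domains, so it says nothing about the "auxiliary free coefficient object". In the actual proof, Hopf's formula is used only to define $\delta^{\alpha}$ and (via Lemma \ref{le42}) to produce finite central extensions $\widetilde G$; the homological identification is then forced through the $\mathcal H$- and cokernel-analysis, not read off from a presentation.

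Two further points. First, your opening reduction to the finite level $R_{n}$ is incompatible with the tool you then invoke: $\Gamma_{G}$ involves $\frac1p\nu$, i.e.\ division by $p$, and only makes sense over the $p$-torsion-free ring $R$; Theorem \ref{thm5} is used in the paper to compare $\mathcal H$ and $\mathcal H_{n}$ (Proposition \ref{pro29}), not to transplant the logarithm to $R_{n}[G]$, and in any case your subsequent steps silently return to $R$. Second, the passage "from $H_{2}$ to $\overline H_{2}$" is not a soft functoriality statement about abelian subgroups: it happens through the subgroup $H_{1}$ of $\widetilde G$-commutators lying in $H$ in Proposition \ref{pro41}, matched with the identity $\delta^{\alpha}(H_{2}^{\mathrm{ab}}(G,\Z))=H\cap\{\text{commutators of }\widetilde G\}$. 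As it stands, the proposal reproduces the statement of Oliver's strategy but omits or misidentifies every step where the generalization from $\Z_{p}$ to $R$ (primality of $pR$, the Frobenius lift, $\overline R$ an integral domain) has to be checked, so it cannot be accepted as a proof.
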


In Sections \ref{s5} and \ref{s6} we extend our results from $p$-groups to arbitrary
finite groups $G.$ The formulation of our general result uses ideas from
[IV]. We let $G_{r}$ denote the set of $p$-regular elements in $G$; we let $R[ G_{r}] $ denote the free $R$-module on the elements of $G_{r}$
and we let $\Psi $ be the $ R$-module endomorphism of $R[ G_{r}]$ which is given by the rule
\begin{equation*}
\Psi ( \sum\nolimits_{g\in G_{r}}a_{g}g) =\sum\nolimits_{g\in
G_{r}}F( a_{g}) g^{p}.
\end{equation*}%
Let $G$ act on $R[ G_{r}] $ by conjugation on $G_{r}$; then the
homology group $H_{2}( G,R[ G_{r}] ) $ is defined. We
let $H_{2}^{\mathrm{ab}}( G,R[ G_{r}]) $ denote the
subgroup of   $H_{2}( G, R[ G_{r}] )$ generated by the images under corestriction of
the $H_{2}( A, R[
A_{r}]) $ for all abelian subgroups $A$ of $G$, and we set $%
\overline{H}_{2}( G, R[ G_{r}] ) =H_{2}( G, R[G_{r}]) /H_{2}^{\mathrm{ab}}( G, R[ G_{r}])$. Then $\Psi $ acts, via its action on $R[ G_{r}]$, on the   groups $H_{2}( G, R[ G_{r}])$, $H_{2}^{\mathrm{ab}}( G,R[ G_{r}] )$,
$\overline{H}_{2}( G, R[ G_{r}] ) $ and we will
write $H_{2}( G, R[ G_{r}]) _{\Psi }$,
$H_{2}^{\rm ab}( G,R[ G_{r}] ) _{\Psi }$,  $\overline{H}%
_{2}( G, R[ G_{r}] ) _{\Psi }$ for their groups of
co-invariants.
Then we have:

\begin{theorem}\label{thm7}
Let  $G$ be an arbitrary finite group and let $R$ be a ring which
satisfies the stated hypotheses, and which in addition is normal. Then there is a
natural isomorphism:
\begin{equation*}
\Theta _{R[G]}:\SK_{1}( R[ G] ) \xrightarrow{\sim} \overline{H}%
_{2}( G, R[ G_{r}] )_{\Psi }.
\end{equation*}
\end{theorem}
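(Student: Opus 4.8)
The plan is to reduce the general statement to the $p$-group case, which is Theorem~\ref{thm6}, by means of the new group logarithm of Section~\ref{ss6c} together with an induction argument over subgroups of $G$. Throughout one may reduce modulo $p^{n}$: by Theorem~\ref{thm5}, for $p^{n}>2$ the natural map identifies $\SK_{1}(R[G])$ with $\SK_{1}(R[G])_{n}$, while by Theorem~\ref{thm1}(b) the normality of $R$ guarantees that the determinantal image of $\Kr_{1}(R[G])$ is already realised on $R[G]^{\times}$, so that the logarithmic machinery of [CPT1] is available in the generality needed here.

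The first step is to produce, for arbitrary $G$, the analogue of the four–term exact sequence \eqref{eq1.3}. Using the Adams operations $\psi^{k}$ on $\mathrm{Det}(\Kr_{1}(R[G]))$ constructed in Section~\ref{ss6c} (generalising [CNT]), one defines a group logarithm $\Gamma_{R[G]}$ whose target is built from the free module $R[G_{r}]$ on the $p$-regular elements of $G$, the relevant ``denominator'' operator being $1-\Psi$ rather than $1-F$; the formulation in terms of $p$-regular elements and $\Psi$-covariants follows the ideas of [IV]. One then shows that $\Gamma_{R[G]}$ fits into an exact sequence relating a Whitehead–type group, the image of $\Gamma_{R[G]}$, and the $\Psi$-covariants of a homology group, from which $\Theta_{R[G]}$ is extracted by Oliver's construction (compare Section~\ref{s5}): one compares the logarithmic data attached to $G$ with that attached to a free presentation of $G$, the subgroup $H_{2}^{\mathrm{ab}}$ of abelian corestrictions being divided out to pass from $H_{2}(G,R[G_{r}])_{\Psi}$ to $\overline{H}_{2}(G,R[G_{r}])_{\Psi}$. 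Naturality in $G$ is built into the construction.

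That $\Theta_{R[G]}$ is an isomorphism I would then establish by d\'evissage. Both $\SK_{1}(R[-])$ and $\overline{H}_{2}(-,R[(-)_{r}])_{\Psi}$ carry compatible induction and restriction maps, and a standard Dress-type induction argument, together with the fact that $\SK_{1}$ of a $p$-adic group ring is detected on its $p$-hyperelementary subgroups, reduces the isomorphism claim to the case $H=C\rtimes P$ with $C$ cyclic of order prime to $p$ and $P$ a $p$-group. For such an $H$ the decomposition $R[C]=\prod_{i}R_{L_{i}}$ of $R[C]$ into a product of unramified extensions $R_{L_{i}}=R\otimes_{\Z_{p}}\O_{L_{i}}$ presents $R[H]$, up to a cocycle twist, as a product of group rings of subgroups of $P$ over the rings $R_{L_{i}}$; these rings again satisfy the Standing Hypotheses — in particular hypothesis~(iv) is stable under unramified base change — so Theorem~\ref{thm6} applies and yields the isomorphism for $R[H]$, with the matching identification on the homology side following from the flatness of $R$ over $\Z$, which gives $H_{\ast}(G,R[G_{r}])\cong H_{\ast}(G,\Z[G_{r}])\otimes_{\Z}R$. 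A compatibility check of $\Theta$ with induction then upgrades the statement from the $p$-hyperelementary subgroups to $G$ itself.

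The main obstacle will be the construction and control of the new logarithm $\Gamma_{R[G]}$. One must verify that the Adams operations descend compatibly to $\mathrm{Det}$ and to the logarithm, that $\Gamma_{R[G]}$ correctly isolates the $p$-regular part of $R[G]$ (the role of $G_{r}$ and $\Psi$), and — the crux — that its kernel on the relevant Whitehead–type subgroup is exactly $\SK_{1}(R[G])$, with no spurious contributions arising from the larger coefficient ring. Proving this integrality statement over a general $R$ rather than over a $p$-adic valuation ring is precisely where hypothesis~(iv) and the approximation arguments behind Theorems~\ref{thm2}--\ref{thm3} are needed; once it is in place, the identification of the cokernel with $\overline{H}_{2}(G,R[G_{r}])_{\Psi}$ and the reduction to Theorem~\ref{thm6} are comparatively routine.
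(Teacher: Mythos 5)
Your overall skeleton (reduce by Dress-type induction to the $p$-group case of Theorem \ref{thm6}, with the $\Q_p$-$p$-elementary groups as the intermediate stage) is the right one, but the step you dismiss as routine is exactly where the real work lies, and your treatment of it would fail. For $H=C\rtimes P$ the decomposition of $R[C]$ gives $R[H]=\oplus_m R[m]\circ P$, and the factors are \emph{twisted} group rings in which $P$ acts nontrivially on $R[m]$ through the quotient $A_m$; they are not, up to a cocycle twist or Morita equivalence, group rings of subgroups of $P$ over the rings $R\otimes_{\Z_p}\O_{L_i}$, so Theorem \ref{thm6} cannot be applied to them directly. What is true, and what the paper spends all of Section \ref{s5} proving, is that $\SK_1(R[m]\circ P)=i_*(\SK_1(R[m][H_m]))=H_0\bigl(A_m,\SK_1(R[m][H_m])\bigr)$ (Proposition \ref{pro55}, Corollary \ref{cor57}, Theorem \ref{thm58}), which rests on the $A_m$-cohomological triviality of $\mathrm{Det}(1+I_{H_m})$ (Proposition \ref{pro56}) and on an auxiliary central extension $\widetilde H$ with $\SK_1(R[m][\widetilde H])=1$. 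The $A_m$-coinvariants are essential: without them you would not recover the centralizer structure visible in Corollary \ref{cor8}, so an argument that applies Theorem \ref{thm6} factorwise and ignores the twist computes the wrong group. On the homology side one must likewise identify $\overline H_2(H,R[H_r])$ with $\oplus_m H_0(A_m,\overline H_2(H_m,R[m]))$ using $A_m$-freeness of $R[m]$, and for the passage from elementary subgroups to $G$ one needs the limit statement $\overline H_2(G,R[G_r])=\varinjlim_{H\in\mathcal{E}_p(G)}\overline H_2(H,R[H_r])$ (Lemma \ref{LemmaNss6}), proved by Sylow corestriction and the orbit decomposition of $G_r$; neither of these is addressed in your sketch.

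There is a second, structural problem with basing the construction of $\Theta_{R[G]}$ on the Adams-operation logarithm. The Adams operations on $\mathrm{Det}(R[G]^\times)$ (Theorem \ref{thm62}) and the resulting exact sequence (Proposition \ref{pro75}) are only established under the two additional hypotheses imposed at the start of \ref{ss6c} (finiteness of idempotents in $\overline R\otimes\mathbb{F}_p^c$ and cohomological triviality of $\Lambda(R_L)$), which are not part of the hypotheses of Theorem \ref{thm7}. The paper deliberately proves Theorem \ref{thm7} first, in \ref{ss6a}, \emph{without} the logarithm: $\Theta_{R[G]}$ is assembled from the $p$-group isomorphisms $\Theta_{R[m][H_m]}$ by applying $H_0(A_m,-)$ and then passing to the limit over $\mathcal{E}_p(G)$ via Theorem \ref{thm53}; the logarithmic, Adams-operation description of $\Theta_{R[G]}$ in \ref{6e} is a second, more explicit construction available only under the extra hypotheses. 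So if you insist on routing the proof through $\Gamma_{R[G]}$ you must either add those hypotheses (weakening the theorem) or supply the missing integrality and exactness statements for general $R$; and in either case you still need the Section \ref{s5} analysis of the twisted group rings, which your proposal currently replaces by an unjustified identification.
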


To analyze the right-hand side further we let $\{C_{i}\}_{i\in I}$
denote the set of $G$-conjugacy classes in $G_{r}$, let $g_{i}$ be a chosen
group element in $C_{i}$ and set $G_{i}$ denote the centralizer of $g_{i}$
in $G$ so that we have a disjoint union decomposition $G_{r}=\cup
_{i}g_{i}^{G/G_{i}}$. We next consider the action of $\Psi $ on the $\{C_{i}\}_{i\in I}$.
We may view this action as an action on $I$, and we let
$J$ denote   the set of orbits of $\Psi $ on $I$. For $j\in J $ we
let $n_{j}$ denote the cardinality of $j$; we then obtain a further disjoint
union decomposition
\begin{equation*}
G_{r}=\bigcup _{j\in J}\bigcup _{m=1}^{n_{j}}( g_{i_{j}}^{G/G_{i_{j}}})
^{\Psi ^{m}}
\end{equation*}%
where $i_{j}$ denotes a chosen element of the orbit $j$ and the conjugacy
class of $g_{i_{j}}$ lies in $C_{i_{j}}.$

In   Appendix B,  we show that this decomposition affords the explicit
description:

\begin{corollary} \label{cor8} Under the assumptions of the  above theorem we have
\begin{equation*}
\SK_{1}(R[G]) \cong  \overline{H}_{2}(G,R[G_{r}])_{\Psi} = \bigoplus_{j}\left[\overline{H}_{2}(G_{i_{j}}, \Z) \otimes \frac{R}{(F-1)R}\right] .
\end{equation*}
\end{corollary}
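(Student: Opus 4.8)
The plan is to deduce Corollary~\ref{cor8} from Theorem~\ref{thm7}, which provides an isomorphism $\Theta_{R[G]}\colon\SK_1(R[G])\xrightarrow{\sim}\overline{H}_2(G,R[G_r])_\Psi$; thus the whole task is to make the right-hand side explicit.

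First I would decompose $R[G_r]$ as a $\Z[G]$-module along the $p$-regular conjugacy classes, $R[G_r]=\bigoplus_{i\in I}R[C_i]$, and identify $R[C_i]\cong\Ind_{G_i}^G R$ (the permutation module on $G/G_i$, with $G_i$ acting trivially on $R$) via $xg_ix^{-1}\leftrightarrow xG_i$. The endomorphism $\Psi$ carries $R[C_i]$ to $R[C_{i'}]$, where $C_{i'}$ is the class of $g_i^p$; hence it permutes the summands exactly as $\Psi$ permutes $I$, and the blocks $M_j=\bigoplus_{i\in j}R[C_i]$ attached to the $\Psi$-orbits $j\in J$ are $\Psi$-stable. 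Since $H_2(G,-)$ is additive, since this decomposition is compatible with restriction to abelian subgroups and hence, by naturality of corestriction, with the formation of $H_2^{\mathrm{ab}}$, and since $\Psi$ commutes with the conjugation action of $G$, one obtains $\overline{H}_2(G,R[G_r])_\Psi=\bigoplus_{j\in J}\overline{H}_2(G,M_j)_\Psi$. It then suffices to treat one orbit.

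Fix an orbit $j$ of size $n_j$ and choose as representatives inside it the elements $g_{i_j},g_{i_j}^p,\dots,g_{i_j}^{p^{n_j-1}}$; since $g_{i_j}$ is $p$-regular, $g_{i_j}^{p^m}$ generates $\langle g_{i_j}\rangle$ and so has centralizer $G_{i_j}$, and all the centralizers along the orbit coincide. Shapiro's lemma then identifies $\overline{H}_2(G,M_j)$ with $\bigoplus_{m=0}^{n_j-1}\overline{H}_2(G_{i_j},\Z)\otimes_\Z R$ — the universal-coefficient $\mathrm{Tor}$-term from $H_1(G_{i_j},\Z)$ being absorbed in the passage to the barred groups, as in the $p$-group case — and under this identification $\Psi$ becomes a twisted cyclic shift: it cyclically permutes the $n_j$ summands, an intermediate step being $\mathrm{id}\otimes F$ and the wrap-around step being $(c_{\gamma_j})_*\otimes F$, where $\gamma_j\in N_G(G_{i_j})$ is any element with $g_{i_j}^{p^{n_j}}=\gamma_j g_{i_j}\gamma_j^{-1}$ and $c_{\gamma_j}$ the induced automorphism of $G_{i_j}$. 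A direct computation of the coinvariants of such a twisted cyclic module — a lemma it is cleanest to isolate on its own — collapses $\overline{H}_2(G,M_j)_\Psi$ onto $\overline{H}_2(G_{i_j},\Z)\otimes_\Z R$ modulo the holonomy one picks up going once around the cycle, built from $(c_{\gamma_j})_*$ and an iterate of $F$.

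The main obstacle is to show that this holonomy collapses to the answer $R/(F-1)R$ in the statement, the key point being that $(c_{\gamma_j})_*$ acts trivially on $\overline{H}_2(G_{i_j},\Z)$. This is not formal, since one cannot in general take $\gamma_j\in G_{i_j}$; but it can be forced by specialising to $R=\Z_p$ with $F=\mathrm{id}$ (which satisfies the Standing Hypotheses, as $r\equiv r^{p}\bmod p$): there $\Psi$ is just $g\mapsto g^p$, the computation above gives $\SK_1(\Z_p[G])\cong\bigoplus_j\overline{H}_2(G_{i_j},\Z)/((c_{\gamma_j})_*-1)$, and comparison with Oliver's description of $\SK_1(\Z_p[G])$ (see [O1-4] and [O5]) forces each $(c_{\gamma_j})_*$ to be trivial on $\overline{H}_2(G_{i_j},\Z)$. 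With the linear part of the holonomy gone, one reads off, using the explicit decomposition $G_r=\bigcup_j\bigcup_{m=1}^{n_j}(g_{i_j}^{G/G_{i_j}})^{\Psi^m}$ recorded above, that the orbit $j$ contributes exactly $\overline{H}_2(G_{i_j},\Z)\otimes_\Z R/(F-1)R$; summing over $j\in J$ gives the corollary. Matching the iterate of $F$ attached to an orbit of length $n_j$ with the single factor $R/(F-1)R$, distributed over the $n_j$ classes making up that orbit, is the one genuinely delicate piece of bookkeeping, and is what the argument in Appendix B must carry out with care.
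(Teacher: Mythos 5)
Your skeleton is the same as the paper's Appendix B proof: decompose $R[G_{r}]$ along the $p$-regular classes, apply Shapiro to identify each block with $H_{2}(G_{i},\Z g_{i})\otimes R$, regroup into $\Psi$-orbits, and compute coinvariants of a cyclically shifted block; your observation that the wrap-around carries a holonomy $(c_{\gamma_{j}})_{*}\otimes F$ with $\gamma_{j}\in N_{G}(G_{i_{j}})$ is a point the paper passes over silently. The genuine gap is at the step you explicitly defer as ``delicate bookkeeping'': even after the conjugation part of the holonomy is disposed of, the $\Psi$-coinvariants of a block of length $n_{j}$ whose shift is twisted by $F$ are $\overline{H}_{2}(G_{i_{j}},\Z)\otimes R/(F^{n_{j}}-1)R$, not $\otimes\,R/(F-1)R$: reducing a tuple to its $0$-th component, the surviving relation is generated by $\sigma^{n_{j}}-1$ with $\sigma=\mathrm{id}\otimes F$. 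This cannot be ``read off,'' and in general it is a different group. For instance, take $R=W(\mathbb{F}_{p^{2}})$ with $F$ the Frobenius, and $G=C_{q}\times Q$ with $q$ a prime for which $p$ has order $2$ modulo $q$ and $Q$ a $p$-group with $\overline{H}_{2}(Q,\Z)\neq 0$; then $\mu_{q}\subset R$, so $R[G]\cong R[Q]^{\times q}$ and, by the $p$-group theorem, $\SK_{1}(R[G])\cong\overline{H}_{2}(Q,\Z)^{\oplus q}$, which agrees with $\bigoplus_{j}\overline{H}_{2}(G_{i_{j}},\Z)\otimes R/(F^{n_{j}}-1)R$ but not with $\bigoplus_{j}\overline{H}_{2}(G_{i_{j}},\Z)\otimes R/(F-1)R\cong\overline{H}_{2}(Q,\Z)^{\oplus (q+1)/2}$. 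So the step you postpone is where the content lies (and, as it happens, it is also the point at which the final display of Appendix B is most compressed; the per-orbit factor there should, as far as I can see, be $R/(F^{n_{j}}-1)R$). Your device for the twist --- specializing to $R=\Z_{p}$, $F=\mathrm{id}$ and comparing with Oliver --- is in any case blind to this issue, since at $R=\Z_{p}$ the ideals $(F-1)R$ and $(F^{n_{j}}-1)R$ coincide; it is also not the paper's route, and it is fragile on its own terms, because it presupposes Oliver's explicit formula in exactly the untwisted, orbit-indexed form and then extracts summand-by-summand information from an abstract isomorphism by counting orders.

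A second, smaller gap is the passage from $H_{2}$ to $\overline{H}_{2}$, which you dispatch ``by naturality of corestriction.'' What is actually needed, and what the paper spends most of Appendix B proving, is that under the Shapiro identification the subgroup $H_{2}^{\mathrm{ab}}(G,R[G_{r}])$ decomposes blockwise into the subgroups generated by corestriction inside the centralizers: one direction uses that an abelian subgroup $A$ containing an element conjugate to $g_{i_{j}}^{p^{m}}$ conjugates into $G_{i_{j}}$, and the converse uses that $H_{2}^{\mathrm{ab}}(G_{i_{j}},\Z)$ is generated by corestriction from maximal abelian subgroups of $G_{i_{j}}$, each of which contains $g_{i_{j}}$. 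Your write-up should include this argument (short, but not formal), and should replace the final ``collapse'' by an honest computation of the orbit coinvariants --- which will land you on $R/(F^{n_{j}}-1)R$ per orbit and force either a corrected target formula or a further identification that you would then have to supply.
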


\begin{corollary}\label{cor9}
Given two rings $R\subset S$ which satisfy the conditions of Theorem \ref{thm7} and
which have compatible lifts of Frobenius (so that the restriction of $F_{S}$
to $R$ coincides with $F_{R}$), then the natural map $\SK_{1}( R[ G]) $ to $\SK_{1}( S[ G]) $ is surjective,
resp. an isomorphism, if the natural map
\begin{equation*}
\frac{R}{(1-F_{R})R}\rightarrow \frac{S}{(1-F_{S})S}
\end{equation*}%
is surjective, resp. an isomorphism. If this map is injective with torsion
free cokernel, then the map $\SK_{1}( R[ G]) $ to $\SK_{1}( S[ G] ) $ is also injective.
\end{corollary}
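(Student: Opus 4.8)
The plan is to deduce the corollary from the naturality, in the coefficient ring, of the isomorphism $\Theta$ of Theorem \ref{thm7} together with the explicit decomposition of Corollary \ref{cor8}. Write $\iota\colon R\hookrightarrow S$ for the inclusion; since $F_S$ restricts to $F_R$ on $R$ we have $\iota\bigl((1-F_R)R\bigr)\subseteq(1-F_S)S$, so $\iota$ induces both the map $\bar\iota\colon R/(1-F_R)R\to S/(1-F_S)S$ figuring in the statement and a map $\iota_*\colon\SK_1(R[G])\to\SK_1(S[G])$. The first point is that all of the combinatorial data entering Corollary \ref{cor8} depends only on $G$ and $p$: the set $G_r$ of $p$-regular elements, the set $I$ of $G$-conjugacy classes in $G_r$, the self-map of $I$ induced by $g\mapsto g^p$ (hence the orbit set $J$ and the integers $n_j$), and the centralizers $G_{i_j}=Z_G(g_{i_j})$, are all unchanged when $R$ is replaced by $S$. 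Granting the naturality of $\Theta$ and of the identification of Corollary \ref{cor8} (addressed below), $\iota_*$ is identified, via $\Theta_{R[G]}$ and $\Theta_{S[G]}$, with the map
\begin{equation*}
\bigoplus_{j\in J}\bigl(\overline{H}_2(G_{i_j},\Z)\otimes_{\Z}R/(1-F_R)R\bigr)\ \xrightarrow{\,\oplus_j(1\otimes\bar\iota)\,}\ \bigoplus_{j\in J}\bigl(\overline{H}_2(G_{i_j},\Z)\otimes_{\Z}S/(1-F_S)S\bigr).
\end{equation*}

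Granting this, the three assertions become formal. Applying the right-exact functor $\overline{H}_2(G_{i_j},\Z)\otimes_{\Z}(-)$ to $\bar\iota$ and summing over $j$: if $\bar\iota$ is surjective then each $1\otimes\bar\iota$ is surjective, hence so is the displayed map, hence so is $\iota_*$; if $\bar\iota$ is an isomorphism, the functor carries it to an isomorphism and $\iota_*$ is an isomorphism. Suppose finally that $\bar\iota$ is injective with torsion-free cokernel $C$, so that there is a short exact sequence
\begin{equation*}
0\longrightarrow R/(1-F_R)R\ \xrightarrow{\ \bar\iota\ }\ S/(1-F_S)S\longrightarrow C\longrightarrow 0 .
\end{equation*}
Since $G$ is finite, each $\overline{H}_2(G_{i_j},\Z)$ is a finite abelian group, say $\bigoplus_k\Z/m_k\Z$; from $\mathrm{Tor}_1^{\Z}(\Z/m_k\Z,C)=C[m_k]=0$ (as $C$ is torsion-free) we get $\mathrm{Tor}_1^{\Z}\bigl(\overline{H}_2(G_{i_j},\Z),C\bigr)=0$, and the long exact Tor sequence shows that $1\otimes\bar\iota$ is injective for every $j$. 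Hence $\iota_*$ is injective.

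The substance of the proof therefore lies in the naturality in $R$ of $\Theta_{R[G]}$ and of the decomposition of Corollary \ref{cor8}, and this is the step I expect to require the most care. I would establish it by going through the construction: $\Theta_{R[G]}$ is assembled, via Oliver's construction, from the four-term exact sequence generalizing (\ref{eq1.3}) (Sections \ref{s5}--\ref{s6}), and each ingredient — the augmentation ideals, the group logarithm of Section \ref{ss6c}, the logarithmic derivative $\omega$, and the homology groups $H_2(G,R[G_r])$ — is functorial for ring homomorphisms compatible with the Frobenius lifts. Since $\iota$ is such a homomorphism (and $\iota$ intertwines $\Psi$ on $R[G_r]$ with $\Psi$ on $S[G_r]$ because $F_S|_R=F_R$), every arrow in the construction intertwines the two coefficient rings, so $\Theta$ is natural. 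For the identification $\overline{H}_2(G,R[G_r])_\Psi\cong\bigoplus_j\bigl(\overline{H}_2(G_{i_j},\Z)\otimes R/(1-F_R)R\bigr)$ of Appendix B, one checks that the Shapiro/transfer isomorphism $H_2(G,R[G_r])\cong\bigoplus_i H_2(G_i,R)$, the passage to $\Psi$-covariants, and the functor $R\mapsto R/(1-F_R)R$ all respect $\iota$; this is bookkeeping rather than new mathematics, but it has to be carried out for the identifications above to make sense. Once it is in place, the corollary follows from nothing more than right exactness of the tensor product and the vanishing of $\mathrm{Tor}_1^{\Z}$ against a torsion-free abelian group.
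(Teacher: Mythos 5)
Your proof is correct and follows essentially the route the paper intends: Corollary \ref{cor9} is read off from the natural decomposition $\SK_1(R[G])\cong\bigoplus_j\bigl(\overline{H}_2(G_{i_j},\Z)\otimes R/(1-F_R)R\bigr)$ of Theorem \ref{thm7}/Corollary \ref{cor8}, with surjectivity and bijectivity coming from right exactness of tensoring and injectivity from the vanishing of $\mathrm{Tor}_1^{\Z}$ of the finite groups $\overline{H}_2(G_{i_j},\Z)$ against the torsion-free cokernel. Your explicit attention to functoriality in the coefficient ring (compatible with the Frobenius lifts and with $\Psi$) is exactly the point the paper leaves implicit.
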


Let $W$ denote the ring of integers of a finite non-ramified extension of $\Q_{p}$ and let $F$ denote the Frobenius automorphism of $W$.  An explicit formula for $\SK_{1}(W[G])$ was given by R. ~Oliver in his series of papers [O1-4]; see also
[O5]; thus Theorem \ref{thm7} is a generalization of Oliver's result.

We now extend $F$ to the power series ring $W[[t]]$ in
an indeterminate $t$ by setting $F( t) =t^{p}$.
Since $(1-F)  ( tW [[ t]]) =tW[[t]]$ we see using the above corollary that for any finite group $G$
the inclusion $W\subset W[[t]]$ induces an isomorphism
\begin{equation}\label{skw}
\SK_{1}( W[ G] ) \cong \SK_{1}( W[ [ t] ] [ G] ) .
\end{equation}
(see Proposition 5.3 in [Wi]).
We also consider
 \begin{equation*}
W \langle  \langle t^{-1} \rangle  \rangle
=\varprojlim_{n}(W[ t^{-1}]/p^nW[ t^{-1}] ),\ \
W\{\{t\}\}=\varprojlim_{n}(W((t))/p^nW((t)))
\end{equation*}
with similarly extended Frobenius.
In Appendix B, we also show that Corollary \ref{cor9} implies:

\begin{corollary}\label{le11}
With the above notation   the inclusion
$W \langle \langle t^{-1} \rangle  \rangle \subset
W\{\{t\}\}$ induces an isomorphism $\SK_{1} ( W \langle  \langle
t^{-1} \rangle  \rangle [G]) \cong \SK_{1}(
W\{\{t\}\}[ G]) $.
\end{corollary}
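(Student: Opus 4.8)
The plan is to deduce Corollary~\ref{le11} from Corollary~\ref{cor9}. Write $R=W\langle\langle t^{-1}\rangle\rangle$ and $S=W\{\{t\}\}$, each carrying the Frobenius lift determined by $F(t)=t^p$ and the Frobenius of $W$; the restriction of $F_S$ to $R$ is $F_R$, so the two lifts are compatible and the inclusion $R\hookrightarrow S$ is $F$-equivariant. First I would verify that $R$ and $S$ satisfy the conditions of Theorem~\ref{thm7}, namely the Standing Hypotheses together with normality. That these rings satisfy the Standing Hypotheses is recorded in Section~\ref{sIntro}: in particular hypothesis (iv) holds for $R$ by the results of L.~Gruson used in Lemma~2.15 of [CPT2], and for $S$ by Bloch's theorem combined with the approximation argument also used there. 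For normality one notes that $S$ is $p$-adically complete with $S/pS\cong(W/pW)((t))$ a field, so $S$ is a complete discrete valuation ring with uniformizer $p$, hence normal; and $R=\{\sum_{n\ge 0}a_nt^{-n}:a_n\in W,\ v_p(a_n)\to\infty\}$ is a regular Noetherian domain, hence normal. By Corollary~\ref{cor9} it then suffices to prove that $R\hookrightarrow S$ induces an isomorphism $R/(1-F)R\xrightarrow{\ \sim\ }S/(1-F)S$.

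The heart of the matter is a direct sum decomposition. From the descriptions $S=\{\sum_{k\in\Z}a_kt^k:a_k\in W,\ v_p(a_k)\to\infty\ \text{as}\ k\to-\infty\}$ and $R=\{f\in S:f\ \text{has no terms of positive degree in}\ t\}$, I would split each $f=\sum_{k\in\Z}a_kt^k\in S$ as $f=f_-+f_+$ with $f_-=\sum_{k\le 0}a_kt^k$ and $f_+=\sum_{k\ge 1}a_kt^k$. The convergence condition defining $S$ is exactly the condition that $f_-$ belong to $R$, while $f_+$ is an arbitrary element of $tW[[t]]\subset S$; and $R\cap tW[[t]]=0$ since members of $R$ carry no positive powers of $t$. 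Hence
\[
S=R\oplus tW[[t]]\qquad\text{as }\Z_p\text{-modules,}
\]
and both summands are stable under $F$, because $F(t)=t^p$ multiplies $t$-degrees by $p$ (preserving their sign) and $F$ is an isometry on coefficients. Therefore $1-F$ respects the decomposition, so
\[
S/(1-F)S\ \cong\ R/(1-F)R\ \oplus\ tW[[t]]/(1-F)\bigl(tW[[t]]\bigr),
\]
and under this identification the inclusion $R\hookrightarrow S$ induces the inclusion of the first summand.

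It remains to recall the identity $(1-F)\bigl(tW[[t]]\bigr)=tW[[t]]$ already used in deriving (\ref{skw}) (see Proposition~5.3 in [Wi]): given $\sum_{k\ge 1}b_kt^k$, the relations $a_k=b_k+F(a_{k/p})$, with $a_{k/p}$ read as $0$ when $p\nmid k$, are solved by a recursion on the number of factors of $p$ in $k$, which terminates after finitely many steps, and no convergence constraint intervenes since $W[[t]]$ imposes none on coefficients. Hence $tW[[t]]/(1-F)\bigl(tW[[t]]\bigr)=0$, the map $R/(1-F)R\to S/(1-F)S$ is an isomorphism, and Corollary~\ref{cor9} yields $\SK_1(W\langle\langle t^{-1}\rangle\rangle[G])\cong\SK_1(W\{\{t\}\}[G])$, as claimed.

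I expect the only delicate points to be the verification of the input hypotheses for these two coefficient rings — in particular the normality of $W\{\{t\}\}$ (which I read off from its being a complete discrete valuation ring) and the vanishing of $\SK_1\bigl(W\langle\langle t^{-1}\rangle\rangle\otimes_{\Z_p}W'\bigr)$ for $W'$ the valuation ring of a finite unramified extension of $\Q_p$ (which relies on Gruson's results) — together with the bookkeeping needed to check that the convergence conditions defining $R$ and $S$ are matched precisely by the splitting according to $t$-degree. Granting those ingredients, the argument is formal and I anticipate no further obstacle.
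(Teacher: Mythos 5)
Your argument is correct and is essentially the paper's own proof: the paper also decomposes $W\{\{t\}\}=W\langle\langle t^{-1}\rangle\rangle + tW[[t]]$, uses $(1-F)\bigl(tW[[t]]\bigr)=tW[[t]]$ to identify $R/(1-F)R$ with $S/(1-F)S$, and then invokes Corollary \ref{cor9}. Your extra verifications (compatibility of the Frobenius lifts, normality, and the Standing Hypotheses for the two coefficient rings) are exactly the points the paper records in the Introduction, so nothing is missing and nothing is genuinely different.
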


\begin{corollary}\label{le12}
With the above notation  the inclusion $W[ [ t]] \rightarrow W\{\{t\}\}$
induces an injection $\SK_{1}( W[[t]][ G]) \hookrightarrow \SK_{1}( W\{\{t\}\}[G])$.
\end{corollary}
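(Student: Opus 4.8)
\noindent\emph{Proof strategy for Corollary~\ref{le12}.} The plan is to apply Corollary~\ref{cor9} to the pair of rings $R = W[[t]] \subset S = W\{\{t\}\}$. First one records that both rings satisfy the hypotheses of Theorem~\ref{thm7}: each is a normal $p$-adically complete Noetherian integral domain of characteristic zero satisfying the Standing Hypotheses (as in the discussion in the introduction, and as already used in the passage preceding \eqref{skw}), and the Frobenius lifts — determined by $F(t)=t^p$ together with the Frobenius of $W$ — are compatible. By the final assertion of Corollary~\ref{cor9}, it then suffices to show that the natural map
\[
W[[t]]/(1-F)W[[t]] \;\longrightarrow\; W\{\{t\}\}/(1-F)W\{\{t\}\}
\]
is injective with torsion-free cokernel.

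The key observation is that $F$ multiplies the exponent of $t$ by $p$ (and acts through the Frobenius on coefficients in $W$), hence respects the direct sum decomposition of $\Z_p$-modules
\[
W\{\{t\}\} \;=\; \bigl(t^{-1}W\langle\langle t^{-1}\rangle\rangle\bigr)\ \oplus\ W[[t]]
\]
of $W\{\{t\}\}$ into its ``principal part'' and its ``power series part.'' Consequently $1-F$ respects this decomposition, so that
\[
W\{\{t\}\}/(1-F)W\{\{t\}\} \;\cong\; \bigl(t^{-1}W\langle\langle t^{-1}\rangle\rangle\bigr)\big/(1-F)\ \oplus\ W[[t]]/(1-F)W[[t]],
\]
under which the map in question is the inclusion of the second summand — in particular it is split injective, with cokernel $C = \bigl(t^{-1}W\langle\langle t^{-1}\rangle\rangle\bigr)/(1-F)\bigl(t^{-1}W\langle\langle t^{-1}\rangle\rangle\bigr)$. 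To finish I would show $C$ is torsion-free. Since $1-F$ is injective on $t^{-1}W\langle\langle t^{-1}\rangle\rangle$ (compare the terms of lowest order in $t^{-1}$), it suffices to check that $px \in (1-F)\bigl(t^{-1}W\langle\langle t^{-1}\rangle\rangle\bigr)$ forces $x \in (1-F)\bigl(t^{-1}W\langle\langle t^{-1}\rangle\rangle\bigr)$: writing $px = (1-F)g$ gives $g \equiv F(g)\pmod p$, and unwinding this congruence coefficient by coefficient — tracking the exact power of $p$ dividing each exponent of $t^{-1}$ — forces every coefficient of $g$ into $pW$, so $g = pg'$ and $x = (1-F)g'$.

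Modulo Corollary~\ref{cor9} the argument is essentially formal, and I do not expect a serious obstacle; the one point I would verify with care is that the decomposition $W\{\{t\}\} = t^{-1}W\langle\langle t^{-1}\rangle\rangle \oplus W[[t]]$ is genuinely $F$-stable at the level of the inverse limit defining $W\{\{t\}\}$, together with the accompanying asymmetry that $1-F$ is \emph{bijective} on $tW[[t]]$ (surjectivity there needing only $t$-adic completeness, with no constraint on the size of coefficients) whereas on the Tate algebra $t^{-1}W\langle\langle t^{-1}\rangle\rangle$ it is merely injective, which is precisely the reason $C$ need not vanish. (Alternatively one could bypass the torsion-freeness check entirely: by the naturality of $\Theta$ in Theorem~\ref{thm7} and the description in Corollary~\ref{cor8}, a split injection $R/(1-F)R \hookrightarrow S/(1-F)S$ remains injective after tensoring term by term with the finite groups $\overline{H}_2(G_{i_j},\Z)$.)
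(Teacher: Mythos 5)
Your argument is correct and follows the paper's skeleton up to the last step: like the paper, you invoke the final assertion of Corollary~\ref{cor9} and deduce injectivity of $W[[t]]/(1-F)W[[t]]\to W\{\{t\}\}/(1-F)W\{\{t\}\}$ from the $F$-stable decomposition $W\{\{t\}\}=t^{-1}W\langle\langle t^{-1}\rangle\rangle\oplus W[[t]]$, so that the map is a split injection with cokernel $C=t^{-1}W\langle\langle t^{-1}\rangle\rangle/(1-F)t^{-1}W\langle\langle t^{-1}\rangle\rangle$. Where you genuinely diverge is the verification that $C$ is torsion free: the paper reduces modulo $p^{m}$, shows $t^{-1}W_{m}[t^{-1}]=\oplus_{p\nmid k>0}W_{m}t^{-k}\oplus(1-F)t^{-1}W_{m}[t^{-1}]$, and applies the Mittag--Leffler condition twice to identify $C\cong\varprojlim_{m}\oplus_{p\nmid k>0}W_{m}t^{-k}$, which is visibly torsion free; you instead argue directly in characteristic zero, showing that $px=(1-F)g$ forces every coefficient of $g$ into $pW$ by induction on the $p$-adic valuation of the exponent of $t^{-1}$, whence $g=pg'$ and $x=(1-F)g'$. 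Both are sound: your route is more elementary (no inverse-limit bookkeeping), and since $C$ is a $\Z_{p}$-module it suffices to exclude $p$-torsion as you do, while the paper's computation buys an explicit description of the cokernel. Two small remarks: the injectivity of $1-F$ on $t^{-1}W\langle\langle t^{-1}\rangle\rangle$ is not actually needed (at the final step you only use that this module has no $p$-torsion), and your alternative ending via Corollary~\ref{cor8} --- a split injection of coefficient modules stays injective after tensoring with the finite groups $\overline{H}_{2}(G_{i_{j}},\Z)$ --- is also legitimate, being in effect the mechanism underlying the torsion-free-cokernel hypothesis in Corollary~\ref{cor9}.
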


These results play an important role in the proof of the adelic Riemann-Roch theorem of [CPT2].
\smallskip

{\sl Acknowledgement:} The authors would like to thank Otmar Venjakob for useful discussions.

\section{General results for $\Kr_{1}$}\label{s2}
\setcounter{equation}{0}

Let $S$ denote an arbitrary unitary ring. Throughout this section $I$ will
denote a $2$-sided ideal of $S,$ which is contained in the Jacobson radical of
$S$; let $\overline{S}=S/I$. Recall that if $v\in S^{\times }$, $j\in I$,
 $s$, $t\in S$ then $v+sjt\in S^{\times }$. Let ${\rm M}_{n}( S) $
 denote the ring of $n\times n$ matrices with entries in $S$ and let ${\rm M}_{n}( I)$
 denote the $2$-sided ideal of ${\rm M}_{n}( S)$
of matrices with entries in $I$. We set ${\rm M}( I) =\varinjlim_{n}{\rm M}_{n}( I) $, and we write
$1+{\rm M}( I) =\varinjlim_{n}(1+{\rm M}_{n}( I) )$. Note that for a matrix $x\in 1+{\rm M}_{n}(
I) $ since all its diagonal entries are units, by left and right
multiplication by elementary matrices, we can bring $x$ into diagonal form with
unit entries; thus $x$ is invertible and so
\begin{equation*}
1+{\rm M}( I) =\GL( S, I) \overset{\mathrm{defn}}{=}\ker
( \GL( S) \rightarrow \GL( \overline{S})) .
\end{equation*}%
Note that $\GL( S)  $ maps onto $\GL( \overline{S})$,
since given $\overline{x}\in \GL_{n}( \overline{S}) $ with inverse
$\overline{y}$, then for lifts $x,y\in {\rm M}_{n}( S) $ we have $xy\in 1+{\rm M}_{n}( I) $ and so $x $
is invertible.

\begin{lemma}\label{le13}
Suppose we are given two rings $A$, $B$ and a surjection $q:A\rightarrow B$, with the
property that $I={\rm Ker}( q:A\rightarrow B)$ is contained in the
Jacobson radical of $A$. Then the map $q_{\ast }:\Kr_{1}( A)
\rightarrow \Kr_{1}( B) $ is surjective and $\ker q_{\ast }\ $is
generated by the image of $\GL( A,I)$.
\end{lemma}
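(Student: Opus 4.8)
The plan is to reduce everything to the discussion immediately preceding the lemma, which already establishes the two facts we need in the ``one-sided ideal sitting in the Jacobson radical'' setting. First I would recall that since $I = \Ker(q)$ lies in the Jacobson radical of $A$, every element of $1 + \mathrm{M}_n(I)$ is invertible (diagonalize by elementary row and column operations, as in the paragraph above), so that $\GL(A, I) = 1 + \mathrm{M}(I)$ and, crucially, $\GL(A) \twoheadrightarrow \GL(B)$: given $\bar x \in \GL_n(B)$ with inverse $\bar y$, choose matrix lifts $x, y \in \mathrm{M}_n(A)$; then $xy, yx \in 1 + \mathrm{M}_n(I)$ are invertible, hence $x$ is invertible in $\mathrm{M}_n(A)$ and lifts $\bar x$. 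Passing to the colimit over $n$ and then to quotients by elementary subgroups shows $q_*\colon \Kr_1(A) \to \Kr_1(B)$ is surjective, since $\Er(A)$ surjects onto $\Er(B)$ and $\GL(A)$ surjects onto $\GL(B)$.

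For the statement about the kernel, I would argue as follows. Let $N \subseteq \Kr_1(A)$ be the (normal) subgroup generated by the image of $\GL(A, I) = 1 + \mathrm{M}(I)$; clearly $N \subseteq \ker q_*$ because $\GL(A,I)$ maps to $1$ in $\GL(B)$. Conversely, suppose $[\alpha] \in \ker q_*$ for some $\alpha \in \GL_n(A)$; then $q(\alpha) \in \Er(B)$, i.e.\ $q(\alpha) = \prod_k e_k$ for finitely many elementary matrices $e_k \in \Er_m(B)$ (possibly enlarging $m \geq n$ and viewing $\alpha$ in $\GL_m(A)$). Each $e_k = 1 + \bar b_k \varepsilon_{ij}$ lifts to an elementary matrix $\tilde e_k = 1 + b_k \varepsilon_{ij} \in \Er_m(A)$ with $q(b_k) = \bar b_k$. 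Set $\beta = \prod_k \tilde e_k \in \Er_m(A)$. Then $q(\alpha \beta^{-1}) = q(\alpha) \prod_k e_k^{-1} = 1$, so $\alpha \beta^{-1} \in \GL_m(A, I) = 1 + \mathrm{M}_m(I)$. Hence in $\Kr_1(A)$ we have $[\alpha] = [\alpha\beta^{-1}] \in N$, since $\beta \in \Er(A)$ dies in $\Kr_1(A)$. This proves $\ker q_* = N$.

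I expect the main point requiring care — though it is routine — to be the bookkeeping with matrix sizes: one must consistently stabilize, replacing $\GL_n$ by $\GL_m$ for $m$ large enough to contain all the elementary matrices appearing in a chosen factorization of $q(\alpha)$ over $B$, and to observe that this is harmless because $\Kr_1$ is defined via the colimit $\GL(A) = \varinjlim_n \GL_n(A)$. There is no genuine obstacle here: the surjectivity $\GL(A) \to \GL(B)$ and the identification $\GL(A,I) = 1 + \mathrm{M}(I)$ do all the work, and the only input beyond the preceding paragraph of the text is the elementary observation that elementary matrices over $B$ lift to elementary matrices over $A$ entry by entry.
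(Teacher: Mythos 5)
Your proposal is correct and follows essentially the same route as the paper: the surjectivity of $q_*$ is deduced, exactly as in the text, from the surjectivity of $\GL(A)\to\GL(B)$ when $I$ lies in the Jacobson radical, and for the kernel the paper simply cites the standard result (Theorem 2.5.3 in [R]), whose proof is precisely your argument of lifting an elementary factorization of $q(\alpha)$ to $\Er(A)$ and observing $\alpha\beta^{-1}\in\GL(A,I)$. Nothing further is needed.
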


\begin{proof} The first statement follows from the above fact that the induced map $\GL( A) \rightarrow \GL( B) $ is surjective; the second
part is standard - see for instance Theorem 2.5.3 page 93 in [R].
\end{proof}

We recall the long exact sequence of K-theory (see for instance page 54 of
[M])
\begin{equation}\label{eq2.5}
\Kr_{2}( S) \rightarrow \Kr_{2}( \overline{S}) \rightarrow
\Kr_{1}( S, I) \rightarrow \Kr_{1}( S) \overset{q}{%
\rightarrow }\Kr_{1}( \overline{S}) \rightarrow 1
\end{equation}%
with $q$ surjective since $\GL( S)  $ maps onto $\GL(
\overline{S})$. We let ${\rm E}( S, I) $ denote the smallest
normal subgroup of the group of elementary matrices ${\rm E}( S) $
containing the elementary matrices $e_{ij}( a) $ with $a\in
I$. Then we know (see for instance page 93 in [R])%
\begin{equation*}
\Kr_{1}( S, I) =\frac{\GL( S, I) }{{\rm E}( S,I) }.
\end{equation*}%
We shall write $\Kr_{1}^{\prime }( S, I) $ for the image of $\Kr_{1}( S, I) $ in $\Kr_{1}( S)$. Note that $\Kr_{2}(
S) \rightarrow \Kr_{2}( \overline{S}) $ is surjective, if and
only if, the map $\Kr_{1}( S, I) \rightarrow \Kr_{1}( S) $
is injective; and this is equivalent to the equality%
\begin{equation}\label{eq2.6}
\GL( S, I) \cap {\rm E}( S) ={\rm E}( S, I).
\end{equation}%
Recall also from [R] loc. cit. that $\Er( S, I) $ is a normal subgroup of $\GL( S) $ and
\begin{equation}
[ \GL( S) , \Er( S, I) ] =[\Er(
S) , \Er( S, I) ] =\Er( S, I) .
\end{equation}
We shall write $\kappa _{S}$ for the map from $S^{\times }$ to $\Kr_{1}(S) $ given by mapping $u\in S^{\times}$ to the diagonal matrix which
is $u$ in the first position and 1 in all other diagonal entries and then
composing with the map $\GL( S) \rightarrow \Kr_{1}( S) $.

From Lemma 2.2 in [CPT1] we quote:

\begin{lemma}\label{le14}
(a)\ \ $[ \GL( S) , 1+{\rm M}( I) ] ={\rm E}(
S, I)$.

(b)\ Given $g\in \GL( S, I)$, there exist $e_{1}$, $e_{2}\in \Er(
S, I)$, and $x\in 1+I$ such that $g=e_{1}\delta ( x) e_{2}$
where $\delta ( x) $ is the diagonal matrix with leading term $x$
and with all non-leading diagonal terms equal to 1; hence in particular
\begin{equation*}
\GL( S, I) \subset \left\langle \Er( S, I) , (
1+I) \right\rangle
\end{equation*}%
where we view $1+I\subset S^{\times }\subset \GL( S, I) $ as
previously. Thus, if we write $\kappa _{I}$ for the natural map from $\GL( S, I)$ to $\Kr_{1}( S, I)$, then $\kappa _{I}(
1+I) =\Kr_{1}( S,I)$.
\end{lemma}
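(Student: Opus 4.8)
The plan is to prove part (b) first, by relative Gaussian elimination, and then deduce part (a) from it together with the facts already recalled in the text — that $\Er(S,I)$ is normal in $\GL(S)$ and that $[\GL(S),\Er(S,I)]=\Er(S,I)$ — and the ordinary Whitehead lemma. For (b) I would start from the identification $\GL_n(S,I)=1+{\rm M}_n(I)$ established just above the lemma: the diagonal entries of $g$ are then units lying in $1+I$ and the off-diagonal entries lie in $I$. Hence each off-diagonal entry can be cleared by a row or column operation of the form $g\mapsto e_{ij}(\lambda)g$ (or $g\mapsto g\,e_{ij}(\lambda)$) with $\lambda\in I$ chosen to kill it; since $\lambda\in I$ one has $e_{ij}(\lambda)\in\Er_n(S,I)$, and since $e_{ij}(\lambda)\in 1+{\rm M}_n(I)$ the product stays in $1+{\rm M}_n(I)$, so the diagonal entries remain units in $1+I$ throughout. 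Processing the entries in a fixed order (below the diagonal left to right, then above the diagonal right to left) yields $g=e'\,\mathrm{diag}(u_1,\dots,u_n)\,e''$ with $e',e''\in\Er_n(S,I)$ and all $u_k\in 1+I$. It then remains to merge $\mathrm{diag}(u_1,\dots,u_n)$ into a single leading entry $x=u_1\cdots u_n$, which I would do by repeatedly applying the identity $\mathrm{diag}(u,v)=\mathrm{diag}(uv,1)\cdot\mathrm{diag}(v^{-1},v)$ together with the claim that $\mathrm{diag}(v^{-1},v)\in\Er_2(S,I)$ whenever $v\in 1+I$; this gives $\mathrm{diag}(u_1,\dots,u_n)\equiv\delta(u_1\cdots u_n)$ modulo $\Er_n(S,I)$, hence $g=e_1\delta(x)e_2$. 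The final assertion $\kappa_I(1+I)=\Kr_1(S,I)$ is then immediate, since $\GL(S,I)\subseteq\langle\Er(S,I),1+I\rangle$ and $\Er(S,I)$ maps to $1$ in $\Kr_1(S,I)$.

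The point that is not purely formal — and which I expect to be the main obstacle — is the claim $\mathrm{diag}(v^{-1},v)\in\Er_2(S,I)$ for $v\in 1+I$, a relative form of the Whitehead lemma. I would prove it by expanding the standard identity $\mathrm{diag}(v,v^{-1})=w(v)w(-1)$ with $w(t)=e_{12}(t)e_{21}(-t^{-1})e_{12}(t)$, cancelling the adjacent pair $e_{12}(v)e_{12}(-1)=e_{12}(v-1)$, and then splitting each remaining elementary factor into an ``integral'' part and a part with entry in $I$ via $e_{12}(v)=e_{12}(1)e_{12}(v-1)$ and $e_{21}(-v^{-1})=e_{21}(-1)e_{21}(1-v^{-1})$ — note that $v-1$ and $1-v^{-1}$ lie in $I$. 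Rearranging so that the integral factors are moved to the left by repeated conjugation, the integral factors telescope to the identity, while the remaining factors become conjugates of relative elementary matrices by elements of $\Er_2(S)$, and hence lie in $\Er_2(S,I)$ by the normality of $\Er(S,I)$ in $\GL(S)$. (Alternatively one may simply invoke the relative $K$-theory developed in [R, Theorem 2.5.3].)

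For part (a), the inclusion $\Er(S,I)\subseteq[\GL(S),1+{\rm M}(I)]$ is immediate from $\Er(S,I)=[\GL(S),\Er(S,I)]\subseteq[\GL(S),\GL(S,I)]$, since $\Er(S,I)\subseteq\GL(S,I)=1+{\rm M}(I)$. For the reverse inclusion, given $\alpha\in\GL_n(S)$ and $\beta\in\GL_n(S,I)$, I would stabilize to size $2n$ and use the ordinary Whitehead lemma, which gives $\varepsilon:=\mathrm{diag}(\alpha,\alpha^{-1})\in\Er_{2n}(S)$ and the identity $\mathrm{diag}([\alpha,\beta],1)=\varepsilon\,\mathrm{diag}(\beta,1)\,\varepsilon^{-1}\,\mathrm{diag}(\beta,1)^{-1}=[\varepsilon,\mathrm{diag}(\beta,1)]$. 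Applying part (b) to write $\mathrm{diag}(\beta,1)=f_1\delta(z)f_2$ with $f_i\in\Er(S,I)$ and $z\in 1+I$, and using that $\Er(S,I)$ is normal in $\GL(S)$, one sees that $[\varepsilon,\mathrm{diag}(\beta,1)]$ is congruent modulo $\Er(S,I)$ to $[\varepsilon,\delta(z)]$; so it suffices to show $[\varepsilon,\delta(z)]\in\Er(S,I)$ for all $\varepsilon\in\Er(S)$ and $z\in 1+I$. Writing $\varepsilon$ as a product of generators and using the commutator identity $[\varepsilon_1\varepsilon_2,y]=\varepsilon_1[\varepsilon_2,y]\varepsilon_1^{-1}\cdot[\varepsilon_1,y]$ together with normality, this reduces to the single-generator case, where a short direct computation (using $D e_{ij}(s)D^{-1}=e_{ij}(d_i s d_j^{-1})$ for diagonal $D=\mathrm{diag}(d_1,\dots)$) gives $[e_{ij}(s),\delta(z)]=1$ when $i,j\neq 1$ and $[e_{ij}(s),\delta(z)]=e_{ij}(c)$ with $c\in I$ when $i=1$ or $j=1$. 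In all cases the commutator lies in $\Er(S,I)$, which completes the deduction; everything outside the relative Whitehead lemma of the second paragraph is bookkeeping with Steinberg-type relations and the normality of $\Er(S,I)$.
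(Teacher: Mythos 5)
The paper itself gives no argument for this statement: it is imported verbatim from [CPT1, Lemma 2.2] (with [R, Theorem 2.5.3] cited nearby for the surrounding facts), so there is no in-text proof to measure you against. Your reconstruction is correct and is essentially the standard argument one finds in [CPT1]/[O5]/[R]: since $I$ lies in the Jacobson radical, every $g\in 1+{\rm M}_n(I)$ has diagonal entries in $1+I$, so relative Gaussian elimination with $e_{ij}(\lambda)$, $\lambda\in I$, reduces $g$ to ${\rm diag}(u_1,\dots,u_n)$ with $u_k\in 1+I$ while staying inside $1+{\rm M}_n(I)$; the diagonal is then collapsed to $\delta(u_1\cdots u_n)$ via the relative Whitehead lemma, whose proof by splitting $w(v)w(-1)$ into integral and $I$-level elementary factors and absorbing conjugates is sound; and part (a) follows from part (b), the stable identity $\mathrm{diag}([\alpha,\beta],1)=[\,\mathrm{diag}(\alpha,\alpha^{-1}),\mathrm{diag}(\beta,1)\,]$, the recalled equality $[\GL(S),\Er(S,I)]=\Er(S,I)$, and the generator-by-generator computation of $[e_{ij}(s),\delta(z)]$, all of which check out. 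The only nitpick is notational: when you say the conjugated factors lie in $\Er_2(S,I)$ "by normality of $\Er(S,I)$ in $\GL(S)$", the normality quoted in the paper is for the stable group, and what your argument literally produces is membership of $\mathrm{diag}(v^{-1},v)$ in the normal closure of $\{e_{ij}(a):a\in I\}$ under conjugation by $\Er_2(S)\subset\Er(S)$ — i.e.\ in $\Er(S,I)$ — which is exactly what the lemma requires, so nothing is lost; just phrase the conclusion at the stable level (or define $\Er_2(S,I)$ as that normal closure) to avoid suggesting an unstable normality statement you have not proved.
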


From 45.12 on page 142 of [CR2] we know:

\begin{proposition}\label{pro15}
If $S$ is a commutative semi-local ring, then $\SK_{1}( S)
=\{1\}$.
\end{proposition}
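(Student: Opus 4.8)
The plan is to deduce the proposition from the statement that every matrix over $S$ of determinant $1$ is a product of elementary matrices, using the fact that a commutative semi-local ring has Bass stable range $1$. Recall that for a commutative ring $S$ the determinant induces a homomorphism $\det\colon\Kr_1(S)\to S^\times$, and $\SK_1(S)=\Ker(\det)$ is the image of $\varinjlim_n\mathrm{SL}_n(S)$ in $\Kr_1(S)=\GL(S)/\Er(S)$. Hence it is enough to show that $\mathrm{SL}_n(S)=\Er_n(S)$ for every $n\ge 1$; passing to the direct limit then gives $\mathrm{SL}(S)=\Er(S)$ and so $\SK_1(S)=\{1\}$. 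The case $n=1$ is trivial, since $\mathrm{SL}_1(S)=\{1\}$.

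Next I would check that $S$ has stable range $1$: for $a,b\in S$ with $aS+bS=S$ there is $c\in S$ with $a+bc\in S^\times$. Let $J$ be the Jacobson radical of $S$; since $S$ is commutative and semi-local, $\overline S=S/J$ is a finite product of fields $k_1\times\cdots\times k_r$, and such a product visibly has stable range $1$ (coordinatewise: if the first entry is nonzero take $c=0$, otherwise the second entry is a unit and one solves $a+bc=1$). Lifting a solution $\overline c$ back to $c\in S$, the element $a+bc$ is a unit modulo $J$, hence a unit because $J$ is contained in the Jacobson radical; and the unimodularity condition $aS+bS=S$ passes to and from $\overline S$ without trouble.

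From stable range $1$ I would invoke the standard argument of Bass to conclude that for $n\ge 2$ the group $\Er_n(S)$ acts transitively on unimodular rows in $S^n$. Then, given $A\in\mathrm{SL}_n(S)$ with $n\ge 2$, its first row is unimodular (cofactor expansion along it yields the unit $1$), so there is $e\in\Er_n(S)$ carrying the first row of $Ae$ to $(1,0,\dots,0)$; left multiplication by suitable elementary matrices $E_{i1}(\ast)$ then clears the remainder of the first column, producing a block matrix $\bigl(\begin{smallmatrix}1&0\\0&B\end{smallmatrix}\bigr)$ with $B\in\mathrm{SL}_{n-1}(S)$, the determinant being preserved throughout. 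Induction on $n$ gives $B\in\Er_{n-1}(S)\subseteq\Er_n(S)$, hence $A\in\Er_n(S)$, and therefore $\mathrm{SL}_n(S)=\Er_n(S)$ for all $n$, as required.

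The genuinely nontrivial input, and the step I expect to be the main obstacle to write out cleanly, is the implication "stable range $1$ $\Rightarrow$ $\Er_n$ acts transitively on unimodular rows for $n\ge 2$": this is the core of Bass's stable-algebra machinery and would be quoted rather than reproved. The reduction of the semi-local hypothesis to a finite product of fields and the inductive diagonalization are routine once that tool is available.
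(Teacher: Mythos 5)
Your argument is correct. Note, though, that the paper does not prove this proposition at all: it simply quotes it from [CR2], 45.12 (p.~142), so what you have written is essentially the standard proof lying behind that citation rather than an alternative to anything in the text. Your chain of reductions is sound: for commutative $S$ the determinant splits off $S^{\times }$ from $\Kr_{1}(S)$, so $\SK_{1}(S)$ is the image of $\varinjlim_n \mathrm{SL}_n(S)$ and it suffices to prove $\mathrm{SL}_n(S)=\Er_n(S)$; a commutative semi-local ring has stable range $1$, since $S/J$ is a finite product of fields (your coordinatewise verification is fine, and lifting works because elements that are units modulo the Jacobson radical are units); and stable range $1$ gives transitivity of $\Er_n(S)$ on unimodular rows for $n\ge 2$, which is the one genuinely nontrivial input and is exactly the piece of Bass's stable-algebra machinery one would quote (it is also in [CR2], \S 41, alongside the Bass stable range theorem the paper itself invokes later). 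The peeling-off induction via the block matrix $\bigl(\begin{smallmatrix}1&0\\0&B\end{smallmatrix}\bigr)$ is routine and correctly handled, including the base case $\mathrm{SL}_1(S)=\{1\}$. In short: your proof is a legitimate self-contained substitute for the citation; the paper's approach buys brevity, yours buys transparency about where semi-locality is actually used (namely, only through stable range $1$, so the statement holds more generally for any commutative ring of stable range $1$).
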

\subsection{Proofs of Theorems \ref{thm2}, \ref{thm3} and \ref{thm5}.}

In this subsection we assume only that $R$ is $p$-adically complete. We let
${\rm SL}(R[G])$ denote the subgroup of elements in $\GL(R[G])$ which are trivial under $\rm Det$; more
generally for a two-sided ideal $I$ of $R[G] $ we define%
\begin{equation*}
{\rm SL}(R[ G] , I) ={\rm SL}( R[ G] ) \cap
\GL( R[ G] , I) .
\end{equation*}
We fix an ordering of the elements of the group; $G=\{g_{1},...,g_{q}\}$.
 We start this subsection by showing:

\begin{theorem}\label{thm16}
$\Er( R[ G] ) $ is $p$-adically closed in $\GL( R[ G])$ and for $p^{k}>2$
\begin{equation}
{\rm SL}( R[ G] , p^{k}) ={\rm E}( R[ G], p^{k}) .
\end{equation}
\end{theorem}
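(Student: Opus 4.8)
The plan is to prove the two assertions in tandem, since the second is really a refinement of the first once one has the right handle on convergence. First I would establish that $\Er(R[G])$ is $p$-adically closed. The natural approach is to use Lemma \ref{le14}(a): modulo $p^n$ the group $\Er(R_n[G],\,\cdot\,)$ and its relation to commutators is well understood, and since $R[G]$ is $p$-adically complete it suffices to show that a Cauchy sequence $\{e^{(m)}\}$ of elements of $\Er(R[G])$ has its limit in $\Er(R[G])$. The key point is that convergence of a sequence of matrices forces, for $m$ large, $e^{(m)}(e^{(m')})^{-1}\in 1+{\rm M}(p^k R[G])$ with $k\to\infty$, and by Lemma \ref{le14}(a) such elements lie in $\Er(R[G],p^k R[G])\subseteq \Er(R[G])$; so it is enough to know that for $k$ large (say $p^k>2$) every element of $1+{\rm M}(p^kR[G])$ is itself a product of a bounded — in fact, explicitly controlled — number of elementary matrices whose entries converge, and then assemble the limit inside $\Er(R[G])$. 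This is where the hypothesis $p^k>2$ enters: it guarantees that the exponential/Mennicke-type identities needed to write $1+p^k a$ as a finite product of elementaries with $p$-adically small entries actually converge, allowing a telescoping construction of the limiting elementary matrix.

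For the equality ${\rm SL}(R[G],p^k)={\rm E}(R[G],p^k)$, the inclusion $\supseteq$ is immediate since $\rm Det$ kills elementary matrices and ${\rm E}(R[G],p^k)\subseteq \GL(R[G],p^k)$. For the reverse inclusion, take $g\in {\rm SL}(R[G],p^k)$. By Lemma \ref{le14}(b) write $g=e_1\,\delta(x)\,e_2$ with $e_1,e_2\in \Er(R[G],p^k)$ and $x\in 1+p^kR[G]$; applying $\rm Det$ and using $\mathrm{Det}(e_i)=1$ gives $\mathrm{Det}(x)=1$, i.e. $\kappa_{R[G]}(x)\in\SK_1(R[G])$. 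So it suffices to show that the image of $1+p^kR[G]$ in $\Kr_1$ lands in the image of ${\rm E}(R[G],p^k)$, equivalently that $x\in (1+p^kR[G])\cap{\rm SL}(R[G])$ already lies in $\Er(R[G])$. Here I would reduce to the commutative situation: the relevant obstruction is $\SK_1$ of a commutative $p$-adically complete ring, and $1+p^kR[G]$ maps to $1+p^k(R[G]/[R[G],R[G]])$, a pro-$p$ abelian group; combined with the first part ($\Er$ is $p$-adically closed) one can approximate, reducing mod $p^n$ for each $n$ and using that over the Artinian rings $R_n[G]$ the group $\SK_1$ of the relevant commutative subquotients vanishes by Proposition \ref{pro15}, then passing to the limit.

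The main obstacle, I expect, is the passage to the limit in the second part: one must produce a \emph{single} element of $\Er(R[G])$ (not merely of each $\Er(R_n[G])$) expressing $x$, and this requires that the elementary factors produced at level $n$ can be chosen compatibly or at least Cauchy. This is exactly what the first assertion — $\Er(R[G])$ being $p$-adically closed — is designed to supply: it lets us take a $p$-adic limit of the level-$n$ solutions and land back in $\Er(R[G])$. So the logical structure is that the closedness statement is both an independent conclusion and the technical engine for the $\SK$-triviality statement; the condition $p^k>2$ is needed throughout to keep the relevant series (logarithms, or Mennicke symbol identities) convergent and to avoid the low-level anomalies at $p=2$, $k=1$. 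I would organize the write-up so that the convergence lemma for $1+{\rm M}(p^kR[G])$ is proved once and then invoked in both halves.
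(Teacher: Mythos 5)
The central gap is that your argument never engages the determinant obstruction, which is the whole content of the statement. You assert, citing Lemma \ref{le14}(a), that elements of $1+{\rm M}(p^kR[G])$ lie in $\Er(R[G],p^kR[G])$, and later that every element of $1+{\rm M}(p^kR[G])$ is a controlled product of elementary matrices once $p^k>2$; this is false, since $1+{\rm M}(p^kR[G])=\GL(R[G],p^k)$ contains for instance the scalar $1+p^k$, whose $\mathrm{Det}$ is nontrivial, and Lemma \ref{le14}(a) only identifies $[\GL(S),1+{\rm M}(I)]$ with $\Er(S,I)$. The hypothesis $\mathrm{Det}=1$ must be used, and that is exactly where the real work lies: the paper's proof shows that every $x\in(1+p^kR[G])\cap\ker(\mathrm{Det})$ equals a product of the \emph{fixed} number $q=|G|$ of commutators $[g_i,1+p^k\mu_i]$, each of which lies in the relative group $\Er(R[G],p^k)$ by Lemma \ref{le14}(a); the $\mu_i$ are built by successive approximation starting from Lemma \ref{le17} and using the commutator congruences of Lemmas \ref{le18} and \ref{le19}, the point of keeping the number of factors fixed being that the entries $\mu_{i,n}$ then converge $p$-adically, so the limit is again an exact finite product of commutators. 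Your substitute for this step --- passing to $R[G]/[R[G],R[G]]$ and invoking Proposition \ref{pro15} --- does not address the problem: abelianizing discards precisely the commutators needed to trivialize $x$, the rings $R_n[G]$ are not commutative, and mod-$p^n$ solutions produce unboundedly many elementary factors with no compatibility as $n$ grows, so ``passing to the limit'' is meaningless without the bounded-commutator expression. Note also that the theorem asserts membership in $\Er(R[G],p^k)$, not merely in $\Er(R[G])$; the two are not interchangeable, the difference being exactly the issue recorded in (\ref{eq2.6}).

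Your plan is also circular as written: you invoke closedness of $\Er(R[G])$ to pass to the limit in the proof of ${\rm SL}(R[G],p^k)=\Er(R[G],p^k)$, while your proposed proof of closedness needs every element of the congruence kernel with trivial determinant to be an explicit convergent product of elementaries, i.e.\ essentially the second statement. The paper runs the logic in the opposite direction: one proves ${\rm SL}_m(R[G],p^k)=\Er_m(R[G],p^k)$ outright by the commutator approximation, and closedness is then a short formal consequence --- given $x\in\overline{\Er_m(R[G])}$, density lets one write $x=eg$ with $e\in\Er_m(R[G])$ and $g\in\GL_m(R[G],p^k)\cap\overline{\Er_m(R[G])}\subset{\rm SL}_m(R[G],p^k)=\Er_m(R[G],p^k)$. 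So the relative equality must come first, and it requires the convergence argument with a bounded number of commutator factors that your proposal does not supply.
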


\begin{proof} If $m\geq 1$, we let $\overline{\Er_{m}( R[ G] ) }
$ denote the $p$-adic closure of $\Er_{m}( R[ G]) $ in $\GL_{m}( R[ G]) $.
We begin by showing that in order
to prove $\Er_{m}( R[ G] ) =\overline{\Er_{m}( R[
G] ) }$ it will suffice to show that for some integer $k$
with $p^{k}>2$ we have ${\rm SL}_{m}( R[ G] , p^{k})
=\Er_{m}( R[ G] , p^{k}) $. To this end we first observe that
we will have  $\Er_{m}( R[ G] ) =\overline{\Er_{m}( R[
G] ) }$ if we can show
\begin{equation*}
\Er_{m}( R[ G] , p^{k})=\GL_{m}( R[ G]
, p^{k}) \cap \overline{\Er_{m}( R[ G]) };
\end{equation*}
this is because if we are given $x\in \overline{\Er_{m}( R[ G]) }$, then, as $\Er_{m}( R[ G]) $ is dense in
$\overline{\Er_{m}( R[ G]) }$, we can write $x=eg$ with $%
e\in \Er_{m}( R[ G] ) $ and $g\in \GL_{m}( R[ G] ,p^{k}) \cap \overline{\Er_{m}( R[ G]) }$.
Since
\begin{equation*}
{\rm SL}_{m}( R[ G] ,p^{k}) \supset \GL_{m}( R[ G] , p^{k}) \cap \overline{\Er_{m}( R[ G]) }
\supset \Er_{m}( R[G] , p^{k}) ,
\end{equation*}%
by the above we now see that in order to show $\Er_{m}( R[ G]) =\overline{\Er_{m}( R[ G]) }$ it will suffice
to show that ${\rm SL}_{m}( R[ G] ,p^{k}) =\Er_{m}( R[G],p^{k})$. 

It is enough to show $%
{\rm SL}_{m}( R[ G] , p^{k}) \subset \Er_{m}( R[ G] , p^{k})$; i.e. given $x\in {\rm SL}_{m}( R[ G]
, p^{k}) $ we want to show we can write $x$ as a finite product of
elements in $\Er_{m}( R[ G] , p^{k}) $. We first note
that as in [CPT1] Lemma 2.2.b given such an $x$ we can find $e$, $e^{\prime
}\in \Er_{m}( R[ G] , p^{k}) $ so that we can write $x=ede^{\prime }$ with $d\in ( 1+p^{k}R[ G]) \cap \ker
( \mathrm{Det})$. The proofs of Proposition 2.4 and Lemma 2.5
in [CPT1] apply under the hypothesis $p^{k}>2$. By the last line in the
proof of Lemma 2.5 in [CPT1] we have:

\begin{lemma}\label{le17}
Suppose that $p^{n}>2$. Given $x\in ( 1+p^{n}R[ G]) \cap \ker ( \mathrm{Det}) $ we can find $\lambda
_{i,n}\in R[ G] $ so that
\begin{equation*}
x\equiv \prod\nolimits_{i=1}^{q}[g_{i},1+p^{n}\lambda _{i,n}]\mod
p^{n+1}R[ G] .
\end{equation*}%
(Here such products, which in the sequel are not generally commutative, are
to be taken in the specified order.)
\end{lemma}

\noindent\textbf{Remark.} The hypotheses on $n$ and $p$ imply that for any $\lambda
\in R[ G] $ we have the congruence%
\begin{equation*}
\log ( 1+p^{n}\lambda ) \equiv p^{n}\lambda \mod
p^{n+1}
\end{equation*}
which is used throughout the proof of Lemma 2.5 loc. cit.. Note that we also
need the commutator congruence for $h\in G$, $\lambda$, $\lambda ^{\prime
}\in R[ G] $%
\begin{equation*}
[ h,1+p^{n}\lambda] [ h,1+p^{n}\lambda ^{\prime }]
\equiv [ h,1+p^{n}( \lambda +\lambda ^{\prime }) ]
\mod  p^{n+1} .
\end{equation*}

\begin{lemma}\label{le18}
For $x,y,z\in R[ G]^{\times}$ we have the standard commutator
relation%
\begin{equation*}
[ z,yx] =[ z,y] [ z,x] ^{y^{-1}}.
\end{equation*}
\end{lemma}

\begin{proof} We note that
\begin{equation*}
[ z,y] [ z,x]
^{y^{-1}}=zyz^{-1}y^{-1}yzxz^{-1}x^{-1}y^{-1}=zyxz^{-1}x^{-1}y^{-1}=[
z,yx] .
\end{equation*}
\end{proof}

\begin{lemma}\label{le19}
For $\lambda$, $\mu \in R[ G] $ and $p^{n}>2$, $m\geq 1$ we have
the congruence
\begin{equation*}
( 1+p^{n}\mu )^{-1}( 1+p^{m}\lambda) (
1+p^{n}\mu ) \equiv ( 1+p^{m}\lambda ) \mod p^{n+m}R[ G]
\end{equation*}
and so
\begin{equation*}
( 1+p^{m}\lambda ) ( 1+p^{n}\mu ) (
1+p^{m}\lambda )^{-1}\equiv ( 1+p^{n}\mu) \mod
p^{n+m}R[ G].
\end{equation*}
\end{lemma}

\begin{proof} This follows at once from the congruence
\begin{equation*}
( 1+p^{m}\lambda ) ( 1+p^{n}\mu ) \equiv 1+p^{n}\mu
+p^{m}\lambda \equiv ( 1+p^{n}\mu ) ( 1+p^{m}\lambda )
\mod p^{n+m}R[ G]
\end{equation*}
and the fact that $1+p^{n}\mu $ and $1+p^{m}\lambda $ are units.
\end{proof}
\medskip

\noindent\textit{Proof of Theorem \ref{thm16}.} We have supposed that $p^{k}>2$.
 Suppose that $x$ belongs to
$( 1+p^{k}R[ G] ) \cap \ker ( \mathrm{Det})$. We shall show that  we can write $x=\prod\nolimits_{i=1}^{q}[ g_{i},1+p^{k}\mu
_{i}] $ with $\mu _{i}\in R[ G]$. By Lemma 2.2.a in [CPT1]
we know that each of these commutators is an element of $\Er( R[ G] , p^{k})$, and so we shall be done.

For $p^{n}\geq p^{k}$ we inductively suppose that we have written
\begin{equation*}
x\equiv \prod\nolimits_{i=1}^{q}[ g_{i},1+p^{k}\mu _{i,n}]
\mod p^{n+1}R[ G] \ \text{with\ }\mu _{i,n}\in R[ G] .
\end{equation*}
(Note that we can use Lemma \ref{le17} to start the induction.) By Lemma \ref{le17} above we
can write $x\cdot [ g_{q},1+p^{k}\mu _{q,n}] ^{-1}\cdots [
g_{1},1+p^{k}\mu _{1,n}] ^{-1}$ as
\begin{equation*}
\prod\nolimits_{i=1}^{q}[ g_{i},1+p^{n+1}\lambda _{i,n}]  \mod p^{n+2}R[ G]
\end{equation*}%
and so now we have written
\begin{equation*}
x\equiv \prod\nolimits_{i=1}^{q}[ g_{i},1+p^{k}\mu _{i,n}]
\prod\nolimits_{j=1}^{q}[ g_{j},1+p^{n+1}\lambda _{j,n}]  \mod p^{n+2} R[ G] .
\end{equation*}
  We then apply Lemma \ref{le19} to note that for all $i,j,$ with $ 1\leq
i,j\leq q$ we have
\begin{equation*}
[ g_{i},1+p^{k}\mu _{i,n}] [ g_{j},1+p^{n+1}\lambda _{j,n}] \equiv [ g_{j},1+p^{n+1}\lambda _{j,n}] [
g_{i},1+p^{k}\mu _{i,n}] \mod p^{n+2}R[ G]
\end{equation*}%
and by Lemma \ref{le18} we see that for all $i$ with $1\leq i\leq q $ we have
\begin{eqnarray*}
[ g_{i}, ( 1+p^{k}\mu _{i,n}) ( 1+p^{n+1}\lambda
_{j,n}) ] &=&[ g_{i}, ( 1+p^{k}\mu _{i,n})
][g_{i}, ( 1+p^{n+1}\lambda _{j,n}) ] ^{( 1+p^{k}\mu
_{i,n}) ^{-1}} \\
&\equiv &[ g_{i},1+p^{k}\mu _{i,n}][ g_{i},1+p^{n+1}\lambda
_{i,n}] \mod p^{n+2}R[ G] .
\end{eqnarray*}%
We then define
\begin{equation*}
1+p^{k}\mu _{i,n+1}=( 1+p^{k}\mu _{i,n})( 1+p^{n+1}\lambda
_{j,n})
\end{equation*}%
so that
\begin{equation*}
x\equiv \prod\nolimits_{i=1}^{q}[ g_{i},1+p^{k}\mu _{i,n+1}] \mod p^{n+2}R[ G]
\end{equation*}%
which completes the inductive step. To conclude we note that for each $i$
the sequence $\{\mu _{i,n}\}_{n}$ is a convergent sequence.
\end{proof}
\medskip

\noindent\textit{Proof of Theorem \ref{thm3}.} By the discussion prior to equation (\ref{eq2.6}) we know
that the reduction map $\Kr_{2}( R[ G] ) \rightarrow
\Kr_{2}( R_{n}[ G] ) $ is surjective if
\begin{equation*}
{\rm SL}( R[ G] , p^{n}) \cap \Er(R[ G] )=\Er( R[ G] , p^{n})
\end{equation*}%
and this follows at once from Theorem \ref{thm16} above. \endproof
\medskip

\noindent\textit{Proof of Theorem \ref{thm5}. } We must show that for $p^{n}>2$ the map $\theta
_{n}: \SK_{1}( R[ G] ) \rightarrow \SK_{1}( R[ G] )_{n}$ is injective, since by definition $\theta _{n}$ is
surjective.  If $x\in \ker \theta _{n}$ then by Lemma \ref{le14} (b) we may write $%
x=\kappa ( x^{\prime }) $ with $x^{\prime }\in (
1+p^{n}( R[ G]) ) \cap \ker ( \mathrm{Det}) \subset {\rm SL}_{m}( R[ G] , p^{n}) =\Er_{m}( R[ G] , p^{n})$. \endproof
\medskip

\noindent\textit{Proof of Theorem \ref{thm2}.} By the Bass Stable Range Theorem (see for
instance 41.25 in [CR2]) we can find a sufficiently large integer $m$ with
the property that the map $\GL_{m}( R[ G]) $ to $\Kr_{1}( R[ G]) $ is surjective. Choose a positive
integer $n$ with $p^{n}>2$. Recall that by the remark prior to (\ref{eq2.6}) and by
Theorem \ref{thm16}, we know that the relative K-group $\Kr_{1}( R[ G]
, p^{n})$ maps injectively into $\Kr_{1}( R[ G] )$. We have an exact sequence
\begin{equation*}
1\rightarrow \Er_{m}( R[ G] , p^{n}) \rightarrow
\GL_{m}( R[ G] , p^{n}) \rightarrow \Kr_{1}( R[ G] , p^{n}) \rightarrow 1
\end{equation*}%
which yields (see the exact sequence (\ref{eq2.5}))
\begin{equation*}
1\rightarrow \frac{\Er_{m}( R[ G] ) }{\Er_{m}( R[
G] ,p^{n}) }\rightarrow \frac{\GL_{m}( R[ G]) }{\GL_{m}( R[ G] , p^{n}) }\rightarrow
\Kr_{1}( R_{n}[ G] ) \rightarrow 1
\end{equation*}
and by taking inverse limits, noting that for $n\geq j$ the morphisms
\begin{equation*}
\frac{\Er_{m}( R[ G]) }{\Er_{m}( R[ G]
,p^{n}) }\rightarrow \frac{\Er_{m}( R[ G]) }{%
\Er_{m}( R[ G] , p^{j}) }
\end{equation*}%
are surjective, so that the Mitag Leffler condition is satisfied, we get the
exact sequence
\begin{equation*}
1\rightarrow \varprojlim_n\frac{\Er_{m}( R[ G]) }{%
\Er_{m} ( R [ G ] , p^{n} ) }\rightarrow \GL_{m}( R[
G] ) \rightarrow \widehat{\Kr}_{1}( R[ G])
\rightarrow 1.
\end{equation*}%
But by Theorem \ref{thm16} we know that $\varprojlim_n {\Er_{m}( R[ G]) }/{\Er_{m}( R[ G] , p^{n}) }$ is just the
$p$-adic closure $\overline{\Er_{m}( R[ G]) }$ and that
this, in turn, coincides with $\Er_{m}( R[ G])$. Hence
we have shown
\begin{equation*}
\Kr_{1}( R[ G]) =\frac{\GL_{m}( R[ G]) }{\Er_{m}( R[G]) }=\widehat{\Kr}_{1}( R[G]).
\end{equation*}
\endproof
\medskip

\section{$p$-groups}\label{sec3}
\setcounter{equation}{0}

Throughout this section $G$ denotes a finite $p$-group and the ring $R$
satisfies the Standing Hypotheses.

\subsection{The group logarithm}\label{sslog}

In this subsection we very briefly recall the group logarithm which is a
fundamental tool for studying the determinants of units of $p$-adic group
rings.

Recall that $R$ is a Noetherian integral domain of finite Krull dimension
with field of fractions $N$ of characteristic zero, that $F$ denotes a lift
of Frobenius on $R$, and that $I( R[ G] ) $ denotes
the augmentation ideal of the group ring $R[ G] $ and when $R$ is
clear from the context we shall write $I_{G}$ in place of $I( R[ G] )$. Because $I(\mathbb{F}_{p}[ G] )$ is the
Jacobson radical of the Artinian ring $\mathbb{F}_{p}[ G]$, it
follows that we can find a positive integer $m$ such that $I(\mathbb{F}_{p}[G])^{m}=0$. Since $I(R[G])=R\cdot I(\Z_{p}[G] )$, it therefore follows that
\begin{equation}\label{eq3.9}
I(R[G])^{m}\subset pR[G].
\end{equation}

Define the $F$-semi-linear map $\Psi :R[G] \rightarrow R[G]$ by the rule that $\Psi ( rg) =F( r) g^{p}$.
As in the Introduction $\phi :R[ G] \rightarrow R[ C_{G}] $ denotes the $R$-linear map given by sending each group element to
its conjugacy class. Write $\overline{\Psi }:R[C_{G}]
\rightarrow R[C_{G}]$ for the $F$-semi-linear map induced by $\Psi$.

We define the group logarithm $\mathcal{L}:1+I(R[ G] )\rightarrow
N[ C_{G}] $ by the rule that for $x\in I( R[ G]) $
\begin{eqnarray}\label{eq3.10}
\mathcal{L}( 1-x) &=&( p-\overline{\Psi }) \circ \phi
( \log ( 1-x) ) =\phi ( ( p-\Psi )
( \log ( 1-x) ) ) \\
&=&-\phi ( \sum_{n\geq 1}\frac{px^{n}}{n}-\sum_{n\geq 1}\frac{\Psi
( x^{n}) }{n})  \notag
\end{eqnarray}
which is seen to converge to an element of $N[ C_{G}] $ by (\ref{eq3.9})
above. Note that the map $\mathcal{L}=\mathcal{L}_{F}$ depends on the chosen
lift of Frobenius $F$; we shall therefore have to be particularly careful
when using such group logarithms for different coefficient rings which could
have incompatible lifts of Frobenius.

From Lemma 3.2 in [CPT1] we recall:

\begin{lemma}\label{le20}
For a character $\chi $ of $G$ and for $x\in I( R[ G]) $%
\begin{equation*}
\chi ( \phi ( \log ( 1+x) ) ) =\log (
\mathrm{Det}( 1+x) ( \chi) )
\end{equation*}%
and
\begin{equation*}
\chi ( \mathcal{L}( 1+x) ) =\log [\mathrm{Det}%
( 1+x) ( p\chi )\cdot \mathrm{Det}( 1+F( x)
) ( -\psi ^{p}\chi ) ]
\end{equation*}%
where $\psi ^{p}$ denotes the $p$-th Adams operation on virtual characters of
$G$, which is defined by the rule $\psi ^{p}\chi ( g) =\chi( g^{p}) $ for $g\in G$.
\endproof
\end{lemma}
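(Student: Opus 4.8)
The final statement to prove is Lemma~\ref{le20}: the two character formulas for $\phi(\log(1+x))$ and $\mathcal{L}(1+x)$. Since the paper says "From Lemma 3.2 in [CPT1] we recall", the real content is a short derivation, and I would present it as such. The plan is to reduce everything to the classical fact that the logarithm of an invertible matrix commutes with any ring homomorphism, applied to the map $R[G]\to N^c$ attached to a character $\chi$.

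**The $\phi$ formula.** First I would recall that for an irreducible $N^c$-valued character $\chi$ of $G$ with associated representation $\rho_\chi$, the determinant component $\mathrm{Det}(1+x)(\chi)$ is by definition $\det(\rho_\chi(1+x))$, where $\rho_\chi$ is extended $R$-linearly to $R[G]$. Extending $\chi$ itself $R$-linearly to a map $\chi\colon R[C_G]\to N^c$ (legitimate since $\chi$ is a class function, hence factors through $R[C_G]$), one has for any $a\in I(R[G])$ the identity $\chi(\phi(a)) = \mathrm{tr}(\rho_\chi(a))$. Now apply this with $a=\log(1+x)=\sum_{n\ge1}(-1)^{n+1}x^n/n$, which converges in $N[C_G]$ by \eqref{eq3.9}. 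Because $\rho_\chi$ is an algebra map (after extending scalars to a suitable $p$-adic field containing the relevant values), it carries the series $\log(1+x)$ to $\log(\rho_\chi(1+x))$ termwise, and then $\mathrm{tr}\log = \log\det$ on matrices close to the identity. Hence $\chi(\phi(\log(1+x))) = \mathrm{tr}\log(\rho_\chi(1+x)) = \log\det(\rho_\chi(1+x)) = \log(\mathrm{Det}(1+x)(\chi))$, which is the first assertion.

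**The $\mathcal{L}$ formula.** For the second formula I would start from the definition \eqref{eq3.10}, $\mathcal{L}(1+x) = \phi((p-\Psi)(\log(1+x)))$, and apply $\chi$. By linearity this splits as $p\cdot\chi(\phi(\log(1+x))) - \chi(\phi(\Psi(\log(1+x))))$. The first term is $p\log(\mathrm{Det}(1+x)(\chi)) = \log(\mathrm{Det}(1+x)(p\chi))$ by the formula just proved and multiplicativity of $\mathrm{Det}$ under addition of characters. For the second term, the key computation is that for any $a\in I(R[G])$,
\begin{equation*}
\chi(\phi(\Psi(a))) = \psi^p\chi(\phi(F(a))),
\end{equation*}
which follows because $\Psi(rg)=F(r)g^p$ and $\psi^p\chi(g^p)=\chi(g^{p^2})$... more carefully: writing $a=\sum r_g g$, one has $\Psi(a)=\sum F(r_g)g^p$, so $\chi(\phi(\Psi(a))) = \sum F(r_g)\chi(g^p) = \sum F(r_g)\,\psi^p\chi(g) = \psi^p\chi(\phi(F(a)))$, using that $\psi^p\chi(g)=\chi(g^p)$ and that $F(a)=\sum F(r_g)g$. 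Taking $a=\log(1+x)$ and using $F(\log(1+x))=\log(1+F(x))$ (valid since $F$ is a ring endomorphism commuting with the convergent series), together with the first formula applied to the character $\psi^p\chi$ and the unit $1+F(x)$, gives $\chi(\phi(\Psi(\log(1+x)))) = \log(\mathrm{Det}(1+F(x))(\psi^p\chi))$. Combining, $\chi(\mathcal{L}(1+x)) = \log(\mathrm{Det}(1+x)(p\chi)) - \log(\mathrm{Det}(1+F(x))(\psi^p\chi)) = \log[\mathrm{Det}(1+x)(p\chi)\cdot\mathrm{Det}(1+F(x))(-\psi^p\chi)]$, as claimed.

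**Main obstacle.** The delicate point is convergence and the interchange of $\log$ with the algebra homomorphism $\rho_\chi$ and with $F$: one must work over a finite extension of $\Q_p$ (or its ring of integers) large enough to contain all eigenvalues, check that $x$ being in $I(R[G])$ with $I(R[G])^m\subset pR[G]$ forces $\rho_\chi(x)$ to be topologically nilpotent so that $\log(\rho_\chi(1+x))$ converges $p$-adically, and verify that $\mathrm{tr}\log = \log\det$ holds in this convergent range — all standard but requiring care about the prime $p=2$, where one needs the estimate coming from \eqref{eq3.9}. Everything else is formal manipulation of convergent series and the definitions of $\mathrm{Det}$, $\psi^p$, and $\Psi$.
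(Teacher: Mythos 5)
Your proposal is correct and is essentially the standard argument behind the quoted result: the paper itself gives no proof here (it cites Lemma 3.2 of [CPT1]), and that proof is exactly the reduction to $\mathrm{tr}\circ\log=\log\circ\det$ through the representation $\rho_\chi$, followed by applying $\chi$ to the definition $\mathcal{L}(1+x)=\phi((p-\Psi)\log(1+x))$ and the identity $\chi(\phi(\Psi(a)))=\psi^p\chi(\phi(F(a)))$. The only cosmetic point is that $\psi^p\chi$ is in general only a virtual character, so the first formula must be extended to virtual characters by $\mathbb{Z}$-linearity before being applied to $1+F(x)$, which is immediate.
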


As per Corollary 3.3 loc. cit. from this we may deduce:

\begin{corollary}\label{cor21}
 If $\mathrm{Det}( 1+x) =1$, then $\phi ( \log( 1+x) ) =0$, and so $\mathcal{L}( 1+x) =1$.
Thus there is a unique map $\nu :\mathrm{Det}( 1+I(R[ G]
)) \rightarrow N[ C_{G}]$ such that $\mathcal{L}=\nu \circ
\mathrm{Det.}$ \endproof
\end{corollary}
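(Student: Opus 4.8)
The plan is to derive both assertions from the first identity in Lemma~\ref{le20} together with the elementary fact that the irreducible characters separate the points of $N[C_{G}]$. Fix an element $1+x$ of $1+I_{G}$; by the construction recalled in Section~3 of [CPT1] and by \eqref{eq3.9}, the series $\log(1+x)=\sum_{n\ge 1}(-1)^{n+1}x^{n}/n$ converges in the $p$-adically completed group ring, so $\phi(\log(1+x))$ is a well-defined element of $N[C_{G}]$. By the first formula of Lemma~\ref{le20}, for every irreducible $N^{c}$-valued character $\chi$ of $G$ one has $\chi\bigl(\phi(\log(1+x))\bigr)=\log\bigl(\mathrm{Det}(1+x)(\chi)\bigr)$.

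Next I would extend scalars to $N^{c}$: each $\chi$ extends to an $N^{c}$-linear functional on $N^{c}[C_{G}]=N[C_{G}]\otimes_{N}N^{c}$, and since $N$ (hence $N^{c}$) has characteristic zero, the number of conjugacy classes of $G$ equals the number of irreducible $N^{c}$-characters and the character table $(\chi(g_{C}))_{\chi,C}$ is invertible over $N^{c}$ — equivalently, the irreducible characters form an $N^{c}$-basis of the space of $N^{c}$-valued class functions. Hence these functionals jointly separate the points of $N^{c}[C_{G}]$, and therefore also of $N[C_{G}]\subset N^{c}[C_{G}]$. It follows that $\phi(\log(1+x))$ is the unique element of $N[C_{G}]$ whose image under each $\chi$ equals $\log\bigl(\mathrm{Det}(1+x)(\chi)\bigr)$; in particular $\phi(\log(1+x))$ depends only on $\mathrm{Det}(1+x)$. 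Specializing to $\mathrm{Det}(1+x)=1$ gives $\chi(\phi(\log(1+x)))=\log 1=0$ for all $\chi$, so $\phi(\log(1+x))=0$, and then, reading off the definition \eqref{eq3.10} term by term (legitimate since $\log$ converges and $\phi$, $p-\overline{\Psi}$ are additive), $\mathcal{L}(1+x)=(p-\overline{\Psi})\bigl(\phi(\log(1+x))\bigr)$ is the trivial element of $N[C_{G}]$.

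For the final statement, I would argue that since $\phi(\log(1+x))$, and therefore $\mathcal{L}(1+x)=(p-\overline{\Psi})(\phi(\log(1+x)))$, depends only on $\mathrm{Det}(1+x)$, the map $\mathcal{L}\colon 1+I_{G}\to N[C_{G}]$ factors through the tautologically surjective map $\mathrm{Det}\colon 1+I_{G}\to \mathrm{Det}(1+I_{G})$; this yields a map $\nu\colon \mathrm{Det}(1+I_{G})\to N[C_{G}]$ with $\mathcal{L}=\nu\circ\mathrm{Det}$, and $\nu$ is unique precisely because $\mathrm{Det}$ is surjective onto $\mathrm{Det}(1+I_{G})$. (Alternatively, one may invoke the homomorphism property of $\mathcal{L}$ from Section~3 of [CPT1] and note that the special case just proved shows $\ker(\mathrm{Det}\vert_{1+I_{G}})\subseteq\ker\mathcal{L}$; but this is not needed.) I do not expect a genuine obstacle: the only point requiring care is the passage to $N^{c}$ in the separation step — checking that the a priori element $\phi(\log(1+x))\in N[C_{G}]$ is annihilated after tensoring with $N^{c}$ and hence is already zero — and making sure the formula for $\mathcal{L}$ really exhibits $\mathcal{L}(1+x)$ as a function of $\phi(\log(1+x))$ alone.
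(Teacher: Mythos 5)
Your proposal is correct and takes essentially the same route as the paper (which deduces the corollary directly from Lemma \ref{le20}): the first identity $\chi(\phi(\log(1+x)))=\log(\mathrm{Det}(1+x)(\chi))$ plus the linear independence of the irreducible $N^{c}$-characters, i.e.\ the invertibility of the character table, forces $\phi(\log(1+x))=0$ when $\mathrm{Det}(1+x)=1$ and hence $\mathcal{L}(1+x)$ trivial. Your observation that the same separation argument shows $\phi(\log(1+x))$, and therefore $\mathcal{L}(1+x)=(p-\overline{\Psi})\phi(\log(1+x))$, is a function of $\mathrm{Det}(1+x)$ alone gives the existence and uniqueness of $\nu$ exactly as intended, with uniqueness coming from the surjectivity of $\mathrm{Det}$ onto $\mathrm{Det}(1+I(R[G]))$.
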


The main result for the group logarithm is (see Theorem 3.4.a in [CPT1]):

\begin{theorem}\label{thm22}
We have
$
\mathcal{L}( 1+I_{G}) \subset p\phi ( I_{G}) \subset
pR[ C_{G}]$.
\end{theorem}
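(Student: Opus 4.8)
The inclusion $p\,\phi(I_G)\subseteq pR[C_G]$ is immediate: $\phi$ sends $R[G]$ into $R[C_G]$, so $\phi(I_G)\subseteq R[C_G]$. The content is the first inclusion, $\mathcal{L}(1-x)\in p\,\phi(I_G)$ for every $x\in I_G$. The plan is to work directly from the defining formula (\ref{eq3.10}),
\[
\mathcal{L}(1-x)=\phi\bigl((p-\Psi)(\log(1-x))\bigr)=\phi\Bigl(\textstyle\sum_{n\ge1}\tfrac{\Psi(x^{n})-p\,x^{n}}{n}\Bigr),
\]
and to control the $p$-adic denominators introduced by $\log(1-x)=-\sum_{n\ge1}x^{n}/n$. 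The basic algebraic input is the congruence
\[
\phi(\Psi(y))\equiv\phi(y^{p})\pmod{p\,\phi(R[G])}\qquad(y\in R[G]),
\]
proved by writing $y=\sum_g r_g g$, expanding $y^{p}=\sum_{(g_1,\dots,g_p)}r_{g_1}\cdots r_{g_p}\,g_1\cdots g_p$, and partitioning the index $p$-tuples into orbits under cyclic rotation: the fixed tuples $(g,\dots,g)$ give $\sum_g r_g^{p}[g^{p}]$, which is congruent to $\sum_g F(r_g)[g^{p}]=\phi(\Psi(y))$ modulo $pR[C_G]$ by Standing Hypothesis (ii) ($F(r)\equiv r^{p}\bmod pR$); every non-fixed orbit has size $p$ and contributes $p$ times a single basis element of $R[C_G]$, since the cyclic permutations of a product $g_1\cdots g_p$ are pairwise conjugate. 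Because $\phi$ is onto $R[C_G]$ one has $p\,\phi(R[G])=pR[C_G]$; and since $\phi(I_G)$ is the augmentation ideal of $R[C_G]$, an $R$-module direct summand, $pR[C_G]\cap\phi(I_G)=p\,\phi(I_G)$. We use this to promote the above to $\phi(\Psi(x^{n}))-\phi(x^{pn})=p\,\rho_n$ with $\rho_n\in\phi(I_G)$ (using also that $\Psi$ and powers of $x$ preserve $I_G$).

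Substituting, the leading part telescopes:
\[
\phi\bigl(\mathcal{L}(1-x)\bigr)=\sum_{n\ge1}\frac{\phi(x^{pn})}{n}-p\sum_{n\ge1}\frac{\phi(x^{n})}{n}+p\sum_{n\ge1}\frac{\rho_n}{n}
=-\,p\!\!\sum_{p\,\nmid\,k}\frac{\phi(x^{k})}{k}\;+\;p\sum_{n\ge1}\frac{\rho_n}{n},
\]
because $\sum_{n\ge1}\phi(x^{pn})/n=p\sum_{p\mid k}\phi(x^{k})/k$. The first summand visibly lies in $p\,\phi(I_G)$. Thus everything is reduced to showing that the correction term $p\sum_{n\ge1}\rho_n/n$ lies in $p\,\phi(I_G)$. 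Writing $n=p^{v}m$ with $p\nmid m$, this amounts to showing, for each $v\ge1$, that
\[
\sigma_v:=\sum_{p\,\nmid\,m}\frac{\rho_{p^{v}m}}{m}\in p^{v}\,\phi(I_G)
\]
(for $v=0$ nothing beyond $\rho_n\in\phi(I_G)$ is needed).

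The verification of $\sigma_v\in p^{v}\phi(I_G)$ is the main obstacle, and is exactly where a naive term-by-term estimate is insufficient: the extra $p$-divisibility genuinely uses the interaction of $p$ and the Frobenius lift $F$ inside the operator $p-\Psi$, not just the congruence $\phi(\Psi(y))\equiv\phi(y^{p})$ applied one index at a time. This is carried out as in [CPT1, \S3] in one of two equivalent ways. One is an induction on $v$ in which the $v$-level identity $\phi(\Psi(x^{p^{v}m}))\equiv\phi(x^{p^{v+1}m})$ is fed back into the cyclic-orbit expansion of a further $p$-th power, using that modulo the next power of $p$ only the cyclically symmetric part of that expansion survives, together with the summation over $m$. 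The other route uses the character description of Lemma \ref{le20}, $\chi(\mathcal{L}(1+x))=p\log\mathrm{Det}(1+x)(\chi)-\log\mathrm{Det}(1+F(x))(\psi^{p}\chi)$, which converts the assertion into the Frobenius congruence $\mathrm{Det}(1+x)(\chi)^{p}\equiv\mathrm{Det}(1+F(x))(\psi^{p}\chi)$ to the required $p$-adic precision — here the hypothesis that $G$ is a $p$-group forces $\mathrm{Det}(1+x)(\chi)\equiv1$ modulo the maximal ideal by (\ref{eq3.9}), and the case $\chi=\mathbf{1}$ already gives that the augmentation of $\mathcal{L}(1-x)$ vanishes (both $\mathrm{Det}(1+x)(\mathbf{1})$ and $\mathrm{Det}(1+F(x))(\mathbf{1})$ equal $1$).

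Finally, convergence: by (\ref{eq3.9}) we have $x^{n}\to0$ $p$-adically, so all the series above converge in $N[C_G]$, and $p\,\phi(I_G)$ is $p$-adically closed in $R[C_G]$ because $\phi(I_G)$ is finitely generated over the Noetherian $p$-adically complete ring $R$. Hence the sum lies in $p\,\phi(I_G)$, which yields $\mathcal{L}(1+I_G)\subseteq p\,\phi(I_G)$ and, with the trivial inclusion above, completes the proof.
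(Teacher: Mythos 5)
Your setup is fine as far as it goes: the trivial inclusion, the closedness of $p\,\phi(I_G)$ under $p$-adic limits, the identity $pR[C_G]\cap\phi(I_G)=p\,\phi(I_G)$, and the mod-$p$ congruence $\phi(\Psi(y))\equiv\phi(y^{p})\pmod{pR[C_G]}$ via cyclic rotation of $p$-tuples are all correct, and the telescoping reduction to the statement $\sigma_v\in p^{v}\phi(I_G)$ is the right bookkeeping. But that statement — equivalently the refined congruence $\phi\bigl(\Psi(x^{p^{v}m})\bigr)\equiv\phi\bigl(x^{p^{v+1}m}\bigr)\pmod{p^{v+1}R[C_G]}$ — \emph{is} the theorem; everything you actually prove only yields the case $v=0$, i.e. controls the terms with $p\nmid n$. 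At the decisive point you write that the verification "is carried out as in [CPT1, \S 3]", which is not admissible in a blind proof: the paper itself does not prove Theorem \ref{thm22} but quotes it from Theorem 3.4(a) of [CPT1], so deferring to that same source leaves nothing established. The two sketches you offer do not close the gap. The inductive route needs a precise lemma of the form: if $a\equiv b$ modulo $p^{k}R[G]+\ker\phi$ (where $\ker\phi$ is spanned by additive commutators $uv-vu$), then $a^{p}\equiv b^{p}$ modulo $p^{k+1}R[G]+\ker\phi$ — this is the nontrivial heart of the Taylor--Oliver argument (one must check that the cross terms coming from $\ker\phi$ and from $p^{k}R[G]$ again land in $p^{k+1}R[G]+\ker\phi$, which uses the orbit/conjugacy analysis a second time), and you neither state nor prove it; "only the cyclically symmetric part survives" is not a substitute. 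The character-theoretic route is likewise only a rephrasing: Lemma \ref{le20} converts the claim into the congruence $\mathrm{Det}(1+x)(\chi)^{p}\equiv\mathrm{Det}(1+F(x))(\psi^{p}\chi)$ to a specified precision, which is exactly as hard as what you started with (and proving it uniformly in $\chi$ does not by itself give you that the result lies in $p\,\phi(I_G)\subset pR[C_G]$, an integrality statement over $R$, without a further descent argument); your observation about $\chi=\mathbf{1}$ only identifies the augmentation component.

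In short: the reduction is correct, but the essential $p$-divisibility input (the higher-power congruence for $\phi\circ\Psi$ versus $p$-th powers) is missing, so the proposal is a sketch of the [CPT1] proof with its key lemma omitted rather than a proof.
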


We now recall a number of results on the images under $\mathcal{L}$ of
various subgroups of $1+I(R[G] )$. For our first such result we
set $\mathcal{A}(R[G]) ={\rm Ker}( R[ G]
\rightarrow R[G^{\rm ab}] ) $ and note that obviously $%
\mathcal{A}( R[ G] ) \subset I(R[G])$. Then from Theorem 3.5 loc. cit.
we have:

\begin{theorem}\label{thm23}
We have
$\mathcal{L}( 1+\mathcal{A}(R[ G])) =p\phi (
\mathcal{A}(R[G]))$.
\end{theorem}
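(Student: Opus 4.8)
The plan is to prove the two inclusions $\mathcal{L}(1+\mathcal{A}(R[G])) \subseteq p\phi(\mathcal{A}(R[G]))$ and $p\phi(\mathcal{A}(R[G])) \subseteq \mathcal{L}(1+\mathcal{A}(R[G]))$ separately, with the first being essentially a refinement of Theorem \ref{thm22} and the second being the substantive part requiring a successive-approximation (bootstrapping) argument modulo increasing powers of $p$.

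For the inclusion ``$\subseteq$'': given $a \in \mathcal{A}(R[G]) \subseteq I_G$, one computes $\mathcal{L}(1+a)$ directly from the defining formula (\ref{eq3.10}). The point is that $\mathcal{A}(R[G]) = \ker(R[G] \to R[G^{\mathrm{ab}}])$, that $\log(1+a) \equiv a \bmod I_G^2$ and more generally that the full log series has all higher terms lying in $I_G \cdot \mathcal{A}(R[G])$-type ideals; meanwhile $\Psi$ and $\overline\Psi$ send the augmentation ideal into itself and, since $G$ is a $p$-group, behave well with respect to $\mathcal{A}$. Combining this with Theorem \ref{thm22} (which already gives $\mathcal{L}(1+I_G) \subseteq p\phi(I_G)$), one checks that when the argument lies in $1+\mathcal{A}(R[G])$ the image actually lands in $p\phi(\mathcal{A}(R[G]))$: the key is that $\phi$ carries $\mathcal{A}(R[G])$ onto its image and that commutators $[g,h]-1$ and their products, which generate $\mathcal{A}(R[G])$, are tracked through $\mathcal{L}$ using the behaviour of $\log$ on multiplicative commutators (so $\mathcal{L}(1+\mathcal{A})$ is controlled by $\mathcal{L}$ of commutator units, and the logarithmic identity $\log([u,v]) \equiv $ an additive commutator shows these land in $\phi(\mathcal{A})$ after multiplying by $p$).

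For the reverse inclusion ``$\supseteq$'': I would fix $b \in \mathcal{A}(R[G])$ and seek $a \in \mathcal{A}(R[G])$ with $\mathcal{L}(1+a) = pb$, built as a $p$-adic limit $a = \lim a_n$. Starting from $\mathcal{L}(1+pb') \equiv pb' \bmod p^2 \phi(\cdot)$ — using the elementary congruence $\log(1+pc) \equiv pc \bmod p^2$ and the fact that $\overline\Psi \circ \phi(\log(1+pc))$ contributes only higher-order terms since $\Psi$ raises the $p$-adic valuation appropriately — one solves $\mathcal{L}(1+a_1) \equiv pb \bmod p^2\phi(R[C_G])$ with $a_1 \in \mathcal{A}(R[G])$, then inductively corrects: if $\mathcal{L}(1+a_n) \equiv pb \bmod p^{n+1}$, write the error as $p^{n+1}\phi(c)$ with $c \in \mathcal{A}(R[G])$ (here one needs the error term to remain in $\phi(\mathcal{A})$, which follows because $\mathcal{L}$ preserves the $\mathcal{A}$-part by the first half of the proof and because $1+a_n$ stays in $1+\mathcal{A}$), and adjust $a_{n+1} = a_n + (\text{correction of order } p^{n+1})$ using that $\mathcal{L}((1+a_n)(1+p^{n+1}d)) \equiv \mathcal{L}(1+a_n) + p^{n+1}\phi(d)\cdot(\text{unit}) \bmod p^{n+2}$. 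Convergence of $\{a_n\}$ in the $p$-adically complete ideal $\mathcal{A}(R[G])$, together with continuity of $\mathcal{L}$, gives the desired $a$.

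The main obstacle I anticipate is the bookkeeping in the reverse inclusion that keeps everything inside $\phi(\mathcal{A}(R[G]))$ rather than merely $\phi(I_G)$ at each approximation step: one must verify that the error terms produced by the $\overline\Psi$-part of $\mathcal{L}$ and by the nonlinearity of $\log$ do not leak out of the $\mathcal{A}$-component. This is where the $p$-group hypothesis is used essentially — it guarantees $I_G$ is nilpotent mod $p$ (inclusion (\ref{eq3.9})), so the log and exp series converge and truncate controllably, and it ensures $G^{\mathrm{ab}}$ and the passage $R[G] \to R[G^{\mathrm{ab}}]$ interact well with $\Psi$ (since $p$-th powers and the Frobenius lift are compatible with abelianization for $p$-groups). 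I expect the cleanest route is to reduce, via the logarithmic commutator identity, to the already-known surjectivity statements for $\mathcal{L}$ on commutator-type units established in [CPT1], rather than re-deriving the approximation from scratch.
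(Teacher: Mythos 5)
First, note that the paper does not prove this statement itself: it is quoted from [CPT1, Theorem 3.5], where the argument is a delicate analysis of the series defining $\mathcal{L}$ (the pairing of the terms $px^{pm}/pm$ with $\Psi(x^{m})/m$, the congruence $\phi(y^{p})\equiv \phi(\Psi(y)) \bmod p\phi(\mathcal{A})$ coming from cyclic permutation of monomials under $\phi$, the stability $\Psi(\mathcal{A})\subseteq\mathcal{A}$, and a successive approximation run along the ideal-power filtration using $I_{G}^{m}\subseteq pR[G]$). Measured against that, your proposal has two genuine gaps. For the inclusion $\subseteq$, the mechanism you invoke is incoherent: you say $\mathcal{L}(1+\mathcal{A})$ is ``controlled by $\mathcal{L}$ of commutator units'' via $\log([u,v])$. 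But $\mathcal{L}$ factors through $\mathrm{Det}$ (Corollary \ref{cor21}), so $\mathcal{L}$ kills every multiplicative commutator of units; if $1+\mathcal{A}$ were controlled by such commutators the left-hand side would be $0$, contradicting the theorem. The ideal $\mathcal{A}$ is additively generated by elements $r([g,h]-1)s$, but the multiplicative group $1+\mathcal{A}$ is not generated by commutators of units, and no argument is given for why the unpaired $\Psi$-terms $\phi(\Psi(x^{n}))/n$ land in $p\phi(\mathcal{A})$ when $x\in\mathcal{A}$ --- that is precisely the content that needs the cyclic-permutation congruence and $\Psi(\mathcal{A})\subseteq\mathcal{A}$.

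For the inclusion $\supseteq$, your starting congruence is false: for $b'\in\mathcal{A}$ one has $p\log(1+pb')\equiv p^{2}b' \bmod p^{3}$ while $\Psi(\log(1+pb'))\equiv p\Psi(b') \bmod p^{2}$, so $\mathcal{L}(1+pb')\equiv -p\,\overline{\Psi}(\phi(b')) \bmod p^{2}$, not $pb'$. More structurally, the iteration cannot be run in the naive $p$-adic filtration starting from $1+p\mathcal{A}$: to hit $p\phi(b)$ one must use $1+x$ with $x\in\mathcal{A}$ itself, and the error to be corrected is the term $\phi(\Psi(x))$, which is not $p$-adically small a priori; its smallness comes only from the congruence $\phi(\Psi(y))\equiv\phi(y^{p}) \bmod p\phi(\mathcal{A})$ together with $I_{G}^{m}\subseteq pR[G]$ (inclusion (\ref{eq3.9})), i.e.\ from a mixed filtration by powers of the ideal, so $\overline{\Psi}$ is only \emph{topologically} nilpotent and each correction step must be justified by this mechanism rather than by asserting the error ``is $p^{n+1}\phi(c)$.'' Your closing suggestion to reduce to surjectivity on commutator-type ideals (the $(1-c)$-results, Lemma \ref{le24} here) is closer in spirit to how such statements are actually proved, but that route requires setting up the induction on $|G|$ via a central commutator $c$ of order $p$ and splicing $G$ with $\overline{G}=G/\langle c\rangle$, none of which is done; as written the proposal neither supplies the key congruences nor a convergent approximation scheme.
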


We now assume $G$ to be non-abelian; then, since $G$ is a $p$-group, by
considering the lower central series of $G$, we may choose a commutator $c=%
[ \gamma ,\delta ] $ which has order $p$ and which lies in the
centre of $G$; it therefore follows that
\begin{equation*}
( 1-c) R[ G] \subset \mathcal{A}(R[ G] ).
\end{equation*}%
From Lemma 3.8 and Lemma 3.10 loc. cit. we recall:

\begin{lemma}\label{le24}
For $n\geq 1$, if $c$ is a commutator as above, we have
$
\mathcal{L}( 1+( 1-c)^{n}R[ G]) =p\phi
( ( 1-c) ^{n}R[G])
$. \endproof
\end{lemma}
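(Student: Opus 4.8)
Since the claimed identity is an equality of additive subgroups of $N[C_G]$, I would prove the two inclusions in turn, the real work being in the reverse one.

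For $\mathcal{L}(1+(1-c)^nR[G])\subseteq p\,\phi((1-c)^nR[G])$, two elementary observations suffice. First, $\Psi$ annihilates the two-sided ideal $(1-c)R[G]$: as $c$ is central of order $p$, $\Psi((1-c)g)=\Psi(g)-\Psi(cg)=g^p-(cg)^p=g^p-c^pg^p=0$ for $g\in G$, and this extends $F$-semi-linearly; hence for $x\in(1-c)^nR[G]$ all powers $x^k$ lie in $(1-c)R[G]$, so $\Psi(\log(1-x))=0$ and $\mathcal{L}(1-x)=\phi\bigl((p-\Psi)\log(1-x)\bigr)=p\,\phi(\log(1-x))$. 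Second, $(1-c)^p=p(1-c)\tilde\mu$ for some $\tilde\mu\in\mathbb{Z}[\langle c\rangle]$ with augmentation $-1$: for $p$ odd this comes from $c^p=1$ together with $\sum_{0<j<p}\tfrac1p\binom pj(-1)^j=0$, and for $p=2$ it is just $(1-c)^2=2(1-c)$. Iterating gives $(1-c)^m=p^{a}\tilde\mu^{a}(1-c)^{m-a(p-1)}$ for $a\le\lfloor(m-1)/(p-1)\rfloor$, and since $\lfloor(m-1)/(p-1)\rfloor\ge v_p(m!)\ge v_p(m)$ one deduces $(1-c)^{nk}/k\in(1-c)^nR[G]$ for all $k\ge1$ and $n\ge1$. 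As $(1-c)^nR[G]$ is $p$-adically closed, $\log(1-x)=-\sum_kx^k/k$ lies in it, so $\mathcal{L}(1-x)=p\,\phi(\log(1-x))\in p\,\phi((1-c)^nR[G])$.

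For the reverse inclusion I would argue by successive approximation. $\mathcal{L}$ restricts to a continuous group homomorphism $1+(1-c)^nR[G]\to p\,\phi((1-c)^nR[G])$ (additivity of the conjugacy logarithm), and its target is a finitely generated, hence $p$-adically complete, $R$-module. Filtering the target by subgroups that mix powers of $p$ and of $(1-c)$ compatibly with both $\mathcal{L}$ and the relation $(1-c)^p\in p(1-c)R[G]$, it is enough to show $\mathcal{L}$ hits each associated graded piece; then completeness together with the closedness of $1+(1-c)^nR[G]$ lets one build a preimage by correcting one step at a time. Expanding $\mathcal{L}(1+(1-c)^ny)=p\,\phi(\log(1+(1-c)^ny))$, all terms of index $k\ge2$ fall into the next step except the index-$p$ term, which by $(1-c)^{np}=p^n(1-c)^n\tilde\mu^n$ equals $\pm p^{n-1}(1-c)^n\tilde\mu^ny^p$. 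For $n\ge2$ this already lies in $p^2\phi((1-c)^nR[G])$, so $\mathcal{L}(1+(1-c)^ny)\equiv p\,\phi((1-c)^ny)$ modulo the next step, the graded map is surjective, and the bootstrap disposes of $n\ge2$ (and of all graded steps $j\ge1$ in general, which are likewise twist-free).

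The case $n=1$ is where the main obstacle lies. Here the index-$p$ term survives, and using $\tilde\mu\equiv-1\bmod(1-c)$ together with the Frobenius-lift congruence $p\,y^p\equiv p\,\Psi(y)\bmod p^2$ one finds $\mathcal{L}(1+(1-c)y)\equiv p\,\phi\bigl((1-c)(y-\Psi(y))\bigr)$ modulo $p^2\phi((1-c)R[G])+p\,\phi((1-c)^2R[G])$. Thus surjectivity on the first graded piece amounts to showing that $\phi\bigl((1-c)(1-\Psi)R[G]\bigr)$, together with $p\,\phi((1-c)R[G])+\phi((1-c)^2R[G])$, exhausts $\phi((1-c)R[G])$. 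In the class-of-identity direction this reduces to $1-\mathrm{Frob}$ on $\bar R=R/pR$, which need not be surjective (e.g. for $\bar R=\mathbb{F}_p[T]$), so the units $1+r(1-c)$ alone do not suffice. The fix is to also use the units $1+s(1-c)g$ for an element $g$ of order $p$ with $g\notin\langle c\rangle$: such $g$ exists because $c$ being a nontrivial central commutator forces $G$ to be non-abelian, and a non-abelian $p$-group has, for $p$ odd, more than one subgroup of order $p$ — the case $p=2$, where $G$ may be generalized quaternion and $1-c$ is then $2$ times a unit, is handled separately. For such $g$ one has $g^p=1$, so the index-$p$ term of its logarithm contributes in the class-of-identity direction with coefficient $\equiv s^p\equiv\mathrm{Frob}(s)\bmod p$; since $\mathrm{Im}(1-\mathrm{Frob})+\mathrm{Im}(\mathrm{Frob})=\bar R$, the gap is closed, and the approximation then goes through to give the remaining inclusion.
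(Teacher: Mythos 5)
Your forward inclusion and your reduction of the reverse inclusion to a surjectivity statement on the first graded piece are fine, but the decisive step --- closing the ``identity-class'' gap at $n=1$ --- is where the argument breaks, and it breaks precisely because you never use the hypothesis that $c$ is a commutator beyond ``$G$ is non-abelian''. The unit $u=1+s(1-c)g$ with $g$ of order $p$, $g\notin\langle c\rangle$, does contribute $\pm F(s)$ in the identity direction through its index-$p$ term, but it also contributes the leading term $s\,\phi((1-c)g)$, and the two cannot be decoupled: the functional on $\phi((1-c)R[G])$ sending $\phi((1-c)rg)\mapsto r\bmod\bigl((1-F)\overline{R}\bigr)$ (well defined, since the sum over a $\langle c\rangle$-orbit of classes gives $p\cdot r\equiv 0$) annihilates $\phi\circ\log$ of every unit $1+(1-c)y$ to first order --- including yours, whose image is $s-F(s)$ --- whenever every occurring class genuinely contributes its leading term, i.e.\ whenever no conjugacy class is fixed by multiplication by $c$. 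Indeed, if your argument were correct it would prove the lemma for any central $c$ of order $p$ in a non-abelian $G$ possessing an order-$p$ element outside $\langle c\rangle$; but for non-commutator $c$ the conclusion is false: Lemma \ref{le26} computes the quotient $p\phi((1-c)R[G])/\mathcal{L}(1+(1-c)R[G])\cong R/((1-F)R+pR)$, which is generally nonzero (e.g.\ for $R=\Z_p\langle\langle T\rangle\rangle$).

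What the commutator hypothesis actually buys is the following. Writing $c=[h,g]$, one has $c^mg=h^mgh^{-m}$ for all $m$, so $g\in\mathcal{S}_G$ and $\phi((1-c)rg)=0$. Hence the unit $1+r(1-c)g$ has \emph{no} leading term, and its index-$p$ term gives a pure contribution $\pm F(r)\,\phi((1-c)g^{p})$; this is exactly what makes the obstruction functional above fail to be defined on the image. Chaining down the tower $g\mapsto g^{p}\mapsto\cdots\mapsto 1$ produces $F^{m}(\overline{R})$ in the identity direction, and since $s\equiv F^{m}(s)\bmod (1-F)\overline{R}$ by telescoping, one gets $(1-F)\overline{R}+F^{m}(\overline{R})=\overline{R}$; this is the identity you should be invoking, rather than $\mathrm{Im}(1-F)+\mathrm{Im}(F)=\overline{R}$ applied to an arbitrary order-$p$ element outside $\langle c\rangle$. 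With that repair --- using the $g$ coming from the commutator (or, more generally, elements of $\mathcal{S}_G$) --- your successive-approximation scheme does line up with the argument of [CPT1, Lemmas 3.8 and 3.10], which is the proof the present paper defers to.
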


If we now assume that $c$ is again a central element order of $p$ but that
now it cannot be written as a commutator in $G$, then from Lemmas 3.12 and
3.13 in [CPT1] we have the following two lemmas:

\begin{lemma}\label{le25}
If $c$ cannot be written as a commutator in $G$, then multiplication by $c$
permutes the elements of $C_{G}$ without fixed points and the kernel of
multiplication by $1-c$ on $R[C_{G}]$ is generated by elements
of the form $\sum\nolimits_{i=0}^{p-1}c^{i}\phi ( g) $ for $g\in
G$. \endproof
\end{lemma}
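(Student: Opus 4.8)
The plan is to use the hypotheses on $c$ (central, of order $p$, not a commutator) to show that left multiplication by $c$ makes $C_G$ into a set on which the cyclic group $\langle c\rangle$ of order $p$ acts \emph{freely}, and then to reduce the kernel computation to the elementary case of the regular representation of $\Z/p$ over $R$.

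First I would verify that $[g]\mapsto[cg]$ is a well-defined permutation of $C_G$: since $c$ is central, $xgx^{-1}=h \Leftrightarrow x(cg)x^{-1}=ch$, so $g\sim h$ iff $cg\sim ch$, and $[g]\mapsto[c^{-1}g]$ is the inverse. For the absence of fixed points: if $[cg]=[g]$ then $cg=xgx^{-1}$ for some $x\in G$, hence $c=xgx^{-1}g^{-1}=[x,g]$, contradicting that $c$ is not a commutator in $G$. Since $c^p=1$, every $\langle c\rangle$-orbit on $C_G$ has cardinality dividing $p$, and by the previous sentence none has cardinality $1$; hence all orbits have size $p$. Choosing orbit representatives $g_j\in G$ ($j\in J$) gives $C_G=\coprod_{j\in J}\{[g_j],[cg_j],\dots,[c^{p-1}g_j]\}$ with the listed classes distinct, and a corresponding direct sum decomposition $R[C_G]=\bigoplus_{j\in J}M_j$, where $M_j$ is the free $R$-module on $[g_j],\dots,[c^{p-1}g_j]$ and is stable under multiplication by $1-c$.

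It then remains to compute $\ker(1-c)$ on each $M_j$. Identifying $M_j$ with $R[t]/(t^p-1)$ via $[c^ig_j]\leftrightarrow t^i$ (so multiplication by $c$ becomes multiplication by $t$) and writing a general element as $\sum_{i=0}^{p-1}a_it^i$, one has
\[
(1-t)\Big(\sum_{i=0}^{p-1}a_it^i\Big)\equiv (a_0-a_{p-1})+\sum_{i=1}^{p-1}(a_i-a_{i-1})t^i \pmod{t^p-1},
\]
which vanishes precisely when $a_0=a_1=\dots=a_{p-1}$; hence $\ker(1-c)|_{M_j}=R\cdot\sum_{i=0}^{p-1}[c^ig_j]=R\cdot\sum_{i=0}^{p-1}c^i\phi(g_j)$, using $\phi(c^ig_j)=c^i\phi(g_j)$. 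Summing over $j$, $\ker(1-c)=\bigoplus_{j\in J}R\cdot\sum_{i=0}^{p-1}c^i\phi(g_j)$. Finally, for an arbitrary $g\in G$ one has $(1-c)\sum_{i=0}^{p-1}c^i\phi(g)=\phi(g)-c^p\phi(g)=0$, so every element $\sum_{i=0}^{p-1}c^i\phi(g)$ lies in $\ker(1-c)$; therefore the $R$-submodule of $R[C_G]$ generated by all these (finitely many) elements both contains the generators found above and is contained in $\ker(1-c)$, so equals it. The argument is entirely elementary, so there is no real obstacle; the only point requiring care is the freeness of the $\langle c\rangle$-action on $C_G$, which is exactly where the hypothesis that $c$ is not a commutator enters.
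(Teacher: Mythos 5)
Your proof is correct. The paper itself gives no argument for this lemma — it is quoted from [CPT1] (Lemma 3.12) with the proof omitted — and your argument is exactly the natural one that is being invoked there: centrality of $c$ makes $[g]\mapsto[cg]$ a well-defined permutation of $C_G$, the hypothesis that $c$ is not a commutator rules out fixed points, and since $\langle c\rangle$ has prime order all orbits have size $p$, so $R[C_G]$ splits into free rank-$p$ pieces on which the kernel of $1-c$ is visibly $R\cdot\sum_{i=0}^{p-1}c^i\phi(g_j)$. So you have correctly supplied the details the paper leaves to the reference, with no gaps.
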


\begin{lemma}\label{le26}
Suppose again that $c$ cannot be written as a commutator in $G$; then
\begin{equation*}
\mathcal{L}( 1+( 1-c) I( R[ G] )
) \supseteq p\phi ( ( 1-c) I( R[ G]
) ) +p^{2}\phi ( 1-c) R
\end{equation*}%
and
\begin{equation*}
\frac{p\phi ( ( 1-c) R[ G] ) }{\mathcal{L}%
( 1+( 1-c) R[ G] ) }\cong \frac{R}{(
1-F) R+pR}
\end{equation*}
induced by mapping  $p\phi ( ( 1-c) rg) \longmapsto r\,
{\rm  mod}\, pR$ for $r\in R$, $g\in G$. \endproof
\end{lemma}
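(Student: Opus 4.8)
The plan is to reduce everything to the ordinary logarithm, exploiting that $c$ has order $p$. The first step is to observe that the $F$-semi-linear map $\Psi$ annihilates the two-sided ideal $(1-c)R[G]$: for $y\in R[G]$ and $n\ge1$,
$\Psi\big((1-c)^ny^n\big)=\sum_k\binom nk(-1)^k\Psi(c^ky^n)=\big(\sum_k\binom nk(-1)^k\big)\Psi(y^n)=0$,
since $c^p=1$ forces $\Psi(c^kz)=c^{kp}\Psi(z)=\Psi(z)$. Also $1-c\in I_G$, so $(1-c)R[G]\subseteq I_G$ and hence $1+(1-c)R[G]$ and $1+(1-c)I(R[G])$ are subgroups of $1+I_G$. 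Feeding the vanishing of $\Psi$ into \eqref{eq3.10}, I would conclude that on each of these subgroups the group logarithm is simply $\mathcal{L}(1-x)=p\,\phi(\log(1-x))$; in particular it is a restriction of the additive homomorphism of Section~\ref{sslog}.

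For $\mathcal{L}(1+(1-c)I(R[G]))\supseteq p\phi((1-c)I(R[G]))$ I would use successive approximation, in the style of the [CPT1] proofs behind Theorem~\ref{thm23} and Lemma~\ref{le24}: for $y\in I(R[G])$, $\mathcal{L}(1+(1-c)y)=p\phi((1-c)y)+p\phi(\cdots)$ with correction in $p\phi((1-c)^2R[G])\subseteq p\phi((1-c)I(R[G]))$, and by \eqref{eq3.9} high powers of $(1-c)I(R[G])$ lie in $p(1-c)R[G]$, so the corrections converge. For the summand $p^2\phi(1-c)R$, note that $1+pa(1-c)=1+(1-c)\big(pa-a\sum_{i=0}^{p-1}c^i\big)$ lies in $1+(1-c)I(R[G])$ (the inner factor has augmentation $0$), and $\mathcal{L}(1+pa(1-c))=p\phi(\log(1+pa(1-c)))\equiv p^2a\,\phi(1-c)\pmod{p^3\phi((1-c)R[G])}$; iterating again yields $p^2\phi(1-c)R\subseteq\mathcal{L}(1+(1-c)I(R[G]))$.

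For the cokernel I would first record that $(1-c)R[G]=(1-c)I(R[G])+(1-c)R$ with $(1-c)I(R[G])\cap(1-c)R=p(1-c)R$, so $p\phi((1-c)R[G])\big/\big(p\phi((1-c)I(R[G]))+p^2\phi(1-c)R\big)\cong R/pR$ via $p\phi((1-c)rg)\mapsto r\bmod pR$, where Lemma~\ref{le25} identifies the kernel of multiplication by $1-c$ on $R[C_G]$. By additivity, $\mathcal{L}(1+(1-c)R[G])$ is the sum of $\mathcal{L}(1+(1-c)I(R[G]))$ with the subgroup generated by the $\mathcal{L}(1+a(1-c))$, $a\in R$, so modulo the part already found the cokernel is governed by the image of $a\mapsto\mathcal{L}(1+a(1-c))$ in $R/pR$. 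Since $\Psi(1+a(1-c))=1$, one has $\mathcal{L}(1+a(1-c))=\phi\big(\log((1+a(1-c))^p)\big)$, and the crux is the congruence $(1+a(1-c))^p\equiv 1+p(1-F)(a)(1-c)\pmod{p^2(1-c)R+p(1-c)I(R[G])}$: the terms $\binom pk a^k(1-c)^k$ with $2\le k\le p-1$ lie in $p(1-c)I(R[G])$, while $a^p(1-c)^p=p\,a^p(1-c)w'$ with $\epsilon(w')\equiv-1\pmod p$ (a short computation of $(1-c)^p$ in $\mathbb{Z}[c]/(c^p-1)$, $\epsilon$ the augmentation) and $a^p\equiv F(a)\pmod{pR}$. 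Then $\mathcal{L}(1+a(1-c))\equiv p(1-F)(a)\,\phi(1-c)$, of class $(1-F)(a)\bmod p$, so $\mathcal{L}(1+(1-c)R[G])$ has image $((1-F)R+pR)/pR$ in $R/pR$ and $p\phi((1-c)R[G])/\mathcal{L}(1+(1-c)R[G])\cong R/((1-F)R+pR)$ with the stated map.

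I expect the main obstacle to be this last congruence: balancing the $p$-divisibility of the $\binom pk$ against the extra $p$-divisibility of $(1-c)^p$ and pinning down $\epsilon(w')\bmod p$ — equivalently, that $(1-c)^p/p$ has $(c-1)$-adic order exactly one mod $p$ with unit leading coefficient $\equiv-1$. For $p=2$ this is transparent, since $(1-c)^2=2(1-c)$ and then $\mathcal{L}(1+a(1-c))=(1-c)\log(1+2a)\equiv p(1-F)(a)\phi(1-c)\pmod{p^2(1-c)R}$. A minor further point is to check that $\mathcal{L}(1+(1-c)R[G])$ is no larger than the subgroup described, which follows from additivity together with the filtration of $(1-c)R[G]$ by powers of $1-c$ and of $p$.
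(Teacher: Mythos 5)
Your proposal is essentially correct, and it is worth noting that the paper itself gives no proof of Lemma~\ref{le26}: it is quoted from [CPT1] (Lemmas 3.12 and 3.13 there). Your route — reducing $\mathcal{L}$ to $p\phi\circ\log$ via the vanishing of $\Psi$ on $(1-c)R[G]$, using $(1-c)\sum_{i=0}^{p-1}c^{i}=0$ to manufacture elements of $1+(1-c)I(R[G])$, invoking $(1-c)^{p}\equiv -p(1-c)\bmod p(1-c)^{2}$ (Lemma~\ref{le35}), the congruence $a^{p}\equiv F(a)\bmod pR$, and Lemma~\ref{le25} for the well-definedness of $p\phi((1-c)rg)\mapsto r$ — is exactly the toolkit the paper assembles around this statement, so you have reconstructed the intended argument.

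Two points should be tightened. First, in the successive approximation for $\mathcal{L}(1+(1-c)I(R[G]))\supseteq p\phi((1-c)I(R[G]))$ the correction is not in $p\phi((1-c)^{2}R[G])$ as you assert: for $\eta=(1-c)\xi$ with $\xi\in I(R[G])$, the $n=p$ term of the logarithm contributes, modulo $(1-c)^{2}$, a multiple of $(1-c)\xi^{p}$, which lies only in $(1-c)I(R[G])^{2}$. So the iteration must be run along the filtration by $(1-c)I(R[G])^{k}$ (convergence then comes from (\ref{eq3.9}) and $p$-adic completeness), not along powers of $(1-c)$; with that adjustment the argument goes through. Second, for the isomorphism you need not only that the classes $\mathcal{L}(1+a(1-c))$ map to $(1-F)(a)$ in $R/pR$, but also that $\mathcal{L}(1+(1-c)I(R[G]))$ maps to $0$ there and that $\mathcal{L}(1+(1-c)R[G])\subseteq p\phi((1-c)R[G])$; both follow by writing $\log(1+(1-c)\xi)=(1-c)y$ with $y\in I(R[G])$, again via Lemma~\ref{le35}, where the $n=p$ term has augmentation zero because $\xi\in I(R[G])$. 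This is precisely the content of your closing ``minor further point'' and should be made explicit.
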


\medskip
 The relative $K$-group $\Kr_{1}( R[ G] ,I_{G})
$ was defined in Sect. \ref{s2}. Recall $\Kr_{1}^{\prime }( R[G]
, I_{G}) $ denotes the image of $\Kr_{1}( R[ G], I_{G}) $ in $\Kr_{1}(R[G])$. We then define
the Whitehead group ${\rm Wh}( R[ G]) $ to be
$\Kr_{1}^{\prime }( R[ G] , I_{G}) /\mbox{Im}(
G) $ and the determinantal Whitehead group ${\rm Wh}^{\prime }( R[
G])$ is defined to be $\mathrm{Det}( 1+I_{G}) /%
\mathrm{Det}( G)$. From Theorems 3.14 and 3.17 in [CPT1] we
have the exact sequence%
\begin{equation}\label{eq3.11}
1\rightarrow \mathrm{Det}( G) \rightarrow \mathrm{Det}(
1+I_{G}) \overset{\upsilon }{\rightarrow }\phi ( I_{G})
\overset{\omega }{\rightarrow }\frac{R}{( 1-F) R}\otimes
G^{\rm ab}\rightarrow 1.
\end{equation}%
where we write $\upsilon =p^{-1}\nu$. Recall that by hypothesis $\SK_{1}( R) =\{1\}$,
and so by using the commutative diagram with
exact horizontal sequence
\begin{equation*}
\begin{array}{ccccccccc}
1 & \rightarrow & \SK_{1}( R) & \rightarrow & \Kr_{1}( R)
& \rightarrow & \mathrm{Det}( R^{\times }) & \rightarrow & 1 \\
&  & \uparrow &  & \uparrow &  & \uparrow &  &  \\
1 & \rightarrow & \SK_{1}( R[ G]) & \rightarrow &
\Kr_{1}( R[ G]) & \rightarrow & \mathrm{Det}( R[ G] ^{\times }) & \rightarrow & 1
\end{array}%
\end{equation*}%
we see that $\SK_{1}( R[G]) \subseteq \Kr_{1}^{\prime
}( R[ G] ,I_{G})$. Noting that the image of $G$ in $\Kr_{1}^{\prime }( R[ G] ,I_{G}) \subset \Kr_{1}( R[G])$ is isomorphic to $\mathrm{Det}( G)$,
using the snake lemma we get the two exact sequences%
\begin{equation}\label{eq3.12}
1\rightarrow \SK_{1}( R[ G]) \rightarrow {\rm Wh}( R[G]) \rightarrow {\rm Wh}^{\prime }(R[G]) \rightarrow 1
\end{equation}%
\begin{equation}\label{eq3.13}
1\rightarrow \SK_{1}(R[G]) \rightarrow {\rm Wh}( R[G]) \overset{\Gamma }{\rightarrow }\phi (
I_{G}) \rightarrow \frac{R}{(1-F) R}\otimes
G^{\rm ab}\rightarrow 1.
\end{equation}%
where $\Gamma $ is obtained by composing $\mathrm{Det}$ with $p^{-1}\nu$.
Suppose now that we have an exact sequence of $p$-groups
\begin{equation*}
1\rightarrow H\overset{\iota }{\rightarrow }\widetilde{G}\overset{\alpha }{%
\rightarrow }G\rightarrow 1.
\end{equation*}%
Our study of $\SK_{1}$ will be based on the detailed study of the two groups
\begin{eqnarray}
\mathcal{H} &=&\ker ( \alpha _{\ast }) =\ker ( \SK_{1}( R[ \widetilde{G}] ) \rightarrow \SK_{1}( R[ {G}]) ) , \\
\mathcal{C} &=&\mathrm{coker }( \alpha _{\ast }) =\mathrm{coker
}( \SK_{1}( R[ \widetilde{G}] ) \rightarrow
\SK_{1}( R[ G]) ) .
\end{eqnarray}
\smallskip


\subsection{The $\mathcal{H}$-subgroup}

\begin{lemma}
If $\ G$ is an abelian $p$-group, then $\mathrm{SK}_{1}( R[ G
] ) =(0).$
\end{lemma}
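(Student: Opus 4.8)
The plan is to use that $G$ abelian makes $R[G]$ commutative, so that on the subgroup of $\Kr_1(R[G])$ in which $\SK_1(R[G])$ sits, the map $\mathrm{Det}$ is actually injective.

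First I would record the relevant ``commutative'' facts. Since $G$ is abelian, $R[G]$ is a commutative ring, so the ordinary determinant $\det\colon \Kr_1(R[G])\to R[G]^\times$ is defined and satisfies $\det\circ\,\kappa=\mathrm{id}_{R[G]^\times}$, where $\kappa\colon R[G]^\times\to\Kr_1(R[G])$ is as in Section~\ref{s2}; in particular $\kappa$ is injective. Moreover each irreducible $N^c$-valued character $\chi$ of the abelian group $G$ is a homomorphism $G\to N^{c\times}$, hence extends to a ring homomorphism $\chi\colon R[G]\to N^c$, and for $u\in R[G]^\times$ one checks directly from the definition of $\mathrm{Det}$ that $\mathrm{Det}(\kappa(u))=(\chi(u))_\chi\in\bigoplus_\chi N^{c\times}=\Kr_1(N^c[G])$. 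The ring homomorphism
\[
R[G]\;\hookrightarrow\;N[G]\;\hookrightarrow\;N^c[G]\;=\;\bigoplus_\chi N^c,\qquad a\longmapsto(\chi(a))_\chi,
\]
is injective, the last identification being valid because $N^c$ is algebraically closed of characteristic $0$ and $G$ is abelian. Consequently the composite $\mathrm{Det}\circ\kappa\colon R[G]^\times\to\bigoplus_\chi N^{c\times}$ is injective.

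Next, since $I_G$ is contained in the Jacobson radical of $R[G]$, Lemma~\ref{le14}(b) applies with $S=R[G]$, $I=I_G$ and gives $\Kr_1(R[G],I_G)=\kappa_{I_G}(1+I_G)$; hence the image $\Kr_1^{\prime}(R[G],I_G)$ of $\Kr_1(R[G],I_G)$ in $\Kr_1(R[G])$ equals the subgroup $\kappa(1+I_G)$. On the other hand, the commutative diagram displayed just before $(\ref{eq3.12})$, together with the Standing Hypothesis $\SK_1(R)=\{1\}$, shows that $\SK_1(R[G])\subseteq \Kr_1^{\prime}(R[G],I_G)=\kappa(1+I_G)$. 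Thus any $x\in\SK_1(R[G])$ can be written $x=\kappa(u)$ with $u\in 1+I_G\subseteq R[G]^\times$, and by definition $\mathrm{Det}(x)=\mathrm{Det}(\kappa(u))=1$. Since $\mathrm{Det}\circ\kappa$ is injective and $\mathrm{Det}(\kappa(1))=1$, we get $u=1$, hence $x=1$, proving $\SK_1(R[G])=\{1\}$.

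I do not expect a genuinely hard step here. The one point needing care is the containment $\SK_1(R[G])\subseteq \Kr_1^{\prime}(R[G],I_G)$, which guarantees that classes in $\SK_1(R[G])$ are represented by honest units in $1+I_G$ — $\mathrm{Det}$ need \emph{not} be injective on all of $\Kr_1$ of a commutative ring — but this containment is already in hand from the excerpt. (One should note in passing that $R[G]$ is in general \emph{not} semi-local, so Proposition~\ref{pro15} does not apply directly; the argument above is the appropriate substitute.)
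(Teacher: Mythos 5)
Your proof is correct and follows essentially the same route as the paper: both reduce, via $\SK_1(R[G])\subseteq \Kr_1^{\prime}(R[G],I_G)$ and Lemma \ref{le14}(b), to a unit in $1+I_G$ with trivial $\mathrm{Det}$, and then use that $\mathrm{Det}$ is injective on $R[G]^{\times}$ when $G$ is abelian to conclude the unit is $1$. The only difference is that you spell out the injectivity via the character decomposition $R[G]\hookrightarrow N^c[G]=\bigoplus_\chi N^c$, which the paper simply asserts.
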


\begin{proof} As previously, we know that $\mathrm{SK}_{1}( R[ G]
) \subset \mathrm{K}_{1}^{\prime }( R[G]
, I_{G} ) $ and so we may assume that $x\in \mathrm{SL} ( R [ G%
 ]  ) $  has the same image in $\mathrm{K}_{1} ( R [ G 
 ]  ) $ as $x^{\prime }\in  \mathrm{GL} ( R [ G ]
, I_{G} )$. Let $x\in \mathrm{SL} ( R [ G ]  ) $ be
represented by $x^{\prime }\in  \mathrm{GL} ( R [ G ]
, I_{G} )$. By Lemma 2.2(b) we may write $x^{\prime }=e_{1}\delta e_{2}$ with $e_{i}\in \mathrm{E} ( R [ G ] , I_{G} ) ,$
$\delta \in 1+I_{G}.$ Since $G$ is abelian, $\mathrm{Det}$ is an
isomorphism on $R [ G ] ^{\times };$ since $\mathrm{Det} (
\delta ) =1,$ we conclude that $\delta =1,$ and so $x^{\prime }\in
\mathrm{E} ( R [ G ] , I_{G} )$.
\end{proof}
 \medskip

We henceforth assume $G $ to be non-abelian and we recall some results from
[CPT1]. As previously we choose a central element $c $ of $G$ which has
order $p $ and we set $\overline{G}=G/\langle c\rangle ,$ so that
we have the exact sequence of groups
\begin{equation*}
1\rightarrow \langle c\rangle \rightarrow G\rightarrow \overline{G%
}\rightarrow 1.
\end{equation*}%
\ \textbf{Definitions.} We define
\begin{equation*}
\mathcal{B}=\mathcal{B}( R[ G] ) =\ker ( R[ G%
] \rightarrow R[ \overline{G}] ) =( 1-c) R%
[ G]
\end{equation*}%
and we set
\begin{equation*}
\mathcal{H=H}( R[ G] ) =\ker ( \SK_{1}( R%
[ G] ) \rightarrow \SK_{1}( R[ \overline{G}]
) ) .
\end{equation*} 
\medskip

 For the remainder of this subsection we shall suppose that $c$ is a
commutator in $G$; that is to say we can write
\begin{equation*}
c=[ h,g] =hgh^{-1}g^{-1},\ c^{p}=1.
\end{equation*}%
From Lemma 4.2 in [CPT1] we recall the exact sequence%
\begin{equation*}
1\rightarrow \mathrm{Det}( 1+( 1-c) R[ G] )
\rightarrow \mathrm{Det}( 1+I( R[ G] ) )
\rightarrow \mathrm{Det}( 1+I( R[ \overline{G}] )
) \rightarrow 1
\end{equation*}%
and so by Corollary 3.2, (3.3), Lemma 3.5 and noting that
\begin{equation*}
\mathrm{Det}( G) \cap \mathrm{Det}( 1+( 1-c) R%
[ G] ) =\{1\}
\end{equation*}%
(by projecting into $G^{\mathrm{ab}}$ and using $\mathrm{Det}( G)
\cong \mathrm{Det}( G^{\mathrm{ab}}) $ and the fact that $(
1-c) R[ G] $ is contained in $\ker ( R[ G]
\rightarrow R[ G^{\mathrm{ab}}] ) )$, we get the further
exact sequence
\begin{equation}
0\rightarrow p\phi ( \mathcal{B}) \rightarrow \mathrm{Det}(
1+I( R[ G] ) ) \rightarrow \mathrm{Det}(
1+I( R[ \overline{G}] ) ) \rightarrow 1.
\end{equation}%
Using the exact sequence
\begin{equation*}
1\rightarrow \mathrm{Det}( 1+I( R[ G] ) )
\rightarrow \mathrm{Det}( R[ G] ^{\times })
\rightarrow R^{\times }\rightarrow 1
\end{equation*}%
first for $ G$, and then with $G$ replaced by $\overline{G},$ and using the
snake lemma we get
\begin{equation*}
0\rightarrow p\phi ( \mathcal{B}) \rightarrow \mathrm{Det}( R%
[ G] ^{\times }) \rightarrow \mathrm{Det}( R[
\overline{G}] ^{\times }) \rightarrow 1.
\end{equation*}

\begin{proposition}
(a) The quotient map $q_{\ast }:\ \mathrm{SK}_{1}( R[ G]
) \rightarrow   \mathrm{SK}_{1}( R[ \overline{G}]
) $ is surjective;

(b)  for $\mathcal{H}$ as defined above,
\begin{equation*}
\mathcal{H=}\{ \kappa ( x) \mid x\in \mathrm{GL}( R%
[ G] , \mathcal{B} )\  \text{with }\mathrm{Det}(
x) =1\} .
\end{equation*}
\end{proposition}

\begin{proof} This follows at once from the commutative diagram%
\begin{equation*}
\begin{array}{ccccccc}
0\rightarrow & \mathcal{H} & \hookrightarrow & \mathrm{SK}_{1}( R[
G] ) &  \xrightarrow{q_{\ast }}   & \mathrm{SK}_{1}(
R[ \overline{G}] ) & \rightarrow 0 \\
& \downarrow &  & \downarrow &  & \downarrow &  \\
0\rightarrow & \mathrm{K}_{1}^{\prime }(R[ G] ,\mathcal{B)} &
\hookrightarrow & \mathrm{K}_{1}(R[ G] \mathcal{)} & \rightarrow
& \mathrm{K}_{1}(R[ \overline{G}] \mathcal{)} & \rightarrow 0 \\
& \downarrow &  & \downarrow &  & \downarrow &  \\
0\rightarrow & \mathrm{Det}( \mathrm{K}_{1}(R[ G] ,\mathcal{%
B)}) & \hookrightarrow & \mathrm{Det}( \mathrm{K}_{1}(R[ G%
] \mathcal{)}) & \rightarrow & \mathrm{Det}( \mathrm{K}%
_{1}(R[ \overline{G}] \mathcal{)}) & \rightarrow 0.%
\end{array}%
\end{equation*}

Here the middle row is exact: indeed, the right-hand arrow is surjective, as
$\mathcal{B}$ is contained in the Jacobson radical of $R[ G] $;
and the fact that $\mathrm{K}_{1}^{\prime }(R[ G] ,\mathcal{B)}$
is equal to the kernel of this map follows from (2.1). By Lemma 2.2 (b) we
know that $\mathrm{Det}( \mathrm{K}_{1}(R[ G] ,\mathcal{B)}%
) =\mathrm{Det(}1+\mathcal{B)}$ and by (3.8) we can identify $\mathrm{%
Det}( 1+\mathcal{B)}) $ with $p\phi ( \mathcal{B}) ;$
the exactness of the middle row then follows   from the exact sequence given
prior to the statement of the proposition. Therefore, by the snake lemma, we
know that the top row is exact, and in particular $q_{\ast }$ is
surjective. 
\end{proof} 

\bigskip

Recall that $R_{n}=R/p^{n}R$. We now apply the reduction map $%
\mathrm{K}_{1}( R[ G] ) \rightarrow \mathrm{K}%
_{1}( R_{n}[ G] ) $ to the exact sequence
\begin{equation*}
0\rightarrow \mathcal{H}\rightarrow \mathrm{SK}_{1}( R[ G]
) \rightarrow \mathrm{SK}_{1}( R[ \overline{G}]
) \rightarrow 0.
\end{equation*} 
Recall from the Introduction that $\mathrm{SK}_{1}( R[ G]
) _{n}$ denotes the image of $\mathrm{SK}_{1}( R[ G]
) $ in $\mathrm{K}_{1}( R_{n}[ G] )$. Writing $%
\mathcal{H}_{n}$ for the image of $\mathcal{H}$ in $\mathrm{K}_{1}(
R_{n}[ G] )$, we shall now show:

\begin{proposition}
The following sequence is exact:
\begin{equation*}
0\rightarrow \mathcal{H}_{n}\rightarrow \mathrm{SK}_{1}( R[ G%
] ) _{n}\rightarrow \mathrm{SK}_{1}( R[ \overline{G}%
] ) _{n}\rightarrow 0.
\end{equation*}
\end{proposition}

Before proving this proposition we first require a preliminary result:

\begin{lemma}
For brevity we shall write $( ( 1-c) ) $ and $(
p^{n}) $ for the $R[ G] $-ideals $( 1-c) R[
G] $ and $p^{n}R[ G] .$

(a) $\mathrm{Det}( 1+( ( 1-c) ) ) \cong
p\phi ( ( 1-c) ) .$

(b) For $n\geq 1,\ \mathrm{Det}( 1+( p^{n}( 1-c)
) ) \cong p^{n+1}\phi ( ( 1-c) ) .$

(c) For $n\geq 2,\ $\ the image under $\nu $ of $\mathrm{Det}( 1+(
p^{n}) ) \ $is contained$\ $in$\ \phi ( ( p^{n})
) .$

(d) For $n\geq 2$

\begin{equation*}
\mathrm{Det}( 1+( ( 1-c) ) ) \cap \mathrm{%
Det}( 1+( p^{n}) ) =\mathrm{Det}( 1+p^{n}(
1-c) ) .
\end{equation*}
\end{lemma}

\begin{proof} First note again that, because $\psi ^{p}( 1-c)
=1-c^{p}=0,\ $\ it follows that $p\phi \circ \log $ coincides with the group
logarithm on $1+( ( 1-c) ) .$ By Corollary 3.2 we know
that $p.\phi \circ \log =\nu \circ \mathrm{Det}$ on $1+( (
1-c) ) $; $\ $and so by Lemma 3.5%
\begin{equation*}
\mathrm{Det}( 1+( ( 1-c) ) ) \cong p\phi
( ( 1-c) ) ,
\end{equation*}%
and\ for $n\geq 1$ the result (b) now follows from Proposition 2.4 (which
only states the result for $n\geq 2$ although the argument holds for $n=1$)
in [CPT1]
\begin{equation*}
\mathrm{Det}( 1+( p^{n}( 1-c) ) ) \cong
p^{n+1}\phi ( ( 1-c) ) .
\end{equation*}%
To see (c) we note that for $n\geq 2$%
\begin{equation}
\mathcal{L}( 1-p^{n}x) =-\phi ( \sum_{m\geq 1}\frac{%
p^{1+mn}x^{m}}{m}-\sum_{m\geq 1}\frac{\Psi ( p^{nm}x^{m}) }{m}%
) \subset p^{n}R[ C_{G}] .  \notag
\end{equation}

The result (d) then follows since we know that
\begin{equation*}
p\phi ( ( p^{n-1}) ) \cap p\phi ( (
1-c) ) =p\phi ( ( p^{n-1}( 1-c) )
) .\ \ \ \ \
\end{equation*}%
To conclude, if $x\in \mathrm{Det}( 1+( ( 1-c) )
) \cap \mathrm{Det}( 1+( p^{n}) ) $ we have $\nu
( x) =\nu ( y) $ for some $y\in \mathrm{Det}(
1+p^{n-1}( 1-c) r) ,$ for some $r\in R[ G] ;$
the result then follows since $\nu $ is injective on $\mathrm{Det}(
1+( ( 1-c) ) )$.
\end{proof} 
\medskip

\noindent\textit{Proof of Proposition 3.10.} Consider the commutative diagram
\begin{equation*}
\begin{array}{ccccccccc}
0 & \rightarrow & \mathcal{H} & \rightarrow & \mathrm{SK}_{1}( R[ G%
] ) & \xrightarrow{q_{\ast }} & \mathrm{SK}_{1}( R%
[ \overline{G}] ) & \rightarrow & 0 \\
&  & \downarrow \alpha &  & \downarrow \beta &  & \downarrow \gamma &  &  \\
0 & \rightarrow & \mathcal{H}_{n} & \xrightarrow{i} & \mathrm{SK}%
_{1}( R[ G] ) _{n} & \xrightarrow{j} &
\mathrm{SK}_{1}( R[ \overline{G}] ) _{n} & \rightarrow
& 0%
\end{array}%
\end{equation*}%
in which, by Proposition 3.9, we know the top row to be exact. By definition
$i$ is an inclusion map and hence is injective; $j\ $is surjective since $%
\gamma $ and $q_{\ast }$ are surjective; clearly $j\circ i=0.$ Therefore it
remains to show $\ker j\subset \Im i.$ For brevity given $x\in \mathrm{%
GL}( R[ G] ) ,$ we shall write $\overline{x}$ for its
image in $\mathrm{GL}( R[ \overline{G}] )$. Now
consider $x\in \mathrm{SK}_{1}( R[ G] ) $ with $j\circ
\beta ( x) =1$; then $\gamma ( \overline{x}) =\gamma
\circ q_{\ast }( x) =1$ and so we can find $a_{i},b_{i}\in
\mathrm{GL}( R[ G] ) $ such that
\begin{equation*}
\overline{x}=\prod\limits_{i}[ \overline{a}_{i},\overline{b}_{i}]
( 1+p^{n}\overline{\lambda }_{1})
\end{equation*}%
with $\lambda _{1}\in \mathrm{M}( R[ G] ) ;$
furthermore by Lemma 2.2(b) we can find elementary matrices $d_{1},d_{2}\in
\mathrm{E}( R[ G] ,p^{n}) $ and $\lambda _{2}\in R%
[ G] ^{\times } $ so that $1+p^{n}\lambda _{1}=d_{1}(
1+p^{n}\lambda _{2}) d_{2} $,  where we view $ R[ G%
] ^{\times }$ as a subgroup of $ \mathrm{GL}( R[ G]
) $ in the usual way. Thus we can deduce that for some $\mu \in
\mathrm{M}( R[ G] ) $
\begin{equation*}
x=\prod\limits_{i}[ a_{i},b_{i}] d_{1}( 1+p^{n}\lambda
_{2}) d_{2}\cdot ( 1+( 1-c) \mu ) .
\end{equation*}%
Using Lemma 2.2(b) again we can write 
\begin{equation*}
( 1+( 1-c) \mu ) =e_{1}( 1+( 1-c) \mu
^{\prime }) e_{2}
\end{equation*}%
with $e_{i}\in \mathrm{E}( R[ G] , I) $ and $\mu
^{\prime }\in R[ G] $. Therefore, taking determinants  and using
the fact that $\mathrm{Det}( x) =1,$ we get%
\begin{equation*}
\mathrm{Det}( 1+( 1-c) \mu ) =\mathrm{Det}(
1+( 1-c) \mu ^{\prime }) \in \mathrm{Det}( 1+(
p^{n}) ) \cap \mathrm{Det}( 1+( ( 1-c)
) )
\end{equation*}%
and Lemma 3.11 shows that we can write
\begin{equation}
\mathrm{Det}( 1+( 1-c) \mu ) =\mathrm{Det}(
1+p^{n}( 1-c) \xi )
\end{equation}%
for some $\xi \in $ $R[ G] .\ $Therefore
\begin{equation}
( 1+( 1-c) \mu ) =( 1+p^{n}( 1-c) \xi
) \tau
\end{equation}%
for some element $\tau \in \mathrm{GL}( R[ G] ,I) \ $%
with the property that $\mathrm{Det}( \tau ) =1;$ hence $\tau \in
\mathcal{H}$, and so we are done, since $\beta ( x) $ is equal to
the class of $i( \tau ) \in \Im( i) ,$ as
required.\endproof
\medskip

\subsection{Some commutator identities}

We now need to analyze   $\mathcal{H}_{R}=\mathcal{H}( R[ G]
) $  in greater detail. The various groups $ \mathcal{H}_{R}$ will
provide the inductive building blocks for   $\mathrm{SK}_{1}( R[ G%
] ) .$ By Lemma 2.2(b), the elements of $\mathrm{SK}_{1}( R%
[ G] )  $ are represented by  elements of $R[ G]
^{\times }$ which have trivial determinant but which are not themselves
products of commutators (in $\mathrm{GL}( R[ G] ) ).$
This then leads us to establish a number of commutator relations and
congruences, which we then use in subsequent subsections.  All these appear
either explicitly or implicitly in Oliver's work (see for example
[O1]). One important point of this paper is that most of these
identities from Oliver's work (which concerns rings of integers of finite
extensions of $\mathbb{Q}_{p}$) continue to hold when the ring $R$ satisfies
our standing hypotheses. In what follows, we explain this and also
reorganize some of the material from Oliver's papers. \medskip

As in the previous parts of this section we continue to suppose that $G$ is
a $p$-group.  First we suppose $G$ to be non-abelian and, as previously, $c$
denotes a central commutator of order $p,$%
\begin{equation*}
c=[ h,g] =hgh^{-1}g^{-1}
\end{equation*}%
and we put $\overline{G}=G/\langle c\rangle $; hence
\begin{equation}
cg=hgh^{-1}
\end{equation}%
also $c=g^{-1}cg=g^{-1}hgh^{-1}\,$and so $ch=g^{-1}hg. $ We then note that
for $n>0,$ and $\lambda \in R$ we have
\begin{eqnarray*}
[ g^{-1}h,1-\lambda ( g-h) ^{n}] &=&g^{-1}h(
1-\lambda ( g-h) ^{n}) h^{-1}g( 1-\lambda (
g-h) ^{n}) ^{-1} \\
&=&( 1-\lambda c^{n}( g-h) ^{n}) ( 1-\lambda
( g-h) ^{n}) ^{-1}.  \notag
\end{eqnarray*}
Noting the entirely obvious identity in non-commuting indeterminates $X$ and
$Y$
\begin{equation}
( 1-YX) ( 1-X) ^{-1}=1+( 1-Y) X(
1-X) ^{-1},
\end{equation}%
setting $X=\lambda ( g-h) ^{n},\ Y=c^{n},$ and observing that $%
( 1-c^{n}) =( 1+c+\cdots +c^{n-1}) ( 1-c) ,$
we get
\begin{eqnarray*}
[ g^{-1}h,1-\lambda ( g-h) ^{n}] &=&1+\lambda (
1-c^{n}) ( g-h) ^{n}( 1-\lambda ( g-h)
^{n}) ^{-1} \\
&\equiv &1+n( 1-c) [ \lambda ( g-h) ^{n}+\lambda
^{2}( g-h) ^{2n}+ .\\
& &+  \lambda ^{3}( g-h) ^{3n}+\cdots ] \mod ( 1-c)^{2}.  \notag
\end{eqnarray*}

We now require two further such commutator congruences, both of whose proofs
are entirely similar, but which are in fact slightly easier.

The first is for $n>1.$ For such $n$ we have
\begin{equation*}
[ h,1-\lambda ( 1-c) ^{n-1}g] =( 1-\lambda (
1-c) ^{n-1}hgh^{-1}) ( 1-\lambda ( 1-c)
^{n-1}g) ^{-1}
\end{equation*}%
and, setting$\;X=\lambda ( 1-c) ^{n-1}g$ and $\;Y=hgh^{-1}g^{-1}$
in (3.13)$,$ we get
\begin{equation}
[ h,1-\lambda ( 1-c) ^{n-1}g] =1+\lambda (
1-c) ^{n-1}( g-hgh^{-1}) ( 1-\lambda ( 1-c)
^{n-1}g) ^{-1}  \notag
\end{equation}%
\begin{equation}
=1+\lambda ( 1-c) ^{n}g( 1-\lambda ( 1-c)
^{n-1}g) ^{-1}
\end{equation}%
and so%
\begin{equation}
\equiv 1+\lambda ( 1-c) ^{n}g \mod ( 1-c)^{n+1}.
\end{equation}
For the second relation we again take $n>0$ and we no longer insist that $g$
and $h$ be chosen such that $\;c=[ h,g] $ and we still suppose
that $c$ is central. We then have
\begin{equation*}
[ h,1-\lambda ( 1-c) ^{n}g] =( 1-\lambda (
1-c) ^{n}hgh^{-1}) ( 1-\lambda ( 1-c)
^{n}g) ^{-1}
\end{equation*}%
and by the identity (3.13) with $X=\lambda ( 1-c) ^{n}g,\
Y=hgh^{-1}g^{-1},\ $we have
\begin{equation}
[ h,1-\lambda ( 1-c) ^{n}g] =1+\lambda (
1-c) ^{n}( g-hgh^{-1}) ( 1-\lambda ( 1-c)
^{n}g) ^{-1}  \notag
\end{equation}%
\begin{equation}
\equiv 1+\lambda ( 1-c) ^{n}( g-hgh^{-1}) \mod
( 1-c) ^{n+1}.
\end{equation}

\subsection{The groups $\mathcal{I}$ and $\mathcal{J}$}

As always we suppose that$\ R$ satisfies the Standing Hypotheses; note that
it is here that we shall use the hypothesis that $pR$ is a prime ideal of $%
R. $ Initially in this subsection we suppose that $c$ is a central element
of $G$ of order $p$ which is not necessarily a commutator.\smallskip

We start our analysis of $\mathcal{H}( R[ G] ) $ by
using the above commutator identities taken together with the use of logarithmic
methods.\medskip

\noindent\textbf{Definition.}  Recall that $\mathcal{H}_{R}$ denotes $\ker (
\mathrm{SK}_{1}( R[ G] ) \rightarrow \mathrm{SK}%
_{1}( R[ \overline{G}] ) ) .$ We shall now
analyze $\mathcal{H}_{R}$ by means of the following groups (c.f. page 203 in
[O1]):
\begin{equation*}
\mathcal{I}_{R}=\ker [ ( 1+( 1-c) R[ G]
)  \xrightarrow{\alpha}\mathrm{K}_{1}( R[ G]
) ]
\end{equation*} 
\begin{equation*}
\mathcal{J}_{n,R}=\ker [ ( 1+( 1-c) ^{n}R[ G]
)   \xrightarrow{\alpha_n }\mathrm{Det}( R[ G%
] ^{\times }) ]
\end{equation*}%
where the map $\alpha $ is induced by the map $\mathrm{GL}( R[ G%
] ) \rightarrow \mathrm{K}_{1}( R[ G] ) $
and $\alpha _{n}$ is induced by the determinant. We shall frequently write $ 
\mathcal{J}_{R}$ for $\mathcal{J}_{1,R}$. Obviously we have the inclusion $%
\mathcal{I}_{R}\subset \mathcal{J}_{R}$. When the ring $R$ is clear from the
context we shall just write $\mathcal{I}$, $\mathcal{J}$ in place of $\mathcal{I%
}_{R}$, $\mathcal{J}_{R}.$ \medskip

Recall that $N$ denotes the field of fractions of $R$ and $\phi :N[ G%
] \rightarrow N[ C_{G}] $ is the $N$-linear map induced by
mapping each element of $G$ to its conjugacy class.

\begin{lemma}
In this lemma we do not suppose that $R$ satisfies all the Standing
Hypotheses but instead we only suppose that $R$ is an integral domain
whose field of fractions has characteritic zero. Define  
\begin{equation*}
\mathcal{S}_{G}=\{ g\in G\mid \{ c^{m}g\} \text{ are
all conjugate to }g\text{ for all }0\leq m<p\}
\end{equation*}%
and let $D_{G}$ denote the set of conjugacy classes $\phi ( \mathcal{S}%
_{G})$. Then, for $n\geq 1,$ the kernel of $\phi :( 1-c)
^{n}R[ G] \rightarrow R[ C_{G}] $ consists of the $R$%
-linear span of the following two kinds of element:

(\textbf{Type 1}) $( 1-c) ^{n}g$ for$\ g\in \mathcal{S}_{G};$

(\textbf{Type 2})$\;( 1-c) ^{n}( g-hgh^{-1}) $ for $%
g,h\in G.$\
\end{lemma}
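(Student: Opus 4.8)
The map $\phi$ is $N$-linear and surjective, and I will use two standard facts about the unrestricted map $\phi\colon N[G]\to N[C_G]$. First, its kernel is the $N$-span
\[
K:=\langle\, g-hgh^{-1}\ :\ g,h\in G\,\rangle\subseteq N[G]
\]
(proved by treating one conjugacy class at a time). Second — and this is where $\Char N=0$ enters — the averaging operator $e\colon N[G]\to N[G]$, $e(v)=\tfrac1{|G|}\sum_{h\in G}hvh^{-1}$, is an $N$-linear idempotent with $v-e(v)\in K$ for every $v$ and $\Ker e=K$. The decisive point is that, $c$ being \emph{central}, $e$ commutes with left multiplication by $c$, hence with every power $(1-c)^n$. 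I also record the elementary remark that explains the set $\mathcal S_G$: left multiplication by the central element $c$ permutes $G$ freely with orbits of size $p$, and acts on $C_G$ by $\bar g\mapsto\overline{cg}$; a class $\bar g$ is fixed precisely when $cg$ is conjugate to $g$, i.e.\ when $g\in\mathcal S_G$, so $D_G=\phi(\mathcal S_G)$ is the $c$-fixed set in $C_G$.

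For the inclusion ``$\supseteq$'': a Type~2 element $(1-c)^n(g-hgh^{-1})$ plainly lies in $(1-c)^nN[G]$, and it is killed by $\phi$ because for each $m$ the element $c^mhgh^{-1}=h(c^mg)h^{-1}$ is conjugate to $c^mg$, so $\phi\big(c^m(g-hgh^{-1})\big)=0$ and one expands $(1-c)^n$ by the binomial theorem; a Type~1 element $(1-c)^ng$ with $g\in\mathcal S_G$ also lies in $(1-c)^nN[G]$, and has $\phi$-image $\big(\sum_{m}\binom nm(-1)^m\big)\bar g=0$ since $\overline{c^mg}=\bar g$ for all $m$ by the definition of $\mathcal S_G$.

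For the inclusion ``$\subseteq$'': let $y=(1-c)^nv\in\ker(\phi|_{(1-c)^nN[G]})$, so that $y\in K=\Ker e$. Applying $e$ and moving it past $(1-c)^n$ gives $0=e(y)=(1-c)^ne(v)$, whence
\[
y=(1-c)^nv=(1-c)^n\big(v-e(v)\big)+(1-c)^ne(v)=(1-c)^n\big(v-e(v)\big),
\]
and $v-e(v)\in\Ker e=K$. Writing $v-e(v)$ as an $N$-combination of differences $g-hgh^{-1}$ exhibits $y$ as an $N$-combination of Type~2 elements. Together with the previous paragraph this gives $\ker(\phi|_{(1-c)^nN[G]})=\langle\text{Type 2 elements}\rangle$, and hence also the asserted equality with the span of all Type~1 and Type~2 elements (the Type~1 elements are a convenient but, as the argument shows, redundant set of generators).

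There is no serious obstacle here: the substantive inputs are just the classical description $\ker\phi=K$ (valid trivially over any coefficients) and the existence of the averaging idempotent $e$ with $\Ker e=K$ (which needs $|G|$ invertible in $N$, guaranteed by $\Char N=0$); after that the restriction argument is purely formal, with the exponent $n$ entering only through the single identity $e\big((1-c)^nv\big)=(1-c)^ne(v)$. In particular none of the Standing Hypotheses on $R$ is used — this is a statement about the $N$-vector spaces $N[G]$ and $N[C_G]$ — so the only thing one really has to be careful about is that everything below takes place over the fraction field rather than over $R$ itself.
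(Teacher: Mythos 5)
Your proof is correct, and it takes a genuinely different route from the paper's. The paper argues module-theoretically on the target: it views $N[C_{G}]$ as a permutation module for the central subgroup $\langle c\rangle$, splits it into free summands and the span $N[D_{G}]$ of the $c$-fixed classes, and then corrects an arbitrary $\mu\in N[G]$ by a Type~1 element $\lambda\in N[\mathcal{S}_{G}]$ so that annihilation of $\phi(\mu-\lambda)$ by $(1-c)^{n}$ forces $\phi(\mu-\lambda)=0$; both types of generators appear naturally in that decomposition. You instead work on the source with the conjugation-averaging idempotent $e$, whose existence uses $\mathrm{char}\,N=0$ (exactly the same hypothesis the paper invokes), and exploit that $e$ commutes with multiplication by the central element $(1-c)^{n}$; all the steps you need ($\ker e=\ker\phi$, $v-e(v)\in\ker\phi$, $e((1-c)^{n}v)=(1-c)^{n}e(v)$) are verified or immediate, so the argument is complete. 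What your approach buys is brevity and a sharper conclusion over $N$: the kernel is exactly $(1-c)^{n}\ker\phi$, i.e.\ the span of the Type~2 elements alone, with Type~1 redundant (indeed, for $g\in\mathcal{S}_{G}$ one has $(1-c)^{n+1}c^{j}g$ of Type~2 and $(1-c)^{n}\in(1-c)^{n+1}N\langle c\rangle$). What the paper's route buys is that it runs parallel to the subsequent integral arguments: over $R$ or $\overline{R}$ (e.g.\ in Proposition \ref{pro36} and Lemma \ref{le38}, where $T_{R,1}$ and $T_{R,2}$ both occur) the averaging operator and the inversion of $(1-c)$ on the non-trivial components are unavailable, so the Type~1 generators are genuinely needed there; your redundancy remark is valid only over the fraction field and should not be carried over to those later statements.
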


\begin{proof} Suppose that $n\geq 1,$ as in the statement of the lemma.The
central subgroup $\langle c\rangle $\ of order $p$ acts naturally
by multiplication on the conjugacy classes $C_{G}.$  Thus $R[ C_{G}%
] $ is a permutation $R\langle c\rangle $-module, and as
such it is isomorphic to a direct sum of copies of $R\langle
c\rangle $ (on which $( 1-c) ^{n}$ acts faithfully, since $%
R $ has characteristic\ zero) and a sum of copies of $R$ on which $(
1-c) ^{n}$ acts as 0. The latter summands form exactly the $R$-linear
span $R[ D_{G}] $ of $D_{G} $. We conclude that for every $\mu
\in R[ G] $ there is an element $\lambda \in R[ \mathcal{S}%
_{G}] $ such that the image $\phi ( \mu -\lambda ) $ of $%
\mu -\lambda $ in $R[ C_{G}] $ is annihilated by $(
1-c) ^{n}\ $if and only if $\phi ( \mu -\lambda ) =0 $. We
have
\begin{equation*}
( 1-c) ^{n}\mu =( 1-c) ^{n}( \mu -\lambda )
+( 1-c) ^{n}\lambda
\end{equation*}%
where $\phi ( ( 1-c) ^{n}\lambda ) =0.$ Thus $\phi
( ( 1-c) ^{n}\mu ) =0$ if and only if $\phi (
( 1-c) ^{n}( \mu -\lambda ) ) =( 1-c)
^{n}\phi ( ( \mu -\lambda ) ) =0,$ and this forces $%
\phi ( \mu -\lambda ) =0.$ Thus $\mu -\lambda $ is an $R$-linear
combination of elements of the form $g-ghg^{-1}$ with $g,h\in G,$ and this
implies the lemma.
\end{proof}
\medskip

\noindent \textbf{Definition.} We define $T_{R,i}$ for $i=1,2$ to be the following $R$%
-submodules of $R[ G] $%
\begin{equation*}
T_{R,1}=\sum\limits_{g\in \mathcal{S}_{G}}Rg,\quad
T_{R,2}=\sum\limits_{g,h}R( g-hgh^{-1}) \ .\
\end{equation*}%
We refer to elements of $T_{R,i}$ as elements of Type $i$.
\medskip

\begin{proposition}
If $u\in \mathcal{J}_{n}$, then $\log ( u) $ is of the form $%
( 1-c) ^{n}\xi $ with $\xi $ an $R$-linear sum of terms of types
1 and 2.
\end{proposition}

\begin{proof} Let $\chi $ denote an arbitrary character of $G.\ $The result then
follows at once from the equality
\begin{equation*}
\chi ( \phi \circ \log ( u) ) =\log ( \mathrm{Det}%
_{\chi }( u) ) =0
\end{equation*}%
since, by varying $\chi ,$ this implies $\phi \circ \log ( u) =0$.
\end{proof}

\bigskip

The following result explains the key role of $\mathcal{I}$ and $\mathcal{J\medskip }$ in understanding $\mathcal{H}.$

\begin{lemma} We have
$
\mathcal{H}\cong \mathcal{J}/\mathcal{I}.
$
\end{lemma}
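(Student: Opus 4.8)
The plan is to realize both $\mathcal{H}$ and $\mathcal{I}$ as the image and kernel, respectively, of one homomorphism out of $\mathcal{J}$, and then quote the first isomorphism theorem.

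First I would introduce the map $\kappa\colon R[G]^{\times}\to \Kr_{1}(R[G])$ given by the usual diagonal embedding followed by the projection $\GL(R[G])\to \Kr_{1}(R[G])$; it is a group homomorphism because $\delta(uv)=\delta(u)\delta(v)$ as matrices. Restrict $\kappa$ to $\mathcal{J}=\{u\in 1+\mathcal{B}\mid \mathrm{Det}(u)=1\}\subset R[G]^{\times}$. Since $1+\mathcal{B}=\GL_{1}(R[G],\mathcal{B})\subseteq \GL(R[G],\mathcal{B})$ and $\mathrm{Det}(\kappa(u))=\mathrm{Det}(u)=1$, Proposition \ref{pro28}(b) shows $\kappa(u)\in\mathcal{H}$; thus $\kappa$ restricts to a homomorphism $\bar\kappa\colon \mathcal{J}\to\mathcal{H}$. (Here $1+\mathcal{B}$ need not be abelian, but $\mathcal{I}$ and $\mathcal{J}$ are kernels of homomorphisms out of $1+\mathcal{B}$, hence normal, and $\mathcal{I}\subseteq\mathcal{J}$ is normal in $\mathcal{J}$, so the quotient makes sense.)

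Next I would prove surjectivity of $\bar\kappa$. By Proposition \ref{pro28}(b), every element of $\mathcal{H}$ is $\kappa(x)$ for some $x\in \GL(R[G],\mathcal{B})$ with $\mathrm{Det}(x)=1$. Applying Lemma \ref{le14}(b) with $S=R[G]$, $I=\mathcal{B}$, write $x=e_{1}\delta(u)e_{2}$ with $e_{i}\in\Er(R[G],\mathcal{B})$ and $u\in 1+\mathcal{B}$. As $\Er(R[G],\mathcal{B})\subset\Er(R[G])$, the $e_{i}$ die in $\Kr_{1}(R[G])$ and have trivial $\mathrm{Det}$, so the image of $x$ in $\Kr_{1}(R[G])$ equals $\kappa(u)$ and $\mathrm{Det}(u)=\mathrm{Det}(x)=1$, i.e. $u\in\mathcal{J}$. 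Hence $\mathcal{H}=\bar\kappa(\mathcal{J})$.

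Finally I would compute $\ker\bar\kappa$. For $u\in\mathcal{J}$ the element $\kappa(u)$ is nothing but $\alpha(u)$, the image of $u$ under $\alpha\colon 1+(1-c)R[G]\to \Kr_{1}(R[G])$. Because $\SK_{1}(R[G])\hookrightarrow\Kr_{1}(R[G])$, $\kappa(u)$ is trivial in $\mathcal{H}$ iff it is trivial in $\Kr_{1}(R[G])$, i.e. iff $\alpha(u)=1$; and $\alpha(u)=1$ already forces $\mathrm{Det}(u)=1$ since $\mathrm{Det}$ factors through $\Kr_{1}(R[G])$. Therefore $\ker\bar\kappa=\{u\in 1+(1-c)R[G]\mid \alpha(u)=1\}=\mathcal{I}$, and the first isomorphism theorem yields $\mathcal{J}/\mathcal{I}\xrightarrow{\ \sim\ }\mathcal{H}$. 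There is no serious obstacle here: the substance is carried entirely by Proposition \ref{pro28}(b) and Lemma \ref{le14}(b); the only point needing care is the determinant bookkeeping — verifying that $\mathrm{Det}(u)=1$ is implied by $\alpha(u)=1$ (so the kernel is exactly $\mathcal{I}$, not a proper subgroup) and that triviality in $\SK_{1}$ coincides with triviality in $\Kr_{1}$.
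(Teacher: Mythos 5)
Your proof is correct, and it rests on exactly the same two ingredients as the paper's: Proposition \ref{pro28}(b) and Lemma \ref{le14}(b). The difference is purely in the packaging. The paper reruns the diagram from the proof of Proposition \ref{pro28}: it sets up two four-term exact rows, namely
$1\to \mathcal{I}\to 1+((1-c))\xrightarrow{\alpha}\kappa_{I_G}(1+I(R[G]))\to\kappa_{I_{\overline G}}(1+I(R[\overline G]))\to 1$ and the corresponding row with $\mathrm{Det}$ in place of $\kappa$ (exact by (\ref{eq3.16})), splits them into short exact sequences and applies the snake lemma twice to get $\mathcal{J}/\mathcal{I}\cong\ker(\mathrm{Im}\,\alpha\to\mathrm{Im}\,\alpha_1)\cong\mathcal{H}$. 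You instead quote the set-theoretic description of $\mathcal{H}$ from Proposition \ref{pro28}(b) directly, use Lemma \ref{le14}(b) to replace a matrix representative by a unit $u\in 1+\mathcal{B}$, and obtain an explicit surjective homomorphism $\mathcal{J}\to\mathcal{H}$, $u\mapsto\kappa(u)$, whose kernel is visibly $\mathcal{I}$ (your care about $\mathcal{I}\subset\mathcal{J}$ and about triviality in $\mathcal{H}$ versus $\Kr_1(R[G])$ is exactly the right bookkeeping). What your version buys is brevity and an explicit isomorphism $u\mapsto[u]$, which is in fact how the identification $\mathcal{H}\cong\mathcal{J}/\mathcal{I}$ is exploited later (elements of $\mathcal{H}$ represented by units in $1+(1-c)R[G]$); what the paper's version buys is that the two exact rows it assembles are reused elsewhere in the section, so the snake-lemma formulation comes essentially for free there.
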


\begin{proof} From (2.1) we have the exact sequence%
\begin{equation*}
1\rightarrow \Kr_{1}^{\prime }( R[ G] ,( (
1-c) ) ) \rightarrow \Kr_{1}( R[ G] )
\rightarrow \Kr_{1}( R[ \overline{G}] ) \rightarrow 1
\end{equation*}%
where as previously $\Kr_{1}^{\prime }( R[ G] ,( (
1-c) ) ) $ denotes the image of $\Kr_{1}( R[ G%
] ,( ( 1-c) ) ) $ in $\Kr_{1}( R[ G%
] ) .$ Clearly%
\begin{equation*}
\ker ( \SK_{1}( R[ G] ) \rightarrow \SK_{1}( R%
[ \overline{G}] ) ) =\ker ( \Kr_{1}( R[ G%
] ) \rightarrow \Kr_{1}( R[ \overline{G}] )
\oplus \mathrm{Det}( R[ G] ) )
\end{equation*}%
and by the above we can write this as
\begin{eqnarray*}
\ker ( \SK_{1}( R[ G] ) \rightarrow \SK_{1}( R%
[ \overline{G}] ) ) &=&\ker ( \Kr_{1}( R[
G] ,( ( 1-c) ) ) \rightarrow \mathrm{Det}%
( R[ G] ) ) \\
&=&\ker ( \kappa _{R[ G] }) :1+( (
1-c) ) \rightarrow \mathrm{Det}( R[ G] ) \\
&=&\mathcal{J}/\mathcal{I}. 
\end{eqnarray*}
\end{proof}
\medskip

We now analyze the groups $\mathcal{I}$\ and $\mathcal{J}$ by means of
filtrations. We will show:

\begin{lemma}
For $n\geq 2$,
\begin{equation}
\mathcal{I\cap }( 1+( ( 1-c) ^{n}) ) =%
\mathcal{J\cap }( 1+( ( 1-c) ^{n}) ) \mod ( ( 1-c) ) ^{n+1}.
\end{equation}
\end{lemma}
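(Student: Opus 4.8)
Since $\mathcal{I}\subset\mathcal{J}$ we get $\mathcal{I}\cap(1+(1-c)^{n}R[G])\subset\mathcal{J}\cap(1+(1-c)^{n}R[G])=\mathcal{J}_{n}$ for free, so the content of the lemma is the reverse inclusion modulo $(1-c)^{n+1}$. Concretely, it suffices to produce, for each $u\in\mathcal{J}_{n}$, an element of $\mathcal{I}$ congruent to $u$ mod $(1-c)^{n+1}$ (any such element automatically lies in $1+(1-c)^{n}R[G]$ as well). The natural bookkeeping device is the abelian group $V_{n}=(1-c)^{n}R[G]/(1-c)^{n+1}R[G]$ together with the map $\pi_{n}\colon 1+(1-c)^{n}R[G]\to V_{n}$, $\ 1+(1-c)^{n}\eta\mapsto(1-c)^{n}\eta\bmod(1-c)^{n+1}$. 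Since $c$ is central and $2n\ge n+1$, $\pi_{n}$ is a surjective group homomorphism with kernel $1+(1-c)^{n+1}R[G]$, and the assertion of the lemma is exactly that $\pi_{n}(\mathcal{J}_{n})\subset\pi_{n}(\mathcal{I}\cap(1+(1-c)^{n}R[G]))$.

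The first ingredient is that for $n\ge 2$ and $u=1+(1-c)^{n}\eta$ one has $\log u\equiv u-1\pmod{(1-c)^{n+1}R[G]}$, i.e.\ $\pi_{n}(u)$ is the class of $\log u$. Writing $\log u=(1-c)^{n}\eta+\sum_{k\ge2}\tfrac{(-1)^{k-1}}{k}(1-c)^{nk}\eta^{k}$, one must check that the terms with $k\ge2$ lie in $(1-c)^{n+1}R[G]$; this is immediate from $nk\ge 2n\ge n+1$ when $p\nmid k$, and when $p\mid k$ one trades powers of $(1-c)$ against powers of $p$, using $(1-c)^{p}=p\beta$ with $\beta\in(1-c)R[\langle c\rangle]$ (for $p=2$, concretely $(1-c)^{2}=2(1-c)$, so $(1-c)^{m}=2^{m-1}(1-c)$). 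I expect this valuation count — the one point where the exact prime $p$, and in particular the case $p=2$, must be treated explicitly — to be the main technical obstacle; note it is also one place where the hypothesis $n\ge 2$ is genuinely used.

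Now take $u\in\mathcal{J}_{n}$. By Proposition \ref{pro32} (which rests on Lemma \ref{le31}) we may write $\log u=(1-c)^{n}\xi$ with $\xi=\sum_{i}a_{i}g_{i}+\sum_{j}b_{j}(g_{j}-h_{j}g_{j}h_{j}^{-1})$, where $a_{i},b_{j}\in R$, $g_{i}\in\mathcal{S}_{G}$, and $g_{j},h_{j}\in G$. For $g_{i}\in\mathcal{S}_{G}$ the element $cg_{i}$ is conjugate to $g_{i}$, so $c=[h_{g_{i}},g_{i}]$ for a suitable $h_{g_{i}}\in G$, and then (\ref{eq3.24}), available precisely because $n>1$, gives $[\,h_{g_{i}},1-a_{i}(1-c)^{n-1}g_{i}\,]\equiv 1+a_{i}(1-c)^{n}g_{i}\pmod{(1-c)^{n+1}}$; likewise (\ref{eq3.25}) gives $[\,h_{j},1-b_{j}(1-c)^{n}g_{j}\,]\equiv 1+b_{j}(1-c)^{n}(g_{j}-h_{j}g_{j}h_{j}^{-1})\pmod{(1-c)^{n+1}}$. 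By these congruences each such commutator differs from an element of $1+(1-c)^{n}R[G]$ by something in $(1-c)^{n+1}R[G]$, hence itself lies in $1+(1-c)^{n}R[G]\subset 1+(1-c)R[G]$; and being a commutator of elements of $R[G]^{\times}\subset\GL(R[G])$ it maps to $1$ in $\Kr_{1}(R[G])$. Therefore all these commutators lie in $\mathcal{I}$.

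Finally put $v=\prod_{i}[\,h_{g_{i}},1-a_{i}(1-c)^{n-1}g_{i}\,]\cdot\prod_{j}[\,h_{j},1-b_{j}(1-c)^{n}g_{j}\,]$. As a product of elements of the subgroup $\mathcal{I}$ it lies in $\mathcal{I}$, and it lies in $1+(1-c)^{n}R[G]$; applying the homomorphism $\pi_{n}$ and the congruences of the previous paragraph gives $\pi_{n}(v)=\sum_{i}a_{i}(1-c)^{n}g_{i}+\sum_{j}b_{j}(1-c)^{n}(g_{j}-h_{j}g_{j}h_{j}^{-1})\bmod(1-c)^{n+1}$, which is the class of $(1-c)^{n}\xi=\log u$. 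By the first ingredient this class equals $\pi_{n}(u)$, so $v\equiv u\pmod{(1-c)^{n+1}}$ with $v\in\mathcal{I}\cap(1+(1-c)^{n}R[G])$. This establishes $\pi_{n}(\mathcal{J}_{n})\subset\pi_{n}(\mathcal{I}\cap(1+(1-c)^{n}R[G]))$ and hence the lemma.
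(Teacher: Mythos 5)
Your proposal is correct and follows essentially the same route as the paper: after the trivial inclusion, use $\log u\equiv u-1\bmod(1-c)^{n+1}$ together with Proposition~\ref{pro32} to write $\log u$ as $(1-c)^{n}$ times a sum of Type 1 and Type 2 terms, and then realize each term as a commutator modulo $(1-c)^{n+1}$ via the identities (\ref{eq3.24}) and (\ref{eq3.25}) (the paper uses the inline computation (\ref{eq3.27}) for the Type 2 terms, which is equivalent). Your additions — the homomorphism $\pi_{n}$ onto $(1-c)^{n}R[G]/(1-c)^{n+1}R[G]$ and the explicit valuation check for $\log u\equiv u-1$ using Lemma~\ref{le35} — are just a cleaner packaging of steps the paper asserts without detail.
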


\begin{proof} (Note that this proof is very similar to that of Theorem
1.5 with the role of $p^{n}$ replaced by $( 1-c) ^{n}.)$

We show (3.16) for all $n\geq 2.$ The inclusion $\subset $ results from the
fact that $\mathcal{I}\subset \mathcal{J}.$ Conversely, to show the
inclusion $\supset ,$ we consider a typical generator $u$ of $\mathcal{J\cap
}( 1+( ( 1-c) ^{n}) ) .$ Note that since $%
n\geq 2$ we know%
\begin{equation*}
\log ( u) \equiv u-1\mod ( 1-c) ^{n+1}.
\end{equation*}%
We wish to show that $u\in \mathcal{I}\mod (( 1-c)
) ^{n+1}.\ $Since $\mathrm{Det}( u) =1,$ by Proposition
3.13 we know that we can write
\begin{equation*}
\log ( u) =\sum\nolimits_{i}n_{i}( 1-c)
^{n}g_{i}+\sum\nolimits_{j}m_{j}( 1-c) ^{n}(
h_{j}-h_{j}^{l_{j}})
\end{equation*}%
where $n_{i},\ m_{j}\in R$, the $g_{i}$ are of Type 1 and the $l_{j}\in G$
(so that the latter right hand terms are all of Type 2). Thus for some $%
k_{i}\in G$ we have
\begin{equation*}
[ k_{i},g_{i}] =c.
\end{equation*}%
We therefore have shown that%
\begin{equation*}
u\equiv 1+\sum\nolimits_{i}n_{i}( 1-c)
^{n}g_{i}+\sum\nolimits_{j}m_{j}( 1-c) ^{n}(
h_{j}-h_{j}^{l_{j}}) \mod( 1-c) ^{n+1}
\end{equation*}%
and so we have shown that we have the congruence $\mod(
1-c) ^{n+1}$
\begin{equation*}
u\equiv \prod\nolimits_{i}( 1+n_{i}( 1-c) ^{n}g_{i})
\prod\nolimits_{j}( 1+m_{j}( 1-c) ^{n}(
h_{j}-h_{j}^{l_{j}}) )
\end{equation*}

Since
\begin{equation*}
[ k_{i},g_{i}^{k_{i}}] =[ k_{i},k_{i}^{-1}g_{i}k_{i}] =%
[ k_{i},g_{i}c^{-1}] =[ k_{i},g_{i}] =c
\end{equation*}%
$\ $by (3.14), we know that%
\begin{equation*}
[ k_{i},( 1-n_{i}( 1-c) ^{n-1}g_{i}) ]
\equiv 1+n_{i}( 1-c) ^{n}g_{i}\mod( 1-c) ^{n+1}
\end{equation*}%
and of course%
\begin{eqnarray*}
[ ( 1+m_{j}( 1-c) ^{n}h_{j}) ,l_{j}^{-1}]
&=&( 1+m_{j}( 1-c) ^{n}h_{j}) l_{j}^{-1}(
1+m_{j}( 1-c) ^{n}h_{j}) ^{-1}l_{j} \\
&\equiv &( 1+m_{j}( 1-c) ^{n}h_{j}) (
1-m_{j}( 1-c) ^{n}h_{j}^{l_{j}}) \mod(
1-c) ^{n+1} \\
&\equiv &( 1+m_{j}( 1-c) ^{n}(
h_{j}-h_{j}^{l_{j}}) ) \mod( 1-c) ^{n+1}
\end{eqnarray*}%
and so 
\begin{equation*}
u\equiv \prod\nolimits_{i}[ k_{i},1-n_{i}( 1-c) ^{n-1}g_{i}%
] \prod\nolimits_{j}[ 1+m_{j}( 1-c) ^{n}h_{j},\
l_{j}^{-1}] \mod( 1-c) ^{n+1}.
\end{equation*}%
This then shows that $u\in \mathcal{I}\mod( ( 1-c)
) ^{n+1}$ as required.  
\end{proof} 
\medskip

\noindent \textbf{Definition.} We let $\overline{\mathcal{I}}$, resp.\ $%
\overline{\mathcal{J}}$, denote the image of $\mathcal{I}$, resp. $\mathcal{J},
$   in  $( 1+( ( 1-c) ) ) \mod(
( 1-c) ^{2}) .$ We note that by Lemma 3.15, the above shows
that
\begin{equation}
\mathcal{H}\cong \overline{\mathcal{J}}/\overline{\mathcal{I}}.
\end{equation}%

\subsection{Logarithmic methods}

Next we analyze the groups $\overline{\mathcal{I}}$  and $\overline{\mathcal{%
J}}$  via logarithmic methods in order to evaluate the right-hand term in
(3.17). In the remainder of this subsection we analyze $\overline{\mathcal{J}%
}$; we\ shall then analyze $\overline{\mathcal{I}}$ in the next subsection.

We write $\overline{R}=R/{pR}$. 
Note that for any $r\in R[ G
] $, if $( 1-c) r=0,$ then $r$ is a multiple of $%
\sum_{n=0}^{p-1}c^{n}.$ From Lemma 3.6 in [CPT1] we quote the elementary congruence:

\begin{lemma}
We have $( 1-c)^{p}\equiv -p( 1-c) \mod p(
1-c) ^{2}.$
\end{lemma}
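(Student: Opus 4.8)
The plan is to carry out the whole computation inside the commutative subring $R[\langle c\rangle]$ of $R[G]$, the only role of the ambient ring $R[G]$ being to give meaning to the stated congruence modulo $p(1-c)^{2}R[G]$. Write $t := 1-c$. Since $c$ lies in the centre of $G$, the element $t$ is central in $R[G]$, so $t^{2}R[G]=R[G]t^{2}$ and $pt^{2}R[G]$ are two-sided ideals and the congruences below are genuine.

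First I would exploit the relation $c^{p}=1$. Writing $c=1-t$ and expanding by the binomial theorem in the commutative ring $R[\langle c\rangle]$,
\[
1 = c^{p} = (1-t)^{p} = \sum_{k=0}^{p}\binom{p}{k}(-1)^{k}t^{k},
\]
so that $\sum_{k=1}^{p}\binom{p}{k}(-1)^{k}t^{k}=0$; peeling off the $k=1$ term gives
\[
(-1)^{p}t^{p} = pt - \sum_{k=2}^{p-1}\binom{p}{k}(-1)^{k}t^{k}.
\]

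Next I would observe that for $2\le k\le p-1$ each summand on the right lies in $pt^{2}R[G]$, since $p\mid\binom{p}{k}$ for $0<k<p$ and $t^{k}=t^{k-2}\cdot t^{2}$. Hence $(-1)^{p}t^{p}\equiv pt \mod pt^{2}R[G]$; when $p$ is odd this is exactly $(1-c)^{p}\equiv -p(1-c)\mod p(1-c)^{2}$, which is the claim. The prime $p=2$ must be handled separately, because then the sum $\sum_{k=2}^{p-1}$ is empty and the displayed identity degenerates to $t^{2}=2t$; in that case $t^{2}+2t=4t=2t^{2}\in 2t^{2}R[G]$, so $t^{2}\equiv -2t\mod 2t^{2}$ as well.

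I do not expect any serious obstacle: the argument is elementary once one recognizes that it takes place in $R[\langle c\rangle]$ and uses only the divisibility $p\mid\binom{p}{k}$ for $0<k<p$ together with the centrality of $t$. The one point demanding a little care is the degenerate case $p=2$, where the ``error sum'' is absent and one must instead invoke $t^{2}=2t$ directly in order to check that $4t=2t^{2}$ lies in the relevant ideal.
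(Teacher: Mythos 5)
Your argument is correct, including the separate check at $p=2$ where the error sum is empty and one uses $t^{2}=2t$ directly. The paper gives no proof here, simply quoting Lemma 3.6 of [CPT1], and your binomial expansion of $c^{p}=(1-t)^{p}=1$ together with the divisibility $p\mid\binom{p}{k}$ for $0<k<p$ is exactly the standard elementary argument behind that quoted result.
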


\begin{lemma}
There is a natural isomorphism%
\begin{equation*}
\frac{1+( 1-c) R[ G] }{1+( 1-c) ^{2}R[
G] }\cong \frac{( 1-c) R[ G] }{( 1-c)
^{2}R[ G] }\cong \overline{R}[ G] /((
1-c)) ,
\end{equation*}
\end{lemma}

\begin{proof} First we consider the commutative diagram with exact rows where the
vertical arrows are multiplication by $( 1-c) $
\begin{equation*}
\begin{array}{ccccccccc}
0 & \rightarrow & ( 1-c) \overline{R}[ G] & \rightarrow
& \overline{R}[ G] & \rightarrow & \overline{R}[ G]
/( 1-c) & \rightarrow & 0 \\
&  & \downarrow &  & \downarrow &  & \downarrow &  &  \\
0 & \rightarrow & ( 1-c) ^{2}\overline{R}[ G] &
\rightarrow & ( 1-c) \overline{R}[ G] & \rightarrow &
( 1-c) \overline{R}[ G] /( 1-c) ^{2}%
\overline{R}[ G] & \rightarrow & 0.%
\end{array}%
\end{equation*}%
Since the left and central downward arrows both have the same kernel, namely
$( 1-c) ^{p-1}\overline{R}[ G] ,$ by the snake lemma
we get%
\begin{equation}
\overline{R}[ G] /( 1-c) \cong ( 1-c)
\overline{R}[ G] /( 1-c) ^{2}\overline{R}[ G%
] .
\end{equation}%
Next we consider the commutative diagram where the vertical arrows are
induced by reduction $\mod {p}$%
\begin{equation*}
\begin{array}{ccccccccc}
0 & \rightarrow & ( 1-c) ^{2}R[ G] & \rightarrow &
( 1-c) R[ G] & \rightarrow & ( 1-c) R [
G] /( 1-c) ^{2}R[ G] & \rightarrow & 0 \\
&  & \downarrow &  & \downarrow &  & \downarrow &  &  \\
0 & \rightarrow & ( 1-c) ^{2}\overline{R}[ G] &
\rightarrow & ( 1-c) \overline{R}[ G] & \rightarrow &
( 1-c) \overline{R}[ G] /( 1-c) ^{2}%
\overline{R}[ G] & \rightarrow & 0%
\end{array}%
\end{equation*}%
and note that the left and central vertical maps both have kernel $p(
1-c) R[ G] =( 1-c) ^{p}R[ G] $. This
then gives the isomorphism%
\begin{equation*}
( 1-c) R[ G] /( 1-c) ^{2}R[ G]
\cong ( 1-c) \overline{R}[ G] /( 1-c) ^{2}%
\overline{R}[ G]
\end{equation*}%
as required. 
\end{proof}
\medskip

\noindent\textbf{Definition. } We define $L$ to be the composite map%
\begin{equation*}
L:\frac{1+( 1-c) R[ G] }{1+( 1-c) ^{2}R%
[ G] }\cong \frac{( 1-c) R[ G] }{(
1-c) ^{2}R[ G] }\cong \overline{R}[ G] /
( 1-c)  \xrightarrow{\phi }\overline{R}[ C_{G}%
] /(( 1-c)) .
\end{equation*}

\begin{lemma}
The logarithm induces a map $\log :1+( 1-c) R[ G]
\rightarrow ( 1-c) R[ G] $ and for $r\in R[ G%
] ,$ using Lemma 3.16,  we have
\begin{equation*}
\log ( 1+( 1-c) r) \equiv ( 1-c) (
r-r^{p}) \mod( 1-c) ^{2}R[ G] .
\end{equation*}
\end{lemma}
\medskip

\noindent\textbf{Definition. } Writing $( 1-c) ^{-1}$ for the isomorphism
in (3.18) we define
\begin{equation*}
\lambda :\frac{1+( 1-c) R[ G] }{1+( 1-c)
^{2}R[ G] }\rightarrow \overline{R}[ C_{G}] /
(( 1-c))
\end{equation*}%
to be the composite
\begin{equation*}
\lambda =\phi \circ ( 1-c) ^{-1}\circ \log \mod(
( 1-c) ,p) =( 1-c) ^{-1}\phi \circ \log \mod ( ( 1-c) ,p)
\end{equation*}%
$\ $\ where%
\begin{equation*}
\lambda ( 1+( 1-c) x) =\phi \circ ( 1-c)
^{-1}\circ \{\log ( 1+( 1-c) x) \mod(
( 1-c) ,p) \}.
\end{equation*}

\bigskip

In summary, we see that, using the map $L$ defined above, we can view $%
\lambda $ as the composite%
\begin{equation}
\lambda :\frac{1+( 1-c) R[ G] }{1+( 1-c)
^{2}R[ G] }  \xrightarrow{ \ L \ }\overline{R}[ C_{G}%
] /(( 1-c))   \xrightarrow{1-\Psi }\overline{%
R}[ C_{G}] /( (1-c)) .
\end{equation}
\medskip

\begin{lemma}
Using the above Lemma 3.18 we obtain
\begin{equation}
\lambda ( 1+( 1-c) r) \equiv \phi ( r)
-\phi ( r^{p}) \ \equiv \phi ( r) -\Psi \circ \phi
( r) \mod( ( 1-c) ,p)
\end{equation}
\end{lemma}

\begin{proof} We need to show $\phi ( r^{p}) \ \equiv \Psi \circ \phi
( r) \mod pR[ C_{G}] $ and this comes from
Proposition 2.2 in Ch. 60 of [T].
\end{proof}
\medskip

\begin{lemma} We have:

(a) \ \ 
$
\ker ( L) =1+( 1-c) T_{\overline{R},2}%
\mod( 1-c) ^{2};
$

(b) \ \ $
\ker ( \lambda ) =( 1+( 1-c) T_{\overline{R}%
,2}) \langle c\rangle \mod( 1-c) ^{2}$.
\end{lemma}

\begin{proof}By the definition of $L$ we see that $\ker(L)$ is isomorphic, under the map in Lemma 3.17, to the kernel of the map from $\overline{R}[G]$ to $\overline{R}[C_{G}]$; that is to say $T_{\overline{R},2}$.
This proves part (a).

We now prove (b).  Note that, since $1-\Psi $ is the identity on $(
1-c) \overline{R}[ C_{G}] ,$ we have the equalities%
\begin{eqnarray*}
\ker ( 1-\Psi :\overline{R}[ C_{G}] /
1-c) \rightarrow \overline{R}[ C_{G}]/(
1-c) ) &=&\ker ( 1-\Psi :\overline{R}[ C_{G}]
\rightarrow \overline{R}[ C_{G}] ) \\
&=&\ker ( 1-\Psi :\overline{R}\rightarrow \overline{R}) =\mathbb{F%
}_{p}
\end{eqnarray*}%
with the penultimate equality holding because $\Psi $ is  nilpotent on the
augmentation ideal $I( \overline{R}[ G] ) $ and with
the final equality holding because, by  hypothesis,  $\overline{R}$ is an
integral domain of characteristic $p $. Hence
\begin{equation*}
\ker ( \lambda ) =L^{-1}( \mathbb{F}_{p}) =\ker (
L) \cdot \langle c\rangle . 
\end{equation*}
\end{proof}

\begin{proposition}
For brevity we write $T_{1}$ and $T_{2}$ for $T_{R,1}$ and $T_{R,2}  $ which 
were defined after Lemma 3.12, and let $\overline{T}_{i}$ denote $T_{R,i}%
\mod {p}.$ Again we assume that $c$ is a central element of order $p$
which is not necessarily a commutator of $G$; then

(a)
\begin{equation}
\mathcal{H}\cong \overline{\mathcal{J}}/\overline{\mathcal{I}}\cong \lambda
( \overline{\mathcal{J}}) /\lambda ( \overline{\mathcal{I}}%
) ;
\end{equation}%
(b)
\begin{equation*}
\lambda ( \overline{\mathcal{J}}) =\phi \{ \Ima(
1-\Psi ) \overline{R}[ G] \cap ( \overline{T}_{1}+%
\overline{T}_{2}) \} ;
\end{equation*}%
(c)%
\begin{equation}
\lambda ( \overline{\mathcal{J}}) =\Ima( 1-\Psi )
\overline{R}[ C_{G}] \cap \widetilde{R}[ D_{G}] .
\end{equation}
\end{proposition}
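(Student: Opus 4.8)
The plan is to handle (a), then (b) (which carries the real content), and to deduce (c) from (b) by a formal lifting argument.

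\smallskip
\noindent\textit{Part (a).} The isomorphism $\mathcal{H}\cong\overline{\mathcal{J}}/\overline{\mathcal{I}}$ is (\ref{eq3.28}), obtained from Lemma~\ref{le34}. Since $\lambda$ is a homomorphism on $\tfrac{1+((1-c))}{1+((1-c)^{2})}$ carrying $\overline{\mathcal{I}}$ into $\lambda(\overline{\mathcal{I}})$ and $\overline{\mathcal{J}}$ onto $\lambda(\overline{\mathcal{J}})$, it induces a surjection $\overline{\mathcal{J}}/\overline{\mathcal{I}}\twoheadrightarrow\lambda(\overline{\mathcal{J}})/\lambda(\overline{\mathcal{I}})$, so it suffices to check $\ker(\lambda|_{\overline{\mathcal{J}}})\subseteq\overline{\mathcal{I}}$. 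Let $v=1+(1-c)u\in\mathcal{J}$ with $\lambda(v)=0$. By Proposition~\ref{pro32} write $\log v=(1-c)(\xi_{1}+\xi_{2})$ with $\xi_{1}\in T_{R,1}$, $\xi_{2}\in T_{R,2}$; as $c$ is central, $\log v\equiv(1-c)u\pmod{(1-c)^{2}}$, so $v\equiv 1+(1-c)(\xi_{1}+\xi_{2})\pmod{(1-c)^{2}}$. Since $\phi$ kills Type~2 elements and $(1-c)$ kills $\phi(T_{R,1})\subseteq R[D_{G}]$, $\lambda(v)$ is the image of $\phi(\xi_{1})$ in $\overline{R}[C_{G}]/(1-c)\overline{R}[C_{G}]$; as $\overline{R}[D_{G}]$ is a direct summand of the latter, $\lambda(v)=0$ forces $\phi(\xi_{1})=0$ in $\overline{R}[D_{G}]$, i.e. the coefficient sum of $\xi_{1}\bmod p$ over each class in $D_{G}$ vanishes. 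Hence $\xi_{1}\bmod p$ is a sum of elements $g-hgh^{-1}$, so $\xi_{1}\bmod p\in\widetilde{T}_{2}$ and $v\equiv 1+(1-c)\eta\pmod{(1-c)^{2}}$ with $\eta\in\widetilde{T}_{2}$; by (\ref{eq3.25}) with $n=1$ this is a product of commutators in $\GL(R[G])$, so $v\in\overline{\mathcal{I}}$.

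\smallskip
\noindent\textit{Part (b).} I use the factorization (\ref{eq3.30}), $\lambda=(1-\Psi)\circ L$ with $L\colon\tfrac{1+((1-c))}{1+((1-c)^{2})}\xrightarrow{\ \sim\ }\overline{R}[G]/(1-c)\xrightarrow{\ \phi\ }\overline{R}[C_{G}]/(1-c)$, together with two easy facts: $\phi$ commutes with $\Psi$, and the kernel of $\phi$ on $\overline{R}[G]/(1-c)$ is exactly the image of $\widetilde{T}_{2}$, since the elements $g-hgh^{-1}$ and $(1-c)$ span the kernel of the conjugacy-class map modulo $p$ and modulo $(1-c)$. For the inclusion ``$\subseteq$'': if $v=1+(1-c)u\in\mathcal{J}$, then by Proposition~\ref{pro32} $\log v=(1-c)\xi$ with $\xi$ a sum of Type~1 and Type~2 elements, so the class $\bar u$ of $u$ equals $\bar\xi\in\widetilde{T}_{1}+\widetilde{T}_{2}$; by (\ref{eq3.29}), $\lambda(v)=(1-\Psi)\phi(\bar u)$, while computing $\lambda(v)$ from $\log v=(1-c)\xi$ also gives $\lambda(v)=\phi(\bar\xi)$, whence $\phi(\Psi\bar u)=0$, i.e. $\Psi\bar u\in\widetilde{T}_{2}$; therefore $(1-\Psi)\bar u=\bar u-\Psi\bar u$ lies in $\widetilde{T}_{1}+\widetilde{T}_{2}$ and in $\mathrm{Im}(1-\Psi)\overline{R}[G]$, and $\lambda(v)=\phi((1-\Psi)\bar u)$. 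The reverse inclusion is the main obstacle: given $\eta\in\mathrm{Im}(1-\Psi)\overline{R}[G]\cap(\widetilde{T}_{1}+\widetilde{T}_{2})$, one must exhibit $v\in\mathcal{J}$ with $\lambda(v)=\phi(\eta)$. For a Type~2 leading term this is immediate from (\ref{eq3.25}) with $n=1$, since that commutator already lies in $\mathcal{J}$; for a Type~1 leading term one uses the identities (\ref{eq3.22})--(\ref{eq3.25}) to build a suitable product of commutators and diagonal units, the point being that the obstruction to $\phi\log v=0$ vanishes at the leading level because $(1-c)$ annihilates $\phi(\widetilde{T}_{1})$, and at higher levels along the $(1-c)$-adic filtration it is expressed through $\Psi$ and the congruence $(1-c)^{p}\equiv-p(1-c)$ of Lemma~\ref{le35}, so the hypothesis $\eta\in\mathrm{Im}(1-\Psi)$ supplies exactly the cancellation needed; these corrections are removed by induction on the filtration, as in the proof of Lemma~\ref{le34}. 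Feeding the resulting description of $\overline{\mathcal{J}}$ through $\lambda=(1-\Psi)\circ L$, using $\phi\Psi=\Psi\phi$ and surjectivity of $\phi$, gives the formula in (b).

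\smallskip
\noindent\textit{Part (c).} This is formal from (b). As $\phi$ is surjective and commutes with $\Psi$, $\phi(\mathrm{Im}(1-\Psi)\overline{R}[G])=\mathrm{Im}(1-\Psi)\overline{R}[C_{G}]$; and $\phi(\widetilde{T}_{1}+\widetilde{T}_{2})=\phi(\widetilde{T}_{1})=\overline{R}[D_{G}]$, since $\phi$ kills Type~2 elements and carries $\mathcal{S}_{G}$ onto $D_{G}$. Hence $\lambda(\overline{\mathcal{J}})=\phi\big(\mathrm{Im}(1-\Psi)\overline{R}[G]\cap(\widetilde{T}_{1}+\widetilde{T}_{2})\big)\subseteq\mathrm{Im}(1-\Psi)\overline{R}[C_{G}]\cap\overline{R}[D_{G}]$. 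For the reverse inclusion, let $y$ lie in the right-hand side; write $y=(1-\Psi)z$ with $z\in\overline{R}[C_{G}]$, lift $z$ to $\tilde z\in\overline{R}[G]$ with $\phi(\tilde z)=z$, and put $\eta=(1-\Psi)\tilde z$. Then $\phi(\eta)=y\in\overline{R}[D_{G}]$, so $\eta\in\phi^{-1}(\overline{R}[D_{G}])$; splitting $\eta$ into its part supported on $\mathcal{S}_{G}$ (which lies in $\widetilde{T}_{1}$) and a part with vanishing conjugacy-class sums off $D_{G}$ (a sum of elements $g-hgh^{-1}$, hence in $\widetilde{T}_{2}$), we get $\eta\in\mathrm{Im}(1-\Psi)\overline{R}[G]\cap(\widetilde{T}_{1}+\widetilde{T}_{2})$ with $\phi(\eta)=y$, so $y$ lies in the set of (b). Thus the only substantial point in the whole proposition is the constructive reverse inclusion in (b).
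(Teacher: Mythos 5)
Your argument for (a) and for the inclusion $\subseteq$ in (b) rests on the congruence $\log(1+(1-c)u)\equiv(1-c)u \pmod{(1-c)^{2}}$ (equivalently, on the claim that $\bar u=\bar\xi$ when $\log v=(1-c)\xi$). This is false: by Lemma \ref{le35} the $p$-th term $\pm\frac{((1-c)u)^{p}}{p}$ of the logarithm contributes a term of the form $\mp(1-c)u^{p}$ modulo $(1-c)^{2}$, which is exactly the source of the $\Psi$ in (\ref{eq3.29}) and (\ref{eq3.30}). The element $v=c$ is a counterexample to your intermediate claims: $c\in\mathcal{J}$, $\log c=0$, $\lambda(c)=0$, yet $c\not\equiv 1\pmod{(1-c)^{2}}$ and $\phi(\Psi\bar u)=-\phi(1)\neq 0$. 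In particular your proof of (a) would show $\ker(\lambda|_{\overline{\mathcal{J}}})\subseteq 1+(1-c)\widetilde{T}_{2}$, which is wrong; the kernel of $\lambda$ is strictly larger than the kernel of $L$ by the $\mathbb{F}_{p}$-line $\langle c\rangle\bmod(1-c)^{2}$ (coming from $\ker(1-\Psi)=\mathbb{F}_{p}$ on $\overline{R}[C_{G}]/(1-c)$, which uses that $pR$ is prime), and it is precisely to absorb this line into $\overline{\mathcal{I}}$ that the hypothesis that $c$ is a commutator is needed — a hypothesis your part (a) never invokes. (The endpoint of your $\subseteq$ argument in (b) is salvageable: from $(1-\Psi)\phi(\bar u)=\phi(\bar\xi)$ and $\ker\phi=\widetilde{T}_{2}$ one gets $(1-\Psi)\bar u\in\widetilde{T}_{1}+\widetilde{T}_{2}$ directly, but not by the route you wrote.)

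The reverse inclusion in (b), which you yourself single out as the only substantial point, is not proved: your sketch (``build a suitable product of commutators and diagonal units \ldots corrections removed by induction on the filtration, as in Lemma \ref{le34}'') contains no construction, and it is not clear how such an induction produces an element of $\mathcal{J}$, since at every stage one must control $\phi\circ\log$ exactly, not just to leading order, while keeping the determinant trivial. The paper's argument is different and explicit: given $\widetilde{u}-\Psi(\widetilde{u})=\widetilde{n}$ with $n\in T_{1}+T_{2}$, formula (\ref{eq3.30}) gives $\log(1+(1-c)u)=(1-c)n+(1-c)^{2}e$; then Lemma \ref{le24} (surjectivity of the group logarithm on $1+(1-c)^{2}R[G]$, i.e. $\mathcal{L}(1+(1-c)^{2}R[G])=p\phi((1-c)^{2}R[G])$) produces $f$ with $\phi\circ\log(1+(1-c)^{2}f)=\phi((1-c)^{2}e)$, and the element $x=(1+(1-c)u)(1+(1-c)^{2}f)^{-1}$ satisfies $\phi\circ\log(x)=\phi((1-c)n)=0$ (since $(1-c)$ kills $\phi(T_{1})\subseteq R[D_{G}]$ and $\phi(T_{2})=0$), hence $\mathrm{Det}(x)=1$, $x\in\mathcal{J}$ and $\lambda(x)=\phi(n)$. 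Some such use of the exact surjectivity statement of Lemma \ref{le24} (or an equivalent device) is indispensable and is missing from your proposal; your part (c), by contrast, is correct and agrees with the paper's deduction of (c) from (b).
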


\begin{proof} Using the identity%
\begin{equation*}
1+( 1-c) ( g-hgh^{-1}) \equiv [ 1+(
1-c) g,h] \mod( 1-c) ^{2}
\end{equation*}
we see that $1+( 1-c) T_{\overline{R},2} \subset \overline{\mathcal{I}}\ $.

We first prove (a). By (3.16) it will suffice to show
\begin{equation*}
\ker ( \lambda ) \cap \overline{\mathcal{J}}=\ker ( \lambda
) \cap \overline{\mathcal{I}}
\end{equation*}%
so that $\lambda ~$induces the required second isomorphism (a). We argue by
cases according as $c$ lies in $[ G,G] \ $or not.\ Firstly, if$\
c\in $ $[ G,G] ,\ $\ then obviously $c\in \overline{\mathcal{I}}$
and so  by Lemma 3.20 and the above $\ker ( \lambda )\subset \overline{\mathcal{I}}$.
Secondly,
if\ $c\notin $ $[ G,G] ,$ then $\mathrm{Det}( c) \neq 1;$ and so $%
c\notin \mathcal{J}$; and therefore, as is easily seen by projecting onto $G^{ab}$, it follows that  $%
c\notin \overline{\mathcal{J}}$; therefore by Lemma 3.20 and the above we are done.

We now prove (b).  Recall that in Lemma 3.12, $D_{G}$ was defined as the
subset of conjugacy classes in\ $C_{G}$ which are fixed under multiplication
by $c$.\ Consider $1+u( 1-c) \in \mathcal{J}$ \ as in Proposition
3.13 and, using the fact that $\Psi ( 1-c) =1-c^{p}=1,\ $we know
that%
\begin{equation*}
0=\nu \circ \mathrm{Det}( 1+u( 1-c) ) =p\log (
1+u( 1-c) )
\end{equation*}%
with
\begin{equation*}
\log ( 1+u( 1-c) ) \equiv ( 1-c) ( \xi
_{1}+\xi _{2})
\end{equation*}%
for $\xi _{i}\in T_{R,i};$ while by\ Lemma 3.19 we know%
\begin{equation*}
\log ( 1+u( 1-c) ) \equiv ( 1-c) (
u-u^{p}) \mod( 1-c) ^{2};
\end{equation*}%
hence we have now shown:%
\begin{equation*}
\lambda ( \overline{\mathcal{J}}) \subset \phi \{ (
1-\Psi ) \overline{R}[ G] \cap ( \overline{T}_{1}+%
\overline{T}_{2}) \} .
\end{equation*}%
To complete the proof of (b) it remains to prove the opposite inclusion $%
\supseteq $. For an element $x\in R[ G] $ we again write $%
\overline{x}$ for $x\mod{p}$. Suppose now we are given$\ \ u\in R[
G] ,\ n\in T_{1}+T_{2}$ with%
\begin{equation*}
\overline{u}-\Psi ( \overline{u}) =\overline{n}\in \Ima%
( 1-\Psi ) \overline{R}[ G] \cap ( \overline{T}%
_{1}+\overline{T}_{2}) .
\end{equation*}%
Writing $u-\Psi ( u) =n+pm$ with $m\in R[ G] ,\ $it
follows from Lemma 3.18 that
\begin{eqnarray*}
\phi \circ \log ( 1+( 1-c) u) &=&\phi \circ \lbrack
(1-c)( u-u^{p}) +( 1-c) ^{2}e^{\prime }] \\
&\equiv &\phi \circ \lbrack (1-c)( u-\Psi ( u) )
+( 1-c) ^{2}e^{\prime }]\mod( 1-c) p \\
&\equiv &\phi \circ \lbrack (1-c)n+( 1-c) ^{2}e^{\prime }]\mod ( 1-c) p
\end{eqnarray*}%
and so $\phi \circ \log ( 1+( 1-c) u) =\phi \circ
\lbrack (1-c)n+( 1-c) ^{2}e]$ for some $e\in R[ G] .$
As we have seen previously, $\mathcal{L}$ and $p\phi \circ \log \ $\
coincide on $1+( 1-c) R[ G] \ ($because$\Psi (
( 1-c) ) =0),$ and so by Lemma 3.5 we know that we can find
$f\in R[ G] $ such that
\begin{equation*}
\phi \circ \log ( 1+( 1-c) ^{2}f) =\phi ( (
1-c) ^{2}e) .
\end{equation*}%
If we set $x=( 1+( 1-c) u) ( 1+( 1-c)
^{2}f) ^{-1};$ then, firstly, we note%
\begin{equation*}
\phi \circ \log ( x) =\phi ( ( 1-c) n)
\end{equation*}%
and so $\lambda ( x) =\phi ( n) \mod(
1-c) .$ Secondly, as in Lemma 3.12, we know that as $n\in
T_{1}+T_{2},$ it follows that $\phi ( ( 1-c) n) =0,\ $%
and so we deduce that
\begin{equation*}
\phi \circ \log ( x) =\phi ( ( 1-c) n) =0.
\end{equation*}%
We claim that we have now shown that $\mathrm{Det}( x) =1\ $so
that $x\in \mathcal{J}$, as required. Again we argue by cases. Firstly, if $%
\ c$ is a commutator, then we note that we have seen previously in the
proof of Lemma 3.11 that
\begin{equation*}
\ker ( \phi \circ \log ) =\ker ( \mathrm{Det}) \ \
\text{on\ \ }1+( 1-c) R[ G] .
\end{equation*}%
and so $\mathrm{Det}( x) =1$. Secondly, if $c$ is not a
commutator, then $D_{G}$ is empty and therefore $T_{1}=(0)$ and so $n\in
( 1-c) T_{2}$ and again using the identity%
\begin{equation*}
( 1+(1-c)g) h^{-1}( 1+(1-c)g) ^{-1}h\equiv 1+(
1-c) ( g-g^{h}) \mod( 1-c) ^{2}
\end{equation*}%
we easily see there is a commutator $y\in 1+( 1-c) R[ G%
] $ with the property the%
\begin{equation*}
\log ( y) \equiv ( 1-c) n\mod( 1-c)
^{2}
\end{equation*}%
and so $( 1-c) n$ $\mod( 1-c) ^{2}\in \lambda (%
\mathcal{I)\ }\subset \lambda ( \mathcal{J}) ,$ as required.

Finally, in order to prove part (c), we must show that the right-hand
expressions in (b) and (c) coincide. The inclusion of the right-hand side of
(b) in the right-hand side of (c) is obvious. Conversely, given $x\in (
1-\Psi ) \overline{R}[ C_{G}] \cap \overline{R}[ D_{G}%
] $ we may write it as
\begin{equation*}
x=( 1-\Psi ) \sum\limits_{\gamma \in C_{G}}m_{\gamma }\gamma
=\sum\limits_{\delta \in D_{G}}n_{\delta }\delta
\end{equation*}%
with $m_{\gamma },n_{\delta }\in \overline{R}.$ We then choose group
elements $\gamma ^{\prime },\delta ^{\prime }\in G$ with the property
that $\phi ( \gamma ^{\prime }) =\gamma ,\ \phi ( \delta
^{\prime }) =\delta ,$ and observe that if we define $y$ as
\begin{equation*}
y=( 1-\Psi ) \sum m_{\gamma }\gamma ^{\prime }=\sum n_{\delta
}\delta ^{\prime }+\sum\limits_{g,h}l_{g,h}( g-hgh^{-1}) ,
\end{equation*}
for some $l_{g,h}\in \overline{R}$, then $y\in ( 1-\Psi )
\overline{R}[ G] \cap ( \overline{T}_{1}+\overline{T}%
_{2}) ;$ and, so by construction, we have $\phi ( y) =x$.
\end{proof} 
\medskip

\begin{corollary}
If $  c$ is not a commutator, $\lambda ( \overline{\mathcal{J}})
=0,$ and so in this case the map $  \mathrm{SK}_{1}( R[ G]
) \rightarrow \mathrm{SK}_{1}( R[ \overline{G}]
) $ is injective. (c.f. Proposition 7 in [O1].)
\end{corollary}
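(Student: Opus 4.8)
The goal is to prove that $\mathcal{H}=\ker\bigl(\SK_{1}(R[G])\to\SK_{1}(R[\overline{G}])\bigr)$ is trivial, where $\overline{G}=G/\langle c\rangle$, by rerunning the machinery of Section~\ref{sec3} for a central element $c$ of order $p$ that need not be a commutator. The first step is to record what "$c$ not a commutator'' buys us. If $g\in\mathcal{S}_{G}$ then $cg$ is conjugate to $g$, say $cg=kgk^{-1}$, whence $c=[k,g]$; so $\mathcal{S}_{G}=\emptyset$ (equivalently, Lemma~\ref{le25} says multiplication by $c$ on $C_{G}$ is free of fixed points), hence $D_{G}=\phi(\mathcal{S}_{G})=\emptyset$ and $T_{R,1}=0$. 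One then checks that Proposition~\ref{pro32}, Lemma~\ref{le33}, Lemma~\ref{le34}, the isomorphism $\mathcal{H}\cong\overline{\mathcal{J}}/\overline{\mathcal{I}}$ of \eqref{eq3.28}, and the inclusion $\ker L=1+(1-c)T_{R,2}\subseteq\overline{\mathcal{I}}$ modulo $(1-c)^{2}$ from the proof of Proposition~\ref{pro36} are all valid for an arbitrary central $c$ of order $p$; the commutator hypothesis entered Section~\ref{sec3} only through the claim $\langle c\rangle\subseteq\overline{\mathcal{I}}$. Since $T_{R,1}=0$, Proposition~\ref{pro32} shows that every $u=1+(1-c)w\in\mathcal{J}$ satisfies $\log u=(1-c)\xi$ with $\xi$ an $R$-linear combination of elements $g-hgh^{-1}$; hence $\phi(\xi)=0$, so $\lambda(u)=\phi\bigl((1-c)^{-1}\log u\bigr)=0$, i.e.\ $\lambda(\overline{\mathcal{J}})=0$ and $\overline{\mathcal{J}}\subseteq\ker\lambda$.

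Next I would pin down $\overline{\mathcal{J}}/\overline{\mathcal{I}}$ inside $\ker\lambda/\ker L$. Because $\Psi$ is nilpotent on the augmentation ideal of $\overline{R}[C_{G}]$, fixes the class $[1_{G}]$ of the identity, and $\overline{R}$ is an integral domain of characteristic $p$, the kernel of $1-\overline{\Psi}$ on $\overline{R}[C_{G}]/(1-c)\overline{R}[C_{G}]$ is the $\mathbb{F}_{p}$-line $\mathbb{F}_{p}[1_{G}]$; since $\lambda=(1-\overline{\Psi})\circ L$ and $L(\bar{c})=-[1_{G}]$ (where $\bar{c}$ denotes $c\bmod(1-c)^{2}$), this gives $\ker\lambda/\ker L\cong\mathbb{F}_{p}$, generated by $\bar{c}$. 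Thus $\mathcal{H}\cong\overline{\mathcal{J}}/\overline{\mathcal{I}}$ is a subquotient of $\mathbb{F}_{p}$, and it remains to exclude the case $\mathcal{H}\cong\mathbb{F}_{p}$. If $L(\overline{\mathcal{J}})=0$, then $\ker L\subseteq\overline{\mathcal{I}}\subseteq\overline{\mathcal{J}}=\ker L$, so $\mathcal{H}=1$. Otherwise there is some $u_{0}=1+(1-c)w_{0}\in\mathcal{J}$ with $L(u_{0})\ne0$; by the first paragraph $L(u_{0})=\phi(\bar{w}_{0})\in\mathbb{F}_{p}[1_{G}]$, so $L(u_{0})=a_{0}[1_{G}]$ for some integer $a_{0}\in\{1,\dots,p-1\}$ (viewed in $R$).

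In this remaining case I claim $c\in[G,G]$. If not, there is a one-dimensional character $\psi$ of $G$ with $\psi(c)\ne1$, so $\zeta:=\psi(c)$ is a primitive $p$-th root of unity and $\beta:=1-\zeta$ is a non-unit of $R[\zeta]$ with $\beta^{p-1}\in pR[\zeta]$. The relation $L(u_{0})=a_{0}[1_{G}]$ together with $\lambda(u_{0})=0$ forces $w_{0}\equiv a_{0}$ modulo $(1-c)R[G]+pR[G]+\sum_{g,h}R(g-hgh^{-1})$, hence $u_{0}\equiv1+a_{0}(1-c)$ modulo $(1-c)^{2}R[G]+p(1-c)R[G]+(1-c)\sum_{g,h}R(g-hgh^{-1})$; applying the $R$-algebra homomorphism $\psi$, which kills each $g-hgh^{-1}$ and sends $1-c$ to $\beta$, and using $p\in\beta^{p-1}R[\zeta]$ to see that the displayed ideal is carried into $\beta^{2}R[\zeta]$, we obtain $1=\mathrm{Det}(u_{0})(\psi)=\psi(u_{0})\equiv1+a_{0}\beta\bmod\beta^{2}$, so $\beta\mid a_{0}$ in $R[\zeta]$ — impossible since $a_{0}$ is a unit in $R[\zeta]$ while $\beta$ is not. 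Hence $c\in[G,G]$; writing $c$ as a product of commutators of elements of $G$ and applying Whitehead's lemma gives $\kappa(c)=1$ in $\Kr_{1}(R[G])$, so $c\in\mathcal{I}$, $\bar{c}\in\overline{\mathcal{I}}$, and therefore $\ker\lambda=\ker L\cdot\langle\bar{c}\rangle\subseteq\overline{\mathcal{I}}$; combined with $\overline{\mathcal{J}}\subseteq\ker\lambda$ this forces $\overline{\mathcal{J}}=\overline{\mathcal{I}}$, i.e.\ $\mathcal{H}=1$. The step I expect to be the main obstacle is exactly this last dichotomy: unlike in the commutator case of Proposition~\ref{pro36}, one cannot simply invoke $\langle c\rangle\subseteq\overline{\mathcal{I}}$, so $\lambda(\overline{\mathcal{J}})=0$ yields only $\overline{\mathcal{J}}\subseteq\ker\lambda$ rather than $\overline{\mathcal{J}}=\overline{\mathcal{I}}$, and the residual one-dimensional "$[1_{G}]$-component'' has to be eliminated by hand through the first-order determinant computation above.
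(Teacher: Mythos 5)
Your argument is correct and runs on the same underlying machinery as the paper's proof ($\mathcal{H}\cong\overline{\mathcal{J}}/\overline{\mathcal{I}}$, the maps $L$ and $\lambda$, and the Type 1/Type 2 analysis with $\mathcal{S}_{G}=\emptyset$), but it is genuinely more careful at the one point the paper elides. The paper's proof simply invokes Proposition \ref{pro36}(c) and then \ref{pro36}(a) with $D_{G}=\emptyset$, even though Proposition \ref{pro36} is stated, and part (a) proved, under the hypothesis that $c$ \emph{is} a commutator; the commutator hypothesis enters exactly through the step $\langle \bar{c}\rangle\subseteq\overline{\mathcal{I}}$, as you observe, and the paper's proof even writes ``$c\notin[G,G]$'' although the hypothesis is only that $c$ is not a single commutator. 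Your dichotomy supplies precisely the missing argument: when $c\in[G,G]$ but is not a commutator (the case actually needed later, in the proof of Proposition \ref{pro41}) the Whitehead-lemma observation still yields $\bar{c}\in\overline{\mathcal{I}}$, and when $c\notin[G,G]$ your first-order evaluation of $\mathrm{Det}$ at an abelian character nontrivial on $c$ excludes any $\bar{c}$-component of $\overline{\mathcal{J}}$; so your write-up is self-contained where the paper is not. Two caveats. First, Proposition \ref{pro32} as stated is \emph{not} valid for an arbitrary central $c$ of order $p$: if $c\notin[G,G]$ then $u=c$ satisfies $\phi(\log c)=0$ while $\mathrm{Det}(c)\neq 1$, so only the inclusion $\mathcal{J}_{n}\subseteq\ker(\phi\circ\log)$ survives --- fortunately that is the only direction you use. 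Second, your assertion that Lemma \ref{le33} extends to non-commutator $c$ rests on the exact sequence quoted from [CPT1, Lemma 4.2] holding in that generality; this needs checking, though the paper's own two-line proof tacitly makes the same assumption.
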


\begin{proof}  If $c$ is not a commutator, then $D_{G}$ is empty and so by Prop.
3.21 (c) $\mathcal{H}$ is trivial. By the very definition of $\mathcal{H}$,
the map $\mathrm{SK}_{1}( R[ G] ) \rightarrow \mathrm{%
SK}_{1}( R[\overline{G}])$ is therefore injective. 
\end{proof} 

\bigskip

\subsection{The logarithmic image of $\mathcal{I}$}

We begin with the following straightforward generalization of Lemma 8 in
[O1]:

\begin{lemma}
If $c$ is a commutator, so that we can write $c=[ g,h] $ for $%
g,h\in G$, then for any $r\in \overline{R}=R\mod{p}:$

(i) \ $r( g-g^{k}) \in \lambda ( \overline{\mathcal{I}})
$ for any integer $k$ prime to $p;$

(ii) \ $( r-r^{p}) g\in \lambda ( \overline{\mathcal{I}}%
) ;$

(iii) \ for $g,h$ as given with $h\in \mathcal{S}_{G}$, $r( g-h)
\in \lambda ( \overline{\mathcal{I}}) .$
\end{lemma}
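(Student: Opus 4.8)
The plan is to establish (i)--(iii) by exhibiting, for the given $r\in\overline{R}$ (lifted to $R$; only its class mod $p$ will matter), a finite product of commutators of units of $R[G]$ that lies in $1+(1-c)R[G]$. Such a product dies in $\Kr_{1}(R[G])$, so by the definition of $\mathcal{I}$ its reduction modulo $(1-c)^{2}$ represents an element of $\overline{\mathcal{I}}$, and its class in $\lambda(\overline{\mathcal{I}})$ is read off from (\ref{eq3.29})--(\ref{eq3.30}): $\lambda\bigl(1+(1-c)u\bigr)\equiv\phi(u)-\phi(u^{p})\equiv(1-\Psi)\phi(u)$ modulo $\bigl((1-c),p\bigr)$. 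The commutators come from the identities (\ref{eq3.22})--(\ref{eq3.25}); carrying out the conjugation computations there one finds, whenever $w\in I(R[G])$ satisfies $xwx^{-1}=w-(1-c)w'$ for some $w'$, that $[x,\,1-rw]=1+r(1-c)\,w'\,(1-rw)^{-1}$. Because $I(R[G])$ lies in the Jacobson radical of $R[G]$, the factor $1-rw$ is a unit and $(1-rw)^{-1}$ converges $p$-adically for \emph{every} $r\in R$; this is what allows one to reach every $r\in\overline{R}$, not only those in $pR$.

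The three base constructions are as follows. For (iii): take $x=g^{-1}h$ and $w=g-h$ (equation (\ref{eq3.22}) with $n=1$); the resulting element of $\mathcal{I}$ has $\lambda$-image $\bar r\bigl(\phi(g)-\phi(h)\bigr)$ plus tail terms, and since $h\in\mathcal{S}_{G}$ the classes $\phi(g),\phi(h)$ both lie in $D_{G}$, so the whole computation stays inside $\overline{R}[D_{G}]$. For (i) with $k\equiv 1\pmod p$: the relations $hgh^{-1}=cg$ and $c^{k}=c$ give $h(g-g^{k})h^{-1}=c(g-g^{k})$, so $x=h$, $w=g-g^{k}$ produces an element of $\mathcal{I}$ with $\lambda$-image $\bar r\bigl(\phi(g)-\phi(g^{k})\bigr)$ plus tails; the reduction of a general $k$ prime to $p$ to this case is carried out as in [O1], using among other things that $(1-c)=(1-c^{m})$ as ideals of $R[G]$ for $m$ prime to $p$ (since $1+c+\cdots+c^{m-1}$ is then a unit). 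For (ii): $x=h$, $w=g-1$ gives an element of $\mathcal{I}$ with $\lambda$-image $\bar r\,\phi(g)-\bar r^{p}\phi(g^{p})$ plus tails, and combining this with (i) (to absorb the conjugate powers $\phi(g^{j})$ that arise) yields $(\bar r-\bar r^{p})\phi(g)$.

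In each case the $\lambda$-image carries, besides the wanted term, two kinds of tail: the higher powers $\bar r^{m}\phi(w^{m})$, $m\ge 2$, coming from the geometric series, and the terms $\bar r^{p}\phi(g^{p}),\bar r^{p}\phi(h^{p}),\dots$ coming from $\Psi=1-(1-\Psi)$. The substance of the proof — and the step I expect to be the main obstacle — is to show that, modulo what the other two statements already put into $\lambda(\overline{\mathcal{I}})$, every tail term again lies in $\lambda(\overline{\mathcal{I}})$ and may therefore be discarded. This is precisely the bookkeeping in the proof of Lemma 8 of [O1]: one runs a simultaneous induction on a filtration of $G$ by powers of the elements involved (equivalently, on $|\langle g,h\rangle|$), the base case $\ord(g)=p$ — where $g^{p}=1$ kills the $\Psi$-tails — being immediate; at each stage a tail term is rewritten, via Lemma \ref{le31}, as a combination of Type 1 and Type 2 expressions attached to subgroups to which the inductive hypothesis applies, and one invokes the filtration comparison of Lemma \ref{le34} and the commutator congruences (\ref{eq3.27}). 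The only features of $R$ the argument uses are that $\overline{R}$ is an integral domain of characteristic $p$ — which, exactly as in the proof of Proposition \ref{pro36}, forces $\ker(1-\Psi)=\mathbb{F}_{p}$ on $\overline{R}[C_{G}]$ — and that $F(r)\equiv r^{p}\pmod{pR}$; both are built into the Standing Hypotheses, so Oliver's proof transfers essentially verbatim, which is the sense in which this lemma is a "straightforward generalization" of his.
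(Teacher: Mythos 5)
The first half of your proposal does match the paper's strategy: one manufactures elements of $\mathcal{I}$ as explicit commutators via the identities (\ref{eq3.22})--(\ref{eq3.25}), reads off their $\lambda$-images through (\ref{eq3.29})--(\ref{eq3.30}), and obtains the wanted term plus two kinds of tails (geometric-series terms $\bar r^{\,m}\phi(w^{m})$, $m\ge 2$, and $p$-th-power terms coming from $\Psi$). Your base construction for (iii) is exactly the paper's; for (i) and (ii) the paper's route differs in detail (it uses $[h^{m},g^{p+k}]=[h^{m},g^{k}]=c$, the relation $\mu g^{k}(1-g^{p})\in\Lambda$, and the expansion of $[h^{-1},1+\mu(1-g)^{r}]$, with (ii) extracted from the $r=1$ case plus (i)), but that is a matter of bookkeeping. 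The genuine gap is in the step you yourself single out as the main obstacle: the disposal of the tails, where your proposed mechanism is both unexecuted and misdirected.

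Your plan is to show that ``every tail term again lies in $\lambda(\overline{\mathcal{I}})$ and may therefore be discarded.'' This cannot work for the $\Psi$-tails. By Proposition \ref{pro36} one has $\lambda(\overline{\mathcal{I}})\subset\lambda(\overline{\mathcal{J}})\subset\overline{R}[D_{G}]$, and after the paper's first reduction --- replacing $G$ by $\langle g,h\rangle$, so that $\overline{G}$ is abelian --- the set $\mathcal{S}_{G}$ contains no $p$-th powers; hence terms such as $\bar r^{\,p}\phi(g^{p})$ or $\phi\bigl((g-h)^{p}\bigr)=\phi(g^{p})-\phi(h^{p})$ do not even lie in $\overline{R}[D_{G}]$, let alone in $\lambda(\overline{\mathcal{I}})$. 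The paper's proof is built precisely to avoid having to prove such a statement: it works in the enlarged space $\Lambda=\lambda(\overline{\mathcal{I}})+\sum_{k\notin\mathcal{S}}\overline{R}\phi(k)$, which is an internal direct sum; all $p$-th-power tails are simply thrown into the second summand; the geometric tails are eliminated by a \emph{downward} induction on the exponent, using the nilpotence of $(g-h)$ and $(1-g)$ in $\overline{R}[G]$; and only at the very end does one use that the three target elements lie in $\overline{R}[D_{G}]$, so that membership in $\Lambda$ together with vanishing of the component in the second summand forces membership in $\lambda(\overline{\mathcal{I}})$. Your proposed simultaneous induction on $|\langle g,h\rangle|$ with base case $\mathrm{ord}(g)=p$, invoking Lemma \ref{le31} and the filtration comparison of Lemma \ref{le34}, does not reflect this argument: Lemma \ref{le34} plays no role in the proof of this lemma, the tail terms are not naturally attached to proper subgroups to which an inductive hypothesis could apply, and no step of the claimed induction is actually carried out. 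As written, the core of the proof --- the mechanism that legitimately discards the tails --- is missing, and the particular claim on which your version of it rests is false.
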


\begin{proof} First we note that by restricting to the subgroup generated by $g,h$
 we may assume that $\overline{G}=G/\langle c\rangle $ is
abelian; note that in this case the set $\mathcal{S=S}_{G}$ contains no $p$-th powers.

Let $\Lambda $ denote the $\mathbb{F}_{p}$-vector space
\begin{equation}
\Lambda =\lambda ( \overline{\mathcal{I}})
+\sum\nolimits_{k\notin \mathcal{S}}\overline{R}\phi ( k)
\end{equation}%
and note that of course this is an internal direct sum, as $\lambda (
\overline{\mathcal{I}}) \subset \lambda ( \overline{\mathcal{J}}%
) \subset \overline{R}[ D_{G}] $.

In order to prove the lemma we claim that it suffices to show that the three
types of term, stated in the lemma, all have trivial projection into the
right-hand factor of the direct sum (3.23). We demonstrate this is indeed
the case by showing that each of the three terms lies in both $\Lambda $ and
$R[ D_{G}] .  $

By hypothesis $c=[g,h],$ and so $c=ghg^{-1}h^{-1}$ and hence $%
c^{-1}=hgh^{-1}g^{-1}$ and therefore
\begin{equation*}
c^{-1}g=\ ^{h}g\text{ and }c^{-k}g^{k}=\ ^{h}g^{k};
\end{equation*}%
therefore, for $k$ coprime to $p,$ we know that $g^{k}\in \mathcal{S}.\ $%
This then shows that the terms listed in (i) and (ii) lie in $\overline{R}%
[ D_{G}] ,$ and then same is true for the terms in (iii) since we
are given $h,g\in \mathcal{S}$.

To conclude we must now show that each of the three types of element listed
in the lemma belongs to $\Lambda .$ For an element $g\in G$ we write $%
\overline{g}$ for the image of $g$ in $\overline{G}.$ (Note that this is
different to the notational convention used by Oliver in [O1].) Thus, as
above, we know that the subgroup of $G$ generated by $g^{p}$, $h^{p}$ is
abelian and central and as such do not lie in $  \Lambda .$ More generally as
in Lemma 3.19 we know that for $\sum_{g}r_{g}g\in R[ G] $ we have
\begin{equation*}
\phi ( ( \sum_{g}r_{g}g) ^{p}) \equiv \phi (
\sum_{g}r_{g}g^{p}) \mod pR[ C_{G}]
\end{equation*}%
and so we have shown more generally that the image under $\phi $ of the $p$%
th power of an element of $\overline{R}[ G] $  has trivial
intersection with $\Lambda .$

Proof of (iii): From identity (3.14) we know that for any $n>0,\ \mu \in R$
\begin{equation}
[ g^{-1}h,1-\mu ( g-h) ^{n}] \equiv 1+n(
1-c) \{\mu ( g-h) ^{n}+\mu ^{2}( g-h)
^{2n}+\cdots \}\mod( 1-c) ^{2}.
\end{equation}%
As we have seen above, we may drop $p$th~powers and so consider only those $%
n $ coprime to $p$ and thereby get that
\begin{equation*}
\{\mu ( g-h) ^{n}+\mu ^{2}( g-h) ^{2n}+\cdots \}\in
\Lambda .
\end{equation*}%
Next, since $( g-h) ^{N}=0$ in $\overline{R}[ G] $ for
$N>>0,$ we may argue by downwards induction to get that $\mu (
g-h) ^{n}\in \Lambda $ for all $n>0$  which are coprime to $p.$

Proof of (i): As above we know that for $k$ coprime to $p$ we can find an
integer $m$ so that we have the equality
\begin{equation}
[ h^{m},g^{p+k}] =[ h^{m},g^{k}] =c.
\end{equation}%
Using part (iii) above, we know that for any $k>0$ we have%
\begin{equation}
\mu g^{k}( 1-g^{p}) =\mu ( g^{k}-h^{m}) -\mu (
g^{p+k}-h^{m}) \in \Lambda
\end{equation}%
and this implies that for any $r\geq p$ we have
\begin{equation}
\mu g( 1-g) ^{r}=\mu g( 1-g) ^{r-p}( 1-g)
^{p}=\mu g( 1-g) ^{r-p}( 1-g^{p}) \in \Lambda .
\end{equation}%
So for any $r>1$%
\begin{eqnarray*}
[ h,1+\mu ( 1-g) ^{r}] &=&( 1+\mu h(
1-g) ^{r}h^{-1}) ( 1+\mu ( 1-g) ^{r}) ^{-1}
\\
&=&( 1+\mu ( 1-cg) ^{r}) ( 1+\mu (
1-g) ^{r}) ^{-1} \\
&=&( 1+\mu ( 1-g) ^{r}-\mu ( 1-g) ^{r}+\mu (
1-cg) ^{r}) ( 1+\mu ( 1-g) ^{r}) ^{-1} \\
&=&1+( \mu ( 1-cg) ^{r}-\mu ( 1-g) ^{r})
( 1+\mu ( 1-g) ^{r}) ^{-1}.
\end{eqnarray*}%
Writing
\begin{equation*}
( 1-cg) ^{r}=( ( 1-g) +g( 1-c) )
^{r}=\sum\nolimits_{i=0}^{r}(
\begin{array}{c}
r \\
i%
\end{array}%
) ( 1-g) ^{r-i}g^{i}( 1-c) ^{i}
\end{equation*}%
we get
\begin{equation*}
[ h,1+\mu ( 1-g) ^{r}] \equiv 1+\mu r( 1-c)
( 1-g) ^{r-1}g[ 1-\mu ( 1-g) ^{r}] ^{-1}%
\mod( 1-c) ^{2}.
\end{equation*}%
Omitting $p$-th powers, as previously, we suppose that $r $ is coprime to $p$ and get that $\Lambda $ contains
\begin{equation}
\mu g( 1-g) ^{r-1}-\mu ^{2}g( 1-g) ^{2r-1}+\mu
^{3}g( 1-g) ^{3r-1}+\cdots  \in \Lambda .
\end{equation}%
Now by (3.26) above we already know that for $ir-1\geq p$ we have $\mu
^{i}g( 1-g) ^{ir-1}\in \Lambda ,\ $and so, again by downwards
induction, we get%
\begin{equation}
\mu rg( 1-g) ^{r-1}=\mu rg( 1-g) ^{r-2}(
1-g) \in \Lambda
\end{equation}%
for all $2\leq r\leq p-1$ and hence
\begin{equation*}
\mu g\equiv \mu g^{2}\equiv\cdots \equiv \mu g^{p-1}\mod\Lambda .
\end{equation*}%
The result then follows by (3.26) and so $\mu ( g-g^{k}) \in
\Lambda $ for any $k$ which is coprime to $p.$

Proof of (ii):\ When we take $r=1$ in (3.28) above  and the line that
follows we get
\begin{equation*}
\mu g-\mu ^{2}g( 1-g) +\mu ^{3}g( 1-g) ^{2}+\cdots +\mu
^{p}g( 1-g) ^{p-1}\in \Lambda
\end{equation*}%
Applying (3.27) we deduce that $\Lambda $ contains
\begin{eqnarray*}
\mu g+\mu ^{p}g( 1-g) ^{p-1} &\equiv &\mu g+\mu ^{p}(
g+g^{2}+\cdots +g^{p-1}) \\
&=&\mu g-\mu ^{p}g+\mu ^{p}( g^{2}-g) +\cdots +(
g^{p-1}-g) .
\end{eqnarray*}
By part (i) we know $\mu ^{p}(g-g^{k})\in \Lambda $ for each $1\leq k\leq
p-1 $ and so $( \mu -\mu ^{p}) g\in \Lambda$, as required.
\end{proof} 

The following is a   straightforward generalization of Theorem 1 in
[O1]:

\begin{theorem}
With the above notation $\mathcal{H}( R[ G] ) =\ker
( \mathrm{SK}_{1}( R[ G] ) \rightarrow \mathrm{SK}%
_{1}( R[ \overline{G}] ) ) $ is generated by the
elements $\exp ( r( 1-c) ( g-g^{\prime }) )
$ for elements $g,g^{\prime }\in S_{G}.$
\end{theorem}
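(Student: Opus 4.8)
The strategy follows Oliver's proof of Theorem 1 of [O1], now fed by Proposition \ref{pro36} and Lemma \ref{le38}. Recall (Lemmas \ref{le33}, \ref{le34} and Proposition \ref{pro36}(a)) that $\mathcal{H}=\mathcal{H}(R[G])$ is isomorphic to $\overline{\mathcal{J}}/\overline{\mathcal{I}}$, that $\lambda$ identifies $\overline{\mathcal{J}}/\overline{\mathcal{I}}$ with $\lambda(\overline{\mathcal{J}})/\lambda(\overline{\mathcal{I}})$, and that under $\mathcal{H}\cong\mathcal{J}/\mathcal{I}$ the class of an element $u\in\mathcal{J}\subset 1+(1-c)R[G]$ corresponds to $\kappa(u)\in\SK_1(R[G])$, which then lies in $\mathcal{H}$. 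Hence it suffices to prove: (a) for $g,g'\in\mathcal{S}_G$ and $r\in R$ the element $\exp(r(1-c)(g-g'))$ lies in $\mathcal{J}$; and (b) the classes of these elements generate $\lambda(\overline{\mathcal{J}})$ modulo $\lambda(\overline{\mathcal{I}})$.

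For (a), write $w=\exp(r(1-c)(g-g'))\in 1+(1-c)R[G]$, so that $\log(w)=r(1-c)(g-g')$. By Proposition \ref{pro32} it is enough that $\phi\bigl(r(1-c)(g-g')\bigr)=r(1-c)\bigl(\phi(g)-\phi(g')\bigr)$ vanish in $N[C_G]$; and indeed $(1-c)\phi(g)=\phi(g)-\phi(cg)=0$ because $cg$ is conjugate to $g$ (the defining property of $\mathcal{S}_G$), and likewise $(1-c)\phi(g')=0$. So each $\exp(r(1-c)(g-g'))$ lies in $\mathcal{J}$ and determines a class in $\mathcal{H}$; by (\ref{eq3.29}) (and since $\phi$ annihilates Lie brackets of group elements, so that $\phi\bigl((g-g')^{p}\bigr)=\phi(g^{p})-\phi((g')^{p})$ in $\overline{R}[C_G]$) its $\lambda$-image is congruent modulo $\bigl((1-c),p\bigr)$ to $r\bigl(\phi(g)-\phi(g')\bigr)-r^{p}\bigl(\phi(g^{p})-\phi((g')^{p})\bigr)$. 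The Frobenius-twisted term $r^{p}\bigl(\phi(g^{p})-\phi((g')^{p})\bigr)$ is what must ultimately be absorbed into $\lambda(\overline{\mathcal{I}})$.

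For (b), represent a class of $\mathcal{H}$ by $u\in\mathcal{J}$. By Proposition \ref{pro36}(b),(c), $\lambda(u)$ lies in $\overline{R}[D_G]$ and may be written $\phi(\xi)$ with $\xi\in\widetilde{T}_1+\widetilde{T}_2$; as $\phi$ kills $\widetilde{T}_2$ we obtain $\lambda(u)=\sum_i r_i\,\phi(g_i)$ with $g_i\in\mathcal{S}_G$, $r_i\in\overline{R}$. One must rewrite each $r_i\phi(g_i)$, modulo $\lambda(\overline{\mathcal{I}})$, as a combination of the $\lambda$-images from (a). This is where Lemma \ref{le38} enters: since $g_i\in\mathcal{S}_G$ one may choose $y_i$ with $c=[y_i,g_i]$ (so that $g_i$ plays the role of the ``$h$'' of that lemma, and $y_i$ is itself in $\mathcal{S}_G$), and parts (i)--(iii), applied to the various commutator presentations of $c$, allow one to move $r_i\phi(g_i)$, the Frobenius powers $r_i^{p}\phi(g_i^{p})$, and the $\phi$-classes of $p$-th powers around relative to one another modulo $\lambda(\overline{\mathcal{I}})$. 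Feeding these relations, together with the constraint $\lambda(u)\in\mathrm{Im}(1-\Psi)$ from Proposition \ref{pro36}(c), back into the formula of (a)---and using that $\Psi$ is nilpotent on the augmentation ideal, so that the process terminates---one finds that $\sum_i r_i\phi(g_i)$ is congruent modulo $\lambda(\overline{\mathcal{I}})$ to a sum of terms $\lambda\bigl(\exp(r(1-c)(g-g'))\bigr)$ with $g,g'\in\mathcal{S}_G$. With (a) this proves the theorem.

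The main obstacle is precisely this last reconciliation: determining which elements of $\overline{R}[D_G]$ belong to $\lambda(\overline{\mathcal{I}})$ and cancelling the Frobenius term $\phi\bigl(r^{p}(g-g')^{p}\bigr)$ against them. This is the technical heart of Oliver's argument; the one new point is that, as recorded in Lemma \ref{le38} and Proposition \ref{pro36}, all of the commutator congruences (\ref{eq3.22})--(\ref{eq3.25}) and the supporting logarithmic identities go through for any ring $R$ satisfying the Standing Hypotheses, the primality of $pR$ entering --- via the equality $\ker(1-\Psi:\overline{R}\to\overline{R})=\mathbb{F}_p$ --- in Proposition \ref{pro36}.
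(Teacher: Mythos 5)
Your skeleton is the intended one (the paper itself gives no written proof, asserting that Theorem \ref{thm39} follows from the preceding machinery exactly as Theorem 1 of [O1] does), and your step (a) showing $\exp(r(1-c)(g-g'))\in\mathcal{J}$ via $\phi\circ\log=0$ is correct. But two things then go wrong. First, your value of the $\lambda$-image of a generator is incorrect: since $\log\exp(r(1-c)(g-g'))=r(1-c)(g-g')$ \emph{exactly} and $\lambda=\phi\circ(1-c)^{-1}\circ\log$, one gets $\lambda\bigl(\exp(r(1-c)(g-g'))\bigr)=\bar r\bigl(\phi(g)-\phi(g')\bigr)$ on the nose, with no Frobenius correction. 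Your formula comes from feeding $u=r(g-g')$ into (\ref{eq3.29}), but $\exp(r(1-c)(g-g'))$ is \emph{not} congruent to $1+(1-c)r(g-g')$ modulo $1+(1-c)^2R[G]$ (the $p^k$-th terms of the exponential contribute via $(1-c)^p\equiv -p(1-c)$), and indeed the term $r^p\bigl(\phi(g^p)-\phi(g'^p)\bigr)$ you propose to ``absorb into $\lambda(\overline{\mathcal{I}})$'' does not even lie in $\overline{R}[D_G]\supset\lambda(\overline{\mathcal{I}})$, since $g^p$ need not belong to $\mathcal{S}_G$. Second, and decisively, step (b) --- the only content of the theorem not already contained in Proposition \ref{pro36} and Lemma \ref{le38} --- is never carried out: ``moving terms around relative to one another'' and ``feeding these relations back'' is not an argument, and you yourself flag this reconciliation as the unresolved ``technical heart''. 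That is a genuine gap.

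The missing step is in fact short once the $\lambda$-image is computed correctly. The images of your generators are exactly the elements $\bar r\bigl(\phi(g)-\phi(g')\bigr)$, $g,g'\in\mathcal{S}_G$, and these span the augmentation kernel of $\overline{R}[D_G]$. Now take $x\in\lambda(\overline{\mathcal{J}})$; by Proposition \ref{pro36}(c) we may write $x=(1-\Psi)y$ with $y\in\overline{R}[C_G]$ and $x\in\overline{R}[D_G]$ (if $D_G=\emptyset$ there is nothing to prove, by Corollary \ref{cor37}). Applying the augmentation $\varepsilon$, which kills $(1-c)\overline{R}[C_G]$, gives $\varepsilon(x)=\varepsilon(y)-F(\varepsilon(y))=t-t^p$ with $t=\varepsilon(y)\in\overline{R}$. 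Fix $g_0\in\mathcal{S}_G$; since $cg_0=kg_0k^{-1}$ for some $k$ and $c$ is central, $c=[g_0,k^{-1}]$, so Lemma \ref{le38}(ii) applies with $g=g_0$ and yields $(t-t^p)\phi(g_0)\in\lambda(\overline{\mathcal{I}})$. Then $x-\varepsilon(x)\phi(g_0)$ has augmentation zero in $\overline{R}[D_G]$, hence is an $\overline{R}$-combination of differences $\phi(g)-\phi(g_0)$, i.e.\ of $\lambda$-images of your exponential elements. Thus $\lambda(\overline{\mathcal{J}})$ is generated by $\lambda(\overline{\mathcal{I}})$ together with these images, and Lemmas \ref{le33}, \ref{le34} and Proposition \ref{pro36}(a) translate this into the stated generation of $\mathcal{H}(R[G])$; no cancellation of Frobenius terms is needed anywhere.
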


The following follows easily from the above theorem by arguing by induction
of the order of the group $G$ (see Proposition 9 in [O1] for details):

\begin{proposition}
If $A$ a normal abelian subgroup of $G$ with the property that $G/A$ is
cyclic then $\mathrm{SK}_{1}( R[ G] ) =\{1\} $.
\end{proposition}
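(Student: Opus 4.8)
The plan is an induction on $|G|$. The base case is $G$ abelian, which is Lemma~\ref{le27}. So assume $G$ is non-abelian and that the proposition holds for all groups of smaller order. Since $G/A$ is cyclic (hence abelian), $[G,G]\subseteq A$; in particular the central commutator $c$ of order $p$ chosen before Proposition~\ref{pro28} lies in $A$. Consequently $\overline G=G/\langle c\rangle$ has the normal abelian subgroup $\overline A=A/\langle c\rangle$ with cyclic quotient $\overline G/\overline A\cong G/A$, and $|\overline G|=|G|/p<|G|$, so $\SK_1(R[\overline G])=\{1\}$ by the inductive hypothesis.

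By Proposition~\ref{pro28}(a) there is an exact sequence $1\to\mathcal H(R[G])\to\SK_1(R[G])\to\SK_1(R[\overline G])\to 1$, so the vanishing just obtained gives $\SK_1(R[G])=\mathcal H(R[G])$. By Theorem~\ref{thm39}, $\mathcal H(R[G])$ is generated by the elements $\exp(r(1-c)(g-g'))$ with $g,g'\in\mathcal S_G$ and $r\in R$. Under the identification $\mathcal H\cong\lambda(\overline{\mathcal J})/\lambda(\overline{\mathcal I})$ of Proposition~\ref{pro36}(a), applying $\lambda=\phi\circ(1-c)^{-1}\circ\log$ to such an element gives the class of $r(\phi(g)-\phi(g'))$ in $\overline R[C_G]/(1-c)$. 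Hence it remains to show that $r(\phi(g)-\phi(g'))\in\lambda(\overline{\mathcal I})$ for all $g,g'\in\mathcal S_G$ and all $r\in\overline R=R/pR$, i.e. that $\lambda(\overline{\mathcal J})=\lambda(\overline{\mathcal I})$.

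For this one uses the metabelian structure explicitly. Fix a generator $\overline t$ of $G/A$, a lift $t\in G$, and let $\sigma$ denote conjugation by $t$ on $A$. Writing $g=at^k$ and computing additively in $A$, the condition $g\in\mathcal S_G$ (equivalently, $cg$ conjugate to $g$) reads $c\in(1-\sigma^k)A+(\sigma^l-1)a$ for some $l$; and if $x=bt^l$ realizes it, i.e.\ $[x,g]=c$, then $g^{-1}xg=cx$, so $x$ too lies in $\mathcal S_G$. Now $\mathcal S_G$, $D_G$, $\mathcal I$, $\mathcal J$ and the subgroups $\lambda(\overline{\mathcal I})$, $\lambda(\overline{\mathcal J})$ are unchanged when $c$ is replaced by $c^{-1}$, because $(1-c)R[G]=(1-c^{-1})R[G]$; so Lemma~\ref{le38} applies to the commutator expression $c^{-1}=[g,x]$, yielding $r(g-g^j)\in\lambda(\overline{\mathcal I})$ for $j$ coprime to $p$ (part (i)), $(r-r^p)g\in\lambda(\overline{\mathcal I})$ (part (ii)), and $r(g-x)\in\lambda(\overline{\mathcal I})$ (part (iii), valid since $x\in\mathcal S_G$). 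Iterating these moves and using the above description of $\mathcal S_G$, one connects any two elements of $\mathcal S_G$ modulo $\lambda(\overline{\mathcal I})$, whence $r(\phi(g)-\phi(g'))=0$ in $\overline R[D_G]/\lambda(\overline{\mathcal I})$ for all $g,g',r$. This last combinatorial step --- checking, from the metabelian-with-cyclic-quotient hypothesis alone, that the generators of $\mathcal H$ furnished by Theorem~\ref{thm39} are all killed by the special relations of Lemma~\ref{le38} --- is where the real work lies; everything else is formal.
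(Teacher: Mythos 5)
Your reduction is sound and follows exactly the route the paper intends (the paper's proof is the one-line reference to Proposition 9 of [O1]): induct on $|G|$, note $[G,G]\subseteq A$ so the central commutator $c$ of order $p$ can be used, apply the inductive hypothesis to $\overline G=G/\langle c\rangle$ (whose image of $A$ is again normal abelian with cyclic quotient), conclude $\SK_1(R[G])=\mathcal H(R[G])$ from Proposition \ref{pro28}, and translate the problem via Theorem \ref{thm39} and Proposition \ref{pro36}(a) into showing $r(\phi(g)-\phi(g'))\in\lambda(\overline{\mathcal I})$ for all $g,g'\in\mathcal S_G$, $r\in\overline R$. Your auxiliary observations are also correct: if $[x,g]=c$ then $x\in\mathcal S_G$, and replacing $c$ by $c^{-1}$ changes neither $(1-c)R[G]$ nor the subgroups $\lambda(\overline{\mathcal I})\subseteq\lambda(\overline{\mathcal J})$, so Lemma \ref{le38} is available for either orientation of the commutator.

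The gap is the final sentence: ``Iterating these moves \dots one connects any two elements of $\mathcal S_G$ modulo $\lambda(\overline{\mathcal I})$'' is asserted, not proved, and (as you yourself say) it is precisely where the real work lies. Lemma \ref{le38} only gives you $r(g-g^k)\in\lambda(\overline{\mathcal I})$, $(r-r^p)g\in\lambda(\overline{\mathcal I})$, and $r(g-h)\in\lambda(\overline{\mathcal I})$ for pairs $(g,h)$ realizing $c^{\pm1}$ as a commutator; to conclude you must show that the resulting ``graph'' on $\mathcal S_G$ is connected, i.e.\ that from the hypothesis ``$A$ normal abelian, $G/A$ cyclic'' any two elements $at^k,\,a't^{k'}\in\mathcal S_G$ (in particular elements in different cosets of $A$, or two elements of $A\cap\mathcal S_G$ whose commutator partners need not interact) can be linked by a chain of such special pairs. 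Nothing in your write-up addresses this; the description of membership in $\mathcal S_G$ via $c\in(1-\sigma^k)A+(\sigma^l-1)a$ is recorded but never used. This combinatorial verification is exactly the content of Oliver's Proposition 9, which the paper cites in lieu of a proof, so as it stands your argument reproduces the formal reductions but omits the step that makes the proposition true; it is not even clear a priori that plain connectivity of the pairing graph is the correct formulation, rather than a more refined induction on cosets of $A$ as in Oliver's computation.
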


\subsection{Oliver's map $\Theta _{R[ G] }$}\label{ss3c}

Recall that $\mathrm{Wh}( R[ G] ) $ and $\mathrm{Wh}%
^{\prime }( R[ G] ) $ were defined in 3.1 and were
shown to sit in exact sequences (see (3.4)) and (3.5):%
\begin{equation}
1\rightarrow \mathrm{SK}_{1}( R[ G] ) \rightarrow
\mathrm{Wh}( R[ G] ) \rightarrow \mathrm{Wh}^{\prime
}( R[ G] ) \rightarrow 1
\end{equation}
\begin{equation}
1\rightarrow \mathrm{SK}_{1}( R[ G] ) \rightarrow
\mathrm{Wh}( R[ G] )  {\xrightarrow
{\Gamma _{G}}}\phi ( I_{G})  {\xrightarrow{\omega _{G}} }R_{F}\otimes
G^{ab}\rightarrow 1
\end{equation}%
where as previously $R_{F}=R/( 1-F) R.$

Suppose now that we have an exact sequence of finite $p$-groups
\begin{equation}
1\rightarrow H  \xrightarrow{\iota }  \widetilde{G}\xrightarrow{\alpha}
 G\rightarrow 1.
\end{equation}%
Our study of $\mathrm{SK}_{1}$ will be based on the detailed study of the
two groups
\begin{eqnarray}
\mathcal{K} &=&\ker ( \alpha _{\ast }) =\ker ( \alpha _{\ast
}:\mathrm{SK}_{1}( R[ H] ) \rightarrow \mathrm{SK}%
_{1}( R[ \widetilde{G}] ) ) , \\
\mathcal{C} &=&\text{\textrm{coker}}( \alpha _{\ast }) =\text{%
\textrm{coker}}( \alpha _{\ast }:\mathrm{SK}_{1}( R[
\widetilde{G}] ) \rightarrow \mathrm{SK}_{1}( R[ G%
] ) ) .
\end{eqnarray}%
The above exact sequence (3.31) affords a commutative diagram with exact
rows:%
\begin{equation}
\begin{array}{ccccccccccc}
1 & \rightarrow & \mathrm{SK}_{1}( R[ H] ) &
\rightarrow & \mathrm{Wh}( R[ H] ) & \xrightarrow{\Gamma
_{H}}  & \phi ( I_{H}) & \xrightarrow{\omega _{H}}  & R_{F}\otimes H^{\rm ab} & \rightarrow & 1 \\
&  & \downarrow &  & \downarrow &  & \downarrow \phi ( \iota ) &
& \downarrow 1\otimes \iota ^{\rm ab} &  &  \\
1 & \rightarrow & \mathrm{SK}_{1}( R[ \widetilde{G}] )
& \rightarrow & \mathrm{Wh}( R[ \widetilde{G}] ) &
\xrightarrow{\Gamma _{\widetilde{G}}}  & \phi ( I_{\widetilde{G%
}}) & \xrightarrow{\omega _{\widetilde{G}}} & R_{F}\otimes
\widetilde{G}^{\rm ab} & \rightarrow & 1 \\
&  & \downarrow \alpha _{\ast } &  & \ \ \ \ \downarrow \mathrm{Wh}( \alpha
) &  & \downarrow &  & \downarrow 1\otimes \pi ^{\rm ab} &  &  \\
1 & \rightarrow & \mathrm{SK}_{1}( R[ G] ) &
\rightarrow & \mathrm{Wh}( R[ G] ) & \xrightarrow{\Gamma
_{G}}  & \phi ( I_{G}) & \xrightarrow{\omega _{G}} & R_{F}\otimes G^{\rm ab} & \rightarrow & 1.%
\end{array}%
\end{equation}%
Reasoning as in the snake lemma, we obtain a map
\begin{equation}
\Delta =``\mathrm{Wh}( \alpha ) \circ \Gamma _{\widetilde{G}%
}^{-1}\circ \phi ( \iota ) \circ \omega _{H}^{-1}":\ker (1\otimes
\iota ^{ab})\rightarrow \text{ }\mathcal{C}\text{.}
\end{equation}%
Next we consider the following two subgroups of $\widetilde{G}:$%
\begin{eqnarray*}
H_{0} &=&H\cap [ \widetilde{G},\widetilde{G}] \\
H_{1} &=&\langle h\in H\mid h=[ \widetilde{g}_{1},\widetilde{g}_{2}%
] \text{ for }\widetilde{g}_{1},\widetilde{g}_{2}\in \widetilde{G}%
\text{ }\rangle
\end{eqnarray*}%
that is to say $  H_{1}$ is the subgroup of $H$ which is generated by $%
\widetilde{G}$ commutators  which lie in $H$; so that obviously $[ H,H%
] \subset H_{1}\subset H_{0}$.  We use the exact sequence%
\begin{equation*}
1\rightarrow H_{0}/[ H,H] \rightarrow H^{\mathrm{ab}}\xrightarrow{i^{ 
\mathrm{ab}}} G^{\mathrm{ab}}
\end{equation*}%
to obtain the composite map $\widetilde{\kappa }_{\alpha }$
\begin{equation*}
\widetilde{\kappa }_{\alpha }:R_{F}\otimes H_{0}\twoheadrightarrow \ker
( 1\otimes \iota ^{\rm ab}) \xrightarrow{\Delta }  
\mathcal{C}
\end{equation*} 
  noting that the left-hand map is surjective since $R_{F}$ is
torsion free.

\begin{proposition}
\bigskip The map $\widetilde{\kappa }_{\alpha }~$induces an isomorphism,
denoted $\kappa _{\alpha },$
\begin{equation*}
\kappa _{\alpha }:\frac{R}{( 1-F) R}\otimes \frac{H_{0}}{H_{1}}%
\xrightarrow { \ \ }\mathcal{C}
\end{equation*}%
which is natural with respect to maps between group extensions.
\end{proposition}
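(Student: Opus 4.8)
The plan is to recognise $\widetilde{\kappa}_{\alpha}$ as a connecting homomorphism for the three--row diagram (\ref{eq4.46}) and to compute its image and kernel. For surjectivity, set $J=\ker(R[\widetilde{G}]\to R[G])$; each $h-1$ with $h\in H$ has zero augmentation, so $J\subset I_{\widetilde{G}}$ lies in the Jacobson radical, and Lemma \ref{le13} gives that $\Kr_{1}(R[\widetilde{G}])\to\Kr_{1}(R[G])$ is onto with kernel generated by the image of $\GL(R[\widetilde{G}],J)$; with Lemma \ref{le14}(b) this shows $\Wh(\alpha)$ is onto and that $\Gamma_{\widetilde{G}}(\ker\Wh(\alpha))$ together with $\phi(\iota)(\phi(I_{H}))$ spans $\ker(\phi(I_{\widetilde{G}})\to\phi(I_{G}))$, which a short computation with conjugacy classes identifies with $\phi_{\widetilde{G}}(J)$. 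Then, for $x\in\SK_{1}(R[G])$, lift it through $\Wh(\alpha)$ to $\widetilde{x}$; since $\Gamma_{G}(x)=0$ one has $\Gamma_{\widetilde{G}}(\widetilde{x})\in\ker(\phi(I_{\widetilde{G}})\to\phi(I_{G}))$, and after multiplying $\widetilde{x}$ by a suitable element of $\ker\Wh(\alpha)$ (which does not change the class of $\Wh(\alpha)(\widetilde{x})$ in $\mathcal{C}$) we may assume $\Gamma_{\widetilde{G}}(\widetilde{x})=\phi(\iota)(z)$ with $z\in\phi(I_{H})$, whence $\omega_{H}(z)\in\ker(1\otimes\iota^{\rm ab})$ and $\Delta(\omega_{H}(z))$ is the class of $x$. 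Finally $R_{F}\otimes H_{0}$ surjects onto $\ker(1\otimes\iota^{\rm ab})$ by right--exactness of $R_{F}\otimes-$ applied to $H_{0}/[H,H]\hookrightarrow H^{\rm ab}\twoheadrightarrow H/H_{0}$ (inspecting the resulting $\mathrm{Tor}$--term), so $\widetilde{\kappa}_{\alpha}$ is surjective.

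Next I would show $R_{F}\otimes H_{1}$ lies in the kernel. For a generator $h=[\widetilde{g}_{1},\widetilde{g}_{2}]$ of $H_{1}$ and $r\in R$, trace $r\otimes h$ through the zig--zag: pick $z\in\phi(I_{H})$ with $\omega_{H}(z)=r\otimes h$, push to $\phi(\iota)(z)$, and lift it through $\Gamma_{\widetilde{G}}$. Because $h$ is a commutator in $\widetilde{G}$, the commutator identities assembled in Section \ref{sec3} --- those underlying Theorem \ref{thm39}, such as the relations around (\ref{eq3.22})--(\ref{eq3.25}) --- allow one to choose such a lift represented by an explicit unit of $R[\widetilde{G}]$ whose image in $\GL(R[G])$ is, up to elementary matrices and up to $\alpha_{\ast}(\SK_{1}(R[\widetilde{G}]))$, a product of commutators, hence trivial in $\Kr_{1}(R[G])$. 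Thus $\widetilde{\kappa}_{\alpha}$ factors through a surjection $\kappa_{\alpha}\colon R_{F}\otimes(H_{0}/H_{1})\twoheadrightarrow\mathcal{C}$, and it remains to prove $\kappa_{\alpha}$ is injective.

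For injectivity I would induct on $|H|$, the case $|H|=1$ being trivial. Choose $c\in H\cap Z(\widetilde{G})$ of order $p$ and factor $\alpha$ through $\widetilde{G}'=\widetilde{G}/\langle c\rangle$, getting $1\to\langle c\rangle\to\widetilde{G}\to\widetilde{G}'\to 1$ and $1\to H/\langle c\rangle\to\widetilde{G}'\to G\to 1$; applying $\SK_{1}$ gives the exact sequence of cokernels $\mathcal{C}_{1}\to\mathcal{C}\to\mathcal{C}_{2}\to 0$. By induction $\mathcal{C}_{2}\cong R_{F}\otimes(H_{0}'/H_{1}')$, where $H_{0}',H_{1}'$ are the invariants of the second extension; the elementary identities $H_{0}'=H_{0}\langle c\rangle/\langle c\rangle$ and $H_{1}'=H_{1}\langle c\rangle/\langle c\rangle$ put $R_{F}\otimes(H_{0}/H_{1})$ into an exact sequence $R_{F}\otimes\langle c\rangle\to R_{F}\otimes(H_{0}/H_{1})\to R_{F}\otimes(H_{0}'/H_{1}')\to 0$, while $\mathcal{C}_{1}=0$ when $c$ is a commutator in $\widetilde{G}$ (Proposition \ref{pro28}(a)) and $\mathcal{C}_{1}=R_{F}\otimes\langle c\rangle$ otherwise, computed in the order--$p$ case from Corollary \ref{cor37} and Lemma \ref{le26}. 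A diagram chase then gives $\mathcal{C}\cong R_{F}\otimes(H_{0}/H_{1})$, together with the compatibility of $\kappa_{\alpha}$ with the $\kappa$'s of the two sub--extensions, which is exactly the naturality statement for this particular morphism of extensions; naturality in general follows formally from functoriality of each arrow in (\ref{eq4.46}) and of the construction (\ref{eq4.47}). The main obstacle, I expect, is the identification of the connecting map $\mathcal{C}_{1}\to\mathcal{C}$ with $R_{F}\otimes(\langle c\rangle\to H_{0}/H_{1})$ --- equivalently, computing $\Gamma_{\widetilde{G}}(\ker\Wh(\alpha))$, i.e.\ the logarithmic image $\mathcal{L}(1+J)$, precisely enough to see that its pullback along $\phi(\iota)$ and $\omega_{H}$ is exactly $\mathrm{Im}(R_{F}\otimes H_{1})$; the rest is formal diagram chasing plus the commutator bookkeeping of Section \ref{sec3}.
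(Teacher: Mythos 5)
Your overall architecture (construct $\widetilde{\kappa}_{\alpha}$ from the diagram (\ref{eq4.46}), reduce injectivity to central order-$p$ kernels, induct) matches the paper's, but the step you yourself flag as "the main obstacle" is exactly where the argument has a genuine gap, and the input you feed into it is wrong. For the extension $1\to\langle c\rangle\to\widetilde{G}\to\widetilde{G}'\to 1$ the cokernel $\mathcal{C}_{1}$ obeys a trichotomy, not your dichotomy: if $c$ is a commutator then $\mathcal{C}_{1}=1$ (Proposition \ref{pro28}(a)); if $c\in[\widetilde{G},\widetilde{G}]$ but is not a commutator then $\mathcal{C}_{1}\cong R_{F}\otimes\langle c\rangle$; but if $c\notin[\widetilde{G},\widetilde{G}]$ then $\mathcal{C}_{1}$ is again trivial --- this is precisely Case 1 of the paper's proof --- and in that case $c\notin H_{0}$, so the comparison map $R_{F}\otimes\langle c\rangle\to R_{F}\otimes(H_{0}/H_{1})$ you invoke does not even exist. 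Moreover, Corollary \ref{cor37} and Lemma \ref{le26} do not by themselves compute $\mathcal{C}_{1}$ in the non-commutator case: Corollary \ref{cor37} gives injectivity of $\alpha_{\ast}$ (a statement about the kernel), and Lemma \ref{le26} only measures the logarithmic quotient $p\phi((1-c)R[\widetilde{G}])/\mathcal{L}(1+(1-c)R[\widetilde{G}])$. Converting this into the cokernel of $\SK_{1}$ is the analytic heart of the proof: one must compare the Whitehead-group sequences (\ref{eq4.42})--(\ref{eq4.43}) for $\widetilde{G}$ and $\widetilde{G}'$ via the diagrams (\ref{eq4.49}) and (\ref{eq4.50}), where the outcome is decided by $\ker(R_{F}\otimes\widetilde{G}^{\rm ab}\to R_{F}\otimes\widetilde{G}'^{\rm ab})$, i.e.\ by whether $c$ survives in $\widetilde{G}^{\rm ab}$; none of this is carried out in your sketch. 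Finally, even granting the correct values of $\mathcal{C}_{1}$ and $\mathcal{C}_{2}$, the concluding chase needs the identification of the map $\mathcal{C}_{1}\to\mathcal{C}$ (or at least its image) with $R_{F}\otimes(\langle c\rangle\to H_{0}/H_{1})$: since $R_{F}\otimes(H_{0}/H_{1})$ is in general infinite, a surjection between abstractly isomorphic groups need not be injective, so this compatibility cannot be waved through.

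Note how the paper avoids both your "commutator identities" step and the commutator case of your induction: it first factors $\alpha$ through $\widetilde{G}/H_{1}$, using Proposition \ref{pro28} (repeatedly, since $H_{1}$ is generated by commutators) to see that $\SK_{1}(R[\widetilde{G}])\to\SK_{1}(R[\widetilde{G}/H_{1}])$ is onto --- so $\mathcal{C}$ is unchanged --- and the argument of Corollary \ref{cor37} to see that $\SK_{1}(R[\widetilde{G}/H_{1}])\to\SK_{1}(R[G])$ is injective. After this reduction every central order-$p$ step in the induction has kernel containing no commutator, so only the two cases handled by (\ref{eq4.49})--(\ref{eq4.50}) occur, and the vanishing of $R_{F}\otimes H_{1}$ comes for free rather than from the explicit unit manipulations that your proposal only asserts.
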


\begin{proof} See Proposition 16 in [O1]. Consider the maps
\begin{equation}
\mathrm{SK}_{1}( R[ \widetilde{G}] ) \xrightarrow{\alpha
_{1\ast }}\mathrm{SK}_{1}( R[ \widetilde{G}/H_{1}%
] ) \xrightarrow{\alpha _{2\ast }} \mathrm{SK}%
_{1}( R[ G] ) .
\end{equation}%
From the proof of Proposition 3.9, because $H_{1}$ is generated by
commutators lying in $H$, we know that $\alpha _{1\ast }$ is surjective. So
we may reduce consideration to to the case where $H$ is replaced by $%
H/H_{1}.\ $By repeated use of the argument used to show Corollary 3.22, we
see that $\alpha _{2\ast }$ is injective, because $H/H_{1}$ contains no
commutators.

We now prove the result by taking the case when $H=\langle
c\rangle $ is of order $p$ and contains no commutators. First we
recall from Lemma 3.7 that%
\begin{equation}
\frac{p\phi ( ( 1-c) R[ \widetilde{G}] ) }{%
\mathcal{L}( 1+( 1-c) R[ \widetilde{G}] ) }%
\cong \frac{R}{( 1-F) R+pR}
\end{equation}%
induced by $\phi ( ( 1-c) rg) \mapsto r\mod {p}$. Using the map
\begin{equation*}
\omega _{\widetilde{G}}:\frac{1+I_{R[ \widetilde{G}] }}{1+I_{R%
[ \widetilde{G}] }^{2}}\cong \frac{I_{R[ \widetilde{G}]
}}{I_{R[ \widetilde{G}] }^{2}}\cong R\otimes \widetilde{G}^{%
\mathrm{ab}}
\end{equation*}%
the above isomorphism (3.38) may be recast as
\begin{equation}
\tau _{\widetilde{G}}:\frac{p\phi ( ( 1-c) R[
\widetilde{G}] ) }{\mathcal{L}( 1+( 1-c) R[
\widetilde{G}] ) }\cong R_{F}\otimes \langle c\rangle
\end{equation}%
We know that
\begin{equation*}
\Gamma _{\widetilde{G}}( \ker \mathrm{Wh}( \alpha ) )
=\nu _{\widetilde{G}}( \mathrm{Det}( 1+( 1-c) R[
\widetilde{G}] ) )
\end{equation*}%
so that we obtain the exact sequence
\begin{equation}
\ker \mathrm{Wh}( \alpha ) \xrightarrow{\Gamma _{\widetilde{G}}}p( 1-c) \phi ( R[ \widetilde{G}]
) \xrightarrow{\tau _{\widetilde{G}}}R_{F}\otimes
\langle c\rangle \rightarrow 0.
\end{equation}%
We use the exact sequence (3.30) to form the diagram with exact rows:%
\begin{equation}
\begin{array}{ccccccccc}
1 & \rightarrow & \mathrm{SK}_{1}( R[ \widetilde{G}] )
& \rightarrow & \mathrm{Wh}( R[ \widetilde{G}] ) &
\rightarrow & \mathrm{Wh}^{\prime }( R[ \widetilde{G}]
) & \rightarrow & 1 \\
&  & \downarrow \alpha _{\ast } &  & \downarrow \mathrm{Wh}( \alpha
) &  & \downarrow \mathrm{Wh}^{\prime }( \alpha ) &  &  \\
1 & \rightarrow & \mathrm{SK}_{1}( R[ G] ) &
\rightarrow & \mathrm{Wh}( R[ G] ) & \rightarrow &
\mathrm{Wh}^{\prime }( R[ G] ) & \rightarrow & 1.%
\end{array}%
\end{equation}%
We consider the further diagram with exact rows:%
\begin{equation}
\begin{array}{ccccccccc}
&  & \ker \mathrm{Wh}( \alpha ) & \xrightarrow{\Gamma _{\widetilde{G}}%
}& ( 1-c) \phi ( R[ \widetilde{G}]
) & \xrightarrow{\tau _{\widetilde{G}}}& R_{F}\otimes
\langle c\rangle & \rightarrow & 0 \\
&  & \downarrow \beta &  & \downarrow p &  & \downarrow &  &  \\
0 &  \rightarrow & \ker \mathrm{Wh}^{\prime }( \alpha ) & \xrightarrow{%
\varpi }& p( 1-c) \phi ( R[ \widetilde{G}%
] ) & \xrightarrow{\omega _{\widetilde{G}}} & \ker
( R_{F}\otimes \widetilde{G}^{\rm ab}\rightarrow R_{F}\otimes G^{\rm ab})
& \rightarrow & 0 \\
&  & \downarrow &  & \downarrow &  &  &  &  \\
&  & {\rm coker}(\alpha _{\ast })&  & 0 &  &  &  &  \\
&  & \downarrow &  &  &  &  &  &  \\
&  & 0 &  &  &  &  &  &
\end{array}%
\end{equation}%
where $\tau _{\widetilde{G}}$ is as in (3.39) above \ and $\omega _{%
\widetilde{G}}$ is as in 3.3; here the top row is exact from (3.40) above;
the middle horizontal row is exact by applying the snake lemma to the diagram%
\begin{equation}
\begin{array}{ccccccccc}
1 & \rightarrow & \mathrm{Wh}^{\prime }( R[ \widetilde{G}]
) & \rightarrow & \phi ( I_{\widetilde{G}}) & \rightarrow &
R_{F}\otimes \widetilde{G}^{\rm ab} & \rightarrow & 1 \\
&  & \downarrow &  & \downarrow &  & \downarrow &  &  \\
1 & \rightarrow & \mathrm{Wh}^{\prime }( R[ G] ) &
\rightarrow & \phi ( I_{G}) & \rightarrow & R_{F}\otimes G^{\rm ab} &
\rightarrow & 1%
\end{array}%
\end{equation}%
the left-hand vertical column is exact by applying the snake lemma to the
diagram (3.41) above.

Recall that by hypothesis $H=\langle c\rangle $  contains no
commutators, so that $H_{1}=(1)$; we now consider separately: Case 1, when $%
H\nsubseteq [ G,G] ;$ and Case 2, when $H\subset [ G,G]$. In all cases, since $R_{F}$ is torsion free we may identify
\begin{equation*}
\ker ( R_{F}\otimes \widetilde{G}^{\mathrm{ab}}\rightarrow R_{F}\otimes
G^{\mathrm{ab}}) =R_{F}\otimes \ker ( \widetilde{G}^{\mathrm{ab}%
}\rightarrow G^{\mathrm{ab}}) =R_{F}\otimes H/H_{0}.
\end{equation*}

\textbf{Case 1.} Then $H_{0}=H_{1}=(1);$ in this case $H=\ker (
\widetilde{G}^{ab}\rightarrow G^{ab}) \ $and using diagram (3.42) we
see that $\ker \tau _{\widetilde{G}}=\ker \omega _{\widetilde{G}};$ hence $%
\beta $ is onto and we have shown:%
\begin{equation*}
H_{0}/H_{1}=(1)=\text{\textrm{coker}}( \alpha _{\ast }) =\text{%
\textrm{coker}}( \alpha _{\ast }:\mathrm{SK}_{1}( R[
\widetilde{G}] ) \rightarrow \mathrm{SK}_{1}( R[ G%
] ) ) .
\end{equation*}%

\textbf{Case 2.} Then $H=H_{0}=\langle c\rangle $ and again $%
H_{1}=(1)$; in this case we have $\widetilde{G}^{\mathrm{ab}}\cong G^{%
\mathrm{ab}}$ and so by diagram (3.42)
\begin{equation*}
\text{\textrm{coker}}( \alpha _{\ast }:\SK_{1}( R[ \widetilde{G%
}] ) \rightarrow \SK_{1}( R[ G] ) ) =%
\text{\textrm{coker}}( \alpha _{\ast }) =R_{F}\otimes
\langle c\rangle . 
\end{equation*}
\end{proof}
\medskip

If we write $G=F/\mathcal{R},$ with $F $ a free group, then by Hopf's
theorem (see Chapter II, Theorem 5.3 and also Exercise 6 in Chapter II of
[B]), we have an isomorphism
\begin{equation*}
H_{2}( G,\mathbb{Z}) =\frac{\mathcal{R}\cap [ F,F] }{%
[ \mathcal{R},F] }.
\end{equation*}%
Given  a surjection $\theta :F\rightarrow \widetilde{G}$ and setting $%
\mathcal{R}=\ker ( F\xrightarrow{\theta }\widetilde{G}\xrightarrow
{\alpha }G) ,$ so that $\theta ( \mathcal{R})
=H,$ and we have a surjective map
\begin{equation*}
\delta ^{\alpha }:H_{2}( G,\mathbb{Z}) \rightarrow H\cap [
\widetilde{G},\widetilde{G}] /[ H,G] \twoheadrightarrow
H_{0}/H_{1}.
\end{equation*}

From Lemma 17 in [O1] we have the following entirely group theoretic result
(in which the ring $R$ plays no role):

\begin{lemma}
For any $p$-group $G$ there are central extensions of $p$-groups
\begin{equation*}
1\rightarrow H_{1}\rightarrow \widetilde{G}_{1}\xrightarrow{\alpha _{1}}G\rightarrow 1, \text{ and } \ 1\rightarrow H_{2}\rightarrow
\widetilde{G}_{2}\xrightarrow{\alpha _{2}}G\rightarrow 1
\end{equation*}%
such that: (i) $\delta ^{\alpha _{1}}$ is an isomorphism; and (ii) any
automorphism of $G$ lifts to an automorphism of $\widetilde{G}_{2},$ and for
any normal subgroup $K$ of $G$, if we write $\widetilde{K}=\alpha
_{2}^{-1}( K)$, then the sub-extension
\begin{equation*}
1\rightarrow H_{2}\rightarrow \widetilde{K}\xrightarrow{\alpha }
K\rightarrow 1
\end{equation*}%
has the property that $\delta ^{\alpha }$ is an injection.
\end{lemma}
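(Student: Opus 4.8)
The plan is to prove the two assertions separately, in both cases working from a finite free presentation $1\to\mathcal R\to\mathcal F\to G\to 1$ (with $\mathcal F$ of finite rank) together with Hopf's formula $H_2(G,\Z)=(\mathcal R\cap[\mathcal F,\mathcal F])/[\mathcal R,\mathcal F]$ recalled just above the statement.

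For assertion (i) I would construct a Schur representation group of $G$. The group $\mathcal F/[\mathcal R,\mathcal F]$ is a central extension of $G$ by the finitely generated abelian group $\mathcal R/[\mathcal R,\mathcal F]$, whose torsion subgroup is precisely $H_2(G,\Z)$. Choosing a free complement $T$ to this torsion subgroup (which is normal in $\mathcal F/[\mathcal R,\mathcal F]$, being central) and setting $\widetilde G_1=(\mathcal F/[\mathcal R,\mathcal F])/T$ yields a central extension whose kernel $H_1\cong H_2(G,\Z)$ is contained in $[\widetilde G_1,\widetilde G_1]$, since it is the image of $\mathcal R\cap[\mathcal F,\mathcal F]$. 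Hence $\operatorname{Im}(\delta^{\alpha_1})=H_1\cap[\widetilde G_1,\widetilde G_1]=H_1$, so $\delta^{\alpha_1}$ is onto, and as source and target are finite of the same order it is an isomorphism. Finiteness of $\widetilde G_1$ and the fact that it is a $p$-group follow from $|\widetilde G_1|=|G|\cdot|H_2(G,\Z)|$ and from the Schur multiplier of a finite $p$-group being a finite $p$-group.

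Assertion (ii) is more delicate, since it asks for a single central extension with enough functoriality. Here I would take $\widetilde G_2$ to be the central extension of $G$ classified by a ``universal'' class. Put $H_2:=\bigoplus_{K}\Ind_K^G H_2(K,\Z)$, the sum over the finitely many normal subgroups $K$ of $G$; this is a finite abelian $p$-group carrying a $G$-action and, by permutation of the summands, an $\mathrm{Aut}(G)$-action. By Shapiro's lemma $H^2(G,H_2)\cong\bigoplus_K H^2(K,H_2(K,\Z))$, and in each factor I choose a class $c_K$ mapping to $\mathrm{id}_{H_2(K,\Z)}$ under the universal-coefficient surjection $H^2(K,H_2(K,\Z))\twoheadrightarrow\Hom(H_2(K,\Z),H_2(K,\Z))$; let $\widetilde G_2$ be the extension classified by $c=(c_K)$. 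Then $\widetilde G_2$ is a finite $p$-group, being an extension of the finite $p$-group $G$ by the finite $p$-group $H_2$. For a normal subgroup $K$ the restricted extension $1\to H_2\to\widetilde K\to K\to 1$ has class $\mathrm{res}^G_K(c)$; by Mackey's formula $\mathrm{res}^G_K\Ind_K^G H_2(K,\Z)$ contains $H_2(K,\Z)$ as a $K$-module direct summand, and under the Shapiro identification the corresponding component of $\mathrm{res}^G_K(c)$ is $c_K$, so composing $\delta^{\alpha}\colon H_2(K,\Z)\to H_2$ with the projection to that summand gives $\mathrm{id}_{H_2(K,\Z)}$; hence $\delta^{\alpha}$ is injective. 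Finally one arranges the choices of the $c_K$, compatibly with the identification of $\mathrm{Aut}(G)$-conjugate summands, so that $c$ is $\mathrm{Aut}(G)$-invariant, whence every automorphism of $G$ lifts to an automorphism of $\widetilde G_2$.

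The main obstacle is exactly the last point: the lifting of automorphisms forces $c$ to be genuinely $\mathrm{Aut}(G)$-invariant, whereas each $c_K$ is a priori only well-defined modulo the ambiguity $\mathrm{Ext}(K^{\mathrm{ab}},H_2(K,\Z))$, and these ambiguities need not be matched up $\mathrm{Aut}(G)$-equivariantly. Getting around this — for instance by enlarging the central extension further, or by the more hands-on argument with the lower central series applied to a free presentation of $G$ on its own underlying set (so that $\mathrm{Aut}(G)$ acts by permuting generators) — is precisely the content of Oliver's Lemma 17 in [O1], and since the ring $R$ plays no role I would simply follow that argument.
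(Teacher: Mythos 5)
The paper gives no argument for this lemma at all: it is quoted directly from Oliver ([O1], Lemma 17), so any self-contained proof is genuinely new content. Your treatment of part (i) is correct and is the classical construction of a Schur representation group: $\mathcal{F}/[\mathcal{R},\mathcal{F}]$ modulo a free complement of the torsion subgroup of $\mathcal{R}/[\mathcal{R},\mathcal{F}]$ is a stem extension with central kernel $H_{1}\cong H_{2}(G,\Z)$ contained in $[\widetilde{G}_{1},\widetilde{G}_{1}]$, so $\mathrm{Im}(\delta^{\alpha_{1}})=H_{1}\cap[\widetilde{G}_{1},\widetilde{G}_{1}]=H_{1}$, and a surjection between finite groups of the same order is an isomorphism; finiteness and the $p$-group property follow as you say.

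For part (ii), however, there is a genuine gap beyond the $\mathrm{Aut}(G)$-equivariance issue you flag at the end. The module $H_{2}=\bigoplus_{K}\mathrm{Ind}_{K}^{G}H_{2}(K,\Z)$ carries a nontrivial $G$-action (permutation of cosets), so an extension classified by a class in $H^{2}(G,H_{2})$ for this module structure is \emph{not} a central extension of $G$ -- but the lemma explicitly demands central extensions, and the map $\delta^{\alpha}$, as set up just before the statement, is only defined when the kernel lies in the centre of $\widetilde{G}_{2}$. The same objection hits the subextensions: for normal $K$ the restriction of $H_{2}$ is still a nontrivial $K$-module, so $1\rightarrow H_{2}\rightarrow\widetilde{K}\rightarrow K\rightarrow 1$ is not central either, and your projection-to-the-$c_{K}$-summand argument does not compute the $\delta^{\alpha}$ of the statement. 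Replacing the induced modules by trivial ones destroys the Shapiro identification on which the whole choice of $c$ rests, so the construction cannot be patched by a small modification. Combined with the unresolved automorphism-lifting step, which you concede and propose to settle by ``following Oliver,'' your argument for (ii) ultimately reduces to the citation the paper already gives. The hands-on route you allude to is in fact Oliver's: take $\mathcal{F}$ free on the underlying set of $G$ (so $\mathrm{Aut}(G)$ permutes generators and automatically lifts), and pass to a quotient of the form $\mathcal{F}/[\mathcal{R},\mathcal{F}]\mathcal{R}^{p^{k}}$ with $p^{k}$ large; this is a finite central extension by construction and its pullbacks to $\alpha_{2}^{-1}(K)$ can be analysed directly, and it is this argument that would have to be carried out to make (ii) self-contained.
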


In the Introduction we defined the subgroup $H_{2}^{ab}( G,\mathbb{Z}%
) $ of $H_{2}( G,\mathbb{Z}) .$ Since for an abelian group $%
A$ we know that $H_{2}( A,\mathbb{Z}) =A\wedge A$ we see easily
that $\delta ^{\alpha }( H_{2}^{ab}( G,\mathbb{Z}) ) $
is equal to the subgroup $H$ generated by $ h\in H$ where $h$ is a
commutator in $\widetilde{G}.$ Note that for any two elements of $\widetilde{%
G}$ whose commutator lies in $H $, their images in $G$ commute and so lie in
an abelian subgroup of $G$.

We now suppose that
\begin{equation*}
1\rightarrow H\rightarrow \widetilde{G}\xrightarrow{\alpha }
G_{1}\rightarrow 1
\end{equation*}%
is an \textbf{arbitrary} extension of finite $p$-groups, and we again define
$H_{0}$ and $H_{1}\ $as above, so that $H_{0}/H_{1}$ is central in $%
\widetilde{G}/H_{1}.$ Using Proposition 3.26 and assembling together the
above we have maps%
\begin{eqnarray*}
\mathrm{SK}_{1}( R[ G] ) &\twoheadrightarrow &\text{%
\textrm{coker}}( \mathrm{SK}_{1}( R[ \widetilde{G}]
) \rightarrow \mathrm{SK}_{1}( R[ G] ) ) \\
&&\overset{\kappa _{\alpha }}{\cong }R_{F}\otimes H_{0}/H_{1}\xrightarrow{%
1\otimes \delta ^{\alpha }}R_{F}\otimes \overline{H}_{2}(
G,\mathbb{Z})
\end{eqnarray*}%
with $1\otimes \delta ^{\alpha }$ surjective as above. Recall here that, as
in the Introduction, by definition, $\overline{H}_{2}( G,\mathbb{Z})
=H_{2}( G,\mathbb{Z}) /H_{2}^{\rm ab}( G,\mathbb{Z}) .$

For any such group extension where $\delta ^{\alpha }$ is an injection (for
instance when the group extension satisfies condition (i) of Lemma 3.27), $%
\delta ^{\alpha }$ is then obviously an isomorphism; and hence $1\otimes
\delta ^{\alpha }$ is an isomorphism. In these circumstances we denote the
composite map as
\begin{equation}
\Theta _{R[ G] }:\mathrm{SK}_{1}( R[ G] )
\rightarrow \frac{R}{( 1-F) R}\otimes \overline{H}_{2}( G,%
\mathbb{Z})
\end{equation}%
and we note that by the naturality of the maps involved, $\Theta _{R[ G%
] }$ is in fact independent of the particular choice of extension used
(where $\delta ^{\alpha }$ is an injection).

\begin{theorem}\label{thm43}
For any ring $R$ which satisfies the Standing Hypotheses the map
\begin{equation*}
\Theta _{R[ G] }:\mathrm{SK}_{1}( R[ G] )
\rightarrow \frac{R}{( 1-F) R}\otimes \overline{H}_{2}( G,%
\mathbb{Z}) .
\end{equation*}%
is an isomorphism.
\end{theorem}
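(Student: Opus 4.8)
The plan is to prove that $\Theta_{R[G]}$ is an isomorphism by induction on $|G|$, following the line of Oliver's argument in [O1] but inserting at each point the versions of his lemmas established above for rings $R$ satisfying the Standing Hypotheses. Throughout write $R_{F}=R/(1-F)R$. Surjectivity is essentially built into the construction: choose, by Lemma \ref{le42}(i), a central extension $1\to H\to\widetilde{G}\xrightarrow{\alpha}G\to 1$ for which $\delta^{\alpha}$ is an isomorphism. Then $H_{0}=H\cap[\widetilde{G},\widetilde{G}]=\mathrm{Im}(\delta^{\alpha})=H$ and $H_{1}=\delta^{\alpha}(H_{2}^{\mathrm{ab}}(G,\Z))$, so $\delta^{\alpha}$ induces an isomorphism $\overline{H}_{2}(G,\Z)\xrightarrow{\sim}H_{0}/H_{1}$, and by its very definition $\Theta_{R[G]}$ is the composite $\SK_{1}(R[G])\twoheadrightarrow\mathcal{C}=\mathrm{coker}(\alpha_{\ast})\xrightarrow{\sim}R_{F}\otimes H_{0}/H_{1}\xrightarrow{\sim}R_{F}\otimes\overline{H}_{2}(G,\Z)$, the two isomorphisms being $\kappa_{\alpha}^{-1}$ (Proposition \ref{pro41}) and $(1\otimes\delta^{\alpha})^{-1}$. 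Hence $\Theta_{R[G]}$ is surjective.

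For injectivity we induct on $|G|$. If $G$ is abelian then $\SK_{1}(R[G])=\{1\}$ by Lemma \ref{le27}, while $\overline{H}_{2}(G,\Z)=0$ (take $A=G$ in the definition of $H_{2}^{\mathrm{ab}}$), so $\Theta_{R[G]}$ is trivially an isomorphism. Now assume $G$ is non-abelian and the theorem holds for all $p$-groups of smaller order. Choose a central commutator $c$ of order $p$ and set $\overline{G}=G/\langle c\rangle$. Proposition \ref{pro28} supplies the exact sequence $1\to\mathcal{H}\to\SK_{1}(R[G])\xrightarrow{q_{\ast}}\SK_{1}(R[\overline{G}])\to 1$, and the naturality of the construction of $\Theta$ (based on the naturality clause of Proposition \ref{pro41}) produces a commutative ladder
\begin{equation*}
\begin{array}{ccccccccc}
1 & \rightarrow & \mathcal{H} & \rightarrow & \SK_{1}(R[G]) & \xrightarrow{q_{\ast}} & \SK_{1}(R[\overline{G}]) & \rightarrow & 1\\
 & & \downarrow\overline{\Theta} & & \downarrow\Theta_{R[G]} & & \downarrow\Theta_{R[\overline{G}]} & & \\
1 & \rightarrow & \mathcal{K} & \rightarrow & R_{F}\otimes\overline{H}_{2}(G,\Z) & \xrightarrow{1\otimes q} & R_{F}\otimes\overline{H}_{2}(\overline{G},\Z) & \rightarrow & 1
\end{array}
\end{equation*}
in which $\mathcal{K}:=\ker(1\otimes q)$ and $\overline{\Theta}$ is the restriction of $\Theta_{R[G]}$ (which does land in $\mathcal{K}$, since $q_{\ast}(\mathcal{H})=\{1\}$ and the right-hand square commutes). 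The bottom row is exact once we know $1\otimes q$ is surjective, and this is automatic from the surjectivity of $q_{\ast}$ and of $\Theta_{R[G]}$. Since $\Theta_{R[\overline{G}]}$ is an isomorphism by the inductive hypothesis, the short five lemma reduces the theorem to showing that $\overline{\Theta}\colon\mathcal{H}\to\mathcal{K}$ is an isomorphism.

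Both $\mathcal{H}$ and $\mathcal{K}$ admit explicit descriptions. On the target side, because $c$ is a commutator we have $c\in[G,G]$, so $G^{\mathrm{ab}}\xrightarrow{\sim}\overline{G}^{\mathrm{ab}}$ and the five-term homology sequence of $1\to\langle c\rangle\to G\to\overline{G}\to 1$ collapses to $H_{2}(G,\Z)\to H_{2}(\overline{G},\Z)\xrightarrow{\partial}\langle c\rangle\to 0$; a Hopf-formula computation then identifies $\mathcal{K}$ with $R_{F}$ tensored with an explicit subquotient of $\overline{H}_{2}(G,\Z)$. On the source side, Theorem \ref{thm39} says $\mathcal{H}$ is generated by the elements $\exp(r(1-c)(g-g'))$ with $g,g'\in\mathcal{S}_{G}$ and $r\in R$, and Proposition \ref{pro36} — this is where the Standing Hypothesis that $pR$ is prime, hence $\overline{R}=R/pR$ a domain, is used — together with Lemma \ref{le38} yields a presentation $\mathcal{H}\cong\overline{\mathcal{J}}/\overline{\mathcal{I}}$ in exactly those generators. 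Unwinding the definition of $\Theta_{R[G]}$ through the connecting map $\Delta$ of (\ref{eq4.47}) shows that $\overline{\Theta}$ carries each generator of $\mathcal{H}$ onto the corresponding generator of $\mathcal{K}$; comparing the two presentations then shows $\overline{\Theta}$ is an isomorphism, which closes the induction.

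The main obstacle is this final comparison: matching, with full bookkeeping, the generators of $\mathcal{H}$ furnished by Theorem \ref{thm39} and the relations among them coming from Lemma \ref{le38} and Proposition \ref{pro36} against the generators and relations of the subquotient of $R_{F}\otimes\overline{H}_{2}(G,\Z)$ describing $\mathcal{K}$, so that the a priori surjection $\overline{\Theta}$ is forced to be injective. Two smaller points must also be verified: that the naturality needed for the commutative ladder — in particular with respect to the quotient map $G\to G/\langle c\rangle$ — follows from the construction in \S\ref{ss3c}, and that each of Oliver's auxiliary results used in the bookkeeping (Corollary \ref{cor37}, Lemma \ref{le38}, Propositions \ref{pro36} and \ref{pro41}, Lemma \ref{le42}) transfers verbatim to our setting; this has been arranged in the preceding subsections, the genuinely new input being the use of Standing Hypotheses (iii) and (iv) in the logarithmic computations of \S\ref{sec3}.
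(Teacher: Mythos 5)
Your surjectivity argument is correct and coincides with the paper's set-up: choosing an extension as in Lemma \ref{le42}(i), $\Theta_{R[G]}$ is by construction the composite of the projection onto ${\rm coker}(\alpha_*)$ with the isomorphisms of Proposition \ref{pro41} and $1\otimes\delta^{\alpha}$. The genuine gap is injectivity. Your whole inductive step rests on showing that $\overline{\Theta}\colon\mathcal{H}\to\mathcal{K}$ is an isomorphism, and the results you invoke cannot deliver this. Theorem \ref{thm39} gives only a \emph{generating set} for $\mathcal{H}$, not a presentation, so "generators go to generators" can at best prove surjectivity, never injectivity; Proposition \ref{pro36} and Lemma \ref{le38} describe $\lambda(\overline{\mathcal{J}})$ and part of $\lambda(\overline{\mathcal{I}})$, but nowhere in the paper (or in Oliver) is the exact structure of $\mathcal{H}$ computed. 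On the target side your description of $\mathcal{K}=\ker\bigl(R_F\otimes\overline{H}_2(G,\Z)\to R_F\otimes\overline{H}_2(\overline{G},\Z)\bigr)$ is not obtainable by the route you sketch: the five-term sequence of $1\to\langle c\rangle\to G\to\overline{G}\to1$ controls the image (equivalently the cokernel) of $H_2(G,\Z)\to H_2(\overline{G},\Z)$, not its kernel; moreover $R_F\otimes(-)$ is only right exact, so $\ker(1\otimes q)$ is not $R_F\otimes\ker q$, and passing to $\overline{H}_2$ adds a further subquotient. So the "bookkeeping" you defer is, in the inductive step, essentially the full strength of the theorem, and it cannot be closed with the lemmas proved earlier in the paper.

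This is precisely why the paper (following Oliver's proof of Theorem 3 in [O1]) runs the induction in the opposite direction and on the cokernel side. Proposition \ref{pro41} computes ${\rm coker}\bigl(\SK_1(R[\widetilde{G}])\to\SK_1(R[G])\bigr)$ for a central cover $\widetilde{G}\to G$; Proposition \ref{pro44} passes the isomorphism property \emph{down} along surjections $\widetilde{G}\to G$; Lemmas \ref{le45} and \ref{le46} handle extensions with cyclic quotient, in particular giving the isomorphism whenever $\SK_1(R[H])=\{1\}$ (e.g. via Proposition \ref{pro40}); and these are then assembled by Oliver's purely group-theoretic argument over a suitable category of central extensions of $G$. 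A kernel-side five-lemma induction on $\overline{G}=G/\langle c\rangle$, as you propose, is not supported by these tools. There is also a secondary point: the commutativity of your ladder requires naturality of $\Theta$ with respect to the quotient $G\to\overline{G}$, which is not automatic from the construction in \S\ref{ss3c} (it is essentially what the proof of Proposition \ref{pro44} establishes); that could be repaired, but the comparison $\mathcal{H}\cong\mathcal{K}$ is the step that fails as argued.
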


The proof of Theorem 3.28 is exactly the same as the proof of Theorem 3 in
[O1] with $R$ now replacing $\mathbb{Z}_{p}.$ We highlight the remaining
steps in the proof in order to provide an overview for the reader's
convenience. \ As a first step towards proving the theorem, we note that
proof of Proposition 18 in [O1] now extends to give:

\begin{proposition}
For any ring $R$ satisfying the Standing Hypotheses, given a surjection of $%
p $-groups $\alpha :\widetilde{G}\rightarrow G,$ if $\Theta _{R[
\widetilde{G}] }$ is an isomorphism, then $\Theta _{R[ G] }$
is also an isomorphism.
\end{proposition}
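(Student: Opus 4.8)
The plan is to reproduce the proof of Proposition~18 in [O1] in our setting; as the authors indicate, the point is that every ingredient it uses is now available over $R$: the cokernel computation of Proposition~\ref{pro41}, the vanishing statement of Corollary~\ref{cor37}, the explicit generators of $\mathcal{H}$ from Theorem~\ref{thm39}, and the purely group-theoretic input of Lemma~\ref{le42} together with Hopf's theorem. Write $H=\ker(\alpha)$. The first step is to reduce to the case in which $H$ is central of order $p$ in $\widetilde{G}$: since $\widetilde{G}$ is a $p$-group, $H$ has a chain $1=H_{(0)}\triangleleft H_{(1)}\triangleleft\cdots\triangleleft H_{(k)}=H$ with each $H_{(i)}$ normal in $\widetilde{G}$ and $H_{(i+1)}/H_{(i)}$ central of order $p$ in $\widetilde{G}/H_{(i)}$ (take $H_{(i+1)}/H_{(i)}$ inside the nontrivial group $Z(\widetilde G/H_{(i)})\cap(H/H_{(i)})$). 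Factoring $\alpha$ through the tower $\widetilde{G}=\widetilde G/H_{(0)}\to\cdots\to\widetilde G/H_{(k)}=G$ and applying the central-order-$p$ case at each stage carries the isomorphy of $\Theta_{R[\widetilde{G}]}$ down to $\Theta_{R[G]}$; one checks that the subgroup $H_2^{\mathrm{ab}}$ entering the definition of $\overline H_2$ behaves well under each step, exactly as in [O1].

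So assume $H=\langle c\rangle$ with $c$ central of order $p$, so $\alpha\colon\widetilde{G}\to\widetilde G/\langle c\rangle=G$. Proposition~\ref{pro41} furnishes a natural isomorphism $\kappa_\alpha\colon R_F\otimes(H_0/H_1)\xrightarrow{\sim}\mathcal{C}=\mathrm{coker}(\alpha_{\ast})$, with $H_0=H\cap[\widetilde G,\widetilde G]$ and $H_1$ generated by the $\widetilde G$-commutators lying in $H$. On the other side, the five-term exact homology sequence of $1\to H\to\widetilde G\to G\to 1$ (here $H$ is central) together with Hopf's theorem identifies $\mathrm{coker}(H_2(\widetilde G,\Z)\to H_2(G,\Z))$ with $\mathrm{im}(\delta^\alpha)=H_0$ via the connecting map $\delta^\alpha$; passing to $\overline H_2$ and using $\delta^\alpha(H_2^{\mathrm{ab}}(G,\Z))=H_1$ identifies $\mathrm{coker}(\overline\alpha_{\ast}\colon\overline H_2(\widetilde G,\Z)\to\overline H_2(G,\Z))$ with $H_0/H_1$. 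Applying the right-exact functor $R_F\otimes(-)$ we obtain a commutative ladder with exact rows
\begin{equation*}
\begin{array}{ccccccccc}
0 & \to & \mathcal{K} & \to & \SK_{1}(R[\widetilde{G}]) & \xrightarrow{\alpha_{\ast}} & \SK_{1}(R[G]) & \to & \mathcal{C}\to 0 \\
  &     & \downarrow &     & \downarrow\,\Theta_{R[\widetilde{G}]} &     & \downarrow\,\Theta_{R[G]} &     & \downarrow\wr \\
0 & \to & L & \to & R_{F}\otimes\overline{H}_{2}(\widetilde{G},\Z) & \to & R_{F}\otimes\overline{H}_{2}(G,\Z) & \to & R_{F}\otimes(H_0/H_1)\to 0
\end{array}
\end{equation*}
in which $\mathcal{K}=\ker(\alpha_{\ast})$, $L$ is the left-hand homology kernel, the rightmost vertical is the isomorphism just constructed, and the middle verticals are the maps $\Theta$. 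Checking that the two middle squares commute is where Oliver's construction must be retraced: it amounts to unwinding the definition of $\Theta$ in (\ref{eq3.51}), of the snake-lemma map $\Delta$ of (\ref{eq4.47}), and of $\widetilde\kappa_\alpha$, and verifying they are compatible with the cokernel of $\alpha_{\ast}$ and with $\delta^\alpha$.

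Granting commutativity, the conclusion follows from the five lemma applied to this ladder: the second vertical is an isomorphism by hypothesis and the fourth by Proposition~\ref{pro41}, so $\Theta_{R[G]}$ is an isomorphism as soon as the first vertical $\mathcal{K}\to L$ is surjective (it is automatically injective, because $\Theta_{R[\widetilde{G}]}$ is). This surjectivity is the main obstacle, and it is the heart of the proof of Proposition~18 in [O1]: one computes $\mathcal{K}=\mathcal{H}(R[\widetilde G])$ — trivial when $c$ is not a commutator, by Corollary~\ref{cor37}, and generated by the elements $\exp(r(1-c)(g-g'))$, $g,g'\in\mathcal S_{\widetilde G}$, when $c$ is a commutator, by Theorem~\ref{thm39} — and compares it with $L$, described via Hopf's theorem and the behaviour of $H_2^{\mathrm{ab}}$ under central extensions, finding the two to coincide. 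For this to survive the passage from $\Z_p$ to $R$ one only needs that the commutator congruences of Section~\ref{sec3} and the logarithmic identities behind Proposition~\ref{pro41} and Theorem~\ref{thm39} hold over $R$, which they do under the Standing Hypotheses; the argument is then word-for-word that of [O1].
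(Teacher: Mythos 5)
Your reduction to a central kernel of order $p$, the four-term ladder, and the four-lemma bookkeeping are all sound, but the argument gives out exactly at the point you yourself flag as ``the main obstacle'': the surjectivity of $\mathcal{K}\rightarrow L$. Granting everything else in your diagram (exact rows, commutativity of the squares, the right-hand vertical an isomorphism via Proposition \ref{pro41} together with Hopf's theorem, and $\Theta_{R[\widetilde{G}]}$ an isomorphism), that surjectivity is \emph{equivalent} to the injectivity of $\Theta_{R[G]}$, i.e.\ to the proposition itself: a short chase shows that if $\Theta_{R[G]}$ is injective then every element of $L$ is hit by $\mathcal{K}$, and conversely. So the sentence ``one computes $\mathcal{K}$ \dots and compares it with $L$ \dots finding the two to coincide'' is not a proof but a restatement of what has to be shown. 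Nor do the ingredients you cite deliver it: Theorem \ref{thm39} and Corollary \ref{cor37} describe generators of $\mathcal{K}=\mathcal{H}(R[\widetilde{G}])$, but nothing established in the paper computes the images $\Theta_{R[\widetilde{G}]}(\exp(r(1-c)(g-g')))$ in $\frac{R}{(1-F)R}\otimes \overline{H}_{2}(\widetilde{G},\Z)$; evaluating $\Theta_{R[\widetilde{G}]}$ on explicit elements means unwinding (\ref{eq3.51}) through an auxiliary extension of $\widetilde{G}$ and the maps $\kappa_{\alpha}$, $\delta^{\alpha}$, a substantial computation that is carried out neither here nor in the quoted source, so ``word-for-word as in [O1]'' does not apply at this step.

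Note also that the paper offers no argument beyond invoking Proposition 18 of [O1], and the mechanism there is not an element-by-element comparison of kernels. By construction (\ref{eq3.51}), $\ker\Theta_{R[G]}$ equals the image of $\SK_{1}(R[G_{1}])\rightarrow \SK_{1}(R[G])$ for whichever extension $G_{1}\rightarrow G$ with $\delta$ injective is used to define $\Theta_{R[G]}$, and $\Theta_{R[G]}$ is independent of that choice; likewise for $\widetilde{G}$. The proof therefore proceeds, roughly, by producing a suitably chosen cover (Lemma \ref{le42} supplies the raw material) through which both $\Theta_{R[\widetilde{G}]}$ and $\Theta_{R[G]}$ can be computed: the hypothesis says the image of $\SK_{1}$ of that cover already dies in $\SK_{1}(R[\widetilde{G}])$, and $\ker\Theta_{R[G]}$ is the $\alpha_{\ast}$-image of it, hence trivial. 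The explicit generators of Theorem \ref{thm39} and the vanishing of Corollary \ref{cor37} feed into the induction elsewhere (e.g.\ Proposition \ref{pro40} and Lemmas \ref{le45}, \ref{le46}), not into this step. If you wish to keep your ladder, you must supply an independent argument that $\Theta_{R[\widetilde{G}]}$ carries $\mathcal{K}$ onto $L$; as written, that is the entire content of the proposition and it is missing.
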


We now quote the following two lemmas (see Lemmas 19 and 20 in [O1]):

\begin{lemma}
Suppose $H$ is a normal subgroup of the $p$-group $G$ with the property that
$G/H$ is cyclic; then the boundary map%
\begin{equation*}
\partial :\ker ( \mathrm{Wh}^{\prime }( R[ H] )
\rightarrow \mathrm{Wh}^{\prime }( R[ G] ) )
\rightarrow \mathrm{coker}( \mathrm{SK}_{1}( R[ H]
) \rightarrow \mathrm{SK}_{1}( R[ G] ) )
\end{equation*}%
is surjective.
\end{lemma}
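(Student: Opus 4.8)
The first step is to reformulate the statement. The boundary map $\partial$ is the connecting homomorphism in the snake lemma applied to the morphism of short exact sequences (\ref{eq4.42}) induced by the ring homomorphism $R[H]\to R[G]$. By exactness of the resulting six-term sequence, $\partial$ is onto precisely when the next arrow $\mathrm{coker}(\SK_1(R[H])\to\SK_1(R[G]))\to\mathrm{coker}(\Wh(R[H])\to\Wh(R[G]))$ is zero; equivalently,
\[
\SK_1(R[G])\ \subseteq\ \mathrm{Im}\bigl(\Wh(R[H])\longrightarrow\Wh(R[G])\bigr)
\]
inside $\Wh(R[G])$, using the inclusion $\SK_1(R[G])\hookrightarrow\Wh(R[G])$ of (\ref{eq4.42}). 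So the whole task reduces to proving this inclusion whenever $G/H$ is cyclic.

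I would prove the inclusion by induction on $|G|$. If $G$ has a normal abelian subgroup with cyclic quotient --- in particular if $H$, or $G$ itself, is abelian --- then $\SK_1(R[G])=\{1\}$ by Proposition \ref{pro40} and there is nothing to do; this is the base case. Otherwise $G$ is non-abelian, and using the lower central series as recalled above we choose a commutator $c=[\gamma,\delta]$ of order $p$ lying in $Z(G)$. The hypothesis that $G/H$ is cyclic forces $[G,G]\subseteq H$, hence $c\in H$ --- this is the one place where cyclicity enters, and it is decisive. Writing $\overline{G}=G/\langle c\rangle$, $\overline{H}=H/\langle c\rangle$, the quotient $\overline{G}/\overline{H}\cong G/H$ is still cyclic and $|\overline{G}|<|G|$, so by induction $\SK_1(R[\overline{G}])\subseteq\mathrm{Im}(\Wh(R[\overline{H}])\to\Wh(R[\overline{G}]))$.

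Now bring in Proposition \ref{pro28}: since $c$ is a commutator, $q_*\colon\SK_1(R[G])\to\SK_1(R[\overline{G}])$ is surjective with kernel $\mathcal H(R[G])$, which by Theorem \ref{thm39} is generated by the elements $\exp(r(1-c)(g-g'))$ with $g,g'\in\mathcal S_G$, $r\in R$. Given $x\in\SK_1(R[G])$, lift $q_*(x)$ to $\Wh(R[\overline{H}])$ by the induction hypothesis, then lift further along $\Wh(R[H])\twoheadrightarrow\Wh(R[\overline{H}])$ (surjectivity being a consequence of the results of Section \ref{s2} together with Proposition \ref{pro28}) to some $w\in\Wh(R[H])$. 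Its image in $\Wh(R[G])$ differs from $x$ by an element of $\ker(\Wh(R[G])\to\Wh(R[\overline{G}]))$, which sits in an extension of $\ker(\Wh^{\prime}(R[G])\to\Wh^{\prime}(R[\overline{G}]))$ by $\mathcal H(R[G])$. The $\mathcal H(R[G])$-contribution is absorbed by checking that each generator $\exp(r(1-c)(g-g'))$ already lies in $\mathrm{Im}(\Wh(R[H])\to\Wh(R[G]))$: since $c\in H$ is central and $c^mg\sim g$ in $G$ for $g\in\mathcal S_G$, the commutator identities (\ref{eq3.22})--(\ref{eq3.25}), together with the logarithmic arguments of Lemma \ref{le34} and Proposition \ref{pro36}, rewrite these elements through units supported on $H$. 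The $\Wh^{\prime}$-contribution is controlled by the description of $\ker(\Wh^{\prime}(R[G])\to\Wh^{\prime}(R[\overline{G}]))$ furnished by (\ref{eq3.16}) and Lemma \ref{le26}, again using that $c\in H$; after a correction by $\ker(\Wh(R[H])\to\Wh(R[\overline{H}]))$ one is reduced to the $\mathcal H$-case just treated.

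The routine part is the snake-lemma bookkeeping and the reformulation of the first paragraph; the substantive work --- identical in spirit to Oliver's argument in [O1] --- is the absorption step, namely verifying that both the Theorem \ref{thm39} generators of $\mathcal H(R[G])$ and the discrepancy group $\ker(\Wh^{\prime}(R[G])\to\Wh^{\prime}(R[\overline{G}]))$ are hit from $\Wh(R[H])$ once $c\in H$. One must also confirm that the various auxiliary surjectivities hold under the Standing Hypotheses (they do, thanks to Section \ref{s2} and to $pR$ being prime), and treat separately the sub-case where $c$ fails to be a commutator in $H$, where Corollary \ref{cor37} replaces the use of surjectivity of $q_*$ on $\SK_1(R[H])$. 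No idea beyond Oliver's is needed, but the case analysis and the commutator computations are the delicate points.
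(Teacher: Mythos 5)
Your opening reduction is fine: applying the snake lemma to the two sequences (\ref{eq4.42}) for $H$ and for $G$, surjectivity of $\partial$ is indeed equivalent to $\SK_1(R[G])\subseteq \mathrm{Im}(\Wh(R[H])\to \Wh(R[G]))$, and the inductive frame (choose a central commutator $c$ of order $p$, note $c\in[G,G]\subseteq H$, pass to $\overline{G}=G/\langle c\rangle$, use Proposition \ref{pro28} and the surjectivity of $\Wh(R[H])\to\Wh(R[\overline{H}])$) is sound as far as it goes. For comparison, the paper offers no argument of its own here: Lemma \ref{le45} is quoted from Oliver's Lemma 19 in [O1] with the remark that his proof carries over to $R$.

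The genuine gap is that the two ``absorption'' claims, which carry all the content, are asserted rather than proved, and the tools you cite cannot deliver them. First, the claim that each generator $\exp(r(1-c)(g-g'))$ of $\mathcal{H}(R[G])$ (Theorem \ref{thm39}) lies in $\mathrm{Im}(\Wh(R[H])\to\Wh(R[G]))$: the elements $g,g'\in\mathcal{S}_G$ need not lie in $H$, and the identities (\ref{eq3.22})--(\ref{eq3.25}) together with Lemma \ref{le34} and Proposition \ref{pro36} only exhibit commutators, i.e.\ elements of $\overline{\mathcal{I}}$, whose classes are already trivial in $\Kr_1$; since $\mathcal{H}\cong\overline{\mathcal{J}}/\overline{\mathcal{I}}$, such identities can never convert a nontrivial class of $\mathcal{H}\subseteq\SK_1(R[G])$ into the class of a unit of $R[H]$. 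This sub-claim is essentially the lemma itself, and it is exactly where a new argument is required. Second, the ``$\Wh'$-correction'': what you need is that the determinant of your lift $w$, which lies in $\mathrm{Im}(\Wh(R[H]))\cap\ker(\Wh'(R[G])\to\Wh'(R[\overline{G}]))$, is realized by an element of $\ker(\Wh(R[H])\to\Wh(R[\overline{H}]))$. Under $\nu$, the latter kernel lands in (at most) $p\phi((1-c)R[H])$ while by (\ref{eq3.16}) the former kernel is all of $p\phi((1-c)R[G])$, so this is not formal: it needs an intersection statement of the shape $\mathrm{Det}(1+I_H)\cap\mathrm{Det}(1+(1-c)R[G])\subseteq\mathrm{Det}(1+(1-c)R[H])\cdot\mathrm{Det}(G)$ (an analogue of Lemma \ref{le30}(d)), plus surjectivity of the logarithm on $1+(1-c)R[H]$ onto $p\phi((1-c)R[H])$ --- and the latter is precisely where Lemma \ref{le26} produces a defect $R/((1-F)R+pR)$ when $c$ is not a commutator in $H$. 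Your appeal to Corollary \ref{cor37} at that point is misplaced: $c$ is a commutator in $G$ by construction, and your argument nowhere uses $\SK_1(R[H])\to\SK_1(R[\overline{H}])$. Finally, a structural warning sign: you say cyclicity of $G/H$ enters only through $c\in H$, i.e.\ only through $[G,G]\subseteq H$. If the absorption step really needed no more, the lemma would follow for every normal subgroup with abelian quotient, a strictly stronger statement than Oliver's; nothing in the sketch substantiates that strengthening, which indicates that the essential use of cyclicity in Oliver's proof is hiding exactly in the steps you have left unproved.
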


\begin{lemma}
Suppose $H$ is a normal subgroup of the $p$-group $G$ with the property that
$G/H$ is cyclic; then
\begin{equation*}
\mathrm{coker}( \mathrm{SK}_{1}( R[ H] )
\rightarrow \mathrm{SK}_{1}( R[ G] ) ) \cong
\frac{R}{( 1-F) R}\otimes \mathrm{coker}( \overline{H}%
_{2}( H,\mathbb{Z}) \rightarrow \overline{H}_{2}( G,\mathbb{Z%
}) ) .
\end{equation*}%
In particular, if $\mathrm{SK}_{1}( R[ H] ) =\{1\},$
then $\Theta _{R[ G] }$ is an isomorphism.
\end{lemma}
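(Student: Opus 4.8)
The plan is to transcribe Oliver's proof of Lemma 20 of [O1] with $\Z_p$ replaced by $R$. Every ingredient it uses --- the good central extensions of Lemma \ref{le42}, the cokernel computation of Proposition \ref{pro41}, the surjectivity of the boundary map of Lemma \ref{le45}, and the exact sequences (\ref{eq4.42}), (\ref{eq4.43}) --- has by now been established over an arbitrary $R$ satisfying the Standing Hypotheses, so the task is to assemble these and to keep track of naturality.

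First I would fix, using Lemma \ref{le42}(ii), a central extension $1\to M\to\widetilde{G}\xrightarrow{\alpha}G\to 1$ of $p$-groups for which the sub-extension over every normal subgroup $K\triangleleft G$ has $\delta^{\alpha}$ injective. Set $\widetilde{H}=\alpha^{-1}(H)$; then $1\to M\to\widetilde{H}\xrightarrow{\alpha}H\to 1$ is central, $\widetilde{G}/\widetilde{H}\cong G/H$ is cyclic, and $\delta^{\alpha}$ is injective for both extensions (taking $K=G$ and $K=H$). As recalled before Theorem \ref{thm43}, injectivity of $\delta^{\alpha}$ makes it an isomorphism onto $H_0=M\cap[\widetilde{G},\widetilde{G}]$ carrying $H_2^{\mathrm{ab}}$ onto $H_1$, hence inducing $\overline{H}_2(G,\Z)\xrightarrow{\sim}H_0/H_1$, and likewise for $\widetilde{H}$. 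Proposition \ref{pro41}, applied to $\alpha\colon\widetilde{G}\to G$ and to $\alpha\colon\widetilde{H}\to H$, then gives natural isomorphisms
$$
\mathrm{coker}\big(\SK_1(R[\widetilde{G}])\xrightarrow{\alpha_{*}}\SK_1(R[G])\big)\cong R_F\otimes\overline{H}_2(G,\Z),\qquad \mathrm{coker}\big(\SK_1(R[\widetilde{H}])\xrightarrow{\alpha_{*}}\SK_1(R[H])\big)\cong R_F\otimes\overline{H}_2(H,\Z),
$$
and by the definition (\ref{eq3.51}) of $\Theta$ these are precisely the isomorphisms built into $\Theta_{R[G]}$ and $\Theta_{R[H]}$.

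Next I would put the four groups $\SK_1(R[\widetilde{H}])$, $\SK_1(R[\widetilde{G}])$, $\SK_1(R[H])$, $\SK_1(R[G])$ into the commutative square induced by $\widetilde{H}\hookrightarrow\widetilde{G}$ and $H\hookrightarrow G$; since $\widetilde{H}\hookrightarrow\widetilde{G}$ is the identity on $M$, this is a morphism of group extensions, so by the naturality clauses of Proposition \ref{pro41} and of $\Theta$ the map induced on vertical cokernels is $1\otimes\iota_{*}\colon R_F\otimes\overline{H}_2(H,\Z)\to R_F\otimes\overline{H}_2(G,\Z)$, with $\iota_{*}$ induced by $H\hookrightarrow G$. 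Passing to cokernels of the bottom horizontal map $j_{*}\colon\SK_1(R[H])\to\SK_1(R[G])$ and using right-exactness of $R_F\otimes(-)$ yields a canonical map
$$
\overline{\Theta}\colon\mathrm{coker}(j_{*})\longrightarrow R_F\otimes\mathrm{coker}\big(\overline{H}_2(H,\Z)\to\overline{H}_2(G,\Z)\big),
$$
and the claim is that $\overline{\Theta}$ is an isomorphism. This is where the cyclicity of $G/H$ enters, and it is the main obstacle: one applies Lemma \ref{le45} to both $\widetilde{H}\triangleleft\widetilde{G}$ and $H\triangleleft G$ --- the surjectivity of the two boundary maps --- together with the $3\times 3$ diagrams attached to the $\Wh$- and $\Wh'$-sequences (\ref{eq4.42}), (\ref{eq4.43}) that already appear in the proof of Proposition \ref{pro41}, and carries out Oliver's diagram chase to force $\overline{\Theta}$ to be bijective. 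No genuinely new difficulty arises relative to [O1]; the work is bookkeeping with the exact sequences now over $R$.

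The final ``in particular'' clause follows formally. If $\SK_1(R[H])=\{1\}$ then $j_{*}$ has trivial source, so $\mathrm{coker}(j_{*})=\SK_1(R[G])$ and $\overline{\Theta}$ becomes the composite of $\Theta_{R[G]}$ with the surjection $R_F\otimes\overline{H}_2(G,\Z)\twoheadrightarrow R_F\otimes\mathrm{coker}(\iota_{*})$; bijectivity of $\overline{\Theta}$ shows at once that $\Theta_{R[G]}$ is injective, and it is surjective because in the situation where this lemma is used --- inside the induction proving Theorem \ref{thm43}, or more generally whenever $\Theta_{R[H]}$ is already known (for instance by Proposition \ref{pro40} when $H$ has a normal abelian subgroup with cyclic quotient) --- one has $R_F\otimes\overline{H}_2(H,\Z)\cong\SK_1(R[H])=\{1\}$, so $\iota_{*}$ vanishes after tensoring and the above surjection is an isomorphism.
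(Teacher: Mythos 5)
For the isomorphism of cokernels itself your route is the paper's route: the authors simply assert that Oliver's proofs of Lemmas 19 and 20 of [O1] go through with $\Z_p$ replaced by $R$, all the inputs having been re-established over $R$, and your outline --- a single ambient extension chosen via Lemma \ref{le42}(ii) so that $\delta^{\alpha}$ is injective over both $G$ and $H$, the identification of the resulting cokernel isomorphisms of Proposition \ref{pro41} with the ones built into $\Theta_{R[G]}$ and $\Theta_{R[H]}$ via (\ref{eq3.51}), naturality with respect to the map of extensions, and then Oliver's diagram chase using Lemma \ref{le45} and the sequences (\ref{eq4.42})--(\ref{eq4.43}) --- is exactly that, at essentially the same level of detail as the paper itself provides.

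The weak point is the ``in particular'' clause. You obtain injectivity of $\Theta_{R[G]}$ correctly from the bijectivity of $\overline{\Theta}$, but your surjectivity argument does not prove the lemma as stated: you invoke $R_F\otimes\overline{H}_2(H,\Z)\cong \SK_1(R[H])=\{1\}$, i.e. you assume that $\Theta_{R[H]}$ is already known to be an isomorphism (equivalently that $1\otimes\iota_*$ vanishes), which is not among the hypotheses --- the only hypothesis is $\SK_1(R[H])=\{1\}$, and this lemma is precisely one of the tools fed into the induction that eventually establishes Theorem \ref{thm43} for all $p$-groups, so conditioning on that conclusion for $H$ is circular in general (your appeal to Proposition \ref{pro40} gives $\SK_1(R[H])=\{1\}$, not the vanishing of $R_F\otimes\overline{H}_2(H,\Z)$). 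The needed observation is much simpler: by its very construction, $\Theta_{R[G]}$ is the composite of the surjection $\SK_1(R[G])\twoheadrightarrow {\rm coker}\bigl(\SK_1(R[\widetilde{G}])\to \SK_1(R[G])\bigr)$ with the isomorphisms $\kappa_{\alpha}^{-1}$ and $(1\otimes\delta^{\alpha})^{-1}$, hence is surjective for every $p$-group $G$ with no hypothesis on $H$ whatsoever. So once $\SK_1(R[H])=\{1\}$ makes the composite of $\Theta_{R[G]}$ with the projection onto $R_F\otimes{\rm coker}\bigl(\overline{H}_2(H,\Z)\to\overline{H}_2(G,\Z)\bigr)$ an isomorphism, injectivity together with this unconditional surjectivity gives the stated conclusion; replace your conditional surjectivity argument by this remark and the proof is complete.
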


In order to prove Theorem 3.28 we can then piece Proposition 3.29 and Lemmas
3.30 and 3.31 together in an entirely group theoretical way, by studying a
suitable category of central extensions of the group $G$. The details are
exactly the same as those for the proof of Theorem 3 in [O1].


\section{Character action and reduction to elementary groups}\label{s4}
\setcounter{equation}{0}

In this section we do not need to assume that $R$ supports a lift of
Frobenius.

\subsection{Character action on $\SK_{1}$}\label{ss4a}

Let $\G_{0}( \Z_{p}[G] ) $ denote the
Grothendieck group of finitely generated $\Z_{p}[G] $%
-modules and let $\G_{0}^{\Z_{p}}( \Z_{p}[G]
) $ denote the Grothendieck group of finitely generated $\Z_{p}%
[G] $-modules which are projective over $\Z_{p}$. From
38.42 and 39.9 in [CR2] we have:

\begin{proposition} We have
\begin{equation*}
\G_{0}^{\Z_{p}}( \Z_{p}[G] ) \xrightarrow{\simeq}
\G_{0}( \Z_{p}[G] )  \xrightarrow{\simeq}\G_{0}( \Q_{p}[G] )
\end{equation*}%
with the first isomorphism induced by the natural embedding of categories and the second
isomorphism induced by the extension of scalars map $\otimes _{\Z_{p}}\Q_{p}$ .
\end{proposition}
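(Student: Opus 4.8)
The plan is to realize the two arrows as instances of two standard machines in the $\G_0$-theory of Noetherian rings — the Resolution Theorem for the first and the Localization Theorem (together with Dévissage) for the second — with the whole substance concentrated in a single point at the very end.

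For the first arrow I would verify that the full subcategory $\mathcal P$ of $\Z_p$-projective finitely generated $\Z_p[G]$-modules satisfies the hypotheses of the Resolution Theorem inside the category of all finitely generated $\Z_p[G]$-modules. It is closed under extensions, since an extension of a $\Z_p$-free module by a $\Z_p$-free module splits $\Z_p$-linearly and is therefore $\Z_p$-free; and if $0\to M'\to M\to M''\to 0$ is exact with $M,M''\in\mathcal P$, then $M'$ is a finitely generated $\Z_p$-submodule of a free $\Z_p$-module, hence torsion free, hence free over the principal ideal ring $\Z_p$, so $M'\in\mathcal P$. Finally every finitely generated $\Z_p[G]$-module $M$ admits a $\mathcal P$-resolution of length one: picking a surjection $\Z_p[G]^n\twoheadrightarrow M$, its kernel is a $\Z_p[G]$-submodule of the free $\Z_p$-module $\Z_p[G]^n$, hence $\Z_p$-free. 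The Resolution Theorem then yields $\G_0^{\Z_p}(\Z_p[G])=\G_0(\mathcal P)\xrightarrow{\ \sim\ }\G_0(\Z_p[G])$, the map being the one induced by the inclusion.

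For the second arrow, write $\Q_p[G]=\Z_p[G][p^{-1}]$ and apply the Localization Theorem for $\G_0$ to the central multiplicative set $\{p^n\}$, obtaining an exact sequence
\[
\G_0(\mathcal T)\xrightarrow{\ i_*\ }\G_0(\Z_p[G])\xrightarrow{\,\otimes_{\Z_p}\Q_p\,}\G_0(\Q_p[G])\to 0,
\]
where $\mathcal T$ is the category of finitely generated $p$-power-torsion $\Z_p[G]$-modules and $i_*$ is induced by $\mathcal T\hookrightarrow\{\text{f.g. }\Z_p[G]\text{-modules}\}$. (Right exactness is also transparent directly: every finitely generated $\Q_p[G]$-module contains a full $\Z_p[G]$-lattice.) Filtering a module in $\mathcal T$ by its $p$-adic filtration, Dévissage gives $\G_0(\mathcal T)\cong\G_0(\mathbb{F}_p[G])$, so it suffices to prove that $i_*\colon\G_0(\mathbb{F}_p[G])\to\G_0(\Z_p[G])$ is the zero map.

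This vanishing is the one genuinely non-formal step, and I expect it to be the main obstacle. I would deduce it from the surjectivity of the decomposition homomorphism $d\colon\G_0(\Q_p[G])\to\G_0(\mathbb{F}_p[G])$, a classical theorem of Brauer: for a $\Z_p[G]$-lattice $L$ the exact sequence $0\to L\xrightarrow{p}L\to L/pL\to 0$ shows $i_*[L/pL]=[L]-[L]=0$ in $\G_0(\Z_p[G])$, and since $d$ sends $[\Q_p\otimes_{\Z_p}L]$ to $[L/pL]$ and is surjective, $i_*$ annihilates all of $\G_0(\mathbb{F}_p[G])$. Hence $\otimes_{\Z_p}\Q_p\colon\G_0(\Z_p[G])\to\G_0(\Q_p[G])$ is an isomorphism, and composing with the first arrow (which on lattices is exactly extension of scalars) proves the Proposition. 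One cannot wholly dispense with an input of this strength: the Cartan relations $\sum_j c_{ji}[S_j]=0$ for the simple $\mathbb{F}_p[G]$-modules $S_j$ (using that indecomposable projectives of $\mathbb{F}_p[G]$ lift to projective $\Z_p[G]$-modules, $\Z_p[G]$ being $p$-adically complete), combined with non-singularity of the Cartan matrix, only show that $\mathrm{im}(i_*)$ is bounded $p$-torsion. Of course one may also simply invoke 38.42 and 39.9 of [CR2] verbatim.
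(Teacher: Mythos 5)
Your argument is correct in outline and is, in substance, the standard proof that underlies the citation the paper gives: the paper offers no argument beyond invoking [CR2, 38.42 and 39.9], and those results rest on exactly your two machines (the Resolution Theorem for the first arrow, the Heller--Reiner localization sequence plus d\'evissage for the second), with the vanishing of $i_*$ deduced from surjectivity of the decomposition map. Your verifications of the hypotheses of the Resolution Theorem, of the localization/d\'evissage reduction, and the observation that $[L/pL]=0$ in $\G_0(\Z_p[G])$ for any lattice $L$ are all complete and correct. The one point where you are leaning on more than ``a classical theorem of Brauer'' is the surjectivity of $d\colon \G_0(\Q_p[G])\to \G_0(\mathbb{F}_p[G])$ for the non-splitting pair $(\Q_p,\mathbb{F}_p)$: Brauer's theorem in its usual form (e.g.\ Serre, Part III, Th.~33) assumes the field is sufficiently large, and over $\Q_p$ there is a genuine issue to address, since for an irreducible $\Q_p[G]$-module $V$ of Schur index $m$ the class $d[V]$ is divisible by $m$, so Schur indices could a priori obstruct surjectivity. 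The statement is nevertheless true in this generality --- one reduces by Witt--Berman induction and the projection formula to $\Q_p$-elementary subgroups, where the relevant simple $\mathbb{F}_p$-modules lift to characteristic zero with Schur index one --- and the paper itself uses precisely this surjectivity (together with a splitting) at the beginning of Section 6.e; but you should cite it in its non-split form (it is in Curtis--Reiner) rather than as the splitting-field theorem, since that is where the residual content of the Proposition is hidden. Your closing remark that the Cartan-matrix argument by itself only shows $\mathrm{im}(i_*)$ is bounded $p$-power torsion is accurate.
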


\begin{proposition}
Let $S$ denote an integral domain containing $\Z_{p}$. Then $\G_{0}^{%
\Z_{p}}( \Z_{p}[G] ) $ and hence, by
the previous proposition, $\G_{0}( \Q_{p}[G] )$, acts naturally on $\Kr_{1}( S[G] ) $ via the
following rule: for a $\Z_{p}[G] $-lattice $L$ and for
an element  of $\Kr_{1}( S[G] ) $ represented by a pair
$( P,\alpha ) $ (where $P$ is a projective $S[G] $-module and $\alpha $ is an $S[G] $-automorphism of $P$), then $L $ sends $( P,\alpha ) $ to the pair
\begin{equation*}
( L\otimes P, \ ( 1\otimes \alpha ) ) .
\end{equation*}%
Note also for future reference that the functor $ G\rightarrow
\Kr_{1}( S[G] ) $ is a Frobenius module for the
Frobenius functor $G\rightarrow \G_{0}^{\Z_{p}}( \Z_{p}%
[G] )  $ (see page 4 in [CR2] and also [L]).  Also,  $G\rightarrow
\SK_{1}( S[G] ) $ is a Frobenius submodule of $G\rightarrow
\Kr_{1}( S[G] ) $ and therefore the action of $%
\G_{0}( \Q_{p}[G] ) $ on $\Kr_{1}( S[G%
] ) $ induces an action on $\mathrm{Det}( \GL(S[G%
] )) $ (see Ullom's Theorem in 2.1 of [T], and see also below
for his explicit description of this action).
\end{proposition}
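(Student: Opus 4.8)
The plan is to verify in turn that the rule $(L,(P,\alpha))\mapsto(L\otimes_{\Z_{p}}P,\,1\otimes\alpha)$ is well defined on classes, that it defines a ring action, that it refines to a Frobenius module structure, and finally that it carries $\SK_{1}$ into itself, so that it descends to $\mathrm{Det}(\GL(S[G]))$. The crux is the well-definedness, and within it the assertion that the diagonal $G$-module $L\otimes_{\Z_{p}}P$ is projective over $S[G]$. Since $G$ is finite and $\Z_{p}$ is a discrete valuation ring, a $\Z_{p}[G]$-lattice $L$ is free over $\Z_{p}$; the untwisting isomorphism $L\otimes_{\Z_{p}}\Z_{p}[G]\xrightarrow{\sim}L_{0}\otimes_{\Z_{p}}\Z_{p}[G]$, $\ell\otimes g\mapsto g^{-1}\ell\otimes g$, in which $L_{0}$ is the underlying $\Z_{p}$-module of $L$ with trivial $G$-action and $G$ acts on the right-hand side through the second factor alone, exhibits $L\otimes_{\Z_{p}}\Z_{p}[G]$ as a free $\Z_{p}[G]$-module of rank $\mathrm{rank}_{\Z_{p}}L$; extension of scalars along $\Z_{p}[G]\to S[G]$ then shows $L\otimes_{\Z_{p}}S[G]$ is $S[G]$-free. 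Choosing $Q$ with $P\oplus Q\cong S[G]^{\,n}$ realises $L\otimes_{\Z_{p}}P$ as a direct summand of $(L\otimes_{\Z_{p}}S[G])^{\,n}$, hence $S[G]$-projective, and $1\otimes\alpha$ is plainly an $S[G]$-automorphism of it, equal to the identity when $\alpha$ is.

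Next one checks the construction respects the relevant relations. For fixed $L$ the assignment $(P,\alpha)\mapsto(L\otimes_{\Z_{p}}P,\,1\otimes\alpha)$ on pairs (finitely generated projective $S[G]$-module, automorphism) sends $(P,\alpha\beta)$ to $\bigl(L\otimes_{\Z_{p}}P,(1\otimes\alpha)(1\otimes\beta)\bigr)$ and is exact — a short exact sequence of projective $S[G]$-modules splits, and stays exact after $L\otimes_{\Z_{p}}-$ — so it induces a group endomorphism of $\Kr_{1}(S[G])$. On the $\G_{0}$ side, a short exact sequence $0\to L'\to L\to L''\to 0$ of lattices is $\Z_{p}$-split, so $-\otimes_{\Z_{p}}P$ carries it to a short exact sequence of projective $S[G]$-modules, necessarily split, with respect to a splitting of which $1\otimes\alpha$ is block-triangular; hence $L\cdot x=(L'\cdot x)(L''\cdot x)$ in $\Kr_{1}(S[G])$ for every $x$, and $L\mapsto(L\cdot-)$ factors through $\G_{0}^{\Z_{p}}(\Z_{p}[G])$. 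Associativity $(L_{1}\otimes_{\Z_{p}}L_{2})\otimes_{\Z_{p}}P\cong L_{1}\otimes_{\Z_{p}}(L_{2}\otimes_{\Z_{p}}P)$, compatibly with the automorphisms, and $\Z_{p}\otimes_{\Z_{p}}P\cong P$ make this an action of the ring $(\G_{0}^{\Z_{p}}(\Z_{p}[G]),\otimes_{\Z_{p}})$; since the composite isomorphism of the previous Proposition is a ring isomorphism for the respective tensor-product structures, one obtains the asserted action of $\G_{0}(\Q_{p}[G])$.

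For the Frobenius module structure, one equips $G\mapsto\Kr_{1}(S[G])$ with its usual transfer structure — restriction $(P,\alpha)\mapsto(P|_{S[H]},\alpha)$ and induction $(P,\alpha)\mapsto(S[G]\otimes_{S[H]}P,\,1\otimes\alpha)$ — and checks the reciprocity axioms of [CR2] (page 4) and [L]; these reduce to the projection isomorphism $S[G]\otimes_{S[H]}\bigl(\Res_{H}^{G}L\otimes_{\Z_{p}}P\bigr)\cong L\otimes_{\Z_{p}}\bigl(S[G]\otimes_{S[H]}P\bigr)$, natural in the automorphisms, together with the identity $\Res_{H}^{G}(L\otimes_{\Z_{p}}P)=\Res_{H}^{G}L\otimes_{\Z_{p}}P$. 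Thus $G\mapsto\Kr_{1}(S[G])$ is a Frobenius module over $G\mapsto\G_{0}^{\Z_{p}}(\Z_{p}[G])$.

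Finally, let $N$ be the fraction field of $S$ and base change along $S[G]\to N^{c}[G]$. Tensor products commute with this base change, so $L\cdot-$ commutes with $\mathrm{Det}\colon\Kr_{1}(S[G])\to\Kr_{1}(N^{c}[G])=\oplus_{\chi}N^{c\times}$; and on $\Kr_{1}(N^{c}[G])$ the operator $L\cdot-$ is given by Ullom's explicit character-theoretic formula (Ullom's theorem, 2.1 of [T]), which in particular fixes the trivial element. Therefore $\mathrm{Det}(L\cdot x)=L\cdot\mathrm{Det}(x)$, so $\SK_{1}(S[G])=\ker\mathrm{Det}$ is stable under the action; the same base-change reasoning shows $\Res$ and $\Ind$ are compatible with $\mathrm{Det}$ (via restriction, resp.\ induction, of characters) and so preserve $\SK_{1}$. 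Hence $G\mapsto\SK_{1}(S[G])$ is a Frobenius submodule, and the action descends to the quotient Frobenius module $\Kr_{1}(S[G])/\SK_{1}(S[G])=\mathrm{Det}(\GL(S[G]))$. The only points needing genuine care are the projectivity of $L\otimes_{\Z_{p}}P$ over $S[G]$ and the compatibility of $L\cdot-$ with the base change to $N^{c}[G]$; everything else is a formal consequence of the associativity, base-change and projection isomorphisms for the tensor product, together with the splitting of short exact sequences of projective modules.
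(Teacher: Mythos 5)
Your proposal is correct, and it essentially reconstructs the argument the paper delegates to Proposition 5.2 of [CPT1]: the untwisting isomorphism to see that $L\otimes_{\Z_{p}}P$ is $S[G]$-projective, additivity in both variables to descend to $\G_{0}^{\Z_{p}}(\Z_{p}[G])$, the projection formula for the Frobenius-module axioms, and Ullom's character-theoretic description to see that the action commutes with $\mathrm{Det}$ and hence preserves $\SK_{1}$ and descends to $\mathrm{Det}(\GL(S[G]))$. This is the same classical Swan--Ullom line of reasoning as the cited source, so no further comparison is needed.
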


Proof. See Proposition 5.2 in [CPT1].

\subsection{Brauer Induction}\label{ss4b}

Let $N^{c}$ denote our chosen algebraic closure of the field $N$ and, for a
positive integer $m$, $\mu _{m}$ denotes the group of roots  of unity of
order $m$ in $N^{c}$. We then identify ${\rm Gal}( N( \mu
_{m}) /N) $ as a subgroup of $(\Z/m\Z) ^{\times }$ in the usual way. Recall that a
semi-direct product of a cyclic group $C$ (of order $m$, say, which is
coprime to $p$) by a $p$-group $P$, $C\rtimes P$, is called $N$\textbf{-}$p$%
\textbf{-elementary} (see page 112 in [S]) if for given $\pi \in P$ there
exists
\begin{equation*}
t=t( \pi ) \in {\rm Gal}( N( \mu _{m})
/N) \subseteq (\Z/m\Z) ^{\times }
\end{equation*}%
such that for all $c\in C$, we have
$
\pi c\pi ^{-1}=c^{t}
$.
The direct product $C\times P$ is called a $p$%
\textbf{-elementary group}. We say that a group is $N$-\textbf{elementary} if it is
$N$\textbf{-}$p$%
\textbf{-elementary} for some $p$. We will denote by $\E_p(N)$
the class of $N$-$p$-elementary groups.

\begin{theorem}\label{thm49}
For a given field $N$ of characteristic zero and for a given finite group $G$ there exists an integer $l$ coprime to $p$ such that
\begin{equation*}
l\cdot \G_{0}( N[G] ) \subseteq \sum_{J}{\rm Ind}%
_{J}^{G}( G_{0}( N[J] ) )
\end{equation*}%
where $J$ ranges over the $N$-$p$-elementary subgroups of $G$.
\end{theorem}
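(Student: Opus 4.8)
The plan is to prove the assertion in the equivalent localised form: that $\G_{0}(N[G])\otimes_{\Z}\Z_{(p)}$ is generated as a $\Z_{(p)}$-module by the images of the maps $\Ind_{J}^{G}$, where $J$ runs over the $N$-$p$-elementary subgroups of $G$. Two preliminary reductions make this manageable. First, since $N$ has characteristic zero the ring $N[G]$ is semisimple and $\G_{0}(N[G])$ is the representation ring; in particular it carries a product for which induction satisfies the projection formula $\Ind_{J}^{G}(x)\cdot y=\Ind_{J}^{G}(x\cdot \Res_{J}^{G}y)$, so the subgroup $M=\sum_{J}\Ind_{J}^{G}(\G_{0}(N[J]))$ is an ideal, and it therefore suffices to produce one integer $l$ coprime to $p$ with $l\cdot 1_{G}\in M$. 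Second, cyclic subgroups of $G$ are $N$-$p$-elementary (writing a cyclic group as the direct product of its $p$-part and its prime-to-$p$ part exhibits it in the required form, with trivial Galois twist), and Artin's induction theorem already shows that inductions from cyclic subgroups span a subgroup of finite index in $\G_{0}(N[G])$; hence $M$ has finite index, and the whole content of the theorem is that this index is coprime to $p$.

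I would then treat the split situation over the algebraic closure $N^{c}$. By the modular refinement of Brauer's induction theorem (reduction mod $p$; see Serre [S, Part III], or [CR2]), the group $\G_{0}(N^{c}[G])\otimes_{\Z}\Z_{(p)}$ is generated as a $\Z_{(p)}$-module by the images of $\Ind_{H}^{G}$, where $H$ runs over the $p$-elementary subgroups $H=C\times P$ of $G$ with $C$ cyclic of order prime to $p$ and $P$ a $p$-group; moreover Brauer's theorem lets one take the generators of $\G_{0}(N^{c}[H])$ to be one-dimensional characters, so after induction one obtains $\Z_{(p)}$-module generators of the form $\Ind_{H}^{G}(\psi)$ with $\psi$ a linear $N^{c}$-character of a $p$-elementary subgroup. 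Note that each such $H$, being a $p$-elementary group, is a fortiori $N$-$p$-elementary; the obstruction to concluding directly is only that an arbitrary element of $\G_{0}(N[G])$, viewed inside $\G_{0}(N^{c}[G])$, is a $\Z_{(p)}$-combination of the $\Ind_{H}^{G}(\psi)$ in which the linear characters $\psi$ need neither be $N$-valued nor $\Gamma$-stable, where $\Gamma=\Gal(N^{c}/N)$.

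The remaining, and principal, step is the descent from $N^{c}$ back to $N$, and this is the part I expect to be the main obstacle. The group $\Gamma$ acts compatibly on all of the $\G_{0}(N^{c}[H])$, fixing each subgroup $H$ of $G$ and permuting its characters, and it commutes with induction. Given $1_{G}$ written over $N^{c}$ as above, I would group the terms into $\Gamma$-orbits and, for a linear $N^{c}$-character $\psi$ of a $p$-elementary $H=C\times P$ of order $d$ (on which $\Gamma$ acts through $\Gal(N(\mu_{d})/N)\subseteq(\Z/d\Z)^{\times}$ by $\psi\mapsto\psi^{t}$), rewrite the orbit-sum $\Ind_{H}^{G}\!\big(\sum_{\sigma}\psi^{\sigma}\big)$ as an induction from an $N$-$p$-elementary subgroup $J\supseteq H$ of a genuine $N$-virtual character: one adjoins to $P$ suitable elements of $G$ that realise, by conjugation on the prime-to-$p$ part of $H$, the Galois twists $t$ occurring in the $\Gamma$-orbit of $\psi$ — which is precisely the construction producing $N$-$p$-elementary (as opposed to merely $p$-elementary) groups in Serre's proof of the rationality/induction theorem [S, \S12.6]. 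Schur indices enter when passing from $\Gamma$-fixed virtual characters to honest $N$-virtual characters, but because the input from the previous step is already $p$-localised, their contribution can be absorbed into a factor coprime to $p$; carrying this out for each orbit and collecting the finitely many such factors into a single $l$ yields $l\cdot 1_{G}\in M$. The delicate points — the compatibility of the twisting construction with the semidirect structure of the $N$-$p$-elementary subgroups so obtained, and the verification that no factor of $p$ is introduced — are handled exactly as in [S, Part III and \S12.6] combined with the $p$-local input above, and I would not reproduce those computations here. (Alternatively one may first reduce to $N$-elementary subgroups via the Witt–Berman induction theorem and then dispose separately of each $N$-$q$-elementary subgroup with $q\neq p$, whose Sylow $p$-subgroup is normal and cyclic; but that residual $q\neq p$ case is again settled by the same Brauer-over-$N^{c}$-plus-descent argument, so the real work is unchanged.)
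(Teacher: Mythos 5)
Your proposal is correct and amounts to essentially the same thing as the paper: the statement is exactly Serre's Theorem 28 on induction from $N$-$p$-elementary (i.e.\ $\Gamma_N$-$p$-elementary) subgroups, and the paper's entire proof is the citation ``See Theorem 28 in [S]''. Your sketch simply reconstructs the standard proof of that theorem (Brauer induction over $N^{c}$, $p$-localisation, then Galois descent with Schur indices absorbed into a factor prime to $p$), and since you yourself defer the decisive descent computations to [S, \S 12.6], both arguments ultimately rest on the same source, with no genuine divergence or gap to report.
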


\begin{proof} See Theorem 28 in [S].\end{proof}

\subsection{Mackey functors and Green rings}\label{ss4c}

We now briefly introduce the notions of a Mackey functor, a Frobenius
functor and a Green ring (see 38.4 in [CR2] and [Bo] for details).

A Frobenius functor consists of a collection of commutative rings $F(
G)$, one for each finite group $G$ together with induction and
restriction maps of rings for each pair of groups $K\subset H$
\begin{equation*}
\mathrm{Ind}_{K}^{H}:F( K) \rightarrow F( H) ,\ \ \
\text{\textrm{Res}}_{K}^{H}:F( H) \rightarrow F( K)
\end{equation*}%
with the properties:

(1) for $K\subset H\subset J$ we have%
\begin{equation*}
\mathrm{Ind}_{H}^{J}\circ \mathrm{Ind}_{K}^{H}=\mathrm{Ind}_{K}^{J},\ \
\text{\textrm{Res}}_{K}^{H}\circ \text{\textrm{Res}}_{H}^{J}=\text{\textrm{%
Res}}_{K}^{J};
\end{equation*}

(2) for $x\in F( H)$, $y\in F( K)  $ we have
\begin{equation*}
x\cdot \mathrm{Ind}_{K}^{H}( y) =\mathrm{Ind}_{K}^{H}( \text{%
\textrm{Res}}_{K}^{H}( x) \cdot y) .
\end{equation*}

A Frobenius module $N$ over the Frobenius functor $F$ is  a collection of $F( G) $-modules $N( G)$, one for each finite group $%
G$, together with induction and restriction maps of rings for each pair of
groups $K\subset H$
\begin{equation*}
\mathrm{Ind}_{K}^{H}:N( K) \rightarrow N( H) ,\ \ \
\text{\textrm{Res}}_{K}^{H}:N( H) \rightarrow N( K)
\end{equation*}%
satisfying the transitivity properties as in (1) above together with the
following three properties:

(i) for $x\in F( H) ,\ m\in N( H) $%
\begin{equation*}
\text{\textrm{Res}}_{K}^{H}( x) \cdot \text{\textrm{Res}}%
_{K}^{H}( m) =\text{\textrm{Res}}_{K}^{H}( x\cdot m) ;
\end{equation*}

(ii) and for $m^{\prime }\in N( K) $%
\begin{equation*}
x\cdot \mathrm{Ind}_{K}^{H}( m^{\prime }) =\mathrm{Ind}_{K}^{H}(
\text{\textrm{Res}}_{K}^{H}( x) \cdot m^{\prime }) ;
\end{equation*}

(iii) and for $x^{\prime }\in F( K) $%
\begin{equation*}
\mathrm{Ind}_{K}^{H}( x^{\prime }) \cdot m=\mathrm{Ind}_{K}^{H}(
x^{\prime }\cdot \text{\textrm{Res}}_{K}^{H}( m) ) .
\end{equation*}

A Mackey functor with values in the category of abelian groups consists of a
collection of abelian groups $M( G)$, one for each finite group $
G $, together with induction and restriction maps for each pair of groups $K\subset H$
\begin{equation*}
\mathrm{Ind}_{K}^{H}: M( K) \rightarrow M( H) ,\quad
\text{\textrm{Res}}_{K}^{H}: M( H) \rightarrow M( K)
\end{equation*}
and conjugation maps $c_{x,H}:M( H) \rightarrow M(
^{x}H) $ for $x\in G$, $H\subset G$ where we write $^{x}H=xHx^{-1}$,
$H^{x}=x^{-1}Hx$. These maps are then required to satisfy the following
properties:

1. For $L\subset K\subset H$ we have $ {\rm Ind}_{K}^{H}\circ {\rm Ind}%
_{L}^{K}={\rm Ind}_{L}^{H}$ and ${\rm Res}_{L}^{K}\circ  {\rm Res}_{K}^{H}={\rm Res}_{L}^{H}$.

2. For $x,y\in G$ and $H\subset G$ we have $c_{y,^{x}H}\circ
c_{x,H}=c_{yx,H}$.

3. For $x\in G$ and $K\subset H\subset G$ we have
\begin{equation*}
c_{x,H}\circ {\rm Ind}_{K}^{H}={\rm Ind}_{^{x}K}^{^{x}H}\circ c_{x,K}%
\text{ \ and \ }c_{x,H}\circ {\rm Res}_{K}^{H}={\rm Res}
_{^{x}K}^{^{x}H}\circ c_{x,K}.
\end{equation*}

4. For $x\in H$ we have $c_{x,H}={\rm id}_{H}$.

5. (The Mackey axiom.) For $L\subset H\supset K$ we have
\begin{equation*}
\text{\textrm{Res}}_{L}^{H}\circ {\rm Ind}_{K}^{H}=\sum\nolimits_{x\in
L\backslash H/K}{\rm Ind}_{L\cap ^{x}K}^{L}\circ c_{x},_{L^{x}\cap
K}\circ\,  {\rm Res}_{L^{x}\cap K}^{K}.
\end{equation*}

A Green ring is a commutative ring-valued Mackey functor $\mathcal{R}$ which is a
Frobenius functor; and  a Green module for the Green ring $\mathcal{R}$ is a
Frobenius module over ${\mathcal R}$ which is also a Mackey functor.
 (See [D] and page 246
in [O5] for details.)\bigskip

\noindent\textbf{Examples.}
(i) For a given prime $p$, the functor $G\rightarrow \G_{0}( \Z_{p}[G] ) $ is a   Green ring and the functors $G\rightarrow  \Kr_{1}( R[G] )$, $\mathrm{Det}(
\Kr_{1}( R[G] ) )$, $\SK_{1}( R[G]
) $ are Green modules for the Green ring  $G\rightarrow \G_{0}( \Z_{p}[G] ) $.

(ii) For a given prime $p$, the functor $G\rightarrow H^{0}( G,\Z_p) $ is a Green ring and the functors $G\rightarrow H_{2}( G,%
\Z_{p})$, $H_{2}^{\rm ab}( G,\Z_{p})$, $\overline{%
H}_{2}( G,\Z_{p}) $ are Green modules for this   Green
ring.  (See page 281 in [O5].)\medskip

Let $\mathcal{C}$ be an arbitrary class of finite groups
which is closed with respect to subgroups and isomorphic images.
 For a given  finite group $G$ we write $\mathcal{C}( G) $ for the
set of subgroups of $G$ which belong to the class $\mathcal{C}$.

 We say that the Mackey functor $M$ is \textbf{generated} by $
\mathcal{C}$ if for any  $G$
\begin{equation*}
\bigoplus _{H\in \mathcal{C}( G) } M( H)\xrightarrow{\Sigma \,
\mathrm{Ind}_{H}^{G}} M( G)
\end{equation*}
is surjective.

  We say that the Mackey functor $M $ is $\mathcal{C}$-\textbf{computable with
respect to induction} if for any $G$
\begin{equation*}
M( G) \cong \varinjlim_{{H\in \mathcal{C}( G)
}}M( H)
\end{equation*}%
where the direct limit is taken with respect to the induction and
conjugation maps.

 We say that the Mackey functor $M $ is $\mathcal{C}$-\textbf{computable with
respect to restriction} if for any $G$
\begin{equation*}
M( G) \cong \varprojlim_{{H\in \mathcal{C}( G)
}}M( H)
\end{equation*}%
where the inverse limit is taken with respect to  the restriction and
conjugation maps.

 \medskip

From Theorem 11.1 in [O5] we have:

\begin{theorem}\label{thm50}
  (Dress)  Let $\mathcal{R}$ be a Green ring, $ M$ be a Green module
for $\mathcal{R}$ and let $\mathcal{C}$ be a class of finite groups with the
property that $\mathcal{R}$ is $\mathcal{C}$-generated; then $M$ is $%
\mathcal{C}$-computable with respect to both induction and restriction.
\end{theorem}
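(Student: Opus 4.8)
The theorem is quoted verbatim from [O5, Thm.\ 11.1], so I would simply reproduce Dress's induction argument. The plan is to derive both computability statements from the single input ``$\mathcal{R}$ is $\mathcal{C}$-generated'', using only the Frobenius (projection) formula built into the definition of a Green module together with the Mackey double-coset axiom. Concretely, for each finite group $G$ fix once and for all an expression
\begin{equation*}
1_{\mathcal{R}(G)}=\sum_{i}\Ind_{H_i}^{G}(a_i),\qquad H_i\in\mathcal{C}(G),\ a_i\in\mathcal{R}(H_i),
\end{equation*}
which exists precisely because $\bigoplus_{H\in\mathcal{C}(G)}\mathcal{R}(H)\to\mathcal{R}(G)$ is onto. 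Everything below is a formal consequence of this one identity.

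For computability with respect to induction, set $\overline{M}(G)=\varinjlim_{H\in\mathcal{C}(G)}M(H)$, the colimit over inclusions and conjugations, and let $\pi_G\colon\overline{M}(G)\to M(G)$ be the canonical map assembled from the maps $\Ind_H^G$. I would define a candidate inverse $\sigma_G\colon M(G)\to\overline{M}(G)$ by $\sigma_G(m)=\sum_i[\,a_i\cdot\Res_{H_i}^G(m)\,]$, where $[\,\cdot\,]$ denotes the class in the colimit; one checks $\sigma_G$ is well defined because $\Res$, the conjugations $c_x$ and multiplication by restricted elements of $\mathcal{R}$ are compatible with the structure maps of the colimit. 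The projection formula gives
\begin{equation*}
\pi_G\sigma_G(m)=\sum_i\Ind_{H_i}^G\!\bigl(a_i\cdot\Res_{H_i}^G m\bigr)=\Bigl(\sum_i\Ind_{H_i}^G(a_i)\Bigr)\cdot m=m,
\end{equation*}
so $\pi_G$ is surjective (which already yields that $M$ is $\mathcal{C}$-generated). For $\sigma_G\pi_G=\mathrm{id}$ I would take a generator $[m]$ with $m\in M(H)$, $H\in\mathcal{C}(G)$, so $\pi_G[m]=\Ind_H^G(m)$; expanding $\Res_{H_i}^G\Ind_H^G$ by the Mackey axiom into a sum over $x\in H_i\backslash G/H$ of terms $\Ind_{H_i\cap{}^xH}^{H_i}c_x\Res_{H_i^x\cap H}^H(m)$, applying the projection formula once more, and then using that in $\overline{M}(G)$ the class of an induced element equals the class of the element itself while conjugations are invertible in the colimit, I would rewrite each term as the image of an element of $M(H)$. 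Collecting these and running the Mackey and projection formulas backwards identifies $\sigma_G\pi_G[m]$ with the image of $\Res_H^G\!\bigl(\sum_i\Ind_{H_i}^G a_i\bigr)\cdot m=\Res_H^G(1)\cdot m=m$, i.e.\ with $[m]$. Hence $\pi_G$ is an isomorphism.

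For computability with respect to restriction I would argue dually: the canonical map $M(G)\to\varprojlim_{H\in\mathcal{C}(G)}M(H)$, $m\mapsto(\Res_H^G m)_H$, admits a one-sided inverse built from the same decomposition of $1_{\mathcal{R}(G)}$ and the projection formula, and the Mackey axiom forces this one-sided inverse to be two-sided; alternatively one invokes the standard fact that for Green modules $\mathcal{C}$-computability with respect to induction and with respect to restriction are equivalent, which again reduces to tracing $1_{\mathcal{R}(G)}=\sum_i\Ind_{H_i}^G(a_i)$ through the double-coset formula. The only real obstacle is bookkeeping: verifying that $\sigma_G$ descends to the colimit and that the Mackey-plus-projection manipulation collapses exactly to $[m]$ with no residual terms. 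No idea beyond $\mathcal{C}$-generation and the Mackey axiom enters, which is why [O5, Thm.\ 11.1] may be invoked here as a black box.
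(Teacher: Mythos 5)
The paper offers no proof of this statement at all: it is quoted from [O5, Theorem 11.1] and attributed to Dress, so there is nothing internal to compare against, and your reconstruction is exactly the standard argument behind that citation. It is essentially correct. Writing $1_{\mathcal{R}(G)}=\sum_i\Ind_{H_i}^{G}(a_i)$ with $H_i\in\mathcal{C}(G)$, the projection formula gives $\pi_G\sigma_G=\mathrm{id}$, and for $\sigma_G\pi_G[m]=[m]$ (with $m\in M(H)$, $H\in\mathcal{C}(G)$) the Mackey expansion of $\Res_{H_i}^{G}\Ind_{H}^{G}$, followed by conjugating each term back by $c_{x^{-1}}$ into a subgroup of $H$ (legitimate because $\mathcal{C}$ is closed under subgroups, so all the groups $H_i\cap{}^xH$ and $H_i^x\cap H$ stay in $\mathcal{C}(G)$) and re-inducing up to $H$ inside the colimit, collapses the whole sum to $[\Res_H^G\bigl(\sum_i\Ind_{H_i}^G a_i\bigr)\cdot m]=[1_{\mathcal{R}(H)}\cdot m]=[m]$; that is precisely the bookkeeping you describe, and it does close up with no residual terms.

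Two points to firm up. First, for the restriction half do not lean on an unproved ``equivalence of the two computabilities''; the dual argument is just as explicit: define $\tau_G\bigl((m_H)_H\bigr)=\sum_i\Ind_{H_i}^{G}(a_i\cdot m_{H_i})$, then $\tau_G\circ\rho_G=\mathrm{id}$ is the projection formula again, and $\rho_G\circ\tau_G=\mathrm{id}$ follows from the same Mackey-plus-projection computation, now using the compatibilities $\Res^{H_i}_{K^x\cap H_i}(m_{H_i})=m_{K^x\cap H_i}$ and $c_x(m_{K^x\cap H_i})=m_{K\cap{}^xH_i}$ built into an element of the inverse limit, which reduce each $K$-component to $\Res_K^G(1_{\mathcal{R}(G)})\cdot m_K=m_K$. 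Second, your manipulations repeatedly use that conjugation is multiplicative for the pairing, $c_x(a\cdot m)=c_x(a)\cdot c_x(m)$, and that restriction maps are unital ring homomorphisms; both are part of the Green-module/Green-ring structure as in [D] and [O5], but they are worth stating explicitly since the Frobenius axioms (i)--(iii) written out in 4.c mention only $\Ind$ and $\Res$. Also note that well-definedness of $\sigma_G$ itself is immediate (its domain is $M(G)$, not the colimit); the only genuine verification is the identity $\sigma_G\pi_G=\mathrm{id}$ on generators, which you have.
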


\begin{definition}\label{def51}
A Mackey functor is called $p$\textbf{-local} if it is a $\Z_{p}$%
-module valued functor.
\end{definition}

Using Brauer induction as in \ref{ss4b} above it can also be shown that (see
Theorem 11.2 in [O1]):

\begin{theorem}\label{thm52}
Let $\mathcal{E}$, resp. $\mathcal{E}_{p}$, denote the class of $\Q$-elementary, resp. of $\Q_{p}$-p-elementary, groups.

(i)  As previously, suppose that $R$ has field of fractions $N$ and let $X$
be an additive functor from the category of $R$-orders in semi-simple $N$-algebras with bimodule morphisms to the category of abelian groups. Then $M( G) =X( R[G] ) $ is a Mackey functor and
is in fact a Green module for the Green ring $G\rightarrow  \G_{0}( \Z[G%
] ) $ and $M$ is $\mathcal{E}$-computable with respect to both induction and restriction.

(ii)   Suppose further that $X$ is $p$-local; then $M( G)
=X( R[G] ) $ is a Green module for the Green ring $G\rightarrow \G_{0}( \Z_{p}[G] )$ and $M$ is $\mathcal{E}%
_{p}$-computable with respect to both induction and restriction.
\end{theorem}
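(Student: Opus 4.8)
The plan is to follow Oliver's proof of the analogous statement for $R=\Z$ or $R=\Z_p$ (cf.\ Theorem 11.2 in [O1] and Chapter 11 of [O5]): one first realizes $M(G)=X(R[G])$ as a Green module over an appropriate Green ring, and then applies Dress's induction theorem (Theorem \ref{thm50}) once that Green ring has been shown to be generated by the relevant class of elementary subgroups. For part (i) the Green ring is $\mathcal{R}(G)=\G_0(\Z[G])$ and the class is $\mathcal{E}$; for part (ii), exploiting that $X$ is $p$-local, the relevant Green ring is $\G_0(\Z_p[G])\otimes\Z_{(p)}$, which by the isomorphism $\G_0(\Z_p[G])\cong\G_0(\Q_p[G])$ of \ref{ss4a} equals $\G_0(\Q_p[G])\otimes\Z_{(p)}$, and the class is $\mathcal{E}_p$.

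First I would set up the Mackey-functor structure on $M$. For $K\subseteq H$ the bimodule $R[H]$, regarded as an $(R[H],R[K])$-bimodule (free as a left $R[H]$-module), is a bimodule morphism $R[K]\to R[H]$, and applying $X$ to it defines $\Ind_K^H$; regarded instead as an $(R[K],R[H])$-bimodule (free, hence finitely generated projective, as a left $R[K]$-module) it is a bimodule morphism $R[H]\to R[K]$, and applying $X$ defines $\Res_K^H$; the canonical isomorphisms $R[H]\xrightarrow{\sim}R[{}^xH]$ give the conjugation maps $c_{x,H}$. Since bimodule morphisms compose and $X$ is functorial, the transitivity axioms 1--4 are automatic, and the Mackey axiom 5 follows from the double-coset decomposition of the $(R[L],R[K])$-bimodule $R[H]$ --- this is the standard verification, carried out in 38.4 of [CR2] and in [Bo], [L], [D] and Chapter 11 of [O5]. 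The Green-module structure of $M$ over $\G_0(\Z[G])$ --- and over $\G_0(\Z_p[G])$ in the $p$-local case, via the isomorphism of \ref{ss4a} --- is defined by tensoring with $\Z[G]$-lattices, by the same construction as in Proposition 5.2 of [CPT1]; the projection-formula identities (i)--(iii) of \ref{ss4c} and the Mackey compatibilities for this action all follow from the compatibility of $(-)\otimes_\Z L$ with the bimodule operations above.

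Next I would establish the generation statements and conclude via Dress. For (i): by the Witt--Berman induction theorem in its integral form (the sharpening of Artin's theorem in which induction from $\Q$-elementary subgroups is surjective with no denominators, together with the fact that the kernel of the localization surjection $\G_0(\Z[G])\twoheadrightarrow\G_0(\Q[G])$ is a sum of $\Ind_J^G$'s from $\mathcal{E}$-subgroups $J$), the Green ring $\G_0(\Z[G])$ is $\mathcal{E}$-generated; see [CR2] and [O5]. Dress's theorem (Theorem \ref{thm50}) then shows that $M$ is $\mathcal{E}$-computable with respect to both induction and restriction, which is (i). For (ii): by Theorem \ref{thm49} applied with $N=\Q_p$ there is an integer $l$ coprime to $p$ with $l\cdot\G_0(\Q_p[G])\subseteq\sum_J\Ind_J^G\G_0(\Q_p[J])$, the sum running over the $\Q_p$-$p$-elementary subgroups $J$ of $G$; since $l$ becomes a unit after tensoring with $\Z_{(p)}$, the Green ring $\G_0(\Q_p[G])\otimes\Z_{(p)}\cong\G_0(\Z_p[G])\otimes\Z_{(p)}$ is $\mathcal{E}_p$-generated. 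As $X$ is $p$-local, each $M(G)$ is a $\Z_p$-module, so the $\G_0(\Z_p[G])$-action on $M$ factors through $\G_0(\Z_p[G])\otimes\Z_{(p)}$ and $M$ is a Green module over this $\Z_{(p)}$-valued Green ring; Theorem \ref{thm50} now gives that $M$ is $\mathcal{E}_p$-computable with respect to both induction and restriction.

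The main obstacle --- or rather the only step that is not pure bookkeeping --- is the passage, in (ii), from the ``with denominator $l$'' form of Brauer induction in Theorem \ref{thm49} to an honest generation statement; this is precisely why one is forced to work $p$-locally, replacing $\mathcal{E}$ by $\mathcal{E}_p$ and $\Z$ by $\Z_{(p)}$, and it is the reason the hypothesis that $X$ be $p$-local is needed in (ii) but not in (i). Everything else --- the verification of the Mackey and Frobenius-module axioms for $M$ in terms of the bimodules $R[H]$ --- is routine and transcribes Oliver's argument with $R$ in place of $\Z$ or $\Z_p$.
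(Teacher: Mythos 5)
Your proposal is correct and follows essentially the same route the paper takes, which is to cite Oliver's Theorem 11.2 after setting up the bimodule-induced Mackey/Green-module structure on $G\mapsto X(R[G])$ and then invoking Dress's theorem (Theorem \ref{thm50}) once the Green ring $\G_{0}(\Z[G])$ (resp. the $p$-localized $\G_{0}(\Z_{p}[G])\cong\G_{0}(\Q_{p}[G])$) is known to be generated by induction from $\mathcal{E}$ (resp. $\mathcal{E}_{p}$) via Swan/Witt--Berman, resp. Theorem \ref{thm49} with the denominator $l$ made invisible by $p$-localization. Your identification of the $p$-localization step as the point where the $p$-local hypothesis on $X$ is genuinely used matches the paper's intent exactly.
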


From Theorem 11.9 in [O5] we have:

\begin{theorem}\label{thm53}
  Let $p$ denote a chosen prime number, let $R$ be a $p$-adically
complete integrally closed integral domain of characteristic zero with field
of fractions $N$ and let $X$ be a $p$-local additive functor on the category
of $R$-orders with bimodule morphisms. For any positive integer $n$ which
is prime to $p$, let $\zeta _{n}$ denote a primitive $n$-th root of unity in $\Q_{p}^{c} $
and set $R[\zeta _{n}] =R\otimes _{\Z_{p}}\Z_{p}[\zeta _{n}] $. Then:

(i) $X( R[G] ) $ is computable with respect to induction from $p$-elementary groups if, and only if, for any $n$ as above, any $p$-group $G$
and any homomorphism $t:G\rightarrow{\rm Gal}( \Q_{p}(
\zeta _{n}) /\Q_{p})$ with $H=\ker(t)$, the
induction map
\begin{equation*}
\mathrm{Ind}_{H}^{G}:H_{0}( G/H,X( R[\zeta _{n}]  [
H] ) ) \rightarrow X( R[\zeta _{n}] \circ
G)
\end{equation*}%
is bijective.

(ii) $X( R[G] ) $ is computable with respect to restriction to $p$-elementary
 groups if, and only if, for any $n$, as above, any $p$-group $%
p$ and any homomorphism $t:G\rightarrow{\rm Gal}( \Q_{p}(
\zeta _{n}) /\Q_{p})$ with $H=\ker(t)$, the
  restriction map
\begin{equation*}
{\rm Res}_{H}^{G}:X( R[\zeta _{n}] \circ G)
\rightarrow H^{0}( G/H, X( R[\zeta _{n}] [H]
) )
\end{equation*}
is bijective.
\end{theorem}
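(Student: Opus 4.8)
The plan is to follow the proof of Theorem 11.9 in [O5], isolating the points at which the coefficient ring enters. Put $M(G)=X(R[G])$. By Theorem \ref{thm52}(ii) the Green module $M$ is already $\mathcal{E}_p$-computable with respect to both induction and restriction, where $\mathcal{E}_p$ is the class of $\Q_p$-$p$-elementary groups; by transitivity of the computability property it follows that $M$ is computable with respect to the subclass of $p$-elementary (direct-product) groups if and only if, for every $\Q_p$-$p$-elementary group $\Gamma=C\rtimes P$ with $|C|=n$ prime to $p$, the group $M(\Gamma)$ is recovered as the corresponding colimit, resp. limit, over the $p$-elementary subgroups of $\Gamma$. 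So I would fix such a $\Gamma$ and make $X(R[\Gamma])$ explicit.

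This is where integral closedness enters. Since $R$ is integrally closed and $n$ is a unit in $R$, the ring $R[C]=R\otimes_{\Z_p}\Z_p[C]$ is a maximal order in $N[C]$ and splits as a finite product of rings $R[\zeta_d]=R\otimes_{\Z_p}\Z_p[\zeta_d]$ with $d\mid n$, indexed by the Galois orbits of characters of $C$. As $P$ acts on $C$ through the cyclic, unramified group $\mathrm{Gal}(\Q_p(\mu_n)/\Q_p)$, it cannot move a character out of its orbit and so fixes each of these blocks; hence $R[\Gamma]=R[C]\rtimes P$ is a product of crossed products $R[\zeta_d]\circ P$, the $P$-action being $t$ followed by the projection $\mathrm{Gal}(\Q_p(\mu_n)/\Q_p)\twoheadrightarrow\mathrm{Gal}(\Q_p(\zeta_d)/\Q_p)$. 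Additivity of $X$ then gives $X(R[\Gamma])\cong\bigoplus_d X(R[\zeta_d]\circ P)$, and the same bookkeeping, applied to the $p$-elementary subgroups $C'\times P'$ of $\Gamma$ (for which $C'\le C$ and $P'$ acts trivially on $C'$), writes the relevant colimit, resp. limit, summand by summand.

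It remains to identify the two descriptions on each summand. On the piece attached to $R[\zeta_d]\circ P$ the colimit, resp. limit, over those $p$-elementary subgroups that contribute to it computes $H_0\bigl(P/H_d,\,X(R[\zeta_d][H_d])\bigr)$, resp. $H^0\bigl(P/H_d,\,X(R[\zeta_d][H_d])\bigr)$, where $H_d$ is the kernel of the action of $P$ on $\zeta_d$ --- so that $R[\zeta_d][H_d]$ is an ordinary group ring and $P/H_d$ is cyclic --- and the comparison map is exactly $\mathrm{Ind}_{H_d}^{P}$, resp. $\mathrm{Res}_{H_d}^{P}$, in the statement. For the ``if'' direction one invokes the hypothesized bijectivity on each summand, the cyclicity of $P/H_d$ being what makes a single induction, resp. restriction, map suffice. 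For the ``only if'' direction one runs this backwards: any triple $(n,G,t)$ for which $\mathrm{Ind}_H^{G}$, resp. $\mathrm{Res}_H^{G}$, fails to be bijective is realized by the $\Q_p$-$p$-elementary group $\Gamma_0=C_n\rtimes G$ (with $G$ acting on $C_n$ through $t$), in whose group ring $R[\zeta_n]\circ G$ occurs as the block of a faithful character of $C_n$, so that the failure on that block obstructs computability of $X(R[\Gamma_0])$.

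This is the argument of Theorem 11.9 in [O5]; the only thing needing comment is that it uses nothing about $R$ beyond its being $p$-adically complete, integrally closed and of characteristic zero --- the coefficient ring intervening only in the maximality of $R[C]$ and in the identification of the blocks of $R[C]\rtimes P$ with the crossed products $R[\zeta_d]\circ P$. I expect the main obstacle to be not conceptual but exactly this bookkeeping: keeping straight the interplay between the idempotent decomposition of $R[C]$, the action of $P$ permuting --- in fact fixing --- its blocks, and the effect of passing to the $p$-elementary subgroups of $\Gamma$.
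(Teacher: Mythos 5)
Your proposal is correct and takes essentially the same route as the paper, whose entire proof consists of deducing the statement from Theorem 11.9 in [O5] exactly along the lines you sketch (Dress induction via Theorem \ref{thm52}, then block-by-block analysis of $\Q_p$-$p$-elementary groups). The only point the paper adds is that, since $R[\zeta_n]=R\otimes_{\Z_p}\Z_p[\zeta_n]$ need not itself be a domain, one invokes the decomposition of $R[\zeta_n]$ into a product of integral domains (Sect.\ 6 of [CPT1]) to fit Oliver's framework --- a point your idempotent/block bookkeeping already covers.
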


To obtain this theorem from Theorem 11.9 in [O5] it is helpful to note that
(see Sect. 6 in [CPT1]) we can decompose the ring $R[ \zeta _{n}] $
into a product of integral domains.

\section{$\SK_{1}$ for $p$-elementary groups}\label{s5}
\setcounter{equation}{0}

In this section we assume that in addition $R$ is a normal ring. Here $G$ is
a $\Q_{p}$-$p$-elementary group, written $G=C\rtimes P$ as above,
with $P$ a $p$-group and $C$ a cyclic group of order prime to $p$. We
therefore have decompositions of algebras:
\begin{equation*}
\Z_{p}[C] =\prod_{m}\Z_{p}[m] \text{ \ where \ }\Z_{p}[m] =\Z[\zeta
_{m}] \otimes _{\Z}\Z_{p}
\end{equation*}%
\begin{equation*}
R[C] =\prod_{m}R[m] \text{ \ where \ }R[m%
] =\Z_{p}[m] \otimes _{\Z_{p}}R
\end{equation*}%
and where $m$ runs over the divisors of the order of $C$.

We fix a surjective abelian character $\chi :C\rightarrow  \langle \zeta
_{m} \rangle$. We let $A_{m}$ denote the image of $P$ in $\mathrm{Aut}\langle \zeta _{m}\rangle $
given by conjugation; thus, by the
definition of a $\Q_{p}$-$p$-elementary group, $A_{m}$ identifies as
a subgroup of the cyclic group $\mathrm{Gal}( \Q_{p}(\zeta _{m}) /\Q_{p})$ and we let $H_{m}$ denote the
kernel of the map from $P$ to $A_{m} $. We endow $R[m] $ with
the lift of Frobenius given by the tensor product of the lift of Frobenius
on $R$ and the Frobenius automorphism of $\Z_{p}[m]$.
From Lemma 6.1 in [CPT1], we know that each $R[m]$ decomposes
as a product of integral domains each of which satisfies the Standing
Hypotheses.

Recall that each twisted group ring $R[m] \circ P$ contains the
standard group ring $R[m] [H_{m}]$, and, as per 6.a
in [CPT1], we have the induction and restriction maps
\begin{equation*}
\Kr_{1}( R[m] [H_{m}] )\, \overset{r_{\chi }}{%
\underset{i_{\ast }}{\leftrightarrows }}\, \Kr_{1}( R[m] \circ
P) .
\end{equation*}

In most of the remaining of this section we fix $m$ and drop the index $m$ where possible; so, in
particular, we write $H$, $A$, $r$ for $H_{m}$, $A_{m}$, $r_{m}$ and set $B=R[m%
] $. We write $I_{H}$ for the augmentation ideal $I( B[H%
] ) $ and we let $I_{P}$ denote the two-sided $B\circ P$ ideal
generated by $I_{H}$. Note for future reference that, since $R[P]
I_{H}$ is contained in the Jacobson radical of $R[P] $ and since
$B$ commutes with $H$, by the definition of $H$, it follows that $I_{P}$ is
contained in the Jacobson radical of $B\circ P$. In the usual way we have
the relative K-groups $\Kr_{1}( B[H] ,I_{H})$, and $\Kr_{1}( B\circ P,I_{P})$.

Recall that from Theorem 6.2 and Proposition 6.3 in [CPT1]
we have

\begin{theorem}\label{thm54}
We have
\begin{equation*}
r( \mathrm{Det}( (B\circ P)^{\times }) ) =\mathrm{Det}%
( B[H] ^{\times }) ^{A}\supseteq r\circ i_{\ast
}( \mathrm{Det}( B[H] ^{\times }) ) .
\end{equation*}
Using the decomposition $\mathrm{Det}( B[H] ^{\times
}) =\mathrm{Det}( 1+I_{H}) \times B^{\times },$ we obtain
inclusions
\begin{equation}\label{eq5.52}
r( \mathrm{Det}( 1+I_{P}) ) =\mathrm{Det}(
1+I_{H}) ^{A}\supseteq r\circ i_{\ast }( \mathrm{Det}(
1+I_{H}) )
\end{equation}
and
\begin{equation*}
\mathrm{Det}( 1+I_{P}) =i_{\ast }( \mathrm{Det}(
1+I_{H}) ) .
\end{equation*}
\end{theorem}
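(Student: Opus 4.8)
These are a transcription of Theorem 6.2 and Proposition 6.3 of [CPT1], so the plan is to extract them from there; for orientation I describe the shape of the argument and where its weight lies.

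First I would record that $B\circ P$ is the crossed product $(B[H])\circ A$ of the ordinary group ring $B[H]$ by the cyclic group $A=P/H$, with $A$ acting on $B=R[m]$ via its embedding $A\hookrightarrow\Gal(\Q_p(\zeta_m)/\Q_p)$ (on the coefficient field, trivially on $R$) and on $H$ by conjugation in $P$. The inclusion $r(\mathrm{Det}((B\circ P)^{\times}))\subseteq\mathrm{Det}(B[H]^{\times})^{A}$ is the easy half: if a class of $\Kr_1(B\circ P)$ is represented by an automorphism of a projective $B\circ P$-module, then conjugation by any lift $\widetilde\sigma\in P$ of $\sigma\in A$ furnishes a semilinear isomorphism between the restriction of this datum to $B[H]$ and its $\sigma$-twist, so its image under $r$ is $A$-invariant; equivalently, by Clifford theory the $B[H]$-restriction of an irreducible of $B\circ P$ is a sum over an $A$-orbit of irreducibles of $B[H]$. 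Granting the first displayed equality, the inclusion $\mathrm{Det}(B[H]^{\times})^{A}\supseteq r\circ i_{\ast}(\mathrm{Det}(B[H]^{\times}))$ is then automatic, since $r\circ i_{\ast}$ factors through $r$.

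The hard part, and where I expect the main obstacle to lie, is the reverse inclusion $\mathrm{Det}(B[H]^{\times})^{A}\subseteq r(\mathrm{Det}((B\circ P)^{\times}))$: a descent statement along the cyclic crossed product $B\circ P=(B[H])\circ A$, asserting that every $A$-invariant determinant on $B[H]$ is the $r$-image of a determinant of a genuine unit of $B\circ P$. The plan would be to combine (i) the Clifford-theoretic fact that an $A$-invariant character of $H$ extends to $P$, the obstruction living in the cohomology $H^{2}$ of the cyclic group $A$ and hence vanishing, with (ii) the surjectivity of $\mathrm{Det}$ onto the full unit groups, where the running hypothesis that $R$ is normal enters — here through the decomposition of $B=R[m]$ into normal domains satisfying the Standing Hypotheses (Lemma 6.1 of [CPT1]) together with the analogue for $B\circ P$ of Theorem \ref{thm1}(b). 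Combining the two inclusions gives the equality, and the same circle of ideas yields $\mathrm{Det}((B\circ P)^{\times})=i_{\ast}(\mathrm{Det}(B[H]^{\times}))$.

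Finally I would deduce the $1+I_{P}$ versions by passing to augmentation-ideal parts. The split surjection $B[H]\to B$ gives an $A$-equivariant decomposition $\mathrm{Det}(B[H]^{\times})=\mathrm{Det}(1+I_{H})\times B^{\times}$, and since $I_{P}$ is by definition the two-sided ideal of $B\circ P$ generated by $I_{H}$, one checks $i_{\ast}(\mathrm{Det}(1+I_{H}))\subseteq\mathrm{Det}(1+I_{P})$ and $r(\mathrm{Det}(1+I_{P}))\subseteq\mathrm{Det}(1+I_{H})$, the two maps being compatible with the factorizations. Intersecting the first displayed equality with $\mathrm{Det}(1+I_{H})$ then gives $r(\mathrm{Det}(1+I_{P}))=\mathrm{Det}(1+I_{H})^{A}$; intersecting $\mathrm{Det}((B\circ P)^{\times})=i_{\ast}(\mathrm{Det}(B[H]^{\times}))$ similarly gives $\mathrm{Det}(1+I_{P})=i_{\ast}(\mathrm{Det}(1+I_{H}))$; and the remaining inclusion in (\ref{eq5.52}) follows as before.
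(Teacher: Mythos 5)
This statement is not proved in the paper at all: it is quoted verbatim as Theorem 6.2 and Proposition 6.3 of [CPT1], and your proposal, which correctly identifies that source and defers to it, therefore takes essentially the same approach as the paper. Your surrounding sketch (semilinear conjugation for the easy inclusion, a descent/extension argument for the hard one, and splitting off the augmentation part) is a reasonable outline of what [CPT1] establishes, and nothing in it conflicts with how the result is used here.
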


We shall now generalize a result of R. Oliver by establishing a
corresponding result for $\Kr_{1}^{\prime }$ by showing:

\begin{proposition}\label{pro55} We have
$
i_{\ast }(\Kr_{1}^{\prime }( B[H] ,I_{H})) =\Kr_{1}^{\prime
}( B\circ P,I_{P})$.
\end{proposition}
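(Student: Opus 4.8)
The plan is to reduce the statement to classes of units, treat the easy inclusion via reduction modulo the ideals, and obtain the hard inclusion from Theorem \ref{thm54} by a snake--lemma argument whose only missing input is a surjectivity statement for $i_*$ on $\SK_1$. Throughout write $S=B\circ P$, $T=B[H]$, $\mathfrak j=I_H$ and $J=I_P$. Since $\mathfrak j$ and $J$ lie in the Jacobson radicals of $T$, resp. $S$ (by the remark preceding the statement), Lemma \ref{le14}(b) gives $\Kr_1'(T,\mathfrak j)=\kappa_T(1+\mathfrak j)$ and $\Kr_1'(S,J)=\kappa_S(1+J)$, and Lemma \ref{le13} identifies these with the kernels of the reduction maps $\Kr_1(T)\to\Kr_1(T/\mathfrak j)=\Kr_1(B)$ and $\Kr_1(S)\to\Kr_1(S/J)=\Kr_1(B\circ A)$, where $A=P/H$. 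So it suffices to compare these kernels under $i_*$.

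For the inclusion $i_*(\Kr_1'(T,\mathfrak j))\subseteq\Kr_1'(S,J)$: the transfer map $i_*$ is functorial for reduction modulo ideals, so there is a commutative square
\begin{equation*}
\begin{array}{ccc}
\Kr_1(T) & \xrightarrow{\ i_*\ } & \Kr_1(S)\\
\downarrow & & \downarrow\\
\Kr_1(B) & \xrightarrow{\ i_*\ } & \Kr_1(B\circ A),
\end{array}
\end{equation*}
whence $i_*$ carries $\ker(\Kr_1(T)\to\Kr_1(B))$ into $\ker(\Kr_1(S)\to\Kr_1(B\circ A))$, which is the asserted inclusion.

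For the reverse inclusion I would use the commutative diagram with exact rows
\begin{equation*}
\begin{array}{ccccccccc}
1 & \to & \SK_1(T) & \to & \Kr_1'(T,\mathfrak j) & \xrightarrow{\ \mathrm{Det}\ } & \mathrm{Det}(1+I_H) & \to & 1\\
& & \downarrow i_* & & \downarrow i_* & & \downarrow i_* & &\\
1 & \to & \SK_1(S) & \to & \Kr_1'(S,J) & \xrightarrow{\ \mathrm{Det}\ } & \mathrm{Det}(1+I_P) & \to & 1
\end{array}
\end{equation*}
whose rows are exact because $\mathrm{Det}(\Kr_1'(S',I'))=\mathrm{Det}(1+I')$ by Lemma \ref{le14}(b), and because $\SK_1(T)\subseteq\Kr_1'(T,\mathfrak j)$ and $\SK_1(S)\subseteq\Kr_1'(S,J)$. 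The latter two facts follow, by the usual commutative diagram relating $\SK_1$, $\Kr_1$ and $\mathrm{Det}$ (as in the derivation of (\ref{eq3.12})), from $\SK_1(B)=\{1\}$ (Standing Hypothesis (iv) with $W=\Z_p$, applied to each factor of $B=R[m]$) and $\SK_1(B\circ A)=\{1\}$; the last vanishing holds because $A$ acts on $B$ through the Galois group of the unramified extension $\Z_p[\zeta_m]/\Z_p$, so the crossed product $\Z_p[\zeta_m]\circ A$ is a matrix ring over the valuation ring $W'$ of the fixed field, hence $B\circ A\cong M_{|A|}(R\otimes_{\Z_p}W')$ and $\SK_1(B\circ A)=\SK_1(R\otimes_{\Z_p}W')=\{1\}$ by Morita invariance and Standing Hypothesis (iv). In this diagram the right-hand vertical map is surjective by Theorem \ref{thm54} ($\mathrm{Det}(1+I_P)=i_*(\mathrm{Det}(1+I_H))$), so the snake lemma shows that $\mathrm{coker}\bigl(i_*\colon\Kr_1'(T,\mathfrak j)\to\Kr_1'(S,J)\bigr)$ is a quotient of $\mathrm{coker}\bigl(i_*\colon\SK_1(T)\to\SK_1(S)\bigr)$. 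Thus everything reduces to showing that $i_*\colon\SK_1(T)\to\SK_1(S)$ is surjective.

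This last point is the main obstacle. Since $H$ is a $p$-group and $B$ is a finite product of domains satisfying the Standing Hypotheses, Theorem \ref{thm43} gives $\SK_1(T)=\SK_1(B[H])\cong (B/(1-F)B)\otimes\overline H_2(H,\Z)$, which is annihilated by a power of $p$ because $\overline H_2(H,\Z)$ is a finite $p$-group. Because $H$ is normal in $P$ with cyclic quotient $A$ of order prime to $p$, the Mackey formula yields $r_\chi\circ i_*=N_A:=\sum_{a\in A}a_*$, the norm for the $A$-action on $\Kr_1(B[H])$; as $\SK_1(B[H])$ has $p$-power exponent and $p\nmid|A|$, the map $N_A$ sends $\SK_1(B[H])$ onto its $A$-fixed part $\SK_1(B[H])^A$, and the image of $r_\chi$ is contained in $\SK_1(B[H])^A$ (restriction from $S$ is invariant under conjugation by $P$, which is inner on $S$). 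Hence $r_\chi(\SK_1(S))\subseteq\SK_1(B[H])^A=r_\chi\bigl(i_*(\SK_1(B[H]))\bigr)$, and granting that $r_\chi$ is \emph{injective} on $\SK_1(S)$ it follows that every $z\in\SK_1(S)$ is of the form $i_*(y)$, $y\in\SK_1(B[H])$, which closes the argument. The injectivity of $r_\chi$ on $\SK_1(S)$ — the twisted--group--ring analogue of Oliver's theorem that $\SK_1$ injects under restriction to a $p$-group — is the only non-formal ingredient; I expect it to be handled by a variant of Oliver's argument using the Green module / Dress computability formalism of Section \ref{s4} (Theorem \ref{thm52}), everything else being a diagram chase built on Theorem \ref{thm54} and the vanishing results above.
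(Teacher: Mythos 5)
Your formal reductions are fine as far as they go: the identification of $\Kr_1'$ with kernels of reduction maps, the easy inclusion, and the snake-lemma step showing that $\mathrm{coker}\bigl(i_*\colon\Kr_1'(B[H],I_H)\to\Kr_1'(B\circ P,I_P)\bigr)$ is a quotient of $\mathrm{coker}\bigl(i_*\colon\SK_1(B[H])\to\SK_1(B\circ P)\bigr)$ (using Theorem \ref{thm54}) are all correct, as is the norm computation $N_A(\SK_1(B[H]))=\SK_1(B[H])^A$. But the step you yourself flag — injectivity of $r_\chi$ on $\SK_1(B\circ P)$, equivalently the surjectivity of $i_*$ on $\SK_1$ — is a genuine gap, and it is not a side issue: it is essentially the content of Corollary \ref{cor57} and Theorem \ref{thm58}, which in the paper are \emph{deduced from} Proposition \ref{pro55} (together with the cohomological triviality statement, Proposition \ref{pro56}). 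So within this development your reduction is circular: the only non-formal input you need is a statement that comes later and rests on the very proposition you are proving. Nor does the suggested fix via Dress induction (Theorem \ref{thm52}) obviously work: $G=C\rtimes P$ is itself $\Q_p$-$p$-elementary, so restriction-computability over $\mathcal{E}_p(G)$ is vacuous for it and gives no injectivity of the twisted restriction $r_\chi$ from $B\circ P$ to $B[H]$; a posteriori that injectivity does hold (since $|A|$ acts invertibly on the $p$-power-exponent group $\SK_1(B[H])$, the norm $\SK_1(B[H])_A\to\SK_1(B[H])^A$ is an isomorphism), but every route I can see to it passes through the conclusion you want.

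For comparison, the paper proves the proposition directly at the level of $\Kr_1'$, with no $\SK_1$ input at all. It argues by induction on $|H|$: choosing a central element $c\in H$ of order $p$ (the case $H=\{1\}$ being trivial), the snake lemma reduces the claim to surjectivity of $\GL(B[H],(1-c))\to\mathcal{K}_1$ onto the corresponding kernel $\mathcal{K}_2$ for $B\circ P$, and then to surjectivity of
\begin{equation*}
\frac{\kappa_H\bigl(1+(1-c)^kB[H]\bigr)}{\kappa_H\bigl(1+(1-c)^{k+1}B[H]\bigr)}
\longrightarrow
\frac{\kappa_P\bigl(1+(1-c)^kB\circ P\bigr)}{\kappa_P\bigl(1+(1-c)^{k+1}B\circ P\bigr)}
\end{equation*}
for each $k$, using completeness in the $(1-c)$-adic (equivalently $p$-adic) topology. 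The graded surjectivity is where the real work happens: one picks $\mu\in B$ lifting a normal basis generator of $\overline{B}$ over $\overline{B}^A[A]$, so that $\mu^{-1}g(\mu)-1\in B^\times$ for every $g\in P\setminus H$, and then rewrites each generator $1+(1-c)^k\lambda g$ of the cokernel as a commutator $\mu^{-1}(1+(1-c)^k\lambda'g)\mu\,(1+(1-c)^k\lambda'g)^{-1}$ modulo $(1-c)^{k+1}$, hence trivial in $\Kr_1'(B\circ P,I_P)$. If you want to salvage your strategy, you would need an independent proof that $r_\chi$ is injective on $\SK_1(B\circ P)$ (or that $i_*$ is surjective on $\SK_1$); without something of that strength, the argument does not close.
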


\begin{proof} Note that the result is immediate if $H=\{1\}$ and so we may
henceforth assume that $H$ contains an element $c$ of order $p$ which is
central in $P$. We write $t:P/H\rightarrow A$ for the isomorphism induced
by conjugation. We let $c$ denote a central element of order $p$ which lies
in $H$; note that here we are not necessarily assuming that $c\ $is a
commutator in $H$. We let $\overline{H}=H/\langle c\rangle$,
$\overline{P}=P/\langle c\rangle $ and we write $I_{\overline{H}}$
for $I( B[\overline{H}] ) $ and $I_{\overline{P}}$
for $I( B\circ \overline{P})$. Our proof proceeds in two
steps.\smallskip

\textbf{Step 1.} With the above notation we consider the diagram with exact
rows, where the downward arrows are induced by $i$
\begin{equation*}
\begin{array}{ccccccccc}
0 & \rightarrow & \mathcal{K}_{1} & \overset{\mathrm{defn}}{\hookrightarrow }
& \Kr_{1}^{\prime }( B[H] ,I_{H}) & \rightarrow &
\Kr_{1}^{\prime }( B[\overline{H}] ,I_{\overline{H}}) &
\rightarrow & 0 \\
&  & \alpha \downarrow\ \ &  & \beta \downarrow &  & \gamma \downarrow &  &  \\
0 & \rightarrow & \mathcal{K}_{2} & \overset{\mathrm{defn}}{\hookrightarrow }
& \Kr_{1}^{\prime }( B\circ P,I_{P}) & \rightarrow & \Kr_{1}^{\prime
}( B\circ \overline{P},I_{\overline{P}}) & \rightarrow & 0%
\end{array}%
\end{equation*}

and recall the exact sequence%
\begin{equation*}
0\rightarrow ( 1-c) B[ H] \rightarrow I_{H}\rightarrow
I_{\overline{H}}\rightarrow 0.
\end{equation*}%
We rewrite the top row of above diagram as%
\begin{equation*}
\begin{array}{ccccccccc}
&  &  &  & 1 & \rightarrow & 1 &  &  \\
&  &  &  & \uparrow &  & \uparrow &  &  \\
1 & \rightarrow & \mathcal{K}_{1} & \rightarrow & \Kr_{1}^{\prime }( B
[ H] ,I_{H}) & \rightarrow & \Kr_{1}^{\prime }( B[
\overline{H}] ,I_{\overline{H}}) & \rightarrow & 1 \\
&  & \uparrow \sigma &  & \uparrow \sigma ^{\prime } &  &     \uparrow  && \\
1 & \rightarrow & \GL( B[ H] ,(1-c)) & \rightarrow &
\GL( B[ H] ,I_{H}) & \rightarrow & \GL( B[
\overline{H}] ,I_{\overline{H}}) & \rightarrow & 1%
\end{array}%
\end{equation*}
We claim that $\sigma $ is surjective: to see this, first note that the map $%
\ker ( \sigma ) \rightarrow \ker ( \sigma ^{\prime }) $
induced by the map $\GL( B[ H] ,I_{H}) \rightarrow
\GL( B[ \overline{H}] ,I_{\overline{H}}) $ is
\begin{equation*}
\rho : \GL( B[ H] ,I_{H}) \cap E( B[ H]
) \rightarrow \GL( B[ \overline{H}] , I_{\overline{H}%
}) \cap E( B[ \overline{H}] ) .
\end{equation*}%
By the snake lemma, it will suffice to show that the map$\ \rho $ is
surjective; to see this, given $\overline{e}=\overline{e}_{1}\cdots
\overline{e}_{n}\in \GL( B[ \overline{H}] , I_{\overline{H}%
}) \cap E( B[ \overline{H}] ) ,$ we choose $%
e_{i}\in E( B[ H] ) $ with image $\overline{e}_{i}\in
E( B[ \overline{H}] ) $ and set $e=e_{1}\cdots e_{n};$
then, as the augmentation map $\varepsilon _{H}$ factors through $%
\varepsilon _{\overline{H}},$ we see that $\varepsilon _{H}( e)
=1 $ because $\varepsilon _{\overline{H}}( \overline{e}) =1$ and
so $e\in \GL( B[ H] ,I_{H})$. A similar argument shows
that $\GL( B\circ P,( 1-c) I_{P}) $ maps onto $\mathcal{%
K}_{2}$ and we have shown:
\begin{equation}
\GL( B[ H] ,(1-c)) \twoheadrightarrow \mathcal{K}_{1},\
\ \ \GL( B\circ P,( 1-c) ) \twoheadrightarrow \mathcal{K%
}_{2}.
\end{equation}%
We now show that $\beta $ is surjective by arguing by induction on the
order of the group $H$. Note that if $\overline{H}=\left\{ 1\right\}$,
then $I_{\overline{H}}=0=I_{\overline{P}},\ $and so in this case we
trivially have%
\begin{equation*}
\Kr_{1}^{\prime }( B[ H] ,\overline{I}) =0=\Kr_{1}^{\prime
}( B\circ P,\overline{I}_{P})
\end{equation*}%
and this starts the induction. By the inductive hypothesis we may assume
that $\gamma $ is surjective. Therefore, by the snake lemma, it will now
suffice to show that $\alpha $ is surjective.\medskip

\textbf{Step 2.}  As $( 1-c) B[ H] $ resp. $%
( 1-c) B\circ P$ lies in the radical of $B[ H] $ resp.
$B\circ P,$ we know from Lemma \ref{le14} (b) that there are surjections%
\begin{equation}
\kappa _{H}:( 1+( 1-c)) B[ H] )
\twoheadrightarrow \mathcal{K}_{1},\ \ \ \kappa _{P}:( 1+(
1-c) B\circ P) \twoheadrightarrow \mathcal{K}_{2}
\end{equation}
where we abbreviate $\kappa _{I_{H}}$ to $\kappa _{H}$ and $\kappa _{I_{P}}$
to $\kappa _{P}$. In order to show that the map $\alpha $ is surjective,
we shall show that for each $k\geq 1$ the map
\begin{equation*}
i_{\ast k}:\frac{\kappa _{H}( 1+( 1-c) ^{k}B[H]
) }{\kappa _{H}( 1+( 1-c) ^{k+1}B[H]
) }\rightarrow \frac{\kappa _{P}( 1+( 1-c) ^{k}B\circ
P) }{\kappa _{P}( 1+( 1-c) ^{k+1}B\circ P) }
\end{equation*}%
is surjective.  This will then prove the proposition because the $(
1-c) $-adic and $p$-adic topologies are cofinal in both $(
1-c) B[H] $ and $( 1-c) B\circ P$.

Recall that $B=R[m] $ and $A\subset \mathrm{Gal}( R[m%
] /R) $. Let $\overline{B}=B/pB$, $\overline{R}=R/pR$
 and recall that by the Standing Hypothesis $\overline{R}$ is an
integral domain  and $\overline{B}=\overline{R}\otimes \mathbb{F}_{p}[m] $.
To establish the surjectivity of $i_{\ast k}$, we choose
a normal basis generator $\widetilde{\mu }$  of $\mathbb{F}_{p}[m%
] $ over $\mathbb{F}_{p}[m] ^{A}$ as follows: let $\zeta $
denote a primitive $m$-th root of unity in the algebraic closure of $%
\mathbb{F}_{p}$; then $A$ identifies as a subgroup of $\mathrm{Gal}(
\mathbb{F}_{p}( \zeta ) /\mathbb{F}_{p}) $ and $\mathbb{F}%
_{p}[m] $ identifies as a product of copies of $\mathbb{F}_{p}( \zeta )$. We then take $\widetilde{\mu }$ to be the direct
product of copies of a normal integral basis of $\mathbb{F}_{p}( \zeta
) /\mathbb{F}_{p}( \zeta ) ^{A}$ and so we observe that $%
\widetilde{\mu }$ is an invertible element of $\mathbb{F}_{p}[m]$.
This then affords a normal basis generator (also denoted $\widetilde{\mu
})$ of $\ov{B}$ over the group ring $\ov{B}^{A}[A]$.
We then choose a lift $\mu \in B$ of $\widetilde{\mu}$. Then for $g\in
P-H$, $g( \widetilde{\mu }) \neq \widetilde{\mu }$ and so $\mu
^{-1}( g( \mu ) ) -1\in B^{\times }.$ So for any $%
\lambda \in B$ we can find $\lambda ^{\prime }$ such that
\begin{equation*}
\lambda =\lambda ^{\prime }( \mu ^{-1}( g( \mu )
) -1)
\end{equation*}
and we note that ${\rm coker }( i_{\ast k})$ is generated
by elements of the form $1+( 1-c) ^{k}\lambda g^{-1}$ with $g\in
P-H$.

Using the identity $g( \mu ) =g\cdot \mu \cdot g^{-1}$, we then note the
identity
\begin{eqnarray*}
\kappa _{P}( 1+( 1-c) ^{k}\lambda g) &=&\kappa
_{P}( 1+( 1-c) ^{k}\lambda ^{\prime }( \mu ^{-1}(
g( \mu ) ) -1) g) \\
&=&\kappa _{P}( 1+\mu ^{-1}( 1-c) ^{k}\lambda ^{\prime }g\mu
-( 1-c) ^{k}\lambda ^{\prime }g) \\
&\equiv &\kappa _{P}( ( 1+( 1-c) ^{k}\mu ^{-1}\lambda
^{\prime }g\mu ) ( 1-( 1-c) ^{k}\lambda ^{\prime
}g) ) \mod ( 1-c) ^{k+1}.
\end{eqnarray*}%
Using the fact that, because $c\in H,$ it commutes with $B$, we have shown
\begin{equation*}
\kappa _{P}( 1+( 1-c) ^{k}\lambda g^{-1}) \equiv
\kappa _{P}( \mu ^{-1}( 1+( 1-c) ^{k}\lambda ^{\prime
}g^{-1}) \mu ( 1+( 1-c) ^{k}\lambda ^{\prime
}g^{-1}) ^{-1}) \mod ( 1-c) ^{k+1}
\end{equation*}%
which, being a commutator, has trivial image in $\Kr'_{1}( B\circ
P,I_{P}) $.
\end{proof}

Recall we write $I_{H}$ for $I_{B[H] }$.

\begin{proposition}\label{pro56}
 ${\rm Det}( 1+I_{H}) $ is a cohomologically trivial $A$-module; hence
\begin{equation*}
\mathrm{Det}( 1+I_{H}) ^{A}=H^{0}( A,\mathrm{Det}(
1+I_{H}) ) =H_{0}( A,\mathrm{Det}( 1+I_{H})
)
\end{equation*}%
and
\begin{equation*}
H_{1}( A,\mathrm{Det}( 1+I_{H}) ) =\{ 1\} =%
\widehat{H}^{-1}( A,\mathrm{Det}( 1+I_{H}) ) =\left\{
1\right\} .
\end{equation*}
\end{proposition}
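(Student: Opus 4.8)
The plan is to prove that $M:=\mathrm{Det}(1+I_{H})$ is a cohomologically trivial $A$-module; the four displayed identities then follow formally. Since $A$ is cyclic, $M$ is cohomologically trivial if and only if $\widehat{H}^{0}(A,M)=\widehat{H}^{-1}(A,M)=1$ (Nakayama's criterion for the cyclic Sylow subgroups of $A$, together with $2$-periodicity, which also identifies $\widehat H^{1}$ with $\widehat H^{-1}$). Granting this, the norm $H_{0}(A,M)=M_{A}\to M^{A}=H^{0}(A,M)$ has kernel $\widehat{H}^{-1}(A,M)$ and cokernel $\widehat{H}^{0}(A,M)$, hence is an isomorphism, giving $H^{0}(A,M)=H_{0}(A,M)$; while $H_{1}(A,M)\cong\widehat{H}^{-2}(A,M)=\widehat{H}^{0}(A,M)=1$ by periodicity, and $\widehat{H}^{-1}(A,M)=1$ is immediate. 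So everything reduces to the two vanishings $\widehat{H}^{0}(A,M)=\widehat{H}^{-1}(A,M)=1$.

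For $\widehat{H}^{0}(A,M)=1$, that is $M^{A}=N_{A}M$: Theorem~\ref{thm54} gives $M^{A}=r(\mathrm{Det}(1+I_{P}))$ and $\mathrm{Det}(1+I_{P})=i_{*}(M)$, so $M^{A}=(r\circ i_{*})(M)$; and since $H\trianglelefteq P$ with $P/H\cong A$ acting on $B[H]$ through the twisted group ring $B\circ P$, the composite $r\circ i_{*}$ (first induction, then restriction) is the degenerate Mackey double coset sum $\sum_{a\in A}c_{a}=N_{A}$ — concretely $(M\otimes_{B[H]}(B\circ P))|_{B[H]}=\bigoplus_{a\in A}{}^{a}M$ — so $M^{A}=N_{A}M$. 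For the second vanishing I first record a structural fact used repeatedly below: $B=R[m]=R\otimes_{\Z_{p}}\O_{L}$ with $L=\Q_{p}(\zeta_{m})$, and $\O_{L}$ is a free $\Z_{p}[A]$-module by the normal integral basis theorem for the unramified extension $L/\Q_{p}$ restricted to the subgroup $A$ of its Galois group; hence $B$ is a direct sum of copies of $\Ind_{1}^{A}(R)$, and more generally, by étale (Galois) descent along $B^{A}\hookrightarrow B$, every $B$-module carrying a $B$-semilinear $A$-action is cohomologically trivial over $A$. In particular $B[C_{H}]=B\otimes_{\Z_{p}}\Z_{p}[C_{H}]$ is cohomologically trivial, hence so is its $A$-stable direct summand $\phi(I_{H})$ (with complement $B\cdot[1_{H}]$), and, dévissing along the $I_{H}$-adic filtration whose graded pieces $I_{H}^{k}/I_{H}^{k+1}$ are $B$-modules with semilinear $A$-action, so is $1+I_{H}$ itself (a routine inverse-limit argument).

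It remains to prove $\widehat{H}^{-1}(A,M)=1$, the one step that is not formal. From $1\to K\to 1+I_{H}\xrightarrow{\mathrm{Det}}M\to 1$ with $1+I_{H}$ cohomologically trivial one gets $\widehat{H}^{i}(A,M)\cong\widehat{H}^{i+1}(A,K)$, so by the first vanishing $\widehat{H}^{1}(A,K)=1$ and the claim becomes $\widehat{H}^{0}(A,K)=1$; equivalently, the $A$-equivariant logarithmic four term exact sequence~\eqref{eq3.11} for $B[H]$ presents $M$ in terms of the cohomologically trivial module $\phi(I_{H})$, the finite module $\mathrm{Det}(H)\cong H^{\mathrm{ab}}$, and $\frac{B}{(1-F)B}\otimes H^{\mathrm{ab}}$. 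In either formulation one is reduced to showing that these ``correction terms'' cancel in Tate cohomology, and here I would follow Oliver's proof (Theorem~1 of [O2], and the treatment in [O5]) line by line with $\Z_{p}$ replaced by $R$: the dévissage by central subgroups of order $p$ of $P$ lying in $H$ as in the proof of Proposition~\ref{pro55}, an induction on $|H|$, the explicit commutator congruences of Section~\ref{sec3}, and the input $\widehat H^{0}(A,M)=1$ just established. Carrying out this cancellation is the part I expect to be the main obstacle: it is genuinely computational, uses the identities of Section~\ref{sslog} rather than formal nonsense, and one must check — exactly as the paper does for its other generalizations of Oliver's results — that none of the manipulations requires $R$ to be a discrete valuation ring rather than an arbitrary ring satisfying the Standing Hypotheses.
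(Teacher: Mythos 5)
Your reduction to the two vanishings $\widehat{H}^{0}(A,M)=\widehat{H}^{-1}(A,M)=1$ for the cyclic group $A$ is fine, and your proof of the first one is essentially correct: Theorem \ref{thm54} gives $M^{A}=r(\mathrm{Det}(1+I_{P}))=r\circ i_{*}(M)$, and $r\circ i_{*}=N_{A}$, so $\widehat{H}^{0}(A,M)=1$. But the second vanishing, which you yourself single out as ``the one step that is not formal,'' is not proved: you defer it to redoing Oliver's computation ``line by line,'' and the route you sketch (pass to $1\to K\to 1+I_{H}\to M\to 1$ and control $\widehat{H}^{0}(A,K)$) is genuinely problematic, since $K=\ker(\mathrm{Det}|_{1+I_{H}})$ has no convenient description — it mixes $\SK_{1}$-type classes with unit-group phenomena — and nothing in your outline explains why its Tate cohomology should cancel. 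As it stands the proposal establishes half of the cohomological triviality and leaves the other half as an unexecuted program, so there is a genuine gap.

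The missing idea is that one should resolve $\mathrm{Det}(1+I_{H})$ itself, not $1+I_{H}$, by cohomologically trivial modules, and the group logarithm already does this. Since $H^{\mathrm{ab}}$ is abelian, $\mathrm{Det}$ is an isomorphism on $1+I(B[H^{\mathrm{ab}}])$, and by Theorem \ref{thm23} (applied over the factors of $B$, which satisfy the Standing Hypotheses) the kernel of $\mathrm{Det}(1+I_{H})\to 1+I(B[H^{\mathrm{ab}}])$ is identified via $\nu$ with $p\phi(\mathcal{A}_{H})\cong\phi(\mathcal{A}_{H})$, where $\mathcal{A}_{H}=\ker(B[H]\to B[H^{\mathrm{ab}}])$. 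This yields the exact sequence
\begin{equation*}
0\rightarrow \phi(\mathcal{A}_{H})\rightarrow \mathrm{Det}(1+I_{H})\rightarrow 1+I(B[H^{\mathrm{ab}}])\rightarrow 1 ,
\end{equation*}
in which $\phi(\mathcal{A}_{H})$ is an $R[A]$-free $B$-lattice (normal basis, as in your descent remark) and $1+I(B[H^{\mathrm{ab}}])$ is filtered with subquotients isomorphic to $\overline{B}$, also $A$-cohomologically trivial. Hence $\mathrm{Det}(1+I_{H})$ is cohomologically trivial outright, and all four displayed identities follow at once — no norm computation, no case-by-case cancellation, and no appeal to Oliver's dévissage is needed.
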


\begin{proof} Recall that we write $\mathcal{A}_{H}=\ker ( B[H%
] \rightarrow B[H^{\rm ab}] )$. With the notation of
\ref{sslog} (using the fact that $B$ is a direct product of algebras which satisfy
the Standing Hypotheses), we have an exact sequence%
\begin{equation*}
0\rightarrow \phi ( \mathcal{A}_{H}) \rightarrow \mathrm{Det}%
( 1+I_{H}) \rightarrow 1+I( B[H^{\rm ab}] )
\rightarrow 1.
\end{equation*}%
As seen previously, the $B$-lattice $\phi ( \mathcal{A}_{H}) $ is
$R[A] $-free and hence is a cohomologically trivial $A$-module.
On the other hand $1+I( B[H^{\rm ab}] ) $ can be filtered
with subquotients all of which are isomorphic to $\overline {B}$.
The result then follows since all these terms are also all cohomologically
trivial $A$-modules.
\end{proof}

\begin{corollary}\label{cor57}
We have
 $
i_{\ast }(\SK_{1}( B[H]) ) =\SK_{1}( B\circ
P,I_{P})=\SK_1(B\circ P)$.
\end{corollary}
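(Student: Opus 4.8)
The plan is to obtain the Corollary from Propositions~\ref{pro55} and~\ref{pro56} (with Theorem~\ref{thm54} supplying the compatibility of $i_\ast$ with $\mathrm{Det}$) by a short diagram chase; the only genuinely new work is the bookkeeping. First I would record the two facts needed to place $\SK_{1}$ inside the relative $\Kr_{1}'$. Since $m$ is prime to $p$, the ring $\Z_{p}[m]=\Z[\zeta_{m}]\otimes_{\Z}\Z_{p}$ is a finite product of valuation rings of finite unramified extensions of $\Q_{p}$, so $B=R[m]=R\otimes_{\Z_{p}}\Z_{p}[m]$ satisfies $\SK_{1}(B)=\{1\}$ by Standing Hypothesis~(iv); and $(B\circ P)/I_{P}=B\circ A$, where $A=P/H$ acts on $B$ through its embedding in a Galois group, so a normal--basis argument (as in Section~6 of [CPT1]) identifies $B\circ A$ with a finite product of matrix rings over rings of the form $R\otimes_{\Z_{p}}W$, $W$ the valuation ring of a finite unramified extension of $\Q_{p}$, whence $\SK_{1}(B\circ A)=\{1\}$ by~(iv) and Morita invariance. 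Arguing exactly as for the inclusion $\SK_{1}(R[G])\subseteq\Kr_{1}^{\prime}(R[G],I_{G})$ in Section~\ref{sec3} (apply the functoriality of $\mathrm{Det}$ to the quotient maps $B[H]\to B$ and $B\circ P\to B\circ A$, together with Lemma~\ref{le13}), one gets $\SK_{1}(B[H])\subseteq\Kr_{1}^{\prime}(B[H],I_{H})$ and $\SK_{1}(B\circ P)\subseteq\Kr_{1}^{\prime}(B\circ P,I_{P})$; the second of these already yields the equality $\SK_{1}(B\circ P,I_{P})=\SK_{1}(B\circ P)$. By Lemma~\ref{le14}(b), $\Kr_{1}^{\prime}(B[H],I_{H})=\kappa_{B[H]}(1+I_{H})$ and $\Kr_{1}^{\prime}(B\circ P,I_{P})=\kappa_{B\circ P}(1+I_{P})$, so $\mathrm{Det}$ carries these onto $\mathrm{Det}(1+I_{H})$ and $\mathrm{Det}(1+I_{P})$ with kernels $\SK_{1}(B[H])$ and $\SK_{1}(B\circ P)$ respectively.

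The inclusion $i_{\ast}(\SK_{1}(B[H]))\subseteq\SK_{1}(B\circ P)$ is immediate, since $i_{\ast}=\Ind_{H}^{P}$ is a morphism of Green modules over $G\mapsto\G_{0}(\Z_{p}[G])$ (Section~\ref{ss4a}), hence preserves $\SK_{1}$, and by Proposition~\ref{pro55} it carries $\Kr_{1}^{\prime}(B[H],I_{H})$ into $\Kr_{1}^{\prime}(B\circ P,I_{P})$. For the reverse inclusion I would take $z\in\SK_{1}(B\circ P)=\SK_{1}(B\circ P,I_{P})$ and, by Proposition~\ref{pro55}, write $z=i_{\ast}(w)$ with $w\in\Kr_{1}^{\prime}(B[H],I_{H})$; applying $\mathrm{Det}$ gives $i_{\ast}(\mathrm{Det}\,w)=\mathrm{Det}\,z=1$, so $\mathrm{Det}\,w$ lies in the kernel of $i_{\ast}\colon\mathrm{Det}(1+I_{H})\to\mathrm{Det}(1+I_{P})$. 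The crucial claim is that this kernel equals $I_{A}\cdot\mathrm{Det}(1+I_{H})$, the subgroup generated by the elements $a(x)x^{-1}$ with $a\in A$ (acting on $\mathrm{Det}(1+I_{H})$ as in Proposition~\ref{pro56}) and $x\in\mathrm{Det}(1+I_{H})$. The inclusion ``$\supseteq$'' holds because induction is invariant under conjugation by elements of $P$, i.e.\ $i_{\ast}\circ a=i_{\ast}$ for all $a\in A$. For ``$\subseteq$'': if $i_{\ast}(x)=1$ then the Mackey double--coset formula $r\circ i_{\ast}=\mathrm{Nm}_{A}=\sum_{a\in A}a$ gives $\mathrm{Nm}_{A}(x)=r\,i_{\ast}(x)=1$, and since $\widehat{H}^{-1}(A,\mathrm{Det}(1+I_{H}))=\{1\}$ by Proposition~\ref{pro56} the norm kernel is precisely $I_{A}\cdot\mathrm{Det}(1+I_{H})$, so $x$ lies there.

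Granting the claim I would finish as follows. Write $\mathrm{Det}\,w=\prod_{k}a_{k}(x_{k})x_{k}^{-1}$ with $a_{k}\in A$, $x_{k}\in\mathrm{Det}(1+I_{H})$, lift each $x_{k}$ to $\widetilde{x}_{k}\in\Kr_{1}^{\prime}(B[H],I_{H})$ with $\mathrm{Det}\,\widetilde{x}_{k}=x_{k}$, and set $w^{\prime}=w\cdot\prod_{k}\widetilde{x}_{k}\,a_{k}(\widetilde{x}_{k})^{-1}$. Since $A$ preserves $\Kr_{1}^{\prime}(B[H],I_{H})=\kappa_{B[H]}(1+I_{H})$, we have $w^{\prime}\in\Kr_{1}^{\prime}(B[H],I_{H})$, and $\mathrm{Det}\,w^{\prime}=\mathrm{Det}\,w\cdot\prod_{k}x_{k}\,a_{k}(x_{k})^{-1}=1$, so $w^{\prime}\in\SK_{1}(B[H])$; while $i_{\ast}(w^{\prime})=i_{\ast}(w)\cdot\prod_{k}i_{\ast}(\widetilde{x}_{k})\,i_{\ast}(a_{k}(\widetilde{x}_{k}))^{-1}=i_{\ast}(w)=z$, using $i_{\ast}\circ a_{k}=i_{\ast}$ once more. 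Hence $z=i_{\ast}(w^{\prime})\in i_{\ast}(\SK_{1}(B[H]))$, and together with the remark of the first paragraph this gives $i_{\ast}(\SK_{1}(B[H]))=\SK_{1}(B\circ P,I_{P})=\SK_{1}(B\circ P)$. The main obstacle, and the only step with real content, is the identification of $\ker\bigl(i_{\ast}\colon\mathrm{Det}(1+I_{H})\to\mathrm{Det}(1+I_{P})\bigr)$ with $I_{A}\cdot\mathrm{Det}(1+I_{H})$: this is exactly where the cohomological triviality of $\mathrm{Det}(1+I_{H})$ from Proposition~\ref{pro56} (through the vanishing of $\widehat{H}^{-1}$) and the relation $r\circ i_{\ast}=\mathrm{Nm}_{A}$ are indispensable; everything else is formal manipulation of the two $\mathrm{Det}$--sequences and the conjugation--invariance of $i_{\ast}$.
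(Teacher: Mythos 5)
Your proposal is correct and follows essentially the same route as the paper: it combines the surjectivity of Proposition~\ref{pro55}, the vanishing of $\widehat{H}^{-1}(A,\mathrm{Det}(1+I_{H}))$ from Proposition~\ref{pro56} together with $r\circ i_{\ast}=N_{A}$ to write the offending determinant as a product of $(a-1)$-twists, the observation that such twists become commutators in $(B\circ P)^{\times }$ (your $i_{\ast}\circ a=i_{\ast}$), and the identification $B\circ A\cong M_{|A|}(B^{A})$ with Standing Hypothesis (iv) to get $\SK_{1}(B\circ P,I_{P})=\SK_{1}(B\circ P)$. The only difference is cosmetic: the paper runs the snake lemma on the two $\mathrm{Det}$-sequences and shows $\ker i$ surjects onto $\ker \tau$, whereas you correct a preimage $w$ of $z$ directly; this is the same diagram chase.
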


\begin{proof} By Theorem \ref{thm1} we know that $\mathrm{Det}( \Kr_{1}( R%
[G] ) ) =\mathrm{Det}( R[G]
^{\times })$. Consider the augmentation map $B[H]
\rightarrow B$ and the induced map
\begin{equation*}
B\circ P\rightarrow B\circ A\cong M_{\vert A\vert }(
B^{A})
\end{equation*}
with the latter isomorphism coming from the proof of Theorem 6.2 in [CPT1].

This then leads us to consider the commutative diagram with exact rows,
where the downward arrows are all induced by $i_{\ast }$:
\begin{equation}\label{eq5.55}
\begin{array}{ccccccccc}
0 & \rightarrow & \SK_{1}( B[H] ) & \rightarrow &
\Kr_{1}^{\prime }( B[H] , I_{H}) & \rightarrow & \mathrm{Det}( 1+I_{H}) & \rightarrow & 1 \\
&  & \downarrow \rho &  & \downarrow i &  & \downarrow \tau &  &  \\
0 & \rightarrow & \SK_{1}( B\circ P,I_{P}) & \rightarrow &
\Kr_{1}^{\prime }( B\circ P,I_{P}) & \rightarrow & \mathrm{Det}%
( 1+I_{P}) & \rightarrow & 1.%
\end{array}%
\end{equation}%
By Proposition \ref{pro55} we know that the middle downward arrow $i$ is
surjective. In order to show that $\rho$ is surjective, by the snake
lemma, it will suffice to show that $\ker i$ maps onto $\ker \tau$.
Suppose therefore that $\mathrm{Det}( x) \in \ker \tau$. We
have seen that the restriction map $r$ is injective and $r\circ i_{\ast }$
coincides with the norm map $N_{A};$ so by Proposition \ref{pro56} we deduce that we
can write
\begin{equation*}
\mathrm{Det}( x) =\prod\limits_{a\in A}\mathrm{Det}(
y_{a}) ^{a-1}
\end{equation*}%
with $y_{a}\in 1+I_{H}$. Now consider the image of the element  $z=\prod
y_{a}^{a-1}\in\Kr_{1}^{\prime }( B[H] ,I_{H}) $
in $\Kr_{1}( B\circ P)$: clearly this maps to $\mathrm{Det}( x) $ under ${\rm Det}$; moreover, each term $%
y_{a}^{a-1}=ay_{a}a^{-1}y_{a}^{-1}$ becomes a commutator in $(B\circ
P)^{\times }$ and so vanishes in $\Kr_{1}( B\circ P) $, as
required. To conclude we show that $\SK_{1}( B\circ P) =\SK_{1}( B\circ
P, I_{P}) $ and this follows from the exact sequence%
\begin{equation*}
1\rightarrow I_{P}\rightarrow B\circ P\rightarrow B\circ A_{m}\rightarrow 1
\end{equation*}%
and the equalities
\begin{equation*}
\SK_{1}( B\circ A_{m}) =\SK_{1}( M_{\left\vert
A_{m}\right\vert }( B^{A_{m}}) ) =\SK_{1}(
B^{A_{m}}) =\SK_{1}( R\otimes \mathbb{Z}[ m] )
=\{1\}.
\end{equation*}
\end{proof}

We conclude this section by showing:

\begin{theorem}\label{thm58}
We have
$$
i_{\ast }(\SK_{1}( B[H]) ) =H_{0}(
A,\SK_{1}( B [H] ) )$$ in
$\SK_{1}( B\circ P)$.
\end{theorem}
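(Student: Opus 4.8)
The plan is to upgrade the induction map $i_*$ to an isomorphism from the group of $A$-coinvariants $H_0(A,\SK_1(B[H]))$ onto $\SK_1(B\circ P)$. First, $i_*$ factors through $H_0(A,\SK_1(B[H]))$ for formal reasons: for $a\in A$ choose a lift $\widetilde a\in P\subset(B\circ P)^\times$; conjugation by $\widetilde a$ carries the subring $B[H]$ of $B\circ P$ into itself and realizes the action of $a$ on $B[H]$ up to an inner automorphism by an element of $H$ (elements of $H$ commute with $B$, so such inner automorphisms are trivial on $\Kr_1$). Hence it realizes the $A$-action on $\SK_1(B[H])$, and since $i_*(y)$ and $i_*({}^{\widetilde a}y)$ become conjugate in $\Kr_1(B\circ P)$ they are equal; thus $i_*$ kills every element ${}^a y\cdot y^{-1}$. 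Write $\bar i\colon H_0(A,\SK_1(B[H]))\to\SK_1(B\circ P)$ for the induced map. By Corollary \ref{cor57} we have $i_*(\SK_1(B[H]))=\SK_1(B\circ P,I_P)=\SK_1(B\circ P)$, so $\bar i$ is surjective; the content of the theorem is that $\bar i$ is injective.

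For injectivity I would use the restriction map $r\colon\SK_1(B\circ P)\to\SK_1(B[H])$ together with the Mackey identity $r\circ i_*=N_A$, where $N_A$ is the norm for the cyclic group $A$ acting on $\SK_1(B[H])$ (compare the proof of Corollary \ref{cor57}). If $y\in\SK_1(B[H])$ satisfies $i_*(y)=1$, then $N_A(y)=r(i_*(y))=1$. Hence $\ker\bar i$ is controlled by $\widehat H^{-1}(A,\SK_1(B[H]))$ — the quotient of $\ker N_A$ by the subgroup generated by the ${}^a y\cdot y^{-1}$ — and it suffices to show this Tate cohomology group is trivial, i.e. that $\SK_1(B[H])$ is a cohomologically trivial $A$-module.

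To establish this cohomological triviality I would argue as in Proposition \ref{pro56}. From the $A$-equivariant exact sequence $1\to\SK_1(B[H])\to\Kr_1^\prime(B[H],I_H)\to\mathrm{Det}(1+I_H)\to1$ of diagram (\ref{eq5.55}) and the cohomological triviality of $\mathrm{Det}(1+I_H)$ (Proposition \ref{pro56}), the Tate long exact sequence gives $\widehat H^i(A,\SK_1(B[H]))\cong\widehat H^i(A,\Kr_1^\prime(B[H],I_H))$ for all $i$, so it is enough to cohomologically trivialize $\Kr_1^\prime(B[H],I_H)=\kappa_{I_H}(1+I_H)$, and hence essentially $1+I_H$, over $A$. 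But $1+I_H$ carries an $A$-stable filtration — coming from $\mathcal A_H=\ker(B[H]\to B[H^{\mathrm{ab}}])$ and from powers of the augmentation ideal of $B[H^{\mathrm{ab}}]$ — whose graded pieces are the $R[A]$-free lattice $\phi(\mathcal A_H)$ and modules over $\overline B=\overline R\otimes\mathbb F_p[m]$, and each such piece is a cohomologically trivial $A$-module exactly as in the proof of Proposition \ref{pro56}: $\phi(\mathcal A_H)$ is $R[A]$-free, and $\mathbb F_p[m]$ has an $A$-equivariant normal basis. (Alternatively one can read the triviality off Theorem \ref{thm43} applied with coefficient ring $B=R[m]$, a finite product of rings satisfying the Standing Hypotheses: this gives an $A$-equivariant isomorphism $\SK_1(B[H])\cong(B/(1-F_B)B)\otimes_{\Z}\overline{H}_{2}(H,\Z)$ and reduces the question to the cohomological triviality of $B/(1-F_B)B$ over $A$.)

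Granting the cohomological triviality we get $\widehat H^{-1}(A,\SK_1(B[H]))=\{1\}$, hence $\bar i$ is injective and therefore an isomorphism, which together with Corollary \ref{cor57} is precisely the asserted equality $i_*(\SK_1(B[H]))=H_0(A,\SK_1(B[H]))$ inside $\SK_1(B\circ P)$. I expect the cohomological-triviality input to be the main obstacle: this is where the hypotheses on $R$ and $m$ genuinely enter, and the work is to check that the filtration argument behind Proposition \ref{pro56} (originally Oliver's, for $R=\mathcal{O}$) survives in our generality — in particular that $\phi(\mathcal A_H)$ is still $R[A]$-free and that the $\overline B$-module subquotients remain cohomologically trivial over the $p$-group $A$. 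Everything else is formal bookkeeping with the snake lemma and the coinvariant functor.
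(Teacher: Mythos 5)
Your opening moves coincide with the paper's: the factorization of $i_{\ast}$ through $H_{0}(A,\SK_{1}(B[H]))$ (the paper records this as $I_{A}\Kr_{1}^{\prime}(B\circ P,I_{P})=0$), the surjectivity via Corollary \ref{cor57}, and the snake-lemma reduction using the $A$-coinvariants of the top row of (\ref{eq5.55}) together with Proposition \ref{pro56}. The gap is in your injectivity step. You need $\widehat{H}^{-1}(A,\SK_{1}(B[H]))=\{1\}$, i.e.\ $A$-cohomological triviality of $\SK_{1}(B[H])$, and this is false in general. Already for $R=\Z_{p}$ (Oliver's own setting) take $P=H\times A$ with $A$ a nontrivial cyclic $p$-group embedded in ${\rm Gal}(\Q_{p}(\zeta_{m})/\Q_{p})$ and $H$ a $p$-group with $\overline{H}_{2}(H,\Z)\neq 0$: by Theorem \ref{thm43} over $B=R[m]$ one has $\SK_{1}(B[H])\cong \overline{H}_{2}(H,\Z)\otimes B/(1-F)B$, and the relevant $A$-action (conjugation by lifts in $P$, hence trivially on $H$ and by Galois on $B$) is trivial, because $F$ acts as the identity on $B/(1-F)B\cong\Z_{p}$. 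A nonzero finite abelian $p$-group with trivial action of a cyclic $p$-group has $\widehat{H}^{-1}\neq\{1\}$. The same example shows your norm reduction is intrinsically too lossy: there $r\circ i_{\ast}=N_{A}$ is multiplication by $|A|$, whose kernel is large, and yet $\bar{i}$ is injective (this is what the theorem asserts); so injectivity cannot be extracted from the identity $r\circ i_{\ast}=N_{A}$ alone.

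Both of your proposed proofs of the cohomological triviality also break down. The filtration with graded pieces $\phi(\mathcal{A}_{H})$ and subquotients coming from $1+I(B[H^{\rm ab}])$ is a filtration of $\mathrm{Det}(1+I_{H})$ — that is precisely the proof of Proposition \ref{pro56} — not of $\Kr_{1}^{\prime}(B[H],I_{H})$; and cohomological triviality does not pass to quotients, so trivializing $1+I_{H}$ (even granting a sensible nonabelian formulation) would say nothing about $\Kr_{1}^{\prime}(B[H],I_{H})$, which differs from $\mathrm{Det}(1+I_{H})$ exactly by the group $\SK_{1}(B[H])$ you are trying to control — the argument is circular. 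Your alternative reduces to the $A$-cohomological triviality of $B/(1-F_{B})B$, which fails as noted above. The paper's actual route is different at exactly this point: it reduces, as you do, to the injectivity of $H_{0}(A,\Kr_{1}^{\prime}(B[H],I_{H}))\rightarrow \Kr_{1}^{\prime}(B\circ P,I_{P})$, but proves it by choosing (Lemmas 8.3 and 8.9 of [O5]) a central extension $1\rightarrow\Sigma\rightarrow\widetilde{H}\rightarrow H\rightarrow 1$, compatibly $\widetilde{P}\rightarrow P$, with $H_{2}(\widetilde{H},\Z_{p})\rightarrow H_{2}(H,\Z_{p})$ zero; then $\SK_{1}(B[\widetilde{H}])=\{1\}$, so $\Kr_{1}^{\prime}(B[\widetilde{H}],I_{\widetilde{H}})=\mathrm{Det}(1+I_{\widetilde{H}})$ \emph{is} $A$-cohomologically trivial by Proposition \ref{pro56}, and a diagram chase using Theorem \ref{thm54} transfers the isomorphism down to $H$. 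Some such d\'evissage to a cover with vanishing $\SK_{1}$ is needed; it cannot be replaced by a Tate-cohomology vanishing for $\SK_{1}(B[H])$ itself.
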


\begin{proof}
We start by noting as previously that for $x\in \Kr_{1}^{\prime }( B\circ
P,I_{P}) $ and $a\in A$ we have $x^{a}x^{-1}=1$ and so $I_{A}\Kr_{1}^{\prime }( B\circ P,I_{P}) =0$. Next we note that by
taking $A$-homology of the top exact row in  (\ref{eq5.55}) and
using the above Proposition \ref{pro56}  we get
\begin{equation*}
\begin{array}{cccccccc}
  H_{0}( A,SK_{1}( B[ H] ) )
& \hookrightarrow & H_{0}( A,K_{1}^{\prime }( B[ H]
,I_{H}) ) & \twoheadrightarrow & H_{0}( A,\mathrm{Det}(
1+I( B[ H] ) ) )   \\
    \downarrow \rho &  & \downarrow i &  & \downarrow \tau &     \\
  \SK_{1}( B\circ P,I_{P}) & \hookrightarrow &
\Kr_{1}^{\prime }( B\circ P,I_{P}) & \twoheadrightarrow & \mathrm{Det}%
( 1+I( B\circ P) ) %
\end{array}%
\end{equation*}%
Now from the above we know that $\tau $ is an isomorphism; by Corollary \ref{cor57} $%
\rho $ is surjective; by Proposition \ref{pro55} $i$ is surjective; hence, by the
snake lemma, it will now suffice to show that $i$ is an isomorphism.

By Lemma 8.3.ii and Lemma 8.9 in [O5] (see also the discussion on page 278
of [O5]) we know that we can find a finite $p$-group $\widetilde{H}$ with a
central subgroup $\Sigma $ so that we have a diagram
\begin{equation*}
\begin{array}{ccccccccc}
1 & \rightarrow & \Sigma & \rightarrow & \widetilde{P} & \rightarrow & P &
\rightarrow & 1 \\
&  & \ \ \uparrow = &  & \uparrow &  & \uparrow &  &  \\
1 & \rightarrow & \Sigma & \rightarrow & \widetilde{H} & \overset{\alpha _{0}%
}{\rightarrow } & H & \rightarrow & 1%
\end{array}%
\end{equation*}%
with $H_{2}( \alpha _{0}) =0,$ where $H_{2}( \alpha
_{0}) $ is the map $H_{2}( \widetilde{H},\mathbb{Z}_{p})
\rightarrow H_{2}( H,\mathbb{Z}_{p}) $ induced by $\alpha _{0};$
hence by Lemma 8.9 in [O5] we know that we have $\SK_{1}( B[ H%
] ) =(1);$ and hence
\begin{equation}\label{eq5.5new}
\Kr_{1}^{\prime }( B[ H] ,I_{H}) =\mathrm{Det}(
1+I( B[ H] ) ) ;
\end{equation}%
and therefore, by Proposition \ref{pro56}, $\Kr_{1}^{\prime }( B[ H]
,I_{H}) $ is $A$-cohomologically trivial.

To conclude we consider the exact sequence which defines $\mathcal{J}$
\begin{equation*}
1\rightarrow \mathcal{J}\rightarrow B[ \widetilde{H}] \rightarrow
B[ H] \rightarrow 1
\end{equation*}%
together with the diagram
\begin{equation*}
\begin{array}{ccccccccc}
&  & \Kr_{1}^{\prime }( B[ \widetilde{H}] ,\mathcal{J})
& \rightarrow & H_{0}( A,\Kr_{1}^{\prime }( B[ \widetilde{H}%
] ,I_{\widetilde{H}}) ) & \rightarrow & H_{0}(
A, \Kr_{1}^{\prime }( B[ H] ,I_{H}) ) & \rightarrow
& 1 \\
&  & \downarrow a &  & \downarrow b &  & \downarrow c &  &  \\
1 & \rightarrow & \Kr_{1}^{\prime }( B\circ \widetilde{P},\mathcal{J}%
) & \rightarrow & \Kr_{1}^{\prime }( B\circ \widetilde{P},I_{%
\widetilde{P}}) & \rightarrow & \Kr_{1}^{\prime }( B\circ
P,I_{P}) & \rightarrow & 1%
\end{array}%
\end{equation*}%
where the top row is obtained by using the map $\Kr_{1}^{\prime }( B[
\widetilde{H}] ,\mathcal{J}) \rightarrow H_{0}(
A,K_{1}^{\prime }( B[ \widetilde{H}] ,\mathcal{J})
) $ by taking the $A$-homology of the exact sequence

\begin{equation*}
1\rightarrow \Kr_{1}^{\prime }( B[ \widetilde{H}] ,\mathcal{J}%
) \rightarrow \Kr_{1}^{\prime }( B[ \widetilde{H}] ,I_{%
\widetilde{H}}) \rightarrow \Kr_{1}^{\prime }( B[ H]
,I_{H}) \rightarrow 1
\end{equation*}%
which follows from the snake lemma applied to the diagram%
\begin{equation*}
\begin{array}{ccccccccc}
&  &  &  & \Kr_{1}( B[ \widetilde{H}] ) & = &
\Kr_{1}( B[ \widetilde{H}] ) &  &  \\
&  &  &  & \downarrow &  & \downarrow &  &  \\
1 & \rightarrow & \Kr_{1}^{\prime }( B[ \widetilde{H}] /%
\mathcal{J},I_{\widetilde{H}}/\mathcal{J}) & \rightarrow & \Kr_{1}(
B[ \widetilde{H}] /\mathcal{J}) & \rightarrow & \Kr_{1}(
B[ \widetilde{H}] /I_{\widetilde{H}}) & \rightarrow & 1.%
\end{array}
\end{equation*}%
The map $a$ is induced by $i$ and so is surjective; the map $b$ is an
isomorphism by (\ref{eq5.5new}) and Theorem \ref{thm54}; hence the map $c$ is an isomorphism
as required.
\end{proof}

\section{Arbitrary finite groups.}\label{s6}
\setcounter{equation}{0}

Throughout this section we shall always suppose that $R$ satisfies the
Standing Hypotheses and is normal and that $G$ is an arbitrary finite group.

We start by proving Theorem \ref{thm7}: we do this by using the induction theorems
of Section \ref{s4}, using the isomorphism $\Theta _{R[ G] }$
for $p$-groups, and the work in Section \ref{s5} for $\mathbb{Q}_{p}$-$p$%
-elementary groups, in order to produce an isomorphism  $\Theta _{R [ G ] }$
  for arbitrary finite groups $G$.

In subsection \ref{ss6c} we construct Adams operations on the determinantal groups $%
\mathrm{Det}( R[ G] ^{\times })$. We then use these
Adams operations to construct a group logarithm $\upsilon _{G}$, which
naturally extends the group logarithm for $p$-groups described in
\ref{sslog}.This enables us to construct a long exact sequence (see Proposition
\ref{pro75}) which generalizes the exact sequence (\ref{eq1.3})  (which
was valid only for $p$-groups); this result provides a deep understanding
of the determinantal groups $\mathrm{Det} ( R  [G ] ^{\times
}) $. \textit{Inter alia} this sequence, together with other
constructions, allows us to provide a more constructive definition of the
map $\Theta _{R[ G] }$.

\subsection{Proof of Theorem \ref{thm7}.}\label{ss6a}

In this subsection we suppose that $R$ satisfies the Standing Hypotheses.
In \ref{ss3c} we constructed the map $\Theta _{R[ G] }$ in the case when
$G$ is a $p$-group. Indeed, in that case, by Theorem \ref{thm43} we have the
natural isomorphism%
\begin{equation}
\Theta _{R[ G] }:\SK_{1}( R[ G])
\rightarrow \overline{H}_{2} ( G, R ) _{\Psi }.
\end{equation}

Suppose now that $G=C\rtimes P$ is a $\mathbb{Q}_{p}$-$p$-elementary group
and, with the notation of Section 5, we then have decompositions
\[
R [ G ] =\oplus _{m}R[ H_{m} ] \circ P,\ R [ G_{r}] =R [ C ] =\oplus _{m}R [ m ]
\]%
(so that $H_{m}=\ker ( \mathrm{conj}:P\rightarrow \mathrm{Aut}\langle \zeta _{m}\rangle ) $ and $A_{m}=\mbox{Im}
( \mathrm{conj}:P\rightarrow \mathrm{Aut}\langle \zeta _{m}\rangle ) $. Thus we obtain the further decompositions
\begin{eqnarray*}
H_{2}( G,R[ G_{r}]) &=&\oplus _{m}H_{2} ( G,R[ m] ) =\oplus _{m}H_{2} ( H_{m},R [ m ]
^{A_{m}} ) \\
&=&\oplus _{m}H_{2} ( H_{m},R [ m ] _{A_{m}} ) =\oplus
_{m}H_{0} ( A_{m},H_{2} ( H_{m},R [ m ]  )  )
\end{eqnarray*}%
by using the fact that $R [ m ]$ is $A_{m}$-free. We then have
similar decompositions for $H_{2}^{\mathrm{ab}} ( G,R [ G_{r} ]
 )$, $\overline{H}_{2} ( G,R [ G_{r} ]  )$,
$\overline{H}_{2} ( G,R [ G_{r} ] ) _{\Psi } $.

By Theorem \ref{thm58} there is a natural isomorphism%
\begin{equation*}
\SK_{1}( R[ m] [ G] ) =\SK_{1}( \oplus
_{m}R[ m] [ H_{m}] \circ P) =\oplus
_{m}H_{0}( A_{m}, \SK_{1}( R[ m] [ H_{m}]
) )
\end{equation*}%
and so applying the functor $H_{0}( A_{m},\, -) $ to the isomorphism
\begin{equation*}
\Theta _{R[ m] [ H_{m}] }:\SK_{1}( R[ m]
[ H_{m}] ) \rightarrow \overline{H}_{2}( H_{m},R[
m] ) _{\Psi }
\end{equation*}%
we get the isomorphisms for each $m$
\begin{equation*}
H_{0}( A_{m},\Theta _{R[ m] [ H_{m}] })
:H_{0}( A_{m}, \SK_{1}( R[ m] [ H_{m}] )
) \rightarrow H_{0}( A_{m},\overline{H}_{2}( H_{m},R[ m%
] ) ) _{\Psi }.
\end{equation*}%
For a $\mathbb{Q}_{p}$-$p$-elementary group $G$ these isomorphisms add
together to give the desired isomorphism
\begin{equation}
\Theta _{R[ G] }:\SK_{1}( R[ G] )
\rightarrow \overline{H}_{2}( G,R[ G_{r}] ) _{\Psi }.
\end{equation}%
In conclusion we note that for arbitrary $G$ by Theorem \ref{thm53} (i)
\begin{equation*}
\SK_{1}( R[ G] ) =\varinjlim_{ H\in \mathcal{E}%
_{p} ( G ) } \SK_{1}( R[ H] ) .
\end{equation*}

 To complete the proof of Theorem \ref{thm7} we show:

\begin{lemma}\label{LemmaNss6} We have
\[
\overline{H}_{2} ( G,R [ G_{r} ] ) =\varinjlim_{ H\in \mathcal{E}%
_{p} ( G ) } \overline{H}_{2} ( H,R[ H_{r}]).
\]
\end{lemma}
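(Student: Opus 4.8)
The plan is to obtain this from Dress's induction theorem (Theorem~\ref{thm50}), applied to the functor $M\colon G\mapsto\overline{H}_{2}(G,R[G_{r}])$ with respect to the Green ring $\mathcal{R}\colon G\mapsto\G_{0}(\Z_{p}[G])$. Two ingredients are needed: that $M$ is a $p$-local Green module for $\mathcal{R}$, and that $\mathcal{R}$ is $\mathcal{E}_{p}$-generated. Granting these, Theorem~\ref{thm50} says that $M$ is $\mathcal{E}_{p}$-computable with respect to induction, which is exactly the assertion of the lemma.

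For the first ingredient I would first make $G\mapsto R[G_{r}]$, with its conjugation action, into a Mackey coefficient system: for $H\le G$ the inclusion $R[H_{r}]\hookrightarrow R[G_{r}]$ and the $H$-equivariant projection $R[G_{r}]\twoheadrightarrow R[H_{r}]$ onto the part supported on $H$, combined with corestriction and restriction in group homology, define induction and restriction maps on $G\mapsto H_{2}(G,R[G_{r}])$; together with the (inner) conjugation maps these satisfy the Mackey double-coset axiom and endow $G\mapsto H_{2}(G,R[G_{r}])$ with the structure of a $p$-local Green module for $\mathcal{R}$. This is established in [O5] for $R=\Z_{p}$; the general case follows because $R$ is flat over $\Z_{p}$ (it is a characteristic-zero domain containing $\Z_{p}$), so that $H_{2}(G,R[G_{r}])=H_{2}(G,\Z_{p}[G_{r}])\otimes_{\Z_{p}}R$, and likewise for $H_{2}^{\mathrm{ab}}$. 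The subfunctor $G\mapsto H_{2}^{\mathrm{ab}}(G,R[G_{r}])$ is stable under all structure maps — obviously under induction and conjugation, and under restriction because, by the Mackey formula, $\Res^{G}_{H}$ of a corestriction from an abelian subgroup $A$ is a sum of corestrictions from the subgroups $H\cap{}^{x}A$, which are again abelian. Hence the quotient $M$ is again a $p$-local Green module for $\mathcal{R}$. (Note that, unlike the constant-coefficient functors $\overline{H}_{2}(-,\Z_{p})$ of the examples in Section~\ref{ss4c}, $M$ is \emph{not} a module over the constant Green ring $G\mapsto H^{0}(G,\Z_{p})$ — the coefficients $R[G_{r}]$ vary with the group, so $\Ind_{K}^{H}\circ\Res_{K}^{H}$ need not equal $[H:K]\cdot\mathrm{id}$ — which is why one must use $\G_{0}(\Z_{p}[G])$, and correspondingly why the generating class is $\mathcal{E}_{p}$ rather than the $p$-groups.)

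For the second ingredient, that $\mathcal{R}$ is $\mathcal{E}_{p}$-generated is Brauer induction: by Theorem~\ref{thm49} with $N=\Q_{p}$ there is an $l$ prime to $p$ with $l\cdot\G_{0}(\Q_{p}[G])\subseteq\sum_{J}\Ind_{J}^{G}\G_{0}(\Q_{p}[J])$ over $J\in\mathcal{E}_{p}(G)$, and combined with the isomorphism $\G_{0}(\Q_{p}[G])\cong\G_{0}(\Z_{p}[G])$ of Section~\ref{ss4a} and the fact that $M$ is $p$-local (so $l$ operates invertibly on it), this is precisely the input that makes Theorem~\ref{thm50} applicable — exactly as in the proof of Theorem~\ref{thm52}(ii). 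Dress's theorem then yields
\[
\overline{H}_{2}(G,R[G_{r}])=\varinjlim_{H\in\mathcal{E}_{p}(G)}\overline{H}_{2}(H,R[H_{r}]),
\]
the colimit being taken over the induction and conjugation maps, which is the lemma.

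The step I expect to demand the most care is the verification that the conjugation coefficient system $G\mapsto R[G_{r}]$, with the restriction/transfer maps described above, really satisfies the Mackey double-coset axiom — so that $G\mapsto H_{2}(G,R[G_{r}])$ is honestly a Mackey functor and not merely a functor equipped with induction and restriction — and that all of this structure is compatible with passage to $\overline{H}_{2}$. Once $R$ is reduced to $\Z_{p}$ by flatness, both points are contained in Oliver's treatment in [O5], so what remains is essentially bookkeeping.
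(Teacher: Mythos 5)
Your overall framing (Dress's Theorem \ref{thm50} plus Brauer induction over $\Q_p$, together with $p$-locality of the homology) is the same machinery the paper leans on, but your proof has a genuine gap at its central point: you never actually produce the $\G_{0}(\Z_{p}[-])$-Green-module structure on $G\mapsto \overline{H}_{2}(G,R[G_{r}])$. What you construct is only a Mackey structure (induction via $R[H_{r}]\hookrightarrow R[G_{r}]$ and corestriction, restriction via homological restriction followed by projection onto $R[H_{r}]$, plus conjugation), and you then assert that this "endows" the functor with a Green-module structure over $\G_{0}(\Z_{p}[-])$, citing [O5] for $R=\Z_{p}$. A Mackey structure alone is not enough: every Mackey functor is a module over the Burnside-ring Green functor, whose $p$-local defect basis is the class of $p$-hypoelementary groups $C\rtimes P$ with \emph{arbitrary} $P$-action on the cyclic group $C$, which is strictly larger than $\mathcal{E}_{p}$ (e.g.\ the dihedral group of order $14$ for $p=2$); so the Mackey data by itself can only give induction from that larger class. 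To get $\mathcal{E}_{p}$ you must specify how a $\Z_{p}[G]$-lattice $L$ acts on $H_{2}(G,R[G_{r}])$ (e.g.\ through the $G$-map $g\mapsto \mathrm{Tr}(g\mid L)\,g$ on coefficients) and verify the Frobenius reciprocity and double-coset compatibilities for this action together with your modified restriction maps — and this is exactly where the difficulty you flag in parentheses (the coefficients vary with the group) lives. Oliver's induction theorems in [O5] (Theorems 11.1, 11.2, 11.9, i.e.\ the paper's Theorems \ref{thm50}, \ref{thm52}, \ref{thm53}) apply to additive functors $X(R[G])$ of orders, which $H_{2}(G,R[G_{r}])$ is not, so the vague pointer to [O5] does not close this.

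The paper's own proof avoids having to verify that structure by an explicit generation argument: since $[G:G_{p}]$ is invertible, $\mathrm{Cor}_{G_{p}}^{G}$ is onto $H_{2}(G,R[G_{r}])$; decomposing $G_{r}$ into $G_{p}$-conjugation orbits $g_{i}^{G_{p}}$ and applying Shapiro's lemma gives $H_{2}(G_{p},R[G_{r}])=\sum_{i}H_{2}(H_{i},Rg_{i})$ with $H_{i}$ the centralizer of $g_{i}$ in $G_{p}$, and each such term is corestricted from the honest $p$-elementary subgroup $\langle g_{i}\rangle\times H_{i}$; the subfunctor $H_{2}^{\mathrm{ab}}$ is then handled by explicitly tracking corestrictions from abelian subgroups through this centralizer decomposition (as in the proof of Corollary \ref{cor8} in Appendix B), and only then is Theorem \ref{thm50} invoked to pass to the direct limit over $\mathcal{E}_{p}$. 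So to repair your argument you should either (i) define the lattice action on the coefficients and carry out the Frobenius/Mackey verifications you defer, or (ii) replace the appeal to an abstract Green-module structure by the paper's Sylow-transfer/orbit/Shapiro computation, which moreover yields the concrete centralizer decomposition that is needed again later for Corollary \ref{cor8}. Your reduction from $R$ to $\Z_{p}$ by flatness and your use of $p$-locality to handle the integer $l$ from Theorem \ref{thm49} are both fine.
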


\begin{proof} Let $E_{p} ( G ) $ denote the set of subgroups of $G$ which
are $p$-elementary, so that of course $E_{p} ( G ) \subset
\mathcal{E}_{p} ( G )$. We start by showing
\[
H_{2}( G,R[ G_{r}] ) =\varinjlim_{ H\in E_{p} ( G ) } H_{2}( H,R [ H_{r} ] ) ;
\]
then, since every $p$-elementary subgroup of $G$ is trivially $\mathbb{Q}%
_{p} $-$p$-elementary, by Theorem \ref{thm50} it will follow that%
\[
H_{2} ( G,R [ G_{r} ]  ) =\varinjlim_{ H\in \mathcal{E}%
_{p} ( G ) }   H_{2} ( H,R [ H_{r} ]  ) .
\]
Let $G_{p}$ denote a $p$-Sylow subgroup of $G$. Since the index $ (
G:G_{p} )  $ is a unit of $R$, we have the equality
\[
\mathrm{Cor}_{G_{p}}^{G} ( H_{2} ( G_{p},R [ G_{r} ])) =H_{2} ( G,R [ G_{r} ] ) .
\]%
Next we decompose $G_{r}$ into disjoint cycles under $G_{p}$-conjugation%
\[
G_{r}=\bigcup _{i\in I}g_{i}^{G_{p}},
\]%
we let $H_{i}$ denote the subgroup of $G_{p}$ that centralizes $g_{i}$. Then
it follows that
\[
H_{2} ( G_{p},R [ G_{r}])
=\sum\nolimits_{i}H_{2} ( G_{p},\sum\nolimits_{\gamma \in
H_{i}\backslash G_{p}}Rg_{i}^{\gamma } ) =\sum\nolimits_{i}H_{2} (
H_{i},Rg_{i} )
\]%
and the result follows because $ \langle g_{i} \rangle \times
H_{i}$ is $p$-elementary.

The proof that
\[
H_{2}^{\mathrm{ab}} ( G,R [ G_{r} ]  ) =\varinjlim_{ H\in \mathcal{E}%
_{p} ( G ) } H_{2}^{\mathrm{ab}} ( H,R [ H_{r} ]
 )
\]%
is very similar (see the proof of Corollary \ref{cor8} in Appendix B).\ \ \
\end{proof}

\bigskip

\subsection{Extending the group logarithm}\label{ss6b}

In this subsection we consider the extension of the group logarithm in the trivial
case when $G=\{1\}$ and define
\begin{equation*}
\mathcal{L}_{R}( R^{\times }) =\left(\log \circ \frac{p}{F}\right)(
R^{\times }) \subset \log ( 1+pR) \subset pR
\end{equation*}
where abusively $p/F$ denotes the map on $R^\times$ given by $p/F(u)=u^p/F(u)$.

We define
\begin{equation*}
\mathcal{M}( R, F) =\{u\in R^{\times }\mid \ F( u)
=u^{p}\},\ \ \Lambda ( R, F) =R^{\times }/ \mathcal{M}( R, F) .
\end{equation*}%
When $F$ is fixed we shall write $\Lambda ( R) $ and $\mathcal{M}( R)$ in place of $\Lambda ( R, F) $ and $\mathcal{M}( R, F)$. We claim that the following sequence is exact:
\begin{equation}\label{newExseq6.3}
R^{\times }\xrightarrow{\frac{1}{p}\mathcal{L}_{R}\oplus \theta }
R\oplus \Lambda ( R) \xrightarrow{T}R\rightarrow 0
\end{equation}%
where $\theta ( u) =u\mod\mathcal{M}( R) $ and
where $T( x\oplus y ) =x-\frac{1}{p}\mathcal{L}_R( y)$.
Indeed: $ T$ is surjective, since obviously $R$ maps onto $R$; clearly
\begin{equation*}
T \left( (\frac{1}{p}\mathcal{L}_{R}\oplus \theta) ( u)\right)
=\frac{1}{p}\mathcal{L}_{R}(u) -\frac{1}{p}\mathcal{L}_{R}(
u ) =0;
\end{equation*}%
and, if $  T ( x\oplus y ) =0$, then $ x=\frac{1}{p}\mathcal{L}%
_{R} ( y )$,   and so $(\frac{1}{p}\mathcal{L}_{R}\oplus \theta)
 ( y ) =x\oplus y.$

\begin{lemma}\label{le60}
(a) $\mathcal{M} ( R ) \cap  ( 1+pR ) =\{1\}.$

(b) $\ker  ( \mathcal{L}_{R} ) =\left\{
\begin{array}{c}
 \mathcal{M}( R) \ \ \ \ \ \ \  \ \ \ \ \ \text{if }p>2, \\
\mathcal{M}( R) \oplus \left\langle \pm 1\right\rangle \ \ \
\text{if }p=2.%
\end{array}%
\right. $
\end{lemma}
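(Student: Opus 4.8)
The plan is to reduce both parts to two standard facts about the $p$-adic logarithm on $R$, together with the defining congruence $F(r)\equiv r^{p}\pmod{pR}$. First note that this congruence gives $u^{p}-F(u)\in pR$ for every $u\in R^{\times}$, hence $u^{p}F(u)^{-1}\in 1+pR$; so $\mathcal{L}_{R}(u)=\log\big(u^{p}F(u)^{-1}\big)$ is the composite of the group homomorphism $u\mapsto u^{p}F(u)^{-1}\colon R^{\times}\to 1+pR$ with $\log\colon 1+pR\to pR$, and in particular $\mathcal{L}_{R}$ is a homomorphism. The two facts I will invoke (both proved by the usual estimates on the $p$-adic valuations of $x^{n}/n$ and $y^{n}/n!$) are: for $p>2$, $\log\colon 1+pR\to pR$ is an isomorphism, so $\ker(\log)=\{1\}$; for $p=2$, $\log\colon 1+4R\to 4R$ is an isomorphism while on $1+2R$ one has $\ker(\log)=\{\pm 1\}$ — the latter because $\log(1+2r)\equiv 2(r+r^{2})\pmod{4R}$, so $\log(1+2r)=0$ forces $r(r+1)\in 2R$, and since $pR$ is prime this means $r\equiv 0$ or $-1\bmod 2R$, i.e.\ $1+2r\in (1+4R)\cup -(1+4R)$, on each of which $\log$ is injective (using $\log(-1)=0$, which holds since $2\log(-1)=\log 1=0$ and $R$ is torsion free).

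For part (b): by the previous paragraph $\ker(\mathcal{L}_{R})$ is the preimage under $u\mapsto u^{p}F(u)^{-1}$ of $\ker(\log|_{1+pR})$. If $p>2$ this kernel is trivial, so $\ker(\mathcal{L}_{R})=\{u: F(u)=u^{p}\}=\mathcal{M}(R)$. If $p=2$ it is $\{\pm 1\}$, so $u\in\ker(\mathcal{L}_{R})$ iff $F(u)=\pm u^{2}$; the case $F(u)=u^{2}$ gives $u\in\mathcal{M}(R)$, while $F(u)=-u^{2}$ is equivalent to $F(-u)=(-u)^{2}$, i.e.\ $-u\in\mathcal{M}(R)$. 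Since $\mathcal{M}(R)$ is a subgroup of $R^{\times}$ and $F(-1)=-1\ne 1=(-1)^{2}$ shows $-1\notin\mathcal{M}(R)$, the set $\mathcal{M}(R)\cup -\mathcal{M}(R)=\mathcal{M}(R)\cdot\langle -1\rangle$ is the internal direct sum $\mathcal{M}(R)\oplus\langle\pm 1\rangle$, as claimed.

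For part (a): let $u\in\mathcal{M}(R)\cap(1+pR)$ and set $y=\log u\in pR$, which is defined since $u\in 1+pR$. Because $F$ is a continuous $\Z_{p}$-algebra endomorphism it commutes with the logarithm series, so $F(y)=\log F(u)=\log(u^{p})=p\log u=py$ (using additivity of $\log$ on $1+pR$ and $F(u)=u^{p}$). Now I run a descent: if $y\ne 0$ then, since $\bigcap_{k}p^{k}R=0$ by $p$-adic completeness, there is a largest $n\ge 1$ with $y\in p^{n}R$; write $y=p^{n}y_{0}$ with $y_{0}\in R\setminus pR$. From $p^{n}F(y_{0})=F(y)=py=p^{n+1}y_{0}$ and cancellation of $p^{n}$ in the domain $R$ we get $F(y_{0})=py_{0}\in pR$; but $F(y_{0})\equiv y_{0}^{p}\pmod{pR}$, so $y_{0}^{p}\in pR$, and primality of $pR$ forces $y_{0}\in pR$, a contradiction. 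Hence $y=\log u=0$, so $u=1$ when $p>2$ by injectivity of $\log$ on $1+pR$; when $p=2$, $u\in\{\pm 1\}$ and $-1\notin\mathcal{M}(R)$ again gives $u=1$.

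The only non-formal step is the descent in (a), and this is exactly where the three Standing Hypotheses ``$pR$ prime'', ``$R$ an integral domain'', and ``$R$ $p$-adically complete'' are each used once; I expect this to be the main (indeed, essentially the only) obstacle, the rest being bookkeeping. An alternative proof of (a) avoiding logarithms runs an induction on $n$ showing $\mathcal{M}(R)\cap(1+p^{n}R)\subseteq 1+p^{n+1}R$ by comparing the $p$-adic expansions of $u^{p}$ and $F(u)=1+p^{n}F(r)$ for $u=1+p^{n}r$; this needs the exceptional case $p=2,\ n=1$ treated separately (where one first obtains only $u\in 1+4R$). A minor recurring nuisance in both parts is tracking the sign $-1$ when $p=2$, which is responsible for the second line in the displayed formula for $\ker(\mathcal{L}_{R})$.
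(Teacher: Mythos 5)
Your proposal is correct, and part (b) is essentially the paper's own argument: the same factorization $\mathcal{L}_{R}=\log\circ(p/F)$ through $1+pR$, the same computation of $\ker\bigl(\log:1+pR\to pR\bigr)$ using primality of $pR$ to handle $p=2$, and the same bookkeeping with $-1\notin\mathcal{M}(R)$ to get the direct sum. Part (a) is where you take a genuinely different route. The paper argues directly on the unit: writing $u=1+p^{n}x$ with $n$ maximal (so $x\notin pR$), it compares $F(u)=1+p^{n}F(x)$ with $u^{p}\equiv 1\pmod{p^{n+1}R}$ (the binomial cross terms all lie in $p^{n+1}R$), so that $p^{n}x^{p}\equiv p^{n}F(x)\equiv 0\pmod{p^{n+1}R}$ and primality of $pR$ gives the contradiction $x\in pR$ in one line. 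You instead pass to $y=\log u$, derive the functional equation $F(y)=py$ from $F(u)=u^{p}$, and run the descent on the largest $n$ with $y\in p^{n}R$. Both arguments consume exactly the same hypotheses (separatedness $\bigcap_{k}p^{k}R=0$, $R$ a domain, $pR$ prime, the Frobenius congruence), so neither buys extra generality; the paper's version is shorter and avoids having to justify $F\circ\log=\log\circ F$ and the homomorphism property of $\log$ on $1+2R$, while yours isolates the clean intermediate statement that $F(y)=py$ forces $y=0$. One side remark: the logarithm-free alternative you sketch at the end needs no exceptional case at $p=2$, $n=1$ — modulo $p^{n+1}$ the cross terms in $(1+p^{n}x)^{p}$ already vanish there, which is precisely why the paper's one-line congruence works uniformly for all $p$ and $n\geq 1$.
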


\begin{proof} (a) For the sake of contradiction we consider $1+p^{n}x\in \mathcal{M}(R) $ with $n>0$ and $x\notin pR$. Then we have congruences modulo $p^{n+1}R$
\begin{equation*}
1+p^{n}x^{p}\equiv 1+p^{n}F(x) \equiv F( 1+p^{n}x)
\equiv ( 1+p^{n}x) ^{p}\equiv 1
\end{equation*}%
which implies $x\in pR$.

For (b) we consider the factorization of $\mathcal{L}_{R}$ given by
\begin{equation*}
R^{\times } \xrightarrow{p/F}  1+pR \xrightarrow{\log } pR
\end{equation*}
where $p/F ( u ) =u^{p}F ( u ) ^{-1}$. The result then
follows since, because $pR$ is a prime ideal, we have:
\begin{equation*}
\ker \left( \log :1+pR\rightarrow pR\right) =\left\{
\begin{array}{cc}
\{1\}\ \ \ \  \ \text{ if }p>2, \\
\ \ \left\langle \pm 1\right\rangle \ \ \ \ \  \ \text{if }p=2.\ \ \
\end{array}%
\right.
\end{equation*}
\end{proof}

\begin{example}
{\rm We can use the structure of $R^{\times }$ to work out $\mathcal{M} (
R ) $\ and $\Lambda  ( R ) $ in the following cases:

1. If $W$ denotes the valuation ring of a finite non-ramified extension of $%
\Q_{p};$ then $\mathcal{M} ( W ) = \mu_{W}^{\prime }$,
the group of roots of unity of $W$ of order prime to $p$ and $\Lambda( W ) \cong pW$.

2. Using $k[t]^\times=k^\times$ we can see that   $\mathcal{M} ( W\langle t\rangle ) = \mu
_{W}^{\prime }$ and $\Lambda ( W \langle t \rangle )
\cong 1+p W \langle t \rangle$.

3. Also $\mathcal{M} ( W[[t]  ] ) =$ $\mu
_{W}^{\prime }$ and $\Lambda (W[[t]])
\cong 1+ ( p, t ) W[[t]]$.

4. Any unit $u$ of the d.v.r $W\{\{t\}\}$ can be written as $u=t^{n}v (
1+pr)$ with $r\in W\{\{t\}\}$, $n\in \Z$ and $v\in W%
[[t]]] ^{\times };$ therefore $\mathcal{M} (
W\{\{t\}\} ) =$ $\mu _{W}^{\prime }\times t^{\Z}$ and we have
an exact sequence
\begin{equation*}
1\rightarrow 1+pW\{\{t\}\}\rightarrow \Lambda  ( W\{\{t\}\} )
\rightarrow 1+tk[[t]]\rightarrow 1
\end{equation*}%
where we write $k$ for the residue class field of $W$.}
\end{example}

  For a $p$-group $G$ we splice the above exact sequence  (\ref{newExseq6.3})
together with the exact sequence
\begin{equation*}
\mathrm{Det} ( 1+I_{R [ G ] } ) \xrightarrow{\nu _{G}} p\phi  ( I_{R [ G ] } ) \rightarrow G^{
\mathrm{ab}}\otimes \frac{R}{\ ( 1-F ) R}\rightarrow 0
\end{equation*}%
from Theorem 3.17 in [CPT1], and we get the exact sequence for the whole
group $\mathrm{Det}R[G]^{\times } )$:
\begin{eqnarray}\label{neweq6.4}
\mathrm{Det}\left( R [ G ] ^{\times }\right)  &=&\mathrm{Det} (
1+I_{R [ G ] } ) \oplus R^{\times }\xrightarrow{{\nu _{G}^{\prime
}\oplus \frac{1}{p}\mathcal{L}_{R}\oplus \theta _{G}}} \phi
 ( I_{R [ G ] } ) \oplus R\oplus \Lambda (R)
\\
&=&pR [ C_{G} ] \oplus \Lambda  ( R ) \rightarrow G^{%
\mathrm{ab}}\otimes \frac{R}{ ( 1-F ) R}\oplus R  \notag
\end{eqnarray}%
where for brevity we set $\nu _{G}^{\prime }=\frac{1}{p}\nu _{G}$ and $%
\theta _{G} $ is the composition of augmentation and reduction modulo $\mathcal{M}( R)$.

\subsection{Adams operations and norm maps.}\label{ss6c}

From here on until the end of the paper we suppose that, in addition to the Standing Hypotheses, $R$
also satisfies the following two additional hypotheses:

(1) \ The $\mathbb{F}_{p}$-algebra $\overline{R}\otimes_{\mathbb{F}_p}{\mathbb{F}}^c_{p}$ contains only finitely many orthogonal idempotents.

(2) \ For any non-ramified extension $L$ of $\Q_{p}$ we set $\Delta =%
\mathrm{Gal} ( L/\Q_{p} )$; then the group $\Lambda  (
R_{L} ) $ is a $\Z_{p}$-module which is $\Delta $%
-cohomologically trivial and for any $p$-subgroup $\Gamma $ of $\Delta $, $%
H^{1}( \Gamma , \mathcal{M} ( R_{L} )  ) =\{1\}$.

\smallskip

For an integer $n$ and a virtual character $\chi $ of $G$, recall that we
define $\psi ^{n}\chi $ by the rule that for $g\in G$ we have $\psi ^{n}\chi
( g) =\chi(g^{n})$. Clearly $ \psi ^{n}\chi  $ is a central function on
$G$; in fact one sees easily by Newton's formulas that $\psi ^{n}\chi $ is a
virtual character of $G$. In this subsection we use these Adams operations
on characters to define Adams operations on the group $\mathrm{Det}( R%
[G] ^{\times })$.

For $\mathrm{Det}( x) \in \mathrm{Det}( R[G]
^{\times }) $ and for an integer $n\ $we define $\psi ^{n}\mathrm{Det}%
( x) $ to be the character function given by the rule that for a
virtual character $\chi $ of $G$
\begin{equation*}
\psi ^{n}\mathrm{Det}( x) ( \chi ) =\mathrm{Det}(
x) ( \psi ^{n}\chi ) .
\end{equation*}
Then, as per Theorem 1 in [CNT] (see also Sect. 9 in [T]), we have:

\begin{theorem}\label{thm62}
For any integer $n$,  $\psi^{n}\mathrm{Det}( R[G]
^{\times }) \subset \mathrm{Det}( R[G] ^{\times
}) .$
\end{theorem}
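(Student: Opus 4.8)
The plan is to follow the proof of Theorem~1 in [CNT] (see also Sect.~9 in [T]), verifying that each step survives when $\Z_p$ (or the valuation ring of a finite extension of $\Q_p$) is replaced by a ring $R$ satisfying the Standing Hypotheses together with the extra conditions (1) and (2) imposed in this subsection.

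\emph{Reductions.} Since $\psi^{mn}=\psi^{m}\circ\psi^{n}$ on virtual characters, $\psi^{0}=\mathrm{id}$, and $\psi^{-1}$ is induced by the ring anti-involution $\sum_{g}a_{g}g\mapsto\sum_{g}a_{g}g^{-1}$ of $R[G]$ --- so that $\psi^{-1}\mathrm{Det}(u)=\mathrm{Det}(\bar u)$, with $\bar u$ again a unit --- it is enough to treat $n=q$ a positive rational prime. The operation $\psi^{q}$ commutes with all restriction and conjugation maps, because $(\psi^{q}\chi)|_{H}=\psi^{q}(\chi|_{H})$ and both sides depend only on the map $g\mapsto g^{q}$. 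Since $\mathrm{Det}(R[G]^{\times})=\mathrm{Det}(\Kr_{1}(R[G]))$ by Theorem~\ref{thm1}, and this functor of $G$ is computable with respect to restriction to $\Q_p$-$p$-elementary subgroups (the induction machinery of Section~\ref{s4}, in which condition (1) is used to split $R\otimes_{\Z_p}\Z_p[\zeta_{n}]$ into a product of domains), it then suffices to prove $\psi^{q}\mathrm{Det}(R[H]^{\times})\subseteq\mathrm{Det}(R[H]^{\times})$ for every $\Q_p$-$p$-elementary group $H$: for such $H$ and any $u\in R[G]^{\times}$ the restrictions $\mathrm{Res}^{G}_{H}(\psi^{q}\mathrm{Det}(u))=\psi^{q}\mathrm{Det}(\mathrm{Res}^{G}_{H}u)$ lie in $\mathrm{Det}(R[H]^{\times})$ and form a compatible family, hence reassemble to an element of $\mathrm{Det}(R[G]^{\times})$.

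\emph{The $\Q_p$-$p$-elementary case.} Write $H=C\rtimes P$. Using the decomposition of $R[H]$ from Section~\ref{s5} into twisted group rings $B\circ P$ with $B=R[m]$ --- a product of domains satisfying the Standing Hypotheses and carrying the Frobenius lift built from $F_{R}$ and the Frobenius of $\Z_p[m]$ --- together with the restriction and norm techniques of that section (the injective restriction of Theorem~\ref{thm54} and the $A_{m}$-action), one reduces to showing that $\psi^{q}$ preserves $\mathrm{Det}(B[H_{m}]^{\times})$, where $H_{m}$ is a $p$-group. For $q\neq p$ this is the Galois-descent argument of [CNT]: $q$ is prime to $p$, and $\psi^{q}$ is controlled by descent along the cyclotomic layers of $B$ over $R$, which requires only the cohomological triviality granted by condition (2) for the non-ramified base rings $R_{L}$. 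For $q=p$ one uses the group logarithm: rewriting the identity of Lemma~\ref{le20} in the form $\mathrm{Det}(1+F(x))(\psi^{p}\chi)=\mathrm{Det}(1+x)(p\chi)\cdot\exp(-\chi(\mathcal{L}(1+x)))$ and invoking Theorem~\ref{thm22}, the exact sequences of subsection~\ref{ss6b}, and the properties of $\Lambda(R_{L})$ and $\mathcal{M}(R_{L})$ from condition (2) to pass between the layers $1+F(I(B[H_{m}]))$, $1+I(B[H_{m}])$ and $B^{\times}$, one concludes that $\psi^{p}$ preserves $\mathrm{Det}(B[H_{m}]^{\times})$.

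\emph{Main obstacle.} I expect the case $q=p$ with $H$ $\Q_p$-$p$-elementary to be the hard part: it is precisely where the $\Z_p$-based logarithmic computations of [CNT], and of Oliver which underlie them, must be redone over an $R$ whose residue field need no longer be finite. The two non-routine verifications are that the Frobenius-lift identities and the convergence of the group logarithm persist over $R$ (the $p$-group inputs being Lemma~\ref{le20} and Theorem~\ref{thm22}), and that wherever Oliver relied on finiteness of residue fields or of $K$-groups the hypotheses (iv), (1) and (2) supply the requisite cohomological triviality and the necessary control of $\Lambda(R_{L})$ and $\mathcal{M}(R_{L})$.
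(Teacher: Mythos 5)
Your overall architecture has a genuine gap at the assembly stage. You reduce to $\Q_p$-$p$-elementary subgroups by claiming that $\mathrm{Det}(R[G]^{\times})=\mathrm{Det}(\Kr_1(R[G]))$ is computable with respect to restriction to the class $\E_p$; but Theorem \ref{thm52}(ii) gives $\E_p$-computability only for $p$-local functors, and $\mathrm{Det}(R[G]^{\times})$ is not a $\Z_p$-module (it contains $R^{\times}$, with torsion prime to $p$). Even if you argue directly with Brauer induction, Theorem \ref{thm49} only expresses $l\cdot\G_0(N[G])$, $l$ prime to $p$, in terms of $\Q_p$-$p$-elementary subgroups, so agreement of restrictions on all $H\in\E_p(G)$ pins down the discrepancy $t=\psi^{q}\mathrm{Det}(u)\cdot d^{-1}$ only up to $l$-torsion. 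This is precisely why the paper works with the quotient $M_h(R[G])$ of (\ref{eq7.73}) and kills it from both sides: Steps 1, 2 and Proposition \ref{pro78}(a) use groups of order prime to $p$ and $\Q_p$-$l$-elementary groups ($l\neq p$) to show $M_h$ is $p$-power torsion, while Step 4 and \ref{pro78}(b) use the $\Q_p$-$p$-elementary case to kill it by an integer prime to $p$. Your proposal omits the prime-to-$p$ elementary analysis entirely, so the torsion ambiguity is never eliminated.

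The second, and in the authors' own words crucial, gap is the $p$-group case, which you flag as the "main obstacle" but do not actually resolve. The difficulty there is not the convergence of the group logarithm (that is already Lemma \ref{le20} and Theorem \ref{thm22} from [CPT1]), nor is it settled by the cohomological triviality in hypothesis (2). After writing $\psi^{h}\mathrm{Det}(x)=\mathrm{Det}(zy)\cdot t$ via Lemmas \ref{le81} and \ref{le82}, one gets an injection $\xi:M_h(R[G])\hookrightarrow T_R$ into a $p$-power torsion group of character functions, and the point is to kill $T$ by a norm argument up an unramified tower: since the classical surjectivity $N_{L/\Q_p}(R_L^{\times})=R^{\times}$ may fail, the paper substitutes Theorem \ref{thm63} (norm surjectivity on $\mathrm{Det}(1+I)$), and it needs hypothesis (1) through Lemma \ref{le83} (the number of components of $R_L$ is bounded) and primality of $pR$ through Lemma \ref{le84} ($\mu_{p^{\infty}}(R\otimes_{\Z_p}\O_E)=\mu_{p^{\infty}}(\O_E)$) to guarantee $T_{R_L}=T_R$, so that a norm from an extension of degree $p^{e}$ annihilates $T$ while mapping $M_h(R_L[G])$ onto $M_h(R[G])$. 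None of this mechanism appears in your sketch (your appeal to $\Lambda(R_L)$, $\mathcal{M}(R_L)$ and condition (2) addresses a different part of Section \ref{s6}), so the case you yourself identify as the heart of the theorem is left unproved.
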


The proof of this result is in many ways similar to the proof given in [CNT]
and Sect. 9 of [T] (when $R$ is the valuation ring of a finite non-ramified
extension of $\Q_{p})$. The details are provided in an Appendix.
However, note that the proof in [CNT] and [T] requires a number of modifications. In
particular, one issue that arises at the conclusion of the proof (see pages
114-115 in [T]) is the following: for a finite non-ramified extension $L$ of
$\Q_{p}$, if we set $R_{L}=R\otimes \O_{L}$ then we do not
necessarily know that:
\begin{equation*}
\mathcal{N}_{L/\Q_{p}}( \mathrm{Det}( R_{L}[G]
^{\times }) ) =\mathrm{Det}( R[G] ^{\times
}) ;
\end{equation*}
and this is because we do not necessarily know that the group of norms $N_{L/\Q_{p}}( R_{L}^{\times }) $ coincides with $R^{\times }$. However,
we do have from Theorem 4.3 of [CPT1]:

\begin{theorem}\label{thm63}
Let $G$ be a $p$-group. With the above notation we have the equality
\begin{equation*}
\mathcal{N}_{L/\Q_{p}}( \mathrm{Det}( 1+I( R_{L}
[G] ) ) ) =\mathrm{Det}( 1+I( R[G%
] ) ) .
\end{equation*}
\end{theorem}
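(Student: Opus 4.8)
The plan is to deduce the statement from two assertions proved together by induction on $|G|$, for all finite unramified extensions $L/\Q_p$ simultaneously. Writing $\Delta=\mathrm{Gal}(L/\Q_p)$, which acts on $\mathrm{Det}(1+I(R_L[G]))$ through its action on the coefficient ring $R_L$, I would establish: (I) $\widehat H^{0}(\Delta,\mathrm{Det}(1+I(R_L[G])))=\{1\}$; and (II) $\mathrm{Det}(1+I(R_L[G]))^{\Delta}=\mathrm{Det}(1+I(R[G]))$, where the right-hand group is embedded via $R\hookrightarrow R_L$ (and is $\Delta$-fixed because determinants of $R$-coefficient units are). Granting (I) and (II) the theorem is immediate: the norm $\mathcal N_{L/\Q_p}$ is the map $y\mapsto\prod_{\delta\in\Delta}y^{\delta}$, so its image lies in $\mathrm{Det}(1+I(R_L[G]))^{\Delta}=\mathrm{Det}(1+I(R[G]))$ by (II), while the vanishing of $\widehat H^{0}$ in (I) is exactly the statement that $\mathcal N_{L/\Q_p}$ surjects onto these invariants.

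Two preliminary points underlie the induction. First, since $L/\Q_p$ is unramified, $\mathcal O_L$ has a normal integral basis over $\Z_p$, so $R_L=R\otimes_{\Z_p}\mathcal O_L\cong R[\Delta]$ as $R[\Delta]$-modules; hence for every $R$-module $M$ the $\Delta$-module $R_L\otimes_R M$ is induced from the trivial subgroup, so it is $\Delta$-cohomologically trivial and $(R_L\otimes_R M)^{\Delta}=M$. Second, by Lemma 6.1 of [CPT1] $R_L$ is a finite product of rings satisfying the Standing Hypotheses, equipped with the compatible Frobenius $F_R\otimes(\text{Frobenius of }\mathcal O_L)$, so every group-logarithm identity of Section \ref{sec3} applies to $R_L$ componentwise. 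The base of the induction is the class of abelian $G$ (and $G=\{1\}$): there $\mathrm{Det}$ is injective on $R_L[G]^{\times}$ (as in the proof of Lemma \ref{le27}), so $\mathrm{Det}(1+I(R_L[G]))\cong 1+I(R_L[G])$; filtering this by the subgroups $1+I(R_L[G])^{k}$, whose successive quotients $I(R_L[G])^{k}/I(R_L[G])^{k+1}\cong R_L\otimes_R\bigl(I(R[G])^{k}/I(R[G])^{k+1}\bigr)$ are induced, hence cohomologically trivial, and passing to the inverse limit (the $I$-adic filtration is complete by (\ref{eq3.9}), with surjective transition maps) gives (I), while (II) becomes $I(R_L[G])^{\Delta}=I(R[G])$.

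For the inductive step I would take $G$ non-abelian, choose a central element $c$ of order $p$ which is a commutator (possible via the lower central series), and use the $\Delta$-equivariant exact sequence
\[
1\to\mathrm{Det}(1+\mathcal B_{R_L})\to\mathrm{Det}(1+I(R_L[G]))\to\mathrm{Det}(1+I(R_L[\overline G]))\to 1
\]
with $\mathcal B=(1-c)R[G]$ and $\overline G=G/\langle c\rangle$, from Lemma 4.2 of [CPT1] (applied componentwise over $R_L$). Since $\psi^{p}$ annihilates $1-c$, on $1+\mathcal B$ the group logarithm coincides with $p\phi\circ\log$; exactly as in the derivation of (\ref{eq3.16}), Lemma \ref{le24} and Corollary \ref{cor21} then give a $\Delta$-equivariant isomorphism $\mathrm{Det}(1+\mathcal B_{R_L})\cong p\phi(\mathcal B_{R_L})=R_L\otimes_R p\phi(\mathcal B_R)$, so the left-hand term is an induced $\Delta$-module, hence $\Delta$-cohomologically trivial, with invariants $p\phi(\mathcal B_R)\cong\mathrm{Det}(1+\mathcal B_R)$. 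By the inductive hypothesis $\mathrm{Det}(1+I(R_L[\overline G]))$ satisfies (I) and (II); substituting this, together with the vanishing of $\widehat H^{\ast}(\Delta,\mathrm{Det}(1+\mathcal B_{R_L}))$, into the long exact Tate-cohomology sequence of the displayed extension yields (I) for $G$, and comparing the $\Delta$-invariants of that extension with those of the corresponding extension over $R$ (five lemma, using that the two outer vertical maps are isomorphisms) yields (II) for $G$.

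The hard part, and the reason the induction is set up this way, is keeping the layer $\mathrm{Det}(1+\mathcal B)$ introduced at each step cohomologically trivial. This forces the choice of $c$ as a commutator in the non-abelian case: for a central non-commutator $c$ of order $p$, Lemma \ref{le26} shows $\mathrm{Det}(1+\mathcal B_R)$ differs from $p\phi(\mathcal B_R)$ by a quotient isomorphic to $R/((1-F)R+pR)$, and this base-changes to a $\Delta$-module on which $\Delta$ acts trivially (because $R_L/(1-F)R_L\cong R/(1-F)R$ with trivial $\Delta$-action, the Frobenius of $\mathcal O_L$ being a generator of $\Delta$), hence not cohomologically trivial in general; it also forces the separate, direct treatment of abelian $G$ via injectivity of $\mathrm{Det}$. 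A final point of bookkeeping is that $R_L$ is only a product of Standing-Hypotheses rings, so Lemma 4.2 of [CPT1] and the logarithmic identities must be invoked componentwise. It is worth emphasizing that the analogous equality for the whole of $\mathrm{Det}(R_L[G]^{\times})$ is false — the discrepancy being governed by $\mathcal M(R)$ and $\Lambda(R)$ — and that the present argument works precisely because restricting to $1+I$ lets the induction bottom out at the trivial group, where there is no obstruction.
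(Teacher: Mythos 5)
Your argument is sound, but note that this paper gives no proof of Theorem \ref{thm63}: it is quoted from Theorem 4.3 of [CPT1], so the only comparison possible is with that reference. Your route — proving simultaneously, by induction on $|G|$, that $\widehat{H}^{0}(\Delta,\mathrm{Det}(1+I(R_L[G])))$ vanishes and that the $\Delta$-invariants are exactly $\mathrm{Det}(1+I(R[G]))$, using the sequence $1\to\mathrm{Det}(1+(1-c)R_L[G])\to\mathrm{Det}(1+I(R_L[G]))\to\mathrm{Det}(1+I(R_L[\overline G]))\to1$ with $c$ a central commutator, identifying its kernel with the induced (hence $\Delta$-cohomologically trivial) module $p\phi((1-c)R_L[G])$ via the logarithm, and treating abelian $G$ directly through injectivity of $\mathrm{Det}$ and the $I$-adic filtration — is precisely the package of ingredients this paper imports from [CPT1] (its Lemma 4.2 is your d\'evissage sequence, and Lemma \ref{le24} together with Corollary \ref{cor21} gives the identification of the kernel), so it should be regarded as essentially the source's intended argument rather than a genuinely different one. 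Two small points would deserve a sentence in a written-up version: (i) $R_L$ is only a product of Standing-Hypotheses domains, so one should check that $F_R\otimes\varphi$ fixes the idempotents of $R_L$ (it does: idempotents are fixed mod $p$ by $x\mapsto x^p$ and idempotent lifts are unique), which justifies applying the logarithm results factor by factor, while $\nu_{R_L}$ is $\Delta$-equivariant because $F_R\otimes\varphi$ commutes with $1\otimes\Delta$; (ii) the unramifiedness of $L/\Q_p$ enters exactly through the normal integral basis $\O_L\cong\Z_p[\Delta]$, which is what makes every base-changed module induced. Your closing observation that the d\'evissage must go through a commutator $c$, since otherwise Lemma \ref{le26} produces a non-cohomologically-trivial layer, correctly pinpoints the one place where the structure of the induction is forced.
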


This is a key-result, as is explained in the Appendix.

The proof of Theorem \ref{thm62} then follows very closely the proof given in [CNT]
and [T]  by using the above result
together with the decomposition
\begin{equation*}
\mathrm{Det}( R[G] ^{\times }) =\mathrm{Det}(
1+I( R[G] ) ) \times R^{\times };
\end{equation*}%
However, the special case where $G$ is a $p$-group is now considerably
more involved. The details are provided in the Appendix. \bigskip

For future reference note that if $H$ is a subgroup of $G$, then, by
definition, for $\mathrm{Det}( y) \in  \mathrm{Det}( R%
[H] ^{\times }) $ we have,
\begin{equation*}
\mathrm{Ind}_{H}^{G}( \mathrm{Det}( y) ) ( \chi
) =\mathrm{Det}( y) ( \text{\textrm{Res}}%
_{H}^{G}( \chi ) )
\end{equation*}%
and so
\begin{eqnarray*}
( \psi ^{n}\mathrm{Ind}_{H}^{G}( \mathrm{Det}( y)
) ) ( \chi ) &=&\mathrm{Ind}_{H}^{G}( \mathrm{Det%
}( y) ) ( \psi ^{n}\chi ) \\
&=&\mathrm{Det}( y) ( \text{\textrm{Res}}_{H}^{G}( \psi
^{n}\chi ) ) =\mathrm{Det}( y) ( \psi ^{n}\text{%
\textrm{Res}}_{H}^{G}( \chi ) ) \\
&=&( \psi ^{n}\mathrm{Det}( y) ) ( \text{\textrm{%
Res}}_{H}^{G}( \chi ) ) =( \text{\textrm{Ind}}%
_{H}^{G}\psi ^{n}\mathrm{Det}( y) ) ( \chi ) .
\end{eqnarray*}%
Thus we have shown that

\begin{lemma}\label{le64}
For $H< G$, $\psi ^{n}$ commutes with the map $\mathrm{Ind}_{H}^{G}$ on
determinants.
\end{lemma}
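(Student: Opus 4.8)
The plan is to prove this by a direct manipulation of character functions, which is really just the adjunction between induction and restriction combined with the evident compatibility of the Adams operations with restriction. The one observation I would isolate first is that $\mathrm{Res}_H^G$ commutes with $\psi^n$ on virtual characters: for a virtual character $\chi$ of $G$ and an element $g\in H$ one has $(\psi^n\chi)(g)=\chi(g^n)=(\mathrm{Res}_H^G\chi)(g^n)$, since $g^n\in H$; hence $\mathrm{Res}_H^G(\psi^n\chi)$ and $\psi^n(\mathrm{Res}_H^G\chi)$ agree as central functions on $H$, and, both being virtual characters (again by Newton's formulas, as recalled above), they coincide as virtual characters of $H$.

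Granting this, I would simply unwind the two relevant definitions in turn. Fix $\mathrm{Det}(y)\in\mathrm{Det}(R[H]^\times)$ and a virtual character $\chi$ of $G$. By the definition of the action of $\psi^n$ on a determinant and of the induction map on character functions,
\begin{equation*}
\bigl(\psi^n\,\mathrm{Ind}_H^G\,\mathrm{Det}(y)\bigr)(\chi)=\bigl(\mathrm{Ind}_H^G\,\mathrm{Det}(y)\bigr)(\psi^n\chi)=\mathrm{Det}(y)\bigl(\mathrm{Res}_H^G(\psi^n\chi)\bigr).
\end{equation*}
Applying the compatibility of the previous paragraph and then running the two definitions in the opposite direction gives
\begin{equation*}
\mathrm{Det}(y)\bigl(\psi^n\,\mathrm{Res}_H^G\chi\bigr)=\bigl(\psi^n\,\mathrm{Det}(y)\bigr)\bigl(\mathrm{Res}_H^G\chi\bigr)=\bigl(\mathrm{Ind}_H^G\,\psi^n\,\mathrm{Det}(y)\bigr)(\chi).
\end{equation*}
Since $\chi$ was an arbitrary virtual character of $G$, this shows that $\psi^n$ commutes with $\mathrm{Ind}_H^G$ on determinants, which is exactly the assertion.

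The only input that is not purely formal is that the objects involved genuinely lie in the determinant groups, so that an equality of character functions is the same thing as an equality in $\mathrm{Det}(R[G]^\times)$; this is precisely Theorem \ref{thm62}, which ensures $\psi^n\,\mathrm{Det}(R[H]^\times)\subset\mathrm{Det}(R[H]^\times)$ and $\psi^n\,\mathrm{Det}(R[G]^\times)\subset\mathrm{Det}(R[G]^\times)$. With that already established, I do not anticipate any genuine obstacle here: the statement is a bookkeeping identity, and the computation above is essentially the whole proof.
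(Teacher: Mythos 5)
Your argument is correct and is essentially identical to the paper's own proof: both simply unwind the definitions $\mathrm{Ind}_H^G(\mathrm{Det}(y))(\chi)=\mathrm{Det}(y)(\mathrm{Res}_H^G\chi)$ and $\psi^n\mathrm{Det}(x)(\chi)=\mathrm{Det}(x)(\psi^n\chi)$, using that $\mathrm{Res}_H^G$ commutes with $\psi^n$ on virtual characters (which you spell out slightly more explicitly than the paper does). No differences of substance.
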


\subsection{The group logarithm for arbitrary finite groups}\label{ss6d}

We define
$$
\Psi :\mathrm{Det}( R[G] ^{\times })
\rightarrow \mathrm{Det}( R[G] ^{\times })
$$
 by the
rule $\Psi ( \mathrm{Det}( x) ) =\psi ^{p}\mathrm{Det}%
( F( x) ) $ and we define
\begin{equation*}
\upsilon_{G} ( \mathrm{Det}( x) ) =\phi \circ
\log \left( \frac{\mathrm{Det}( x) ^{p}}{ \Psi ( \mathrm{Det}(
x) ) }\right).
\end{equation*}%
 We begin this subsection by showing the following
generalization of Theorem \ref{thm22} in \ref{sslog}:

\begin{theorem}\label{thm65} We have \
$\upsilon_{G}( \mathrm{Det}( R[G] ^{\times
}) ) \subset pR[C_{G}] $.
\end{theorem}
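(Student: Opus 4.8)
The plan is to reduce the assertion, first by Brauer induction to the case of $\Q_{p}$-$p$-elementary groups, and then by the twisted group ring decomposition of Section \ref{s5} to the case of a $p$-group, where it becomes Theorem \ref{thm22} for the ``$1+I$'' part and the elementary facts about $\mathcal{L}_{R}$ from \ref{ss6b} for the scalar part. Before doing so I would record two formal properties. First, $\upsilon_{G}$ is a group homomorphism: the target is abelian, so $\mathrm{Det}(x)\mapsto\mathrm{Det}(x)^{p}$ and $\Psi$ are endomorphisms of $\mathrm{Det}(R[G]^{\times})$ (the latter valued in $\mathrm{Det}(R[G]^{\times})$ by Theorem \ref{thm62}), and $\log$ is a homomorphism where defined; that the logarithms occurring converge and that $\phi\circ\log$ of them lands in $N[C_{G}]$ will drop out of the reductions below (alternatively one invokes the congruence $\mathrm{Det}(u)^{p}\equiv\psi^{p}\mathrm{Det}(F(u))$ modulo $p$, valid for a unit $u$ of a group ring over a $\Z_{p}$-algebra). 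Secondly, $\upsilon_{G}$ commutes with induction: by Lemma \ref{le64}, $\psi^{p}$ and hence $\Psi$ commute with $\mathrm{Ind}_{H}^{G}$; since $\mathrm{Ind}_{H}^{G}$ acts on determinant functions by precomposition with $\mathrm{Res}_{H}^{G}$, it likewise commutes with the fibrewise operations $(\,\cdot\,)^{p}$, $\log$ and $\phi$, and it carries $R[C_{H}]$ into $R[C_{G}]$. The same ``fibrewise in the character variable'' principle will make $\upsilon$ compatible with the restriction maps $r$ of Section \ref{s5}.

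Next I would apply Theorem \ref{thm49} with the field taken to be $\Q_{p}$, together with the Frobenius-module structure of $H\mapsto\mathrm{Det}(R[H]^{\times})$ over $H\mapsto\G_{0}(\Q_{p}[H])$ described in \ref{ss4a}: there is an integer $l$ prime to $p$ with $l\cdot\mathrm{Det}(R[G]^{\times})\subseteq\sum_{J}\mathrm{Ind}_{J}^{G}(\mathrm{Det}(R[J]^{\times}))$, $J$ running over the $\Q_{p}$-$p$-elementary subgroups of $G$. Applying $\upsilon_{G}$ and the previous paragraph gives $l\cdot\upsilon_{G}(\mathrm{Det}(R[G]^{\times}))\subseteq\sum_{J}\mathrm{Ind}_{J}^{G}\bigl(\upsilon_{J}(\mathrm{Det}(R[J]^{\times}))\bigr)$, and since $l^{-1}\in\Z_{p}\subseteq R$ the lattice $pR[C_{G}]$ is stable under division by $l$; so it suffices to prove the theorem for a $\Q_{p}$-$p$-elementary group $G=P\rtimes C$.

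For such a $G$, Section \ref{s5} supplies the decomposition $R[G]=\oplus_{m}R[m]\circ P$ with each $R[m]$ a finite product of rings satisfying the Standing Hypotheses and $H_{m}\subseteq P$ a $p$-group, compatibly decomposing $\mathrm{Det}(R[G]^{\times})$ and $R[C_{G}]$; by Theorem \ref{thm54} the restriction $r\colon\mathrm{Det}((R[m]\circ P)^{\times})\hookrightarrow\mathrm{Det}(R[m][H_{m}]^{\times})$ is injective and, as noted, compatible with $\upsilon$. This reduces me to the statement that $\upsilon_{R'[Q]}(\mathrm{Det}(R'[Q]^{\times}))\subseteq pR'[C_{Q}]$ for $R'$ satisfying the Standing Hypotheses and $Q$ a $p$-group. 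Here I would use $\mathrm{Det}(R'[Q]^{\times})=\mathrm{Det}(1+I_{Q})\times R'^{\times}$ and that $\upsilon_{R'[Q]}$ is a homomorphism. On $\mathrm{Det}(1+I_{Q})$, Lemma \ref{le20} (its first formula handles $\phi\circ\log$ of $\mathrm{Det}(1+y)^{p}$, and the identity relating evaluation at $\psi^{p}\chi$ to $\overline{\Psi}$ on $N[C_{Q}]$ handles $\phi\circ\log$ of $\Psi\mathrm{Det}(1+y)$) shows $\upsilon_{R'[Q]}(\mathrm{Det}(1+y))=(p-\overline{\Psi})\bigl(\phi(\log(1+y))\bigr)=\mathcal{L}(1+y)$ for $y\in I_{Q}$, which lies in $p\phi(I_{Q})\subseteq pR'[C_{Q}]$ by Theorem \ref{thm22}. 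On the central scalars $a\in R'^{\times}$, the equality $\mathrm{Det}(a)(\chi)=a^{\chi(1)}$ forces $\mathrm{Det}(a)^{p}/\Psi\mathrm{Det}(a)$ to have character value $(a^{p}/F(a))^{\chi(1)}$, whence $\upsilon_{R'[Q]}(\mathrm{Det}(a))=\mathcal{L}_{R'}(a)\cdot\phi(1)$ with $\mathcal{L}_{R'}(a)\in\log(1+pR')\subseteq pR'$ by the definition of $\mathcal{L}_{R'}$ in \ref{ss6b}. Multiplying the two contributions finishes the $p$-group case and hence the theorem.

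The step I expect to be the real obstacle is not any of the displayed computations but the naturality bookkeeping: one must check carefully that $\upsilon$ genuinely commutes with the induction maps of Section \ref{s4} and with the twisted-group-ring restriction maps $r$ of Section \ref{s5}, and that the logarithms appearing in the definition of $\upsilon_{G}$ converge even when $G$ is not a $p$-group. Once the character-function formalism is set up so that each operation occurring in the definition of $\upsilon$ acts fibrewise in the character variable, these compatibilities become mechanical, but they are where the substance of the argument sits.
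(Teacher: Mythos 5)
Your outline (Brauer induction to $\Q_{p}$-$p$-elementary groups, then the decomposition $R[G]=\oplus_{m}R[m]\circ P$, then the $p$-group case via Theorem \ref{thm22} plus the scalar computation with $\mathcal{L}_{R}$) is the same as the paper's, and your Brauer-induction step and $p$-group step are fine. The gap is in the middle step, the reduction from a $\Q_{p}$-$p$-elementary group to the $p$-groups $H_{m}$. First, the ``fibrewise in the character variable'' principle you invoke justifies Lemma \ref{le64}, i.e.\ that $\psi^{p}$ commutes with $\mathrm{Ind}_{H}^{G}$ on determinants, because that map is dual to \emph{restriction} of characters and Adams operations commute with restriction; but the map $r$ of Theorem \ref{thm54} is dual to \emph{induction} of characters, and $\psi^{p}$ does not commute with induction. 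So ``$\upsilon$ is compatible with $r$'' is not automatic, and in the paper this compatibility is only obtained through a nontrivial comparison map on the targets, namely the trace construction $\mathrm{Tr}\circ\mathrm{res}$ inducing $H_{0}(Tr):H_{0}(P,R[m]\circ P)\to H_{0}(H_{m},R[m][H_{m}])^{A_{m}}$ and the commutative diagram of Lemma \ref{le66}.

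Second, even granting such a compatibility, injectivity of $r$ on determinant groups is not the property you need. What you must transfer back is an \emph{integrality} statement: from $\upsilon_{H_{m}}(r(u))\in pR[m][C_{H_{m}}]$ you want $\upsilon_{G}(u)\in pR[C_{G}]$, and for that the comparison map between the two conjugacy-class lattices must be an isomorphism (or at least have $p$-torsion-free cokernel): an injective lattice map can pull $pL_{2}$ back to something strictly larger than $pL_{1}$ (multiplication by $p$ on $R$ already shows this). This is exactly where the paper does its real work: surjectivity of $H_{0}(Tr)$ via the normal basis generator $\widetilde{\mu}$ (so that $R[m][C_{H_{m}}]^{A_{m}}$ is spanned by the elements $(\mathrm{Tr}\circ\mathrm{res})(\zeta h)$), injectivity by a rank count, and then Lemma \ref{le67}, proved by induction on $|C|$, identifying $\oplus_{m}H_{0}(H_{m},R[m][H_{m}])^{A_{m}}$ with $R[C_{G}]$. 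Without constructing this map and proving it is an isomorphism of $R$-lattices, your reduction to the $p$-group case does not go through; the ``naturality bookkeeping'' you defer is precisely this content, not a routine verification.
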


  In the sequel we shall write $\upsilon_{G}^{\prime }$ for $p^{-1}\upsilon
_{G}$.

\begin{proof} Suppose first that $G$ is a $p$-group; the result then follows
immediately from Theorem \ref{thm22}  and \ref{ss6b}.

Suppose next that $G$ is a $\Q_{p}$-$p$-elementary group. We adopt
the notation of \S \ref{s5}: We write $G=C\rtimes P$ and we have the
decomposition
\begin{equation*}
R[G] =\oplus_{m}R[m] \circ P.
\end{equation*}%
We consider the restriction map $\mathrm{res}:R[m] \circ
P\rightarrow M_{ \vert A_{m} \vert }( R[m] [
H_{m}] ) $ and form the composite
  \begin{equation*}
R[m] \circ P\rightarrow M_{ \vert A_{m} \vert }( R%
[m] [H_{m}] ) \xrightarrow{\rm Tr} R[m] [H_{m}] ^{A_{m}}\rightarrow (
R[m] [H_{m}] /I_{H_{m}}) ^{A_{m}}=R[m%
] [C_{H_m}] ^{A_{m}}
\end{equation*}
where the map $R[ m] [ H_{m}] ^{A_{m}}\rightarrow
( R[ m] [ H_{m}] /I_{H_{m}}) ^{A_{m}}$ is
induced by the natural map $R[ m] [ H_{m}] \rightarrow
R[ m] [ H_{m}] /I_{H_{m}}$, where of course $A_{m}$
acts on both $H_{m}$ and $R[ m]$.
Recall that here $P$ acts on $R[m] \circ P$ by conjugation on $P$
and its natural action on $R[m]$.
Since $A_{m}$ is cyclic, we
know $[P,P]\subset H_{m}$, so for any $g$, $k\in P$ we have $g^{k}-g$ lies in
the $R[G] $-ideal generated by $I_{H_{m}};$ hence $(\mathrm{Tr}
\circ \mathrm{res})( I_{P}) \subset I_{H_{m}}$ and we have
constructed a map
\begin{equation*}
H_{0}( Tr) :H_{0}( P,R[m] \circ P)
\rightarrow H_{0}( H_{m},R[m] [H_{m}] )
^{A_{m}}.
\end{equation*}
Observe that $R[m] [C_{H_{m}}] ^{A_{m}}$is spanned
over $R[m] ^{A_{m}}$ by the elements $(\mathrm{Tr}\circ {\rm res} )
( \zeta h) $ for $h\in H_{m}$ and $\zeta $ a primitive $m$-th
root of unity; this shows that $H_{0}( Tr) $ is surjective.

\begin{lemma}\label{le66}
We have the commutative diagram:%
\begin{equation*}
\begin{array}{ccccc}
\mathrm{Det}( R[G] ^{\times }) & \xrightarrow{\sim} & \oplus _{m}%
\mathrm{Det}( R[m] \circ P^{\times }) &  \xrightarrow{\sim}  &
\oplus_{m}\mathrm{Det}( R[m] [H_{m}] ^{\times
}) ^{A_{m}} \\
\downarrow \upsilon _{G}^{\prime } &  & \downarrow &  & \downarrow \oplus
\upsilon _{m}^{A_{m}} \\
H_{0}( G,R[G] ) &  \xrightarrow{\sim} & \oplus _{m}H_{0}( G,R
[m] \circ P) &
\xrightarrow{H_0(Tr)} & \oplus _{m}H_{0}( H_{m},R[m] [H_{m}%
] ) ^{A_{m}}%
\end{array}%
\end{equation*}%
and $H_{0}( Tr) $ is an isomorphism.
\end{lemma}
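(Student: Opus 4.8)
The plan is to identify the arrows of the diagram, verify its two squares, and then prove that $H_0(Tr)$ is an isomorphism; throughout, the vertical maps are understood a priori to take values in $N\otimes_R(-)$, the integrality of $\upsilon_G'$ being one of the outputs of the lemma. The leftmost horizontal isomorphisms come from the ring decomposition $R[G]=\oplus_m R[m]\circ P$ --- immediate on determinants, and for the bottom row because $C\subset G$ acts on the factor $R[m]\circ P$ through conjugation by the unit $\chi_m(c)\in R[m]^\times$, hence trivially on $H_0$, so that $H_0(G,R[m]\circ P)=\mathrm{HH}_0(R[m]\circ P)$ and $H_0(G,R[G])=\mathrm{HH}_0(R[G])=R[C_G]$. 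The second horizontal map of the top row is the restriction $r=\oplus_m r_m$ of Theorem \ref{thm54}, identifying $\mathrm{Det}((R[m]\circ P)^\times)$ with $\mathrm{Det}(R[m][H_m]^\times)^{A_m}$. The leftmost two vertical maps are the rescaled group logarithms of \ref{ss6d}, and the rightmost one is induced on $A_m$-invariants by the group logarithm of \ref{sslog} for the $p$-group $H_m$ over the coefficient ring $R[m]$ --- legitimate since each $R[m]$ is a product of rings satisfying the Standing Hypotheses (Lemma 6.1 of [CPT1]), for which Theorem \ref{thm65} (the $p$-group case, which does not rely on the present lemma) gives integrality.

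The left square commutes by inspection: with $\upsilon_G'(\mathrm{Det}(x))=p^{-1}\phi(\log(\mathrm{Det}(x)^p/\Psi(\mathrm{Det}(x))))$, each of $\log$, the conjugacy-class projection $\phi$ (which is exactly the quotient $R[G]\twoheadrightarrow\mathrm{HH}_0(R[G])$), the $p$-th power map, and $\Psi=\psi^p\circ F$ respects the decomposition $R[G]=\oplus_m R[m]\circ P$ and the accompanying product decomposition of the characters of $G$. The right square is the substance of the lemma, and I would treat it through the character description of the group logarithm given by Lemma \ref{le20}, according to which $\upsilon$ on determinants is determined by the values $\chi\mapsto\log[\mathrm{Det}(y)(p\chi)\cdot\mathrm{Det}(F(y))(-\psi^p\chi)]$. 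Its commutativity then reduces to three compatibilities: (i) by construction $r_m(\mathrm{Det}(y))$ pairs with a character $\chi$ of $H_m$ as $\mathrm{Det}(y)$ pairs with $\mathrm{Ind}_{H_m}^P\chi$; (ii) on Hochschild homology $H_0(Tr)$ is the transfer dual to the same restriction of class functions --- one uses that $\mathrm{res}$ realizes $R[m]\circ P$ acting on itself as a free $R[m][H_m]$-module of rank $|A_m|$, so that $Tr$ is the associated reduced trace landing in $R[m][H_m]^{A_m}$ (as in the construction preceding the lemma), whose effect on $\mathrm{HH}_0$ is the $A_m$-relative trace, the isomorphism $R[m]\circ A_m\cong M_{|A_m|}(R[m]^{A_m})$ (Theorem 6.2 of [CPT1]) being used here; and (iii) $\psi^p$ commutes with $\mathrm{Ind}$ by Lemma \ref{le64}, so the $\psi^p$-and-$F$ twist built into $\upsilon$ is carried through (i) and (ii) intact. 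Matching coefficients of all characters of $H_m$ yields commutativity. The identification in (ii) --- pinning down $Tr\circ\mathrm{res}$ as the transfer on $\mathrm{HH}_0$ through the twisted-group-ring and matrix identifications --- is what I expect to be the main obstacle.

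Finally, $H_0(Tr)$ is an isomorphism. Surjectivity was already established above by the normal-basis-generator argument. For injectivity, note that $\mathrm{HH}_0(R[m][H_m])=R[m][C_{H_m}]=R[m]\otimes_R R[C_{H_m}]$ with its diagonal $A_m$-action is $A_m$-free over $R$ (because $R[m]$ is), so its invariants form a finitely generated free $R$-module; and $\mathrm{HH}_0(R[m]\circ P)$ is likewise finitely generated free over $R$. Since a surjection between finitely generated free modules of equal rank over the domain $R$ is an isomorphism, it suffices to compare ranks after $\otimes_R N$. There $N[G]$ is semisimple, so $\sum_m\dim_N\mathrm{HH}_0(N[m]\circ P)=\dim_N\mathrm{HH}_0(N[G])=|C_G|$; and writing $N[m]\circ P$ as the crossed product $(N[m][H_m])\ast A_m$, the standard decomposition of $\mathrm{HH}_0$ of a crossed product over the conjugacy classes of the (abelian) group $A_m$, together with the fact that $A_m$ acts faithfully on $N[m]$ (which forces the twisted Hochschild terms attached to the non-identity elements of $A_m$ to vanish), identifies $\dim_N\mathrm{HH}_0(N[m]\circ P)$ with $\dim_N(\mathrm{HH}_0(N[m][H_m]))^{A_m}$ for each $m$. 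The two ranks therefore agree, $H_0(Tr)$ is injective, and the lemma follows.
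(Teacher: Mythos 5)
Your proposal is correct and follows essentially the same route as the paper, whose own proof is far terser: it obtains the commutativity of the diagram directly from Theorem \ref{thm54} together with the construction of $H_{0}(Tr)$ preceding the lemma, and proves injectivity exactly as you do, from the already-established surjectivity plus the fact that the lower-row terms are $R$-torsion-free of the same rank (seen by tensoring with $N$). The trace--restriction compatibility you single out in (ii) as the expected main obstacle is precisely the point the paper leaves implicit in its appeal to Theorem \ref{thm54}, so there is no genuine divergence in method, only in the level of detail supplied.
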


\begin{proof} The diagram comes from Theorem \ref{thm54}.
 We have shown $H_{0}( Tr) $ to be surjective, and
the terms in the lower row are $R$-torsion free with the same rank (as is
seen by tensoring by $N$), and so $H_{0}( Tr)$ is injective.
\end{proof}

\begin{lemma}\label{le67} We have
\begin{equation*}
\oplus _{m}H_{0}( H_{m},R[m] [H_{m}] )
^{A_{m}}\cong \oplus _{m}R[m] [C_{H_{m}}] ^{A_{m}}\cong R[
C_{G}] .
\end{equation*}
\end{lemma}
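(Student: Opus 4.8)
The plan is to establish the two claimed isomorphisms separately, as the second is essentially a bookkeeping identity while the first requires a small computation with coinvariants. First I would handle $H_0(H_m, R[m][H_m]) \cong R[m][C_{H_m}]$. Recall that $H_m$ acts on its own group ring $R[m][H_m]$ by conjugation (with $A_m$ acting on $R[m]$ in a way that commutes with $H_m$, by the definition of $H_m$ as the kernel of $P \to \mathrm{Aut}\langle\zeta_m\rangle$). The coinvariants $H_0(H_m, R[m][H_m])$ of the conjugation action are spanned over $R[m]$ by the orbits of $H_m$ acting by conjugation on the elements of $H_m$, i.e. by the conjugacy classes $C_{H_m}$; this identifies $H_0(H_m, R[m][H_m])$ with the free $R[m]$-module $R[m][C_{H_m}]$ on the set $C_{H_m}$ of conjugacy classes of $H_m$. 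Since $R[m]$ is $A_m$-free (as noted repeatedly in Section \ref{s5} and \S\ref{ss6a}) and the $A_m$-action on $R[m][C_{H_m}]$ is induced from the diagonal action on $R[m]$ and on $C_{H_m}$, taking $A_m$-invariants commutes with passing to coinvariants of $H_m$ here, giving $H_0(H_m, R[m][H_m])^{A_m} \cong R[m][C_{H_m}]^{A_m}$, which is what appears on the left.

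Second, for the identification $\oplus_m R[m][C_{H_m}]^{A_m} \cong R[C_G]$, I would use the description of $G_r$ and its $G$-conjugacy classes via the decomposition $R[G_r] = R[C] = \oplus_m R[m]$ from \S\ref{ss6a}, together with the structure of conjugacy classes in a $\Q_p$-$p$-elementary group $G = C \rtimes P$. The $p$-regular elements of $G$ are exactly the elements of $C$ (up to conjugacy), and a $G$-conjugacy class of a $p$-regular element $g_i$ with centralizer $G_i$ decomposes, upon restricting to $H_m = \ker(\mathrm{conj}: P \to \mathrm{Aut}\langle\zeta_m\rangle)$, into $A_m$-orbits of $H_m$-conjugacy classes; more precisely $R[m][C_{H_m}]^{A_m}$ records precisely the $A_m$-stable data, and summing over $m \mid |C|$ reassembles the full set $C_G$ of $G$-conjugacy classes. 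I would make this precise by matching up, for each $m$, the summand $R[m][C_{H_m}]^{A_m}$ with the span of those conjugacy classes $\gamma \in C_G$ whose $p$-regular part lies in the $m$-th isotypic component of $R[C]$; the free-module ranks agree after tensoring with $N$ (as in the proof of Lemma \ref{le66}), and the natural maps — the composite $\mathrm{Tr}\circ\mathrm{res}$ together with the projection to $C_{H_m}$ — are compatible with the inclusions, so the resulting $R$-linear map is an isomorphism.

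The main obstacle is keeping the bookkeeping of the two compatible decompositions straight: one must check that the natural map built from $H_0(\mathrm{Tr})$ followed by $R[m][H_m] \to R[m][C_{H_m}]$ really does carry $\oplus_m H_0(H_m, R[m][H_m])^{A_m}$ isomorphically onto $R[C_G]$, rather than merely abstractly comparing ranks. I expect this to come down to verifying that the $A_m$-invariants $R[m][C_{H_m}]^{A_m}$ are spanned over $R[m]^{A_m}$ by the traces $(\mathrm{Tr}\circ\mathrm{res})(\zeta h)$ for $h \in H_m$ and $\zeta$ a primitive $m$-th root of unity — exactly the spanning statement already established just before Lemma \ref{le66} — and that these traces are in bijection with the $G$-conjugacy classes in $G_r$ lying over the $m$-th component. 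Since all the modules in sight are $R$-torsion-free of equal rank and the maps are $R$-linear surjections (or become surjections after the above spanning remark), injectivity is automatic, mirroring the argument in the proof of Lemma \ref{le66}.
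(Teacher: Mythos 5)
Your first isomorphism is fine: since $H_m$ acts trivially on $R[m]$ (by its very definition as the kernel of the conjugation map), the conjugation coinvariants $H_0(H_m,R[m][H_m])$ are canonically the free $R[m]$-module on the conjugacy classes $C_{H_m}$, and this identification is $A_m$-equivariant, so taking $A_m$-invariants gives the first equality; the remark about invariants commuting with coinvariants is not needed. For the second isomorphism your operative argument --- build the $R$-linear map from $\mathrm{Tr}\circ\mathrm{res}$ followed by projection onto $C_{H_m}$, get surjectivity from the spanning statement established just before Lemma \ref{le66}, and get injectivity because both sides are $R$-torsion free of equal rank after tensoring with $N$ --- is sound, and it amounts to composing the canonical identification $R[C_G]=H_0(G,R[G])=\oplus_m H_0(G,R[m]\circ P)$ with the isomorphism $H_0(Tr)$ of Lemma \ref{le66} for all $m$ simultaneously. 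This is organized differently from the paper, which argues by induction on the group order: it chooses $c\in C$ of prime order $l\neq p$, splits $R[C_G]=R[C_{\overline G}]\oplus(1-c)R[C_G]$ with $\overline G=G/\langle c\rangle$, identifies $(1-c)R[G]$ with the sum of the $R[m]\circ P$ over the $m$ divisible by $l^r$, applies the trace isomorphism of Lemma \ref{le66} to those components, and treats the remaining components by the inductive hypothesis for $\overline G$. Your global route is shorter but leans entirely on the rank count already asserted in Lemma \ref{le66}; the paper's induction buys an explicit bookkeeping of which piece of $R[C_G]$ corresponds to which $m$.

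One caution: the heuristic by which you ``match'' the summand $R[m][C_{H_m}]^{A_m}$ with ``the span of those conjugacy classes of $G$ whose $p$-regular part lies in the $m$-th isotypic component'' is not well defined and should be discarded. The decomposition $R[C_G]=\oplus_m H_0(G,R[m]\circ P)$ is induced by the idempotents of $\Z_p[C]$, which do not respect the $R$-basis of conjugacy classes (a single class $\phi(g)$ has nonzero components in several summands); moreover $R[C_G]$ is the free module on all conjugacy classes of $G$, not only the $p$-regular ones, so the appeals to $G_r$ are off target here. This misstep is only motivational: once you replace the matching picture by the canonical module-theoretic identification above, the surjectivity-plus-rank argument you describe does deliver the lemma.
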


\begin{proof} The result is clear if $C=\{1\}$, so suppose that $C$ is non-trivial
and choose a prime number $l$, that divides $|C| $ and
write $|C| =l^{r}q$ with $q$ coprime to $l$. Let $c\in C$
denote an element in $C$ of order $l$ and set $\overline{G}=G/C$.  We can
then write $R[C_{G}] =R[C_{\overline{G}}] \oplus
( 1-c) R[C_{G}] $ where $( 1-c) R[
C_{G}] $ denotes $\ker ( R[C_{G}] \rightarrow R[
C_{\overline{G}}] )$. We then have the decompositions
\begin{equation*}
( 1-c) R[C] =\oplus _{m}R[m] ,\quad (
1-c) R[G] =\oplus _{m}R[m] \circ P
\end{equation*}%
where$\ m$ runs through all the divisors of $|C|$ which
are divisible by $l^{r}$.
\begin{eqnarray*}
H_{0}( G,( 1-c) R[G] ) &=&H_{0}(
G,\oplus _{m}R[m] \circ P) =\oplus _{m}H_{0}( G,R%
[m] \circ P) \\
&=&\oplus _{m}H_{0}( P,R[m] \circ P)  \overset{H_0(Tr)}{\cong}\oplus _{m}H_{0}( H_{m},R[m]
[H_{m}] ) ^{A_{m}}.
\end{eqnarray*}
The result then follows by induction on the group order (which gives the result for $\overline{G}$).
\end{proof}
 \smallskip

To complete the proof of Theorem \ref{thm65} for arbitrary $G$ we recall from
Theorem \ref{thm49} that we can write $m\cdot 1_{G}=\sum\nolimits_{H}n_{H}\mathrm{Ind}%
_{H}^{G}( \theta _{H}) $ where $n_{H}$ are integers, the $H$ are $\Q_{p}$-$p$-elementary subgroups of $G$ and the $\theta _{H}$ are $\Q_{p}$-characters of $H$ and $m$ is not divisible by $p$.

Let $\mathrm{Det}( x) \in  \mathrm{Det}( R[G]
^{\times })$. Using Lemma \ref{le64} we have
\begin{eqnarray*}
m\cdot \upsilon_{G}^{\prime }( \mathrm{Det}( x) )
&=&\sum\nolimits_{H}n_{H}\cdot \upsilon_{G}^{\prime }( \mathrm{Ind}%
_{H}^{G}( ( \theta _{H}) \mathrm{Det}( x) )
) \\
&=&\sum\nolimits_{H}n_{H}\cdot \upsilon_{G}^{\prime }( \mathrm{Ind}%
_{H}^{G}( \theta _{H}( \text{\textrm{Res}}_{H}^{G}( \mathrm{%
Det}( x) ) ) ) ) \\
&=&\sum\nolimits_{H}n_{H}\cdot \mathrm{Ind}_{H}^{G}( \upsilon_{H}^{\prime
}( \theta _{H}( \text{\textrm{Res}}_{H}^{G}( \mathrm{Det}%
( x) ) ) ) )
\end{eqnarray*}%
and so by the result for $\Q_{p}$-$p$-elementary groups we know that
$m\cdot \upsilon_{G}^{\prime }( \mathrm{Det}( x) ) \in pR[
C_{G}]$. The result is then shown since $m$ acts as an automorphism
on $R[C_{G}]$.
\end{proof}

\subsection{Oliver's map $\Theta _{R[G]} $ for arbitrary finite groups}\label{6e}

In this section, among other results, we provide a more direct construction of Oliver's isomorphism
$\Theta_{R[G]}$ of Theorem \ref{thm7}.
(Recall that $R$  satisfies the additional
hypotheses stated in the beginning of \S \ref{ss6c}.)

For a $\Z[G] $-module $M$ we recall the bar
resolution%
\begin{equation*}
\cdots \rightarrow \Z[G] \otimes _{\Z}%
\Z[G] \otimes _{\Z}M\overset{\partial
_{2}}{\rightarrow }\Z[G] \otimes _{\Z}M%
\overset{\partial _{1}}{\rightarrow }M
\end{equation*}%
which computes the homology groups $H_{\ast }( G,M)$. (See for
instance 13.2 in [HS].) Explicitly for $m\in M$, and $g$, $h\in G$ we have
\begin{eqnarray*}
\partial _{1}( g\otimes m) &=&( 1-g) m \\
\partial _{2}( g\otimes h\otimes m) &=&h\otimes m-gh\otimes
m+g\otimes hm.
\end{eqnarray*}%
In what follows an element of $H_{1}( G,M) $ will be represented
as an element of the quotient $\ker \partial _{1}/{\rm Im}\, \partial _{2}$
without further reference.

\medskip

Let $G_{r}$ again denote the subset of $p$-regular elements of $G$; i.e. the
subset of elements of $G$ whose order is coprime to $p.$ We shall be
particularly interested in the $R[G] $-module $R[G_{r}]$ where $G$ acts on the left on $G_{r}$ by conjugation; that is to
say $^{g}h=ghg^{-1}$ for $g\in G,\ h\in G_{r}.$

We shall view $R[G_{r}] $ as a $\Psi $-module (as in the
Introduction) by the rule
\begin{equation*}
\Psi ( \sum\nolimits_{g}s_{g}g) =\sum\nolimits_{g}F(
s_{g}) g^{p}
\end{equation*}%
for $s_{g}\in R$, $g\in G_{r} $. Note that the actions of $G$ and $\Psi $
commute, so that we may view $R[G_{r}] $ as a $G\times \Psi $%
-module. We write $H_{1}( G,R[G_{r}] ) ^{\Psi }$ and $%
H_{1}( G,R[G_{r}] ) _{\Psi }$ for the groups of $\Psi
$-invariants and $\Psi $-covariants of $H_{1}( G,R[G_{r}]
) $ in the usual way.

\subsubsection{The map $\omega _{G}$}\label{sss6e1}

See Theorem 12.9 in [O5].   We also view $R[G] $ as a left $G$-module
by conjugation; so that $^{g}( \sum\nolimits_{h}s_{h}h)
=\sum\nolimits_{h}s_{h}.ghg^{-1}$. We may then identify $H_{0}( G,R%
[G] ) $ with $R[C_{G}]$. Recall that we have
previously defined the $R$-linear map $\phi :R[G] \rightarrow R%
[C_{G}] $ by mapping each group element to its conjugacy class.
Recall also that each group element $g\in G$ may be written uniquely as $%
g=g_{r}g_{p}$ where $g_{r}$ has order coprime to $p$, $g_{p}$ has $p$-power
order, and $g_{r}$ and $g_{p}$ commute.

\begin{proposition}\label{pro68}
The map
\begin{equation*}
\omega_G ( \sum\nolimits_{g}s_{g}g) =\sum\nolimits_{g}g\otimes
s_{g}g_{r}
\end{equation*}%
induces a homomorphism
\begin{equation*}
\omega_G :H_{0}( G,R[G] ) \rightarrow H_{1}( G,R%
[G_{r}] ) .
\end{equation*}
\end{proposition}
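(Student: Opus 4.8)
The plan is to verify directly that the $R$-linear assignment $\sum_g s_g g\mapsto\sum_g g\otimes s_g g_r$ on $R[G]$ takes values in $\ker\partial_1\subset\Z[G]\otimes_\Z R[G_r]$ and annihilates the subgroup of $R[G]$ generated by the elements $m-hmh^{-1}$ (with $m\in R[G]$, $h\in G$), so that it factors through the coinvariants $H_0(G,R[G])\cong R[C_G]$ and lands in $H_1(G,R[G_r])=\ker\partial_1/\mathrm{Im}\,\partial_2$. Everything rests on two elementary observations about the decomposition $g=g_rg_p$, which I would record first: since $g_r$ and $g_p$ commute, $g$ commutes with $g_r$; and for $h\in G$ the $p$-regular part of $hgh^{-1}$ is $hg_rh^{-1}$, by uniqueness of the decomposition under conjugation.

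For the first assertion, applying $\partial_1(g\otimes m)=(1-g)m$ to a typical summand gives $\partial_1(g\otimes g_r)=g_r-gg_rg^{-1}=0$; hence $\partial_1\circ\omega_G=0$ on $R[G]$, and $\omega_G$ does land in $\ker\partial_1$. For the factorization, recall that $H_0(G,R[G])$ is the quotient of $R[G]$ by the relations $g\sim hgh^{-1}$, so by $R$-linearity it suffices to see that $\omega_G(g-hgh^{-1})\in\mathrm{Im}\,\partial_2$. Using $\partial_2(a\otimes b\otimes m)=b\otimes m-ab\otimes m+a\otimes bmb^{-1}$ together with $gg_rg^{-1}=g_r$ and $(hgh^{-1})_r=hg_rh^{-1}$, a short computation gives
\[
\partial_2\bigl(hgh^{-1}\otimes h\otimes g_r-h\otimes g\otimes g_r\bigr)=hgh^{-1}\otimes hg_rh^{-1}-g\otimes g_r=\omega_G(hgh^{-1})-\omega_G(g).
\]
Hence $\omega_G(g-hgh^{-1})\in\mathrm{Im}\,\partial_2$, so the composite of $\omega_G$ with the projection $\ker\partial_1\twoheadrightarrow H_1(G,R[G_r])$ kills every defining relation of the coinvariants and therefore descends to a homomorphism $\omega_G\colon H_0(G,R[G])\to H_1(G,R[G_r])$; $R$-linearity is immediate from the formula.

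I do not expect a genuine obstacle. The one point that needs a little care is picking the correct pair of bar $2$-chains that witnesses the boundary relation in the factorization step, and checking that the sign convention for $\partial_2$ matches the one fixed in the excerpt; the rest is routine bookkeeping with the commuting factorization $g=g_rg_p$.
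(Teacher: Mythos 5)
Your proof is correct and follows essentially the same route as the paper: both first check that $\partial_1(g\otimes g_r)=g_r-gg_rg^{-1}=0$ because $g$ and $g_r$ commute, and then verify that the conjugation relations defining the coinvariants $H_0(G,R[G])$ die in $H_1(G,R[G_r])$. The only difference is that where the paper invokes the triviality of the conjugation action on group homology (citing [B], Ch.\ III, Prop.\ 8.3), you exhibit the explicit bar $2$-chain $hgh^{-1}\otimes h\otimes g_r-h\otimes g\otimes g_r$ whose boundary is $\omega_G(hgh^{-1})-\omega_G(g)$ (using $(hgh^{-1})_r=hg_rh^{-1}$), which is a correct computation that simply makes the cited fact explicit in this special case.
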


\begin{proof} For simplicity, write $\omega$ instead of $\omega_G$.
 Note that $\partial _{1}( \omega ( g) )
=\partial _{1}( g\otimes g_{r}) =g_{r}-gg_{r}g^{-1}=0,$ since $g$
and $g_{r}$ commute. $\ $To see that we get a well-defined homomorphism
(that is to say which is independent of choices), we note that
\begin{equation*}
\omega ( ^{g}h) =\ ^{g}h\otimes {}^{g}h_{r}=\ ^{g}\omega (
h)
\end{equation*}
and $^{g}\omega ( h) $ and $\omega ( h) $ are
homologous  since the functorial action of $G$ on homology groups is
trivial (see for instance Ch. III Proposition 8.3 in [B]).
\end{proof}

\begin{remark} {\rm For future reference note that if $G$ is $p$-group, then
$G_{r}=\{1\}$, $H_{1} ( G,R [ G_{r} ]  ) =G^{\mathrm{ab}}\otimes R$, and $\omega _{G}$ is the $R$-linear map induced by
$G\rightarrow G^{\mathrm{ab}}\otimes 1$.}
\end{remark}

\subsubsection{The map $ \xi _{G}$}\label{sss6e2}

\begin{proposition}
For $u=\sum\nolimits_{g\in G}{}s_{g}g\in \GL_{n}( R[G]
) ,$ with $s_{g}\in {\rm M}_{n}( R)$, we write $u^{-1}=\sum\nolimits_{h\in G}{}t_{h}h$ and set
\begin{equation*}
{\xi'_G} ( u) =\sum\nolimits_{g,h}{}g\otimes \mathrm{Tr}(
t_{h}s_{g}) ( hg) _{r}\in R[G_{r}] .
\end{equation*}%
This induces a homomorphism
$$
\xi_G:\Kr_{1}( R[G] )
\rightarrow H_{1}( G,R[G_{r}] )
$$
 and the element
\begin{equation*}
{ \xi_{G}} ( u) =\sum\nolimits_{g,h}{}( g-1)
\otimes \mathrm{Tr}( t_{h}s_{g}) ( hg) _{r}\in R[
G_{r}]
\end{equation*}
lies in the same homology class as $\xi'_G ( u) $ in $H_{1}( G,R%
[G_{r}] ) $.
\end{proposition}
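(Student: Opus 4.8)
The plan is to prove the three assertions in turn: that $\xi'_G(u)$ is a $1$-cycle, so that its class in $H_1(G,R[G_r])$ is defined; that $u\mapsto[\xi'_G(u)]$ factors through $\Kr_1(R[G])$ and is a homomorphism; and that the $(g-1)$-formula computes the same class. First I would verify $\xi'_G(u)\in\ker\partial_1$. Write $u=\sum_g s_g g$ and $u^{-1}=\sum_h t_h h$ with $s_g,t_h\in{\rm M}_n(R)$. Comparing coefficients in $uu^{-1}=1$ and $u^{-1}u=1$ and taking matrix traces (using $\mathrm{Tr}(s_gt_h)=\mathrm{Tr}(t_hs_g)$) gives, for every $k\in G$,
\[
\sum_{gh=k}\mathrm{Tr}(t_hs_g)\ =\ \sum_{hg=k}\mathrm{Tr}(t_hs_g)\ =\ \begin{cases} n,& k=1,\\ 0,& k\neq 1.\end{cases}
\]
Since $G$ acts on $R[G_r]$ by conjugation and $(\cdot)_r$ commutes with conjugation, one has ${}^{g}(hg)_r=(g(hg)g^{-1})_r=(gh)_r$, whence
\[
\partial_1\bigl(\xi'_G(u)\bigr)\ =\ \sum_{g,h}\mathrm{Tr}(t_hs_g)\,\bigl((hg)_r-(gh)_r\bigr)\ =\ n\cdot(1)_r-n\cdot(1)_r\ =\ 0
\]
by the two identities; the same computation, together with $\partial_1(1\otimes m)=0$, gives $\partial_1(\xi_G(u))=0$.

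The last assertion is then immediate: since $\partial_2(1\otimes 1\otimes m)=1\otimes m$ for every $m\in R[G_r]$,
\[
\xi'_G(u)-\xi_G(u)\ =\ \sum_{g,h}1\otimes \mathrm{Tr}(t_hs_g)(hg)_r\ =\ \partial_2\Bigl(\,\sum_{g,h}1\otimes 1\otimes \mathrm{Tr}(t_hs_g)(hg)_r\Bigr)\ \in\ \mathrm{Im}\,\partial_2,
\]
so $\xi_G(u)$ and $\xi'_G(u)$ represent the same class in $H_1(G,R[G_r])$.

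The substantive point is that $u\mapsto[\xi'_G(u)]$ is a homomorphism $\GL_n(R[G])\to H_1(G,R[G_r])$ compatible with the stabilisations $\GL_n\hookrightarrow\GL_{n+1}$; granting this, since $H_1(G,R[G_r])$ is abelian and $\Kr_1(R[G])=\GL(R[G])^{\mathrm{ab}}$, one obtains the homomorphism $\xi_G$ on $\Kr_1$ at once. Conceptually, $\xi'_G$ is the Dennis trace followed by a coefficient projection: via ${\rm M}_n(R[G])={\rm M}_n(R)[G]$ and the generalised matrix trace, the Dennis trace of $[u]$, represented by $u^{-1}\otimes u$, has image $\sum_{g,h}\mathrm{Tr}(t_hs_g)\,h\otimes g$ in $HH_1(R[G])$; under the standard identification $HH_1(R[G])\cong H_1(G,R[G]^{\mathrm{conj}})\cong\bigoplus_{[x]}H_1(Z_G(x),R)$ and the $G$-equivariant coefficient map $R[G]^{\mathrm{conj}}\to R[G_r]$, $k\mapsto k_r$ (which keeps exactly the summands indexed by $p$-regular conjugacy classes), this maps to $[\xi'_G(u)]$, and the homomorphism and stabilisation properties are inherited from the corresponding properties of the Dennis trace. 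If one prefers a self-contained argument one instead checks by a direct chain-level computation that, for $u,v\in\GL_n(R[G])$, the difference $\xi'_G(uv)-\xi'_G(u)-\xi'_G(v)$ equals $\partial_2$ of an explicit $2$-chain built from the matrix entries of $u,v,u^{-1},v^{-1}$; this is the chain-level witness of the additivity of the Dennis trace, specialised to $R[G]$. I expect this index bookkeeping, and the care needed with the conjugation action when commuting factors past one another, to be the main obstacle; the rest is formal.
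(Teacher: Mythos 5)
Your verification that $\xi'_G(u)$ is a $1$-cycle (via the coefficient identities $\sum_{hg=k}\mathrm{Tr}(t_hs_g)=\sum_{gh=k}\mathrm{Tr}(t_hs_g)=n\delta_{k,1}$ and the $G$-equivariance of $x\mapsto x_r$) and your comparison of $\xi_G$ with $\xi'_G$ (via $\partial_2(1\otimes 1\otimes m)=1\otimes m$) are correct and coincide with the paper's argument. Where you genuinely diverge is on the heart of the statement, additivity: the paper proves $\xi'_G(uv)\equiv\xi'_G(u)+\xi'_G(v)\ (\mathrm{mod}\ \mathrm{Im}\,\partial_2)$ by a short self-contained chain computation, expanding $\xi'_G(uv)=\sum ga\otimes\mathrm{Tr}(q_bt_hs_gp_a)(bhga)_r$, applying the relation $ga\otimes x\equiv a\otimes x+g\otimes axa^{-1}$, and collapsing the inner sums with the same coefficient identities ($\sum_{hg=y}t_hs_g=\delta_{y,1}$, etc.); then it factors through $\Kr_1$ because the target is abelian. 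You instead invoke the Dennis trace: its additivity on $\Kr_1$, invariance under the generalized matrix trace, and the identification $HH_1(R[G])\cong H_1(G,R[G]^{\mathrm{conj}})$, followed by the $G$-equivariant coefficient map $k\mapsto k_r$. That route is legitimate and conceptually illuminating (it also gives stabilization for free), but it buys this at the cost of citing external machinery whose normalization you would still need to unwind to see that the composite is literally $u\mapsto\sum_{g,h}\mathrm{Tr}(t_hs_g)\,g\otimes(hg)_r$ — which is essentially the same bookkeeping you defer in your ``self-contained'' alternative. Be aware that as written your direct-computation fallback is only a promissory note; if you intend it as the actual proof you should carry it out (it is about five lines, exactly as in the paper), whereas the paper's computation needs nothing beyond the bar resolution and the two trace identities you already established.
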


\begin{proof} (See Theorem 12.9.i in [O5].) We first show that $\xi'=\xi'_G$ is a
homomorphism from $\GL_{n}( R[G] ) $ into $H_{1}(
G,R[G_{r}] )$. To this end we first note that
\begin{equation*}
1=\sum\nolimits_{g,h}{}t_{h}s_{g}hg=\sum\nolimits_{g,h}{}s_{g}t_{h}gh
\end{equation*}%
and so
\begin{equation*}
n=\sum\nolimits_{g,h}{}{\rm Tr}( t_{h}s_{g})
hg=\sum\nolimits_{g,h}{}{\rm Tr}( s_{g}t_{h}) gh
\end{equation*}
since for given $x\in G$, $\sum\nolimits_{hg=x}{}\mathrm{Tr}
( t_{h}s_{g})=0$ unless $x=1$, in which case of course we have $h=g^{-1}$.
We then see that
\begin{equation*}
\partial _{1}\circ \xi'( u) =\sum\nolimits_{g,h}{}\mathrm{Tr}%
( t_{h}s_{g}) ( hg) _{r}-\mathrm{Tr}(
t_{h}s_{g}) ( ghgg^{-1}) _{r}=0\text{ .}
\end{equation*}
To see that $\xi' $ is a homomorphism, consider $v=\sum\nolimits_{a\in
G}{}p_{a}a\in \GL_{n}( R[G] ) $ and write $v^{-1}=\sum\nolimits_{b\in G}{}q_{b}b$. We then see that
\begin{equation*}
\xi' ( uv) =\sum ga\otimes \mathrm{Tr}(
q_{b}t_{h}s_{g}p_{a}) ( bhga) _{r}.
\end{equation*}%
Using the fact that $gh\otimes x-h\otimes x-g\otimes hxh^{-1}$ lies in $%
\mbox{Im}\partial _{2},\ $we get the congruence modulo ${\rm Im}\,\partial
_{2}$
\begin{eqnarray*}
\xi '( uv) &\equiv &\sum_{a,b}a\otimes \sum_{h,g}\mathrm{Tr}%
( q_{b}t_{h}s_{g}p_{a}) ( bhga) _{r}+ \\
&&\sum_{h,g}g\otimes \sum_{a,b}\mathrm{Tr}(
q_{b}t_{h}s_{g}p_{a}) ( abhg) _{r}
\end{eqnarray*}%
and using the equalities
\begin{eqnarray*}
\sum_{h,g}\mathrm{Tr}( q_{b}t_{h}s_{g}p_{a}) ( bhga)
_{r} &=&\mathrm{Tr}( q_{b}p_{a}) ( ba) _{r} \\
\sum_{a,b}\mathrm{Tr}( p_{a}q_{b}t_{h}s_{g}) ( abhg)
_{r} &=&\mathrm{Tr}( s_{h}r_{g}) ( hg) _{r}
\end{eqnarray*}
we find $\xi' ( uv) =\xi'( u) +\xi'(v)$ in
$H_{1}( G,R[G_{r}] )$. Next observe that, since $\xi' $ is homomorphism into an abelian group, it factors through $\Kr_{1}( R [
G] ) $.

To conclude we note that setting $h=1$ in $gh\otimes x=h\otimes x+g\otimes
hxh^{-1}$ we get that $\xi ( u) -\xi ^{\prime }(u)$ is in ${\rm Im}\,\partial _{2}$.
\end{proof}
\medskip

The following result shows how the above map $\xi_G $ relates to the $p$-group
logarithmic differential map in 3.3 of [CPT1].

\begin{lemma}\label{le70}
  Suppose that $ G$ is a $p$-group, so that $G_{r}=\{1\}$. Then, under the
identification
\begin{equation*}
\Omega _{R[G] /R}^{1}=G^{\rm ab}\otimes _{\Z}R=\frac{I_{G}}{%
I_{G}^{2}}\otimes _{\Z}R=H_{1}( G,R) ,
\end{equation*}
we have the equality $\xi_G ( u) =d\log ( u) $ for $u\in
R[G] ^{\times }$.
Let $\Psi $ denote the $F$-semi-linear map on $\Omega _{R [ G ]
/R}^{1}$ (so that for $x\in I_{G}/I_{G}^{2},$ $r\in R$, we have $\Psi
 ( r dx ) =F ( r ) \Psi ( dx ) ) $ and which has
the property that $p\Psi \circ d=d\circ \Psi $ (see Proposition 3.18 in
[CPT1]). Then we have the equality  for $u\in R[ G] ^{\times }$
\begin{equation*}
 ( 1-\Psi  ) \xi_{G}(u)  =d \left( p^{-1}
\mathcal{L}_{G}(u)\right) =d( \upsilon _{G}^{\prime
} ( \mathrm{Det} ( u )  )  ) =\omega _{G}\circ
\upsilon _{G}^{\prime }(\mathrm{Det}(u)) .
\end{equation*}

\end{lemma}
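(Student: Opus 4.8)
The plan is to prove the identity $\xi_G=d\log$ on $R[G]^\times$ first --- this being the genuinely new input beyond \S 3.3 of [CPT1] --- and then to derive the displayed chain of equalities from it by a formal manipulation using the defining formula \eqref{eq3.10} for $\mathcal{L}_G$ and the relation $p\Psi\circ d=d\circ\Psi$.

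For $\xi_G=d\log$: both $\xi_G$ and $d\log$ are continuous homomorphisms from $R[G]^\times$ to $\Omega^1_{R[G]/R}=I_G/I_G^2\otimes_{\Z}R=G^{\rm ab}\otimes R$ which factor through $\Kr_1(R[G])$ ($\xi_G$ by the preceding proposition, $d\log$ by its construction in [CPT1, \S 3.3]), which are natural in $G$, and which vanish on $R^\times$. By naturality with respect to the surjection $R[G]\to R[G^{\rm ab}]$ one reduces to $G$ abelian; alternatively one compares the two maps directly on the topological generators $G$, $R^\times$ and a neighbourhood of $1$. Since $G$ is a $p$-group, $G_r=\{1\}$, so $(hg)_r=1$ for all $g,h$ and, writing $u=\sum_g s_g g\in\GL_n(R[G])$ and $u^{-1}=\sum_h t_h h$, the trace formula of the preceding proposition collapses in $H_1(G,R)=G^{\rm ab}\otimes R$ to
\begin{equation*}
\xi_G(u)=\sum\nolimits_{g}\bar g\otimes\mathrm{Tr}\bigl(\varepsilon(u)^{-1}s_g\bigr),
\end{equation*}
$\varepsilon$ denoting the augmentation. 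For $u=g\in G$ this equals $\bar g\otimes 1=d\log(g)$; for $u\in R^\times$ it equals $0=d\log(u)$; and for $u=1+x\in 1+I_G$ with $x$ small enough that $\log(1+x)\in{\rm M}_n(I_G)$, one has $\varepsilon(u)=1$ and $\log(u)\equiv x\mod {\rm M}_n(I_G^2)$, so that $d\log(u)$ --- the image of $\log(u)$ under ${\rm M}_n(I_G)\to I_G/I_G^2\otimes R$ --- equals $\sum_g\bar g\otimes\mathrm{Tr}(s_g)=\xi_G(u)$. This is exactly the argument of [O5, Thm.~12.9(ii)], which goes through verbatim with $\Z_p$ replaced by $R$.

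For the chain of equalities: the first is obtained by applying $d$ to the identity $p^{-1}\mathcal{L}_G(u)=\upsilon_G'(\mathrm{Det}(u))$ in $R[C_G]$, which is the comparison of the two formulas for the group logarithm carried out in Lemma \ref{le20}, combined with the definition of $\upsilon_G$ in \S\ref{ss6d} and the extension of $\mathcal{L}$ to $R^\times$ in \S\ref{ss6b}. The last equality is the identification, on the subgroup $\phi(I_G)+R\cdot[1]\subset R[C_G]$ in which $\upsilon_G'(\mathrm{Det}(u))$ lies by Theorem \ref{thm65} and \S\ref{ss6b}, of the map ``$d$'' of [CPT1, Prop.\ 3.18] with the map $\omega_G$ of Proposition \ref{pro68}: for a $p$-group $\omega_G$ is the $R$-linear map sending a conjugacy class $[g]$ to $\bar g\otimes 1$, which is precisely the effect of $d$. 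Finally, for the central equality: by multiplicativity and $R[G]^\times=R^\times\cdot(1+I_G)$ we may assume $u\in 1+I_G$ (the case $u\in R^\times$ being trivial on both sides); there $d\log$ coincides with $d\circ\phi\circ\log$, and using \eqref{eq3.10}, that $\phi$ intertwines $\Psi$ with $\overline{\Psi}$, and $p\Psi\circ d=d\circ\Psi$,
\begin{equation*}
d\bigl(p^{-1}\mathcal{L}_G(u)\bigr)=d\bigl(\phi\circ\log u\bigr)-p^{-1}\,d\bigl(\overline{\Psi}(\phi\circ\log u)\bigr)=d\log u-\Psi\,d\log u=(1-\Psi)\,\xi_G(u),
\end{equation*}
the last step by $\xi_G=d\log$.

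The main obstacle will be the first assertion: pinning down the precise form of $d\log$ and of the isomorphism $\Omega^1_{R[G]/R}\cong G^{\rm ab}\otimes R$ from [CPT1, \S 3.3] well enough to see that the trace formula reproduces it, and justifying the reduction step --- whether to $G$ abelian via naturality, or to the topological generators of $R[G]^\times$, which is the standard density argument and depends on $1+I_G$ being topologically generated, modulo commutators, by the images of group elements. Once $\xi_G=d\log$ is established, what remains is a purely formal computation, contingent only on correctly matching the normalizations of $\mathcal{L}_G$, $\upsilon_G$, $\omega_G$ and ``$d$'' to the conventions of [CPT1].
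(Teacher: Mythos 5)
Your overall plan matches the paper's (collapse the trace formula using $G_r=\{1\}$, then manipulate with \eqref{eq3.10} and $p\Psi\circ d=d\circ\Psi$), but the step you yourself flag as the main input --- the identity $\xi_G=d\log$ --- is where your argument has a genuine gap, and it is a gap that cannot be closed along the lines you propose. First, the reduction ``check on topological generators $G$, $R^\times$ and a neighbourhood of $1$'' fails for exactly the coefficient rings this paper is about: in the first graded quotient $(1+I_G)/(1+I_G^2)\cong G^{\rm ab}\otimes R$, the group elements contribute only $G^{\rm ab}\otimes \Z$, $R^\times$ contributes nothing, and any $p$-adic neighbourhood of $1$ contributes only terms in $G^{\rm ab}\otimes pR$ (which die, $G^{\rm ab}$ being $p$-torsion) or in $I_G^2$; so unless $\overline{R}=\mathbb{F}_p$ these elements do not topologically generate $R[G]^\times$, and the reduction to abelian $G$ by naturality does not help, since the same deficit persists there. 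Second, identifying $d\log(u)$ with the image of $\log(u)$ in $I_G/I_G^2\otimes R$ is only correct on a small neighbourhood of $1$: for $u=1+r(g-1)$ with $g$ of order $p$, Lemma \ref{le35} gives $\log(u)\equiv (r-r^p)(g-1)\bmod I_G^2$ (when $\log u$ is integral at all), whereas $\xi_G(u)=du\cdot u^{-1}=\bar g\otimes r$; the discrepancy $\bar g\otimes r^p$ is precisely the $\Psi$-correction appearing in the second display, so no density argument can make ``image of $\log$'' agree with $\xi_G$ globally. The same conflation occurs in your computation of the central equality, where you split $d(\phi\circ\log u)-p^{-1}d(\overline{\Psi}\phi\log u)$ into two terms that are separately ill-defined (or wrong) integrally.

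The paper's proof avoids both issues by a two-line direct computation valid for every $u\in R[G]^\times$ at once: writing $u=\sum_g s_g g$, $u^{-1}=\sum_h t_h h$, one has $du\cdot u^{-1}=\sum_{g,h}(g-1)h\otimes s_gt_h\equiv\sum_{g,h}(g-1)\otimes s_gt_h=\xi_G(u)\bmod I_G^2$, using only $(g-1)h\equiv(g-1)\bmod I_G^2$ --- here $d\log(u)$ means the logarithmic derivative $du\cdot u^{-1}$, not the image of $\log(u)$. For the displayed chain one then keeps the possibly non-integral terms grouped, $(1-\Psi)d\log(u)=d\bigl((1-\Psi/p)\log(u)\bigr)=d\bigl(p^{-1}\mathcal{L}_G(u)\bigr)$, the inner expression being integral by Theorem \ref{thm22}, with $p\Psi\circ d=d\circ\Psi$ supplied by [CPT1, Prop.\ 3.18]; your treatment of the remaining two equalities (via $\mathcal{L}=\nu\circ\mathrm{Det}$ and the identification of $d$ with $\omega_G$ from the Remark after Proposition \ref{pro68}) is fine.
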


\begin{proof} Let $u=\sum\nolimits_{g}s_{g}g$, $u^{-1}=\sum\nolimits_{h}t_{h}h$.
Since $( g-1) h\equiv ( g-1) \mod I_{G}^{2}$,
we obtain the congruence $\mod I_{G}^{2}$:
\begin{equation*}
 {du}\cdot {u}^{-1}=\sum\nolimits_{g,h}( g-1) h\otimes s_{g}t_{h}\equiv
\sum\nolimits_{g,h}( g-1) \otimes s_{g}t_{h}\equiv \xi_{G} (
u)
\end{equation*}
and also
\begin{eqnarray*}
 ( 1-\Psi  ) \xi _{G}(u)=( 1-\Psi ) d\log
\left( u\right) =\ \ \ \ \ \ \ \ \ \\
\ \ \ \ \ \ \ \ \ \ \ =d( ( 1-\Psi /p) \log  ( u )
) =d( p^{-1}\mathcal{L}_{G} ( u )  ) =d(
\upsilon _{G}^{\prime }( \mathrm{Det}( u))) .
\end{eqnarray*}
\end{proof}

For this short paragraph $G$ is still a $p$-group so that $G_r=\{1\}$. As in the final part of \ref{ss6b}
we can decompose the map  $\upsilon _{G}^{\prime }=\upsilon _{\varepsilon
}\times \upsilon _{I}\ $via the decomposition $\mathrm{Det} ( R [ G%
 ] ^{\times } ) =R^{\times }\times \mathrm{Det} (
1+I_{G} ) $ and note that $\upsilon _{\varepsilon }$ coincides with the
map $p^{-1}\mathcal{L}_{R}$ of \ref{ss6b}.  Using the above and noting that $H_{1}(
G,R) =R\otimes G^{\rm ab}$ we see that the exact sequence
\begin{equation*}
\mathrm{Det} ( 1+I_{G} ) \xrightarrow{\upsilon _{G}^{\prime }} \phi  ( I_{G}) \xrightarrow{\omega _{G}}
\frac{R}{( 1-F) R}\otimes G^{\rm ab}\rightarrow 0
\end{equation*}%
of (\ref{eq3.11}) affords the further exact sequence
\begin{equation*}
\mathrm{Det} ( 1+I_{G} ) \xrightarrow{\upsilon _{G}^{\prime }\times \xi
_{G}} \phi ( I_{G}) \oplus H_{1}( G, R)
\xrightarrow{\omega _{G}+ (\Psi -1)} H_{1} ( G,R )
\rightarrow 0.
\end{equation*}%
By adding this to the exact sequence
\begin{equation*}
R^{\times }\xrightarrow{p^{-1}\mathcal{L}_{R}\oplus \theta }
R\oplus  {\Lambda } ( R ) \xrightarrow{T}
{R}\rightarrow 0
\end{equation*}%
of (\ref{newExseq6.3}) we get the further exact sequence
\begin{equation}\label{eqExact1}
\mathrm{Det} ( R [ G ] ^{\times } ) \xrightarrow{s_{G}} R[ C_{G}] \oplus H_{1}( G,R) \oplus
 {\Lambda }( R) \xrightarrow{T_{G}}H_{1}(
G,R) \oplus  {R}\rightarrow 0
\end{equation}
where $s_{G}( \mathrm{Det}( z))=( \upsilon
_{G}^{\prime }\times \xi _{G}\times \theta _{G} ) $ and for $ (
x,y,z ) = (  x_{\epsilon }\oplus x_{I}, y_{I}, z_{\varepsilon
} ) $ we have $T_{G} ( x,y,z ) = ( \omega _{G} (
x_{I}) + ( \Psi -1 ) y  ,$ $x_{\varepsilon }-\upsilon'
_{\varepsilon }  ( z ) )$.

\begin{lemma}\label{le71}
 For an arbitrary finite group $G$ let $J=J_{R[G]}$ denote the Jacobson radical of $R[G]$. Then $\SK_{1}( R[G] )$
is contained in $\Kr^{\prime}_{1}( R[G],J  )$. If $G$ is abelian, then $\SK_{1}( R[G] ) =\{1\}$.
\end{lemma}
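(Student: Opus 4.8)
The plan is to prove the inclusion $\SK_{1}(R[G])\subseteq \Kr_{1}^{\prime}(R[G],J)$ first, and to deduce the abelian case from it.

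By the exactness of (\ref{eq2.5}) one has $\Kr_{1}^{\prime}(R[G],J)=\ker\!\bigl(\Kr_{1}(R[G])\to\Kr_{1}(R[G]/J)\bigr)$, so it suffices to show that the reduction map $\Kr_{1}(R[G])\to\Kr_{1}(R[G]/J)$ annihilates $\SK_{1}(R[G])$. Since $R[G]$ is $p$-adically complete, $pR[G]\subseteq J$, and $J/pR[G]$ is the Jacobson radical of $\overline{R}[G]$ (where $\overline{R}=R/pR$); hence $R[G]/J=\overline{R}[G]/\mathrm{rad}(\overline{R}[G])$, a semisimple $\overline{R}$-algebra which, over the fraction field $\overline{N}$ of $\overline{R}$, becomes a finite product $\prod_{i}\mathrm{M}_{n_{i}}(D_{i})$ of matrix rings over division $\overline{N}$-algebras. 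The key point, for which Hypothesis (1) on $R$ is designed, is that $\SK_{1}(R[G]/J)=\{1\}$: using the structure of modular group algebras one checks that each $\SK_{1}(\mathrm{M}_{n_{i}}(D_{i}))=\SK_{1}(D_{i})$ vanishes — in the cases that occur the $D_{i}$ are commutative fields, or else $\overline{N}$ is a field over which $\SK_{1}$ of any division algebra is trivial — and one then descends from the generic fibre to $\overline{R}$ itself, using that $R[G]/J$ is an $\overline{R}$-order in $\overline{N}[G]/\mathrm{rad}$. Granting this, the reduced norm embeds $\Kr_{1}(R[G]/J)$ into a product of commutative unit groups. Because the entries of $\mathrm{Det}(u)$ for $u\in R[G]^{\times}$ are units in a ring integral over $R$, reduction modulo $p$ gives a commutative square relating $\mathrm{Det}$ to this reduced norm (the compatibility of $\mathrm{Det}$ with the decomposition map). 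Therefore any $x\in\SK_{1}(R[G])=\ker(\mathrm{Det})$ has trivial reduced norm in $\Kr_{1}(R[G]/J)$, hence maps to $1$ in $\Kr_{1}(R[G]/J)$; this is precisely the assertion $x\in\Kr_{1}^{\prime}(R[G],J)$.

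Now suppose $G$ is abelian. Then $G$ is itself an abelian subgroup of $G$, and the corresponding corestriction $H_{2}(G,R[G_{r}])\to H_{2}(G,R[G_{r}])$ is the identity; thus $H_{2}^{\mathrm{ab}}(G,R[G_{r}])=H_{2}(G,R[G_{r}])$, so $\overline{H}_{2}(G,R[G_{r}])=0$ and a fortiori $\overline{H}_{2}(G,R[G_{r}])_{\Psi}=0$. Since $R$ is normal, Theorem \ref{thm7} now gives $\SK_{1}(R[G])\cong\overline{H}_{2}(G,R[G_{r}])_{\Psi}=\{1\}$. (One can also argue directly from the first part: for $G$ abelian $R[G]$ is commutative, so by Lemma \ref{le14}(b) every element of $\Kr_{1}^{\prime}(R[G],J)$ is $\kappa(u)$ for some $u\in 1+J$, and since $\det\circ\kappa=\mathrm{id}$ the map $\mathrm{Det}$ is injective on $\Kr_{1}^{\prime}(R[G],J)$; as $\SK_{1}(R[G])\subseteq\Kr_{1}^{\prime}(R[G],J)$ equals $\ker(\mathrm{Det})$, it must be trivial.)

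The hard part is the middle step, namely the vanishing $\SK_{1}(R[G]/J)=\{1\}$. It requires pinning down the structure of the semisimple algebra $\overline{R}[G]/\mathrm{rad}(\overline{R}[G])$ accurately enough to control its $\SK_{1}$; this is where Hypothesis (1) is genuinely used, and it rests on the structure theory of modular group algebras together with the known triviality of $\SK_{1}$ of division algebras over the fields $\mathrm{Frac}(R/pR)$ that actually arise.
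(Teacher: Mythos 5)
Your reduction of the first assertion to the vanishing of $\SK_1(R[G]/J)$ is the right frame, but the step you yourself flag as ``the hard part'' is exactly where your sketch fails. You propose to verify $\SK_1(R[G]/J)=\{1\}$ by passing to the generic fibre $\overline{N}\otimes_{\overline{R}}(R[G]/J)$, checking that the simple factors there have trivial $\SK_1$, and then ``descending'' to $\overline{R}$ because $R[G]/J$ is an $\overline{R}$-order. That descent is not a valid inference: $\SK_1$ of an order (or of a non-local commutative ring) is precisely the part of $\Kr_1$ that is invisible on the generic fibre --- indeed the whole point of this paper is that $\SK_1(R[G])$ can be nontrivial even though $\SK_1(N^c[G])$ is trivial. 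The correct route, and the one the paper takes, is integral: since $\overline{R}=R/pR$ is a domain and the residue fields in the Wedderburn decomposition $\Z_p[G]/J_{\Z_p[G]}=\prod_i {\rm M}_{n_i}(k_i)$ are separable over $\mathbb{F}_p$, one gets $R[G]/J\cong \prod_i {\rm M}_{n_i}(\overline{R}\otimes_{\mathbb{F}_p}k_i)$, so by Morita invariance the issue is $\SK_1(\overline{R}\otimes_{\Z_p}W_i)$ where $W_i$ is the Witt ring of $k_i$. This is killed by lifting to characteristic zero: Standing Hypothesis (iv) gives $\SK_1(R\otimes_{\Z_p}W_i)=\{1\}$, and since $p$ lies in the Jacobson radical of $R\otimes_{\Z_p}W_i$ (so $\Kr_1(R\otimes W_i)\twoheadrightarrow \Kr_1(\overline{R}\otimes W_i)$) while $1+p(R\otimes W_i)$ injects into $(R\otimes W_i)^\times$, the map $\SK_1(R\otimes W_i)\to\SK_1(\overline{R}\otimes W_i)$ is surjective, whence $\SK_1(\overline{R}\otimes W_i)=\{1\}$. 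Note also that you attribute the needed input to ``Hypothesis (1)'' of Section \ref{ss6c} (finiteness of idempotents of $\overline{R}\otimes\mathbb{F}_p^c$), which plays no role here; the hypothesis actually designed for this lemma is Standing Hypothesis (iv). Without some replacement for the generic-fibre descent, your argument for the inclusion $\SK_1(R[G])\subseteq \Kr_1'(R[G],J)$ has a genuine hole.

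The abelian case is fine. Your parenthetical direct argument is essentially the paper's: by Lemma \ref{le14} (b) every class in $\Kr_1'(R[G],J)$ comes from a unit in $1+J$, and for abelian $G$ the map $\mathrm{Det}$ is injective on $R[G]^\times$ (because $R[G]$ embeds in $\prod_\chi N^c$), so a unit with trivial $\mathrm{Det}$ is $1$; the precise point is injectivity of $\mathrm{Det}$ on units rather than ``$\det\circ\kappa=\mathrm{id}$''. Your first argument, invoking Theorem \ref{thm7} via $\overline{H}_2(G,R[G_r])=0$ for abelian $G$, is not formally circular given the paper's ordering (Theorem \ref{thm7} is proved in \ref{ss6a}, before this lemma, without using it), but it is far heavier than necessary and requires $R$ normal, whereas the direct argument needs only the first assertion of the lemma.
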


\begin{proof}
Consider the Wedderburn decomposition
\begin{equation*}
\Z_{p}[G]/J=\prod\nolimits_{i}{\rm M}_{n_{i}}(
k_{i})
\end{equation*}%
where each $k_{i}\ $is a finite field of characteristic $p$ and let $W_{i}$
denote the ring of Witt vectors of $k_{i}$. By the Standing Hypotheses
we know that $\SK_{1}( R\otimes_{\Z_p} W_{i}) =\{1\}$. We write $%
\overline{R}=R/pR$ which is a domain. Observe that we have the commutative diagram
\begin{equation*}
\begin{array}{ccccccccc}
&  &  &  & \Kr_{1}( R\otimes_{\Z_p} W_{i},( p) ) & \rightarrow
& 1+pR\otimes_{\Z_p} W_{i} &  &  \\
&  &  &  & \downarrow &  & \downarrow &  &  \\
1 & \rightarrow & \SK_{1}( R\otimes_{\Z_p} W_{i}) & \rightarrow &
\Kr_{1}( R\otimes_{\Z_p}  W_{i}) & \overset{\det }{\rightarrow } & (
R\otimes_{\Z_p}  W_{i})^{\times } & \rightarrow & 1 \\
&  & \downarrow &  & \downarrow &  & \downarrow &  &  \\
1 & \rightarrow & \SK_{1}( \overline{R}\otimes W_{i}) &
\rightarrow & \Kr_{1}( \overline{R}\otimes W_{i}) & \overset{\det }%
{\rightarrow } & ( \overline{R}\otimes W_{i}) ^{\times } &
\rightarrow & 1 \\
&  &  &  & \downarrow &  & \downarrow &  &  \\
&  &  &  & 1 &  & 1 &  &
\end{array}
\end{equation*}
Since $1+pR\otimes_{\Z_p}  W_{i}$ injects into $( R\otimes_{\Z_p}  W_{i}) ^{\times
}$ and since $\Kr_{1}( R\otimes_{\Z_p} W_{i}) $ surjects onto $\Kr_{1}(
\overline{R}\otimes_{\Z_p}  W_{i})$, because $p$ is in the Jacobson radical
of $R\otimes_{\Z_p}  W_{i}$, we see that $\SK_{1}( R\otimes W_{i})
\rightarrow \SK_{1}( \overline{R}\otimes_{\Z_p}  W_{i}) $ maps onto and so
$\SK_{1}( \overline{R}\otimes_{\Z_p}    W_{i}) =\{1\}$. This then shows $%
\SK_{1}( R[G] ) $ is contained in ${\rm Im}(
\GL( R[G] ,J) )$, as required.

 Now suppose that $G$ is abelian. By Lemma \ref{le14} (b) we know that if we have $x\in \GL( R[G] ,J) $
with $\mathrm{Det}( x) =1$, then $x$ is elementary. The result then follows since by the above
$\SK_{1}( R[G] ) \subset {\rm Im}(
\GL( R[G] ,J) )$.
\end{proof}

\begin{lemma}\label{le72}
For an arbitrary finite group $G$, $\xi_G $ is trivial on $\SK_{1}( R%
[G] )$. We shall therefore henceforth feel free to view $\xi_G $ as being defined on $\mathrm{Det}( R[G] ^{\times })$.
\end{lemma}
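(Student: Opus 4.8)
The plan is to reduce to the case where $G$ is a $p$-group by means of the induction theory of Section \ref{s4}, and to settle that case directly using Lemma \ref{le70}.

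First I would treat a $p$-group $G$, so that $G_r=\{1\}$ and $H_1(G,R[G_r])=H_1(G,R)=I_G/I_G^2$. Since $\xi_G$ is a homomorphism which is trivial on $\Er(R[G])$ and since $\SK_1(R[G])\subseteq\Kr'_1(R[G],I_G)$, Lemma \ref{le14}(b) shows that every class in $\SK_1(R[G])$ is represented by a unit $y\in 1+I_G$ with $\mathrm{Det}(y)=1$, and that $\xi_G$ of this class equals $\xi_G(y)$. By the computation carried out in the proof of Lemma \ref{le70} (namely $du\cdot u^{-1}\equiv\sum_{g,h}(g-1)s_g t_h\equiv u-1\bmod I_G^2$ for $u=\sum s_g g\in 1+I_G$), $\xi_G(y)$ is the image of $y-1$ in $I_G/I_G^2$. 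Now $\mathrm{Det}(y)=1$ implies in particular that $\chi(y)=1$ for every one-dimensional character $\chi$ of $G$; as $\mathrm{Det}$ is injective on $R[G^{\rm ab}]^\times$, this forces the image $\bar y$ of $y$ in $R[G^{\rm ab}]^\times$ to be $1$. Since the abelianization map $R[G]\to R[G^{\rm ab}]$ induces an isomorphism $I_G/I_G^2\xrightarrow{\sim}I_{G^{\rm ab}}/I_{G^{\rm ab}}^2$ which carries the class of $y-1$ to the class of $\bar y-1=0$, we conclude $\xi_G(y)=0$, proving the lemma for $p$-groups.

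Next I would record that $\xi$ is compatible with induction: for $H\le G$ one has $\xi_G\circ\Ind_H^G=\Ind_H^G\circ\xi_H$, where on the homology side $\Ind_H^G$ is the transfer composed with the inclusion $R[H_r]\hookrightarrow\Res_H^G R[G_r]$ of $H$-modules; and likewise $\xi$ is compatible with the induction maps $i_*$ relating $\Kr_1(B[H])$ and $\Kr_1(B\circ P)$ in Theorem \ref{thm54}. In both cases this is checked from the explicit matrix formula defining $\xi'_G$ together with the description of induction on $\Kr_1$ via $-\otimes_{R[H]}R[G]$, using the relation $gh\otimes x-h\otimes x-g\otimes hxh^{-1}\in\Im\partial_2$ exactly as in the proof that $\xi_G$ is a homomorphism (cf. [O5, §12]). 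In particular, if $\xi_H$ vanishes on $\SK_1(R[H])$ then $\xi_G$ vanishes on the image of $\SK_1(R[H])$ under the corresponding induction map. With this in hand, let $G=C\rtimes P$ be $\Q_p$-$p$-elementary, with $R[G]=\bigoplus_m R[m]\circ P$ and $R[G_r]=R[C]=\bigoplus_m R[m]$, so that $\SK_1(R[G])=\bigoplus_m\SK_1(R[m]\circ P)$ and $\xi_G$ decomposes accordingly. By Corollary \ref{cor57}, $\SK_1(R[m]\circ P)=i_*(\SK_1(R[m][H_m]))$; here $R[m]$ is a finite product of integral domains satisfying the Standing Hypotheses (Section \ref{s5}) and $H_m$ is a $p$-group, so the $p$-group case already proved, applied over each factor, gives that $\xi_{H_m}$ is trivial on $\SK_1(R[m][H_m])$. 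By the compatibility with $i_*$, $\xi_G$ is therefore trivial on $\SK_1(R[m]\circ P)$ for each $m$, hence on $\SK_1(R[G])$. Finally, for an arbitrary finite group $G$, Theorem \ref{thm53}(i) shows that $\SK_1(R[G])$ is generated by the images $\Ind_H^G(\SK_1(R[H]))$ with $H$ running over the $\Q_p$-$p$-elementary subgroups of $G$; since $\xi_H$ kills $\SK_1(R[H])$ for each such $H$ and $\xi$ commutes with $\Ind_H^G$, we conclude that $\xi_G$ is trivial on $\SK_1(R[G])$.

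The hard part will be the naturality of $\xi$ under the induction maps, and especially under the twisted-group-ring induction $i_*$ of Section \ref{s5}: the formula for $\xi$ involves the trace and the inverse of the matrix, so following it through these maps requires the same bookkeeping as in Oliver's [O5, §12], and one must be careful that the map $i_*$ used there is the "induction" direction (consistent with transfer on homology). By contrast, the $p$-group case itself is immediate once Lemma \ref{le70} is available, since there $\xi_G$ factors through the abelianization on the level of $I_G/I_G^2$.
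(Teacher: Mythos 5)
Your proposal is correct and follows essentially the same route as the paper: reduce from arbitrary $G$ to $\Q_p$-$p$-elementary groups by the induction results of Section \ref{s4}, then to $p$-groups via the Section \ref{s5} results (you use Corollary \ref{cor57} where the paper invokes Theorem \ref{thm58}), and settle the $p$-group case by the $d\log$ computation of Lemma \ref{le70} together with the triviality of $\SK_1$ for abelian group rings (your direct argument via injectivity of $\mathrm{Det}$ on $R[G^{\rm ab}]^{\times}$ and $\mathcal{A}(R[G])\subset I_G^2$ is exactly the content of Lemmas \ref{le27}/\ref{le71}). The only difference is one of explicitness: you spell out the compatibility of $\xi$ with the induction maps (including the twisted-group-ring map $i_{\ast}$), which the paper leaves implicit in its appeal to Theorems \ref{thm52} and \ref{thm58} and to Oliver's naturality statements in [O5, Thm. 12.9].
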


\begin{proof} We know from Theorem \ref{thm52} that $\SK_{1}( R[G]
) $ is computable by induction from $\Q_{p}$-$p$-elementary
groups. Thus, by Theorem \ref{thm58}, it will suffice to prove the result
for $p$-groups. However, from Lemma \ref{le70} we know that $\xi ( \SK_{1}(
R[G] ) ) =\xi ( \SK_{1}( R[G^{\rm ab}%
] ) ) $ and Lemma \ref{le71} above shows that $\SK_{1}( R[
G^{\rm ab}] ) =1$.
\end{proof}
\medskip

\begin{lemma}\label{leNew} Suppose we have a surjection of finite groups $\alpha: \widetilde{G}\xrightarrow{ }G$.
Then $\SK_{1}(R[G])
\subset \alpha _{\ast }(\Kr_{1}(R[ \widetilde{G}]))$.
\end{lemma}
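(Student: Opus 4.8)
The plan is to argue by induction on $|\widetilde{G}|$. If $\ker\alpha=\{1\}$ there is nothing to prove, so suppose $\ker\alpha\neq\{1\}$.

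The first step is to dispose of the case in which $\ker\alpha$ contains a nontrivial subgroup $N$, normal in $\widetilde{G}$, that is either a $p$-group or of order prime to $p$. When $N$ is a $p$-group, the kernel of $R[\widetilde{G}]\rightarrow R[\widetilde{G}/N]$ is the two-sided ideal generated by the elements $n-1$ ($n\in N$), i.e. $R[\widetilde{G}]I(R[N])$; since $N$ is normal and $I(\mathbb{F}_{p}[N])$ is nilpotent, a power of this ideal lies in $pR[\widetilde{G}]$ (as in (\ref{eq3.9})), so the ideal is contained in the Jacobson radical of $R[\widetilde{G}]$ and Lemma \ref{le13} shows that $\Kr_{1}(R[\widetilde{G}])\rightarrow\Kr_{1}(R[\widetilde{G}/N])$ is surjective. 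When $|N|$ is prime to $p$ it is a unit of $R$, so $e_{N}=|N|^{-1}\sum_{n\in N}n$ is a central idempotent of $R[\widetilde{G}]$ realising $R[\widetilde{G}/N]=R[\widetilde{G}]e_{N}$ as a ring direct factor, and again $\Kr_{1}(R[\widetilde{G}])\rightarrow\Kr_{1}(R[\widetilde{G}/N])$ is surjective. Since $\widetilde{G}/N$ surjects onto $G$ with kernel $\ker\alpha/N$ and $|\widetilde{G}/N|<|\widetilde{G}|$, the inductive hypothesis combined with this surjectivity finishes the case. In particular this already covers the situation when $\ker\alpha$ itself is a $p$-group or has order prime to $p$.

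It remains to treat the case in which $\ker\alpha$ meets both the largest normal $p$-subgroup $O_{p}(\widetilde{G})$ and the largest normal $p'$-subgroup $O_{p'}(\widetilde{G})$ of $\widetilde{G}$ trivially. Here I would first reduce to the case that $G$ is $\Q_{p}$-$p$-elementary: by Theorem \ref{thm52} the group $\SK_{1}(R[G])$ is generated by the subgroups $\mathrm{Ind}_{H'}^{G}(\SK_{1}(R[H']))$ with $H'$ running over the $\Q_{p}$-$p$-elementary subgroups of $G$; if $G$ is not itself $\Q_{p}$-$p$-elementary these $H'$ may be chosen proper, and then $\alpha^{-1}(H')$ is a proper subgroup of $\widetilde{G}$. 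Applying the inductive hypothesis to the surjection $\alpha\colon\alpha^{-1}(H')\rightarrow H'$, and using the compatibility of $\alpha_{\ast}$ with induction — which rests on the natural identification $R[\widetilde{G}]\otimes_{R[\alpha^{-1}(H')]}R[H']\cong R[G]$ of $R[\widetilde{G}]$-modules — we obtain $\mathrm{Ind}_{H'}^{G}(\SK_{1}(R[H']))\subseteq\alpha_{\ast}(\Kr_{1}(R[\widetilde{G}]))$, hence the claim for $G$.

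This leaves the case $G=P\rtimes C$ with $G$ $\Q_{p}$-$p$-elementary. Here I would invoke the block decomposition $R[G]=\bigoplus_{m}R[m]\circ P$ together with Corollary \ref{cor57} and Theorem \ref{thm58}, which present $\SK_{1}(R[G])$, through the induction maps $i_{m\ast}$, in terms of the groups $\SK_{1}(R[m][H_{m}])$ for the $p$-subgroups $H_{m}\le P$; by Theorem \ref{thm39} the latter are represented by units of $R[m][H_{m}]\subseteq R[m]\circ P$ congruent to $1$ modulo the radical. Pulling these generators back along $R[\widetilde{G}]\rightarrow R[G]$ and once more using compatibility of $\alpha_{\ast}$ with the induction maps reduces the required lifting to the case of $p$-groups, where $\SK_{1}(R[\widetilde{G}_{0}])\rightarrow\SK_{1}(R[G_{0}])$ is surjective for every surjection $\widetilde{G}_{0}\rightarrow G_{0}$ of $p$-groups by iterated use of Proposition \ref{pro28}(a). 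I expect this last step to be the main obstacle: keeping track of how the block decomposition of $R[G]$ interacts with the surjection from $R[\widetilde{G}]$, and checking that the explicit generators of the groups $\SK_{1}(R[m][H_{m}])$ lift compatibly; everything else is a straightforward descent.
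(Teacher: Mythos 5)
The decisive step of your argument is the last one, and it contains a genuine error. You assert that $\SK_{1}(R[\widetilde{G}_{0}])\rightarrow \SK_{1}(R[G_{0}])$ is surjective for \emph{every} surjection of $p$-groups ``by iterated use of Proposition \ref{pro28}(a)''. Proposition \ref{pro28}(a) only applies to the quotient by a central element of order $p$ \emph{which is a commutator}; for a general surjection the kernel cannot be filtered by such elements, and when the central order-$p$ element is not a commutator one only gets injectivity (Corollary \ref{cor37}), not surjectivity. In fact Proposition \ref{pro41} computes the cokernel of $\SK_{1}(R[\widetilde{G}_{0}])\rightarrow \SK_{1}(R[G_{0}])$ for a $p$-group surjection as $\frac{R}{(1-F)R}\otimes H_{0}/H_{1}$, which is non-zero precisely in the interesting cases: for the central extensions with $\delta^{\alpha}$ an isomorphism used in \ref{ss3c} the cokernel is $\frac{R}{(1-F)R}\otimes \overline{H}_{2}(G_{0},\Z)\cong \SK_{1}(R[G_{0}])$. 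So the surjectivity you invoke is false. Note that Lemma \ref{leNew} only asks that elements of $\SK_{1}(R[G])$ lift to $\Kr_{1}(R[\widetilde{G}])$ — the lifts need not have trivial determinant — and the strengthened statement you are implicitly trying to prove (lifting within $\SK_{1}$) is simply not available.

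Even granting a repair there, the reduction of the $\Q_{p}$-$p$-elementary case to $p$-groups is not actually carried out: in your remaining case $\widetilde{G}$ need not be $\Q_{p}$-$p$-elementary, and neither the decomposition $R[G]=\oplus_{m}R[m]\circ P$ nor the subrings $R[m][H_{m}]$ have counterparts in $R[\widetilde{G}]$, so ``pulling these generators back along $R[\widetilde{G}]\rightarrow R[G]$'' is exactly the assertion to be proved rather than a construction. The lemma has a much softer proof, which is the one the paper gives: by Lemma \ref{le71}, $\SK_{1}(R[G])\subset \Kr_{1}^{\prime}(R[G],J_{R[G]})$; since $\bF_{p}[\widetilde{G}]/J_{\bF_{p}[\widetilde{G}]}$ and $\bF_{p}[G]/J_{\bF_{p}[G]}$ are semisimple, lifting idempotents shows $\alpha(J_{R[\widetilde{G}]})=J_{R[G]}$, hence matrices in $\GL(R[G],J_{R[G]})$ lift to $\GL(R[\widetilde{G}],J_{R[\widetilde{G}]})$ and the lemma follows with no induction on $|\widetilde{G}|$ and no induction theory. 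Your Case 1 and the reduction to elementary $G$ via Theorem \ref{thm52} are sound as far as they go, but the argument cannot be closed by the final step as written.
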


\begin{proof} By Lemma \ref{le71} we have $\SK_{1}(R[G]) \subset
\Kr_{1}^{\prime } ( R[G], J_{R[G]}) $ where
$J_{R[ G] }$ denotes the Jacobson radical of $R[G]$.
We claim next that $\alpha ( J_{R[ \widetilde{G}] })
=J_{R[G] }$. We have $J_{R[G]}=(J_R, J_{\Z_p[G]})$
and similarly for $J_{R[\widetilde G]}$; thus it will
suffice to show that $\alpha(J_{\Z_p[\widetilde G]})= J_{\Z_p[G]}$
and therefore it will
suffice to show $\alpha(J_{\bF_p[\widetilde G]})= J_{\bF_p[G]}$.
We know that the quotient
rings $\bF_p[\widetilde G]/J_{\bF_p[\widetilde G]}$ and
$\bF_p[G]/J_{\bF_p[G]}$
  are semisimple with the former mapping
onto the latter with kernel a two-sided ideal. Thus we have a decomposition
\begin{equation*}
\bF_p[\widetilde G]/J_{\bF_p[\widetilde G]}=X\oplus (\bF_p[G]/J_{\bF_p[G]})
\end{equation*}%
which by lifting of idempotents lifts to a splitting
\[
\bF_{p} [ \widetilde{G} ] =X^{\prime }\oplus Y
\]%
where $Y/J_{Y}\cong \bF_{p}[G]/J_{\bF_{p} [ G] }$; and so we see that indeed $J_{\bF_p[\widetilde G]}$
maps onto $J_{\bF_p[ G]}$.  Finally we conclude since
\begin{equation*}
{\rm Mat}(R[\widetilde G] , J_{R[\widetilde G]})=\GL
 ( R [ \widetilde G ]
, J_{R [\widetilde  G ] } ),\quad  {\rm Mat}(R[ G] , J_{R[ G]})=\GL
 ( R [ G ]
, J_{R [ G ] } )
\end{equation*}
and by the above ${\rm Mat}(R[\widetilde G] , J_{R[\widetilde G]})$ maps
onto $ {\rm Mat}(R[ G] , J_{R[ G]})$.
\end{proof}

\subsubsection{The map $\theta _{G}$}\label{sss6e3}

First we need:

\begin{lemma}\label{le73}
Let $\mathcal{E}_{p}( G) $ denote the set of $\Q_{p}$-$p$-elementary subgroups of $G$.
 Then

(a)  $\varinjlim_{ H\in \mathcal{E}_{p}( G) }H_{0}( H,R%
[H] ) =H_{0}( G,R[G] )$;

(b)  $\varinjlim_{ H\in \mathcal{E}_{p}( G) }H_{1}(
H,R[H_{r}] ) =H_{1}( G,R[G_{r}] )$.

(c) Suppose that for each subgroup $H$ of $G$, we are given a subgroup  $M( H) $
 of $\mathrm{Det}( R[H] ^{\times }) $ such that
 the $M( H)$ are natural with respect to inclusion, so that we have
${\rm Ind}_{H}^{G}( M( H) ) \subset M(
G)$. Suppose further that for each $H$ the quotient group $%
\mathrm{Det}( R[H] ^{\times }) /M( H) $ is
a $\Z_{p}$-module
and   that
$\mathrm{Det}( R[G] ^{\times }) /M(G) $ is torsion free; then the natural map
\begin{equation*}
\varinjlim_{ H\in \mathcal{E}_{p}( G) }\mathrm{Det}( R%
[H] ^{\times }) /M( H) \rightarrow \mathrm{Det}%
( R[G] ^{\times }) /M( G)
\end{equation*}%
is surjective.
\end{lemma}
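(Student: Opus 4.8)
The plan is to deduce (a) and (b) from the induction‑theoretic machinery of \S\ref{s4}, and to prove (c) by combining the Green‑module structure of $G\mapsto\mathrm{Det}(R[G]^\times)$ with $\Q_p$‑Brauer induction. For (a), the functor $X(A)=A/[A,A]$ on $R$‑orders in semisimple $N$‑algebras (with bimodule morphisms) is a $p$‑local additive functor, and $X(R[G])=H_0(G,R[G])$ for the conjugation action, so (a) is an instance of Theorem \ref{thm52}(ii). For (b) one argues exactly as in the proof of Lemma \ref{LemmaNss6}, with $\overline{H}_2$ replaced by $H_1$ and without passing to the $H_2^{\mathrm{ab}}$‑quotient: reduce to a Sylow $p$‑subgroup $G_p$ via the surjective corestriction $\mathrm{Cor}_{G_p}^G$ (its index being a unit of $R$), decompose $R[G_r]$ under $G_p$‑conjugacy as $\bigoplus_i\mathrm{Ind}_{H_i}^{G_p}(Rg_i)$ with $g_i\in G_r$ and $H_i=C_{G_p}(g_i)$, apply Shapiro's lemma, and note that $\langle g_i\rangle\times H_i$ is $p$‑elementary and that $Rg_i$ is a direct summand of $R[(\langle g_i\rangle\times H_i)_r]$ through which $\mathrm{Cor}_{H_i}^G$ factors; this makes $G\mapsto H_1(G,R[G_r])$ generated by $p$‑elementary subgroups, hence by $\mathcal{E}_p$, and Dress's theorem (Theorem \ref{thm50}) then gives the colimit identification. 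The same reduction, with $G_r$ replaced by $G$ (now $g_i$ ranges over all of $G$, and one uses that $\langle g_i\rangle H_i$ is still $p$‑elementary), gives an alternative proof of (a).

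For (c), set $Q(H)=\mathrm{Det}(R[H]^\times)/M(H)$. Because $R$ is normal, Theorem \ref{thm1}(b) identifies $\mathrm{Det}(R[H]^\times)=\mathrm{Det}(\GL(R[H]))$, and by \S\ref{ss4a} the functor $G\mapsto\mathrm{Det}(R[G]^\times)$ is a Frobenius (Green) module over $G\mapsto\G_0(\Z_p[G])\cong\G_0(\Q_p[G])$; in particular the induction maps on determinants defined before Lemma \ref{le64} obey the projection formula $\mathrm{Ind}_J^G(\theta)\cdot z=\mathrm{Ind}_J^G\!\big(\theta\cdot\mathrm{Res}_J^G z\big)$ for $\theta\in\G_0(\Q_p[J])$ and $z\in\mathrm{Det}(R[J]^\times)$. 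The naturality of the $M(H)$ together with $\mathrm{Ind}_H^G(M(H))\subseteq M(G)$ shows that each $\mathrm{Ind}_H^G$ descends to $\overline{\mathrm{Ind}}_H^G\colon Q(H)\to Q(G)$, so there is a well‑defined natural map $\varinjlim_{H\in\mathcal{E}_p(G)}Q(H)\to Q(G)$; write $Q(G)_0$ for its image.

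Next I would invoke $\Q_p$‑Brauer induction (Theorem \ref{thm49}): there is an integer $l$ prime to $p$, $\Q_p$‑$p$‑elementary subgroups $J\subseteq G$, and virtual characters $\theta_J$ of $J$ with $l\cdot 1_G=\sum_J n_J\,\mathrm{Ind}_J^G(\theta_J)$ in $\G_0(\Q_p[G])$. For $\tilde x\in\mathrm{Det}(R[G]^\times)$, applying this identity and the projection formula yields $\tilde x^{\,l}=\prod_J\mathrm{Ind}_J^G(y_J)^{n_J}$ with $y_J=\theta_J\cdot\mathrm{Res}_J^G(\tilde x)\in\mathrm{Det}(R[J]^\times)$. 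Reducing modulo $M(G)$, the class $x^l$ of $\tilde x^{\,l}$ in $Q(G)$ lies in $Q(G)_0$, since each factor is the image of $\overline{y_J}\in Q(J)$ under $Q(J)\to\varinjlim_H Q(H)\to Q(G)$. Thus $l\cdot Q(G)\subseteq Q(G)_0$; but $Q(G)$ is a $\Z_p$‑module by hypothesis and $l$ is a unit of $\Z_p$, so multiplication by $l$ is bijective on $Q(G)$, whence $Q(G)_0=Q(G)$, i.e. the natural map is surjective.

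The decisive input is not the Brauer step but the verification that $\mathrm{Det}(R[G]^\times)$ genuinely carries a $\G_0$‑action with induction maps satisfying Frobenius reciprocity — this is precisely what \S\ref{s4} supplies (resting on Theorem \ref{thm1}(b) and Ullom's theorem), so the care needed is mostly bookkeeping: keeping the multiplicative notation on $\mathrm{Det}$ consistent with the $\Z_p$‑module structure on the quotients, and checking that $\overline{y_J}$ is well defined in $Q(J)$ (which uses only that $M$ is natural for restriction and conjugation). I expect that for surjectivity only the hypothesis that $Q(G)$ is a $\Z_p$‑module is actually used; the $\Z_p$‑module condition on the remaining $Q(H)$ and the torsion‑freeness of $Q(G)$ are recorded in the statement for the sake of the applications (and of a companion injectivity assertion).
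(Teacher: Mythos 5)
Your proof is correct, and for parts (b) and (c) it is essentially the paper's own argument: (c) is exactly the paper's one, namely Brauer induction $l\cdot 1_G=\sum_J n_J\,\mathrm{Ind}_J^G(\theta_J)$ plus the Frobenius (projection) formula for the $\G_0(\Q_p[-])$-action on $\mathrm{Det}$, followed by inverting the prime-to-$p$ integer in the $\Z_p$-module $\mathrm{Det}(R[G]^{\times})/M(G)$; and your remark that only this one quotient needs to be a $\Z_p$-module (torsion-freeness playing no role in surjectivity) matches what the paper's proof actually uses. For (b) the paper argues in the opposite order — Shapiro first, giving $H_1(G,\sum_h Rg^h)=H_1(Z_G(g),Rg)$, then a transfer reduction to a Sylow subgroup $Z_G(g)_p$ of the centralizer, then Dress — while you reduce to a Sylow subgroup of $G$ first and take centralizers inside it; the two are interchangeable. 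The only genuinely different packaging is in (a): the paper just observes that $H_0(G,R[G])=R[C_G]$ is generated by inductions from cyclic subgroups, which lie in $\mathcal{E}_p(G)$, and quotes Dress's theorem, whereas you invoke Theorem \ref{thm52}(ii) for the $p$-local additive functor $A\mapsto A/[A,A]$ (Hattori--Stallings $HH_0$) on $R$-orders with bimodule morphisms; this is valid (and your alternative Sylow/centralizer argument for (a) also works, since $g_p$ is a power of $g$ and hence central in $Z_G(g)$), but it carries more machinery than the paper's one-line observation buys you.
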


\begin{proof} Let $d\in \mathrm{Det}( R[G] ^{\times })$.
To see that (c) holds observe that by \ref{ss4b}
for some $m$ coprime to $p$ we
can write
\begin{equation*}
m\cdot d=\sum_{H \in  \mathcal{E}_{p}( G) }n_{H}\mathrm{Ind}%
_{H}^{G}( \theta _{H})  d=\sum_{H \in \mathcal{E}%
_{p}( G) }n_{H}\mathrm{Ind}_{H}^{G}( \theta _{H}\text{%
\textrm{Res}}_{H}^{G}( d) )
\end{equation*}
which belongs to $ \sum_{H  \in
\mathcal{E}_{p}( G) }\mathrm{Ind}_{H}^{G}( \mathrm{Det}%
( R[H] ^{\times }) )$.
The result then follows since all the quotient groups $\mathrm{Det}( R[H]
^{\times }) /M( H) $ are $\Z_{p}$-modules where $m$
acts as an automorphism.

For (a) and (b) by Theorem \ref{thm50} it suffices to show that the right-hand terms
are generated over $\mathcal{E}_{p}( G)$. This holds  for (a)
because $\mathcal{E}_{p}( G) $ contains all cyclic
subgroups of $G$. To see that (b) holds, note that for $g\in G_{r}$ we have
\begin{equation*}
H_{1}( G,\sum\nolimits_{h\in G}Rg^{h}) =H_{1}( Z_{G}(
g) ,Rg) =H_{1}( Z_{G}( g) _{p},Rg)
\end{equation*}%
where $Z_{G}( g) _{p}$ denotes a $p$-Sylow subgroup of the centralizer $Z_{G}( g)$. The result then follows since $Z_{G}( g)
_{p} \cdot \langle g \rangle \in $ $\mathcal{E}_{p}( G)$. \end{proof}

 \begin{lemma}\label{lem74}
For any finite group $G$ we have that $\mathrm{Det}( R[G]
^{\times }) /\ker ( \upsilon _{G}^{\prime }\times \xi _{G})
$  is a $\Z_{p}$-module.
\end{lemma}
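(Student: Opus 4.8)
The plan is to reduce, in two steps, to the case of $p$-groups already treated by the exact sequence (\ref{eqExact1}), and to work throughout inside the ambient module $T_G:=R[C_G]\oplus H_1(G,R[G_r])$. This $T_G$ is a finitely generated $R$-module (group homology of a finite group with finitely generated coefficients), hence $p$-adically separated and complete, hence a $\Z_p$-module; moreover $\upsilon'_G$ lands in its first factor by Theorem \ref{thm65}. Since $\mathrm{Det}(R[G]^{\times})/\ker(\upsilon'_G\times\xi_G)$ is identified by the first isomorphism theorem with $V_G:=\mathrm{Im}(\upsilon'_G\times\xi_G)\subseteq T_G$, it suffices to prove that $V_G$ is a $\Z_p$-submodule of $T_G$. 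I will repeatedly use that any group homomorphism from a $\Z_p$-module into a $p$-adically separated $\Z_p$-module is automatically $\Z_p$-linear, and that images, sums and quotients of $\Z_p$-submodules inside a fixed $\Z_p$-module are again $\Z_p$-submodules.

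\textbf{The case of $p$-groups.} Here $s_G=\upsilon'_G\times\xi_G\times\theta_G$, and by exactness of (\ref{eqExact1}) the image of $s_G$ equals the kernel of the map $T_G^{\circ}\colon R[C_G]\oplus H_1(G,R)\oplus\Lambda(R)\to H_1(G,R)\oplus R$. The three constituents of $T_G^{\circ}$ — the logarithmic derivative $\omega_G$, the $F$‑semilinear operator $\Psi-1$, and $p^{-1}\mathcal{L}_R$ (which factors through $\Lambda(R)$ by Lemma \ref{le60}) — are homomorphisms between $\Z_p$-modules, here using additional hypothesis (2) that $\Lambda(R)$ is a $\Z_p$-module, and hence are $\Z_p$-linear by the separatedness remark above. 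Thus $T_G^{\circ}$ is $\Z_p$-linear, $\ker(T_G^{\circ})=\mathrm{Im}(s_G)$ is a $\Z_p$-submodule, and $V_G$, being its image under the $\Z_p$-linear projection forgetting the $\Lambda(R)$-coordinate, is a $\Z_p$-module.

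\textbf{The case of $\Q_p$-$p$-elementary groups.} Write $G=C\rtimes P$ and use the decomposition $R[G]=\oplus_m R[m]\circ P$ of \S\ref{s5} and \S\ref{ss6a}. By Lemmas \ref{le66} and \ref{le67}, the map $\upsilon'_G\times\xi_G$ is, under the canonical isomorphisms, the direct sum over $m$ of the $A_m$-fixed parts of the corresponding maps for the ordinary group rings $R[m][H_m]$, where each $H_m$ is a $p$-group and each $R[m]$ is a product of rings satisfying all our hypotheses. As these group rings are $A_m$-cohomologically trivial (Proposition \ref{pro56}), taking $A_m$-fixed points coincides with taking $A_m$-covariants $H_0(A_m,-)$, and $H_0(A_m,-)$ carries $\Z_p$-modules to $\Z_p$-modules. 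Applying the $p$-group case to each $R[m][H_m]$ and summing, $V_G$ is a $\Z_p$-module.

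\textbf{Arbitrary $G$.} For $H\in\mathcal{E}_p(G)$ the operators $\upsilon'$ and $\xi$ commute with $\mathrm{Ind}_H^G$ — for $\upsilon'$ by Lemma \ref{le64}, for $\xi$ by the standard naturality of its homological construction — so $\mathrm{Ind}_H^G(V_H)=(\upsilon'_G\times\xi_G)(\mathrm{Ind}_H^G\,\mathrm{Det}(R[H]^{\times}))\subseteq V_G$, and by the previous step each $\mathrm{Ind}_H^G(V_H)$ is a $\Z_p$-submodule of $T_G$ (the induction maps being $R$-linear on coefficients). Put $V':=\sum_{H\in\mathcal{E}_p(G)}\mathrm{Ind}_H^G(V_H)\subseteq V_G$, a $\Z_p$-submodule of $T_G$. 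Given any $v=(\upsilon'_G\times\xi_G)(d)$, Brauer–Witt induction (\S\ref{ss4b}) yields an integer $m$ prime to $p$ with $m\cdot d=\sum_H n_H\,\mathrm{Ind}_H^G(\theta_H\cdot\mathrm{Res}_H^G d)$, whence $m v\in V'$; since $m$ is a unit on the $\Z_p$-module $T_G$ and $V'$ is $\Z_p$-stable, $v=m^{-1}(m v)\in V'$. Therefore $V_G=V'$ is a $\Z_p$-submodule of $T_G$, which proves the lemma.

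\textbf{Main obstacle.} The subtle point is this last step: $\mathrm{Det}(R[G]^{\times})$ itself is \emph{not} a $\Z_p$-module (it contains prime-to-$p$ torsion, e.g.\ roots of unity), so one cannot simply transport a $\Z_p$-structure along $\upsilon'_G\times\xi_G$. The remedy is to argue entirely inside the honest $\Z_p$-module $T_G$ and to convert the Brauer–Witt relation, which only holds after multiplying by a prime-to-$p$ integer, into the exact equality $V_G=V'$ via the $m$-saturation of the $\Z_p$-submodule $V'$; the inductive reductions to $\Q_p$-$p$-elementary and then to $p$-groups are routine given \S\ref{s5} and (\ref{eqExact1}).
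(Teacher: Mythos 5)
Your $p$-group step is sound (exactness of (\ref{eqExact1}) plus the observation that $\omega_G$, $\Psi-1$ and the map out of $\Lambda(R)$ are automatically $\Z_p$-linear into the separated target), and so is your final step for arbitrary $G$: working inside the ambient $\Z_p$-module $T_G$ and saturating by the prime-to-$p$ integer $m$ is correct, and it even avoids the paper's appeal to torsion-freeness via Theorem \ref{thm65}. The genuine gap is in the $\Q_p$-$p$-elementary case. There you must show that $(\upsilon'\times\xi)\bigl(\mathrm{Det}(R[m][H_m]^{\times})^{A_m}\bigr)$ is $\Z_p$-stable, and you deduce ``fixed points $=$ covariants'' from Proposition \ref{pro56}. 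But Proposition \ref{pro56} only asserts cohomological triviality of $\mathrm{Det}(1+I_{H_m})$, not of the full group $\mathrm{Det}(R[m][H_m]^{\times})=\mathrm{Det}(1+I_{H_m})\times R[m]^{\times}$; moreover the chain as written does not typecheck, since $\mathrm{Det}(R[m][H_m]^{\times})$ is not a $\Z_p$-module (it has prime-to-$p$ torsion), so ``$H_0(A_m,-)$ preserves $\Z_p$-modules'' has to be routed through the image, i.e.\ through the norm map. For the factor $\mathrm{Det}(1+I_{H_m})$ this works: $\widehat H^0(A_m,\cdot)=0$ gives $\mathrm{Det}(1+I_{H_m})^{A_m}=N_{A_m}(\mathrm{Det}(1+I_{H_m}))$, whose image is $\mathrm{Tr}_{A_m}$ of a $\Z_p$-submodule. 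But for the scalar factor $R[m]^{\times}$ nothing you cite applies: $\xi$ kills scalars and $\upsilon'$ sends a scalar unit $u$ to $p^{-1}\mathcal{L}_{R[m]}(u)$, so you need $p^{-1}\mathcal{L}_{R[m]}\bigl((R[m]^{\times})^{A_m}\bigr)$ to be $\Z_p$-stable, and elements of $(R[m]^{\times})^{A_m}$ that are not $A_m$-norms are exactly the obstruction. Indeed the paper emphasizes, just before Theorem \ref{thm63}, that surjectivity of norms on $R_L^{\times}$ is \emph{not} known, so $\widehat H^0(A_m,R[m]^{\times})$ cannot be assumed to vanish. (A smaller slip: Lemma \ref{le66} treats $\upsilon'$ only; the compatibility for $\xi$ also needs a word.)

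The missing ingredient is hypothesis (2) of \S\ref{ss6c}: since $A_m$ is a $p$-group inside $\mathrm{Gal}(\Q_p(\zeta_m)/\Q_p)$ and the factors of $R[m]$ are rings $R_L$ with $L/\Q_p$ non-ramified, the vanishing $H^1(A_m,\mathcal{M}(R[m]))=\{1\}$, together with the cohomological triviality of the $\Z_p$-module $\Lambda(R[m])$, shows that $(R[m]^{\times})^{A_m}$ surjects onto $\Lambda(R[m])^{A_m}$; the scalar contribution is then the image of a $\Z_p$-submodule under the (automatically $\Z_p$-linear) map $\Lambda(R[m])\to R[m]$, hence $\Z_p$-stable. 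This is precisely the $A_m$-fixed-point argument carried out in the paper's proof of Proposition \ref{pro75} (and is what the paper's own one-line proof is invoking via Theorem \ref{thm54}); once you insert it, your elementary case, and with it the whole proof, goes through.
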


\begin{proof} Since ${\rm Im}( \xi_G ) $ is a finite abelian $p$-group,
it will suffice to show that the quotient $\mathrm{Det}( R[G] ^{\times
}) /\ker ( \upsilon _{G}^{\prime}) $ is a $\Z_{p}$%
-module. This follows for $p$-groups by (\ref{newExseq6.3}) and (\ref{eq3.11}). The result for $%
\Q_{p}$-$p$-elementary groups then follows from Theorem \ref{thm54}. The
result for general finite groups follows from (c) above using Theorem \ref{thm65} to see that
$\mathrm{Det}( R[G] ^{\times
}) /\ker ( \upsilon _{G}^{\prime}) $ is torsion free.
\end{proof}
\smallskip

We now define a group ${\Lambda } ( R [ G ]  ) $
and  maps $\theta _{G}$ and $\widetilde{\mathcal{L}}_{R [ G] }$;
these appear in the statement of Proposition \ref{pro75}.

Let $d:\G_{0}(\Q_p[G])\rightarrow \G_{0}(\mathbb{F}_p[G])$ denote the decomposition
map, which is a surjective $\lambda $-ring homomorphism which admits a
natural splitting. Thus the isomorphism classes of $\mathbb{F}_{p} [ G%
 ] $-modules are identified with Brauer modular characters in
characteristic zero; so, in the sequel, for $\xi \in G_{0} ( \mathbb{Q}%
_{p}[G])) $ we write $d ( \xi  ) $ both for the
image of $\xi $ in $\G_{0} ( \mathbb{F}_{p}[G]) $ and
for its Brauer lift in $\xi \in \G_{0}(\Q_p[G]) $. Note also that for $\theta \in \G_{0}(\Q_p[G]) $ we have the equality $d ( \xi  ) \cdot\theta =d (
\xi  ) \cdot( d ( \theta  ) + ( \theta -d ( \theta
)))=d ( \xi \cdot\theta ) .$  We define $\mathcal{%
M} ( R [ G ] ) $ as
\begin{equation*}
\{\mathrm{Det} ( x ) \in \mathrm{Det}(R[G]
^{\times } ) \mid \ \Psi  ( \mathrm{Det}(x))
\left( d\left( \xi \right) \right) =\mathrm{Det}( x )  (
d ( \xi  ) ) ^{p}\ \text{for all }\xi \in \G_{0}(\Q_p[G])\}.
\end{equation*}%
We assert that the $\G_{0}(\Q_p[G])$-Green structure of $\mathrm{Det} ( R\left[ G\right] ^{\times } ) $
makes $\mathcal{M} ( R[ G ] ) $ a $\G_{0}(\Q_p[G])$-Green submodule: indeed, for $\theta \in
\G_{0}(\Q_p[G])$ recall we have $\theta \cdot
\mathrm{Det}( x) ( \xi ) =\mathrm{Det}( x)( \xi \overline{\theta })$, and so for $\mathrm{Det}(
x) \in \mathcal{M}(R[G])$ we have
\begin{eqnarray*}
\Psi ( \theta \cdot\mathrm{Det}( x) ) ( d( \xi
) )  &=&F( ( \theta \cdot\mathrm{Det}( x)
) ) ( \psi ^{p}d( \xi ) ) =F( (
\theta \cdot\mathrm{Det}( x) ) ) ( d( \psi
^{p}\xi ) )  \\
&=&F( \mathrm{Det}( x) ) ( d( \psi ^{p}\xi) \cdot F( \overline{\theta })  ) =F( \mathrm{Det}%
( x) ) ( d( \psi ^{p}\xi ) \cdot dF(
\overline{\theta }) )  \\
&=&F( \mathrm{Det}( x) ) ( d( \psi ^{p}\xi)\cdot \psi ^{p}d ( \overline{\theta }) ) =F( \mathrm{%
Det}( x) ) ( \psi ^{p}d( \xi \cdot\overline{\theta }) )  \\
&=&\Psi ( \mathrm{Det}( x) ) ( d( \xi
\overline{\theta } )  ) =\mathrm{Det} ( x )  (
d ( \xi \overline{\theta } )  ) ^{p}= ( \theta \cdot\mathrm{Det%
}( x) ( d( \xi ) )) ^{p}.
\end{eqnarray*}%
For future reference we note that $\mathcal{M} ( R [ G ]
 ) \supset \ker d\cdot\mathrm{Det} ( R [ G ] ^{\times } )$.

We define:
\begin{eqnarray*}
\mathrm{Det}^{\prime } ( R [ G ] ^{\times } ) &=&\mathrm{%
Det}^{\prime } ( R [ G ] ^{\times } ) \otimes
_{\G_{0} ( \mathbb{Q}_{p}[ G] ) }\G_{0}( \mathbb{F}%
_{p}[ G] ) =\frac{\mathrm{Det}( R[ G]
^{\times }) }{\ker d\cdot \mathrm{Det} ( R [ G ] ^{\times
} ) } \\
\ \mathcal{M}^{\prime } ( R [ G ]  ) &=&\frac{\ \mathcal{M}%
 ( R [ G ]  ) }{\ker d\cdot \mathrm{Det} ( R [ G]
^{\times } ) }
\end{eqnarray*}%
and we set:
\begin{equation*}
\Lambda ( R [ G ]  ) =\mathrm{Det}^{\prime } ( R[
G ] ^{\times } ) /\mathcal{M}^{\prime }( R[ G]
 ) =\mathrm{Det}( R[ G] ^{\times }) /\mathcal{M}( R[ G] ) .
\end{equation*}%
The map $\theta _{G}$ is defined to be the composite
\begin{equation*}
\theta _{G}:\mathrm{Det}(R[G]^{\times } )
\rightarrow \mathrm{Det}^{\prime }(R[G]^\times)
\rightarrow \Lambda(R[G]).
\end{equation*}%
We now define
\begin{equation*}
\widetilde{\mathcal{L}}_{R [ G ] }:\Lambda ( R [ G ]
 ) \rightarrow \prod\nolimits_{i}R\otimes_{\Z_p} W_{i}=H_{0} ( G, R [
G_{r} ]  )
\end{equation*}%
where the product extends over the irreducible modular characters of $%
\G_{0} ( \mathbb{F}_{p} [ G ]  )$, and where $\widetilde{\
\mathcal{L}}_{R [ G ] }$ is the product of the logarithmic maps
\begin{equation*}
\frac{1}{p}\mathcal{L}_{R\otimes_{\Z_p} W_{i}}: ( R\otimes_{\Z_p} W_{i} )
^{\times }\rightarrow R\otimes_{\Z_p} W_{i}
\end{equation*}%
followed by identification of the free $R$-module on the $p$-regular
conjugacy classes of $G$, namely $H_{0} ( G, R [ G_{r} ])$, with $\prod\nolimits_{i}R\otimes_{\Z_p} W_{i}$, by evaluation on the
irreducible modular characters $\chi _{i}$ of $G$ (see Brauer's Theorem in
18.2 of [S] and in particular Exercise 4 in 18.2). To be more precise,
writing $\chi _{i}$ for the irreducible character associated to the $i$-th
component of the above description, explicitly for each $x\in R[ G] ^{\times }$ we may view $\widetilde{\mathcal{L}}_{R[ G]
}\circ \theta _{G}( \mathrm{Det}( x)) $ as being
given by the value%
\begin{equation*}
\prod\nolimits_{i}\frac{1}{p}\mathcal{L}_{R\otimes_{\Z_p} W_{i}}( \mathrm{Det}%
_{\chi _{i}}( x)) .
\end{equation*}%
We view $\mathrm{Det}^{\prime }( R[ G] ^{\times }) $
as a Green module for the ring $\G_{0}( \mathbb{F}_{p}[ G])$, and by the above $\mathcal{M}^{\prime }( R[ G]
) $ is a sub-Green module of $\mathrm{Det}^{\prime } ( R [ G
 ] ^{\times } ) ,$ and so $\Lambda  ( R [ G ]  )
$ is a quotient Green module for $\G_{0} ( \mathbb{F}_{p} [ G ]
 ) .$

For each $H\subset G$, in the usual way we have the induction map $\mathrm{%
Ind}_{H}^{G}:\mathrm{Det}^{\prime } ( R [ H ] ^{\times } )
\rightarrow \mathrm{Det}^{\prime } ( R [ G ] ^{\times } )$ and hence we have a natural map $\Lambda  ( R [ H ] )
\rightarrow \Lambda  ( R [ G ]  )$.

\begin{proposition}\label{PropNss6e3}
For each finite group $G$, the group $ {\Lambda } ( R [ G%
 ]  ) $ is a pro-$p$-group and
\[
\varinjlim_{ H\in \mathcal{E}_{p} ( G ) } {\Lambda }%
 ( R [ H ] ) = {\Lambda } ( R [ G ]) .
\]
\end{proposition}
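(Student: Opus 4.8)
The plan is to prove the two assertions in turn; the direct‑limit statement will follow, once the pro‑$p$ property is known, from a Dress‑style induction identical in form to the proof of Lemma~\ref{le73}(c), so the real content lies in the first assertion.

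For the pro‑$p$ statement (i.e.\ that $\Lambda(R[G])$ is a $\Z_{p}$‑module, which is the sense used here, as in Lemmas~\ref{le73} and~\ref{lem74}) I would verify three things: $p$‑adic completeness, absence of prime‑to‑$p$ torsion, and $l$‑divisibility for all $l$ coprime to $p$. Completeness is immediate, since $\mathrm{Det}(R[G]^{\times})$ is a $p$‑adically complete quotient of $R[G]^{\times}$ and $\mathcal{M}(R[G])$ is closed. For the torsion statement, if $\mathrm{Det}(x)^{l}\in\mathcal{M}(R[G])$ with $\gcd(l,p)=1$, then evaluating the defining congruences of $\mathcal{M}(R[G])$ on a modular irreducible $\chi_{i}$ and using that $\Psi(\mathrm{Det}(x))(\chi_{i})\equiv\mathrm{Det}(x)(\chi_{i})^{p}$ modulo $p$ (built into the Adams‑operation package of \S\ref{ss6c}, $F$ being a Frobenius lift) shows that $\Psi(\mathrm{Det}(x))(\chi_{i})/\mathrm{Det}(x)(\chi_{i})^{p}$ is an $l$‑th root of unity in $R\otimes_{\Z_{p}}W_{i}$ that is $\equiv1$ modulo $p$; since $(R\otimes_{\Z_{p}}W_{i})/p=\overline R\otimes_{\mathbb{F}_{p}}k_{i}$ is, by Standing Hypothesis~(1), a finite product of characteristic‑$p$ integral domains in which $X^{l}-1$ is separable, this root of unity is $1$, so $\mathrm{Det}(x)\in\mathcal{M}(R[G])$. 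For $l$‑divisibility I would reduce modulo $p$: $\mathrm{Det}(\overline R[G]^{\times})$ is an extension of $\prod_{i}(\overline R\otimes k_{i})^{\times}$ by the image of $1+J_{\overline R[G]}$, the latter an abelian $p$‑group of bounded exponent, hence uniquely $l$‑divisible, while the ``semisimple'' quotient is absorbed into the reduction of $\mathcal{M}(R[G])$ by a Teichm\"uller‑type lifting — the defining condition of $\mathcal{M}(R[G])$ being automatic modulo $p$, one constructs an exact solution by successive approximation, completing the lift $p$‑adically since $1+pR[G]$ is a $\Z_{p}$‑module. (As a cross‑check on the building blocks: the map $\widetilde{\mathcal L}_{R[G]}\colon\Lambda(R[G])\to\prod_{i}R\otimes_{\Z_{p}}W_{i}$ identifies $\Lambda(R[G])$, up to $2$‑torsion, with a subgroup of $\prod_{i}\Lambda(R\otimes_{\Z_{p}}W_{i})$, and each $R\otimes_{\Z_{p}}W_{i}$ equals $R_{L_{i}}$ for an unramified $L_{i}/\Q_{p}$, so each $\Lambda(R\otimes_{\Z_{p}}W_{i})$ is a $\Z_{p}$‑module by Hypothesis~(2); the $p=2$ sign in Lemma~\ref{le60}(b) contributes only $2$‑torsion and is harmless.)

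For the direct limit, recall from the construction preceding the Proposition that $\Lambda(R[-])$ is a Green module for the Green ring $\G_{0}(\mathbb{F}_{p}[-])$; by the first part it is $\Z_{p}$‑module‑valued, so its structure extends to one over $\G_{0}(\mathbb{F}_{p}[-])\otimes\Z_{p}$. Applying the (surjective, induction‑compatible) decomposition map $d\colon\G_{0}(\Q_{p}[-])\to\G_{0}(\mathbb{F}_{p}[-])$ to Brauer's theorem (Theorem~\ref{thm49} with $N=\Q_{p}$) gives $l\cdot\G_{0}(\mathbb{F}_{p}[G])\subseteq\sum_{J\in\mathcal{E}_{p}(G)}\mathrm{Ind}_{J}^{G}\,\G_{0}(\mathbb{F}_{p}[J])$ for some $l$ coprime to $p$, so $\G_{0}(\mathbb{F}_{p}[-])\otimes\Z_{p}$ is $\mathcal{E}_{p}$‑generated. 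Dress's theorem (Theorem~\ref{thm50}) then forces $\Lambda(R[-])$ to be $\mathcal{E}_{p}$‑computable with respect to induction, which is exactly the statement $\Lambda(R[G])=\varinjlim_{H\in\mathcal{E}_{p}(G)}\Lambda(R[H])$; alternatively this is Theorem~\ref{thm52}(ii) applied to the $p$‑local functor underlying $\Lambda$.

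The main obstacle is the Teichm\"uller‑type lifting in the first part: one must solve the defining equations of $\mathcal{M}(R[G])$ exactly, starting from a solution modulo $p$, compatibly with the permutation of the modular characters induced by $\psi^{p}$ and the accompanying Frobenius action on the $W_{i}$. This is precisely where the two extra Standing Hypotheses of \S\ref{ss6c} enter — the finiteness of idempotents in $\overline R\otimes_{\mathbb{F}_{p}}\mathbb{F}_{p}^{c}$ to control the decomposition into domains, and the cohomological triviality together with the vanishing $H^{1}(\Gamma,\mathcal{M}(R_{L}))=\{1\}$ to run the approximation $\psi^{p}$‑equivariantly. The remainder is bookkeeping within the Green‑functor formalism already in place.
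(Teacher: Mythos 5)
Your second assertion (the direct‑limit statement) is handled essentially as in the paper: apply the decomposition map to Brauer induction to see that $\G_{0}(\mathbb{F}_{p}[-])$ acts through a ring in which some $l$ prime to $p$ becomes invertible on the ($\Z_p$‑module valued) Green module $\Lambda(R[-])$, and then invoke Theorem~\ref{thm50}; that part is fine once pro‑$p$‑ness is known. The gap is in your proof of pro‑$p$‑ness, specifically the $l$‑divisibility step. You claim that the ``semisimple'' quotient $\prod_i(\overline R\otimes k_i)^{\times}$ is absorbed into the reduction of $\mathcal{M}(R[G])$ by a Teichm\"uller‑type successive approximation, the defining condition of $\mathcal{M}$ being automatic modulo $p$. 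This fails for the rings the paper is designed to cover, because $\overline R$ is in general imperfect: already for $G=\{1\}$, solving $F(u)=u^{p}$ with $u\equiv u_{0}\ (\mathrm{mod}\ p)$ requires, at the very first step of the approximation, an element $a_{1}$ with $F(a_{1})\equiv -b\ (\mathrm{mod}\ p)$, i.e.\ surjectivity of Frobenius on $\overline R$, which is false for $\overline R=k[T]$, $k((t))$, etc. Concretely, by item~4 of the Example in \S\ref{ss6b}, for $R=W\{\{t\}\}$ one has $\mathcal{M}(R)=\mu_{W}'\times t^{\Z}$, whose image in $\overline R^{\times}=k((t))^{\times}$ misses all of $1+tk[[t]]$; so no lifting of the residue units into $\mathcal{M}$ exists, and indeed $\Lambda(W\{\{t\}\})$ has the extra pro‑$p$ piece $1+tk[[t]]$ rather than being $1+pR$. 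A frontal argument of your kind would in effect be trying to \emph{prove} Hypothesis~(2) of \S\ref{ss6c}, which the paper assumes precisely because such lifting statements are unavailable.

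Two further points. First, your ``cross‑check'' that $\widetilde{\mathcal L}_{R[G]}$ identifies $\Lambda(R[G])$, up to $2$‑torsion, with a subgroup of $\prod_i\Lambda(R\otimes_{\Z_p}W_i)$ is not justified: $\mathcal{M}(R[G])$ is cut out by the condition $\Psi(\mathrm{Det}(x))(d\xi)=\mathrm{Det}(x)(d\xi)^{p}$, in which $\psi^{p}$ permutes the Brauer characters, whereas $\mathcal{M}(R\otimes W_i)$ is cut out by $F(u)=u^{p}$ on each value separately; the comparison map can have a nontrivial kernel, so pro‑$p$‑ness cannot be read off from Hypothesis~(2) this way (your prime‑to‑$p$ torsion argument also leans on the unproved congruence $\Psi(\mathrm{Det}(x))(d\xi)\equiv\mathrm{Det}(x)(d\xi)^{p}\ \mathrm{mod}\ p$, which should be deduced from Theorem~\ref{thm65} evaluated at Brauer lifts, at least for $p$ odd). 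Second, the paper's route avoids all of this: by Brauer induction for \emph{modular} characters one reduces pro‑$p$‑ness of $\Lambda(R[G])$ to $\Q_{p}$‑$l$‑elementary groups with $l\neq p$; for such $G=(C'\times C_{p^m})\rtimes L$ the splitting $\mathcal{M}(R[G])=\mathrm{Det}(1+(1-c)R[G])\times\mathcal{M}(R[G'])$, together with $\G_{0}(\mathbb{F}_p[G])\cong\G_{0}(\mathbb{F}_p[G'])$, gives $\Lambda(R[G])\cong\Lambda(R[G'])$ with $G'$ of order prime to $p$, where $R[G']\cong\prod_i{\rm M}_{n_i}(R\otimes_{\Z_p}W_i)$ and Hypothesis~(2) applies directly. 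You should replace your divisibility argument by this reduction (or an equivalent one) rather than attempting an exact solution of the $\mathcal{M}$‑equations by approximation.
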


\begin{proof} The proof proceeds in three steps.

First Step: Note that if $p\nmid \left\vert G\right\vert$, then we have a
decomposition
\[
R [ G ] \cong \prod\nolimits_{i}{\rm M}_{n_{i}} ( R\otimes _{\mathbb{%
Z}_{p}}W_{i} )
\]%
for rings of integers $W_{i}$ of non-ramified extensions of $\mathbb{Q}_{p}$.
The result in this case therefore follows from Hypothesis (2) in \ref{ss6c}.

Second step: Suppose now that $G$ is $\mathbb{Q}_{p}$-$l$-elementary for
some prime number $l\neq p$. We may write $G= ( C^{\prime }\times
C_{p^{m}} ) \rtimes L$ where $L$ is an $l$-group, $C_{p^{m}}$ is a
cyclic group of order $p^{m}$ for some $m\geq 0$ and $C^{\prime }$ is a
cyclic group with order prime to $lp$. If $m=0$, then we are in the situation
dealt with in the first step and so we now suppose that $m>0$. We set $G^{\prime }=C^{\prime }\rtimes L$,
and note that the natural quotient $
G\rightarrow G^{\prime }$ is split.

Every irreducible character of $G$ can be written in the form $\mathrm{Ind}_{H}^{G}(\chi)$ for $\chi $ an abelian character of a subgroup $H$ of $G$
which contains $C^{\prime }\times C_{p^{m}}$; moreover, the irreducible
characters of $G$ which are inflated from $G^{\prime }$ arise precisely from
those $\chi $ which are trivial on $C_{p^{m}}$.

Recall that $\left\langle c\right\rangle =C_{p^{m}}$. We have the split
exact sequence:
\begin{equation*}
1\rightarrow \mathrm{Det}( 1+( 1-c) R[ G] )
\rightarrow \mathrm{Det}( R[ G] ^{\times })
\leftrightarrows \mathrm{Det}( R[ G^{\prime }] ^{\times
}) \rightarrow 1
\end{equation*}%
and the surjection $G\rightarrow G^{\prime }$ induces an isomorphism $%
G_{0}( \mathbb{F}_{p}[ G] ) \cong G_{0}( \mathbb{F%
}_{p}[ G^{\prime }] )$. Thus, from the definition of $%
\mathcal{M}( R[ G] ) ,$ we have the direct
decomposition
\begin{equation*}
\mathcal{M}( R[ G] ) =\mathrm{Det}( 1+(
1-c) R[ G] ) \times \mathcal{M}( R[
G^{\prime }] )
\end{equation*}%
and hence the natural map $\Lambda ( R[ G] )
\rightarrow \Lambda ( R[ G^{\prime }] ) $ is an
isomorphism, and we are now done by Step 1.

Third Step: To prove the final part of the Proposition, we use Brauer's
induction theorem for modular characters  (see Theorem
39 in [S]). By this result we know that for each prime $l\neq p$, we can find
a positive integer $d_{l}$, prime to $l $ such that for each $\lambda \in
 {\Lambda } ( R [ G ] ) $ we know that $%
d_{l}\lambda $ is in the image of induction from $\mathbb{Q}_{p}$-$l$-elementary subgroups of $G$.
 So varying over the primes different $l$ from $p,$ by the above we know that
we can find a non-negative integer $m$ such that $p^{m}\Lambda ( R[
G] ) $ is a finite sum of pro-$p$-groups.

To conclude we use Brauer's induction theorem for $\mathbb{Q}_{p}$-$p$%
-elementary groups expressed in terms of modular characters (by applying the
decomposition map to the standard form of Brauer's theorem) together with
the Green module structure of $\Lambda ( R[ G] ) $
over $G_{0}( \mathbb{F}_{p}[ G] ) $; this then shows
that $\Lambda ( R[ G] ) $ is generated by induction
from $\mathbb{Q}_{p}$-$p$-elementary groups, and so by Theorem \ref{thm50} we
deduce
\[
\varinjlim_{ H\in \mathcal{E}_{p} ( G ) } {\Lambda }%
 ( R [ H] ) = {\Lambda } ( R [ G ]
 ) .
\]\end{proof}

 As previously we set $M(H)={\rm ker}(\upsilon_H'\times \xi_H)$. We conclude this subsection by noting that the maps
 $$
  \upsilon _{H}^{\prime }:\mathrm{Det}( R[ H ]
^{\times }) /M( H) \rightarrow H_{0}( H,R[ H] ) , $$
$$\xi _{H}:\mathrm{Det}( R[ H] ^{\times
}) /M( H) \rightarrow H_{1}( H,R[ H]),
$$
 afford commutative diagrams
\begin{equation*}
\begin{array}{ccc}
\varinjlim_{H\in \mathcal{E}_{p}(G)}\mathrm{Det}( R[ H] ^{\times }) /M( H) & \rightarrow &
\varinjlim_{H\in \mathcal{E}_{p}(G)}H_{0}( H,R[
H] ) \\
\downarrow & \searrow \widehat \upsilon_{G}^{\prime } & \downarrow \\
\mathrm{Det}( R[ G]^{\times }) /M( G) &
\rightarrow & H_{0}( G, R[G] )
\end{array}
\end{equation*}
and
\begin{equation*}
\begin{array}{ccc}
\varinjlim_{H\in \mathcal{E}_{p}(G)}\mathrm{Det} ( R%
 [ H ] ^{\times } ) /M ( H ) & \rightarrow &
\varinjlim_{H\in \mathcal{E}_{p}(G)}H_{1} ( H, R [
H ]  ) \\
\downarrow & \searrow \widehat\xi _{G} & \downarrow \\
\mathrm{Det} ( R [ G ] ^{\times } ) /M ( G ) &
\rightarrow & H_{1} ( G,R [ G ] )
\end{array}%
\end{equation*}%
where we denote the diagonal maps by $\widehat{\upsilon }_{G}^{\prime }$ and $\widehat{\xi }_{G}$ respectively.

\subsubsection{An exact sequence}\label{sss6e4}

Recall that we identify $H_{0}( G,R[G] ) $ and $R%
[C_{G}] $. We define $s_G=\upsilon^{\prime}_G \times \xi_G \times \theta_G$
and we let $\rho_G: H_0(G, R[G])\to H_0(G, R[G_r])$
be the $R$-linear map induced by mapping each group element $g\in G$
to its $p$-regular component $g_r$.

\begin{proposition}\label{pro75}
The following sequence is exact:
\begin{eqnarray}\label{eq6.60}
\ \ \ \ \mathrm{Det}%
( R[G] ^{\times })   \xrightarrow{\ s_G\ } H_{0}( G,R[G] ) \oplus
H_{1}( G,R[G_{r}] ) \oplus  {\Lambda} ( R [
G] )\rightarrow  \\ \xrightarrow{\ T_G\ } H_{1}( G,R[G_{r}%
] ) \oplus H_{0}( G, {R}[G_{r}] )
\rightarrow 0 \ \ \ \ \ \notag
\end{eqnarray}%
where $T_{G}( x\oplus y\oplus z) = (\omega _{G}( x) -(
1-\Psi ) y)\oplus (\rho_G ( x) -  \widetilde{\mathcal{L}}%
_{R[G]}( z) )$.
\end{proposition}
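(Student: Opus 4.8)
The plan is to prove the proposition in three stages --- for $p$-groups (where the sequence is already at hand), then for $\mathbb{Q}_p$-$p$-elementary groups, and finally for arbitrary $G$ by Brauer induction --- noting that ``exactness'' here means only $\mathrm{Im}(s_G)=\ker(T_G)$ together with surjectivity of $T_G$, and that both assertions concern \emph{images}, hence pass to direct limits. \textbf{The $p$-group case.} When $G$ is a $p$-group, $G_r=\{1\}$, so $H_1(G,R[G_r])=G^{\mathrm{ab}}\otimes R$, $H_0(G,R[G_r])=R$, and $\rho_G$ is the augmentation $R[C_G]\to R$; since $\G_0(\mathbb{F}_p[G])=\mathbb Z\cdot\mathbf 1$ one checks directly that $\mathcal M(R[G])$ is the preimage of $\mathcal M(R)$ under augmentation, whence $\Lambda(R[G])\cong\Lambda(R)$ and $\widetilde{\mathcal L}_{R[G]}$ reduces to $p^{-1}\mathcal L_R=\upsilon'_\varepsilon$. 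With these identifications, and using Lemma~\ref{le70} (which gives $\omega_G\circ\upsilon'_G=(1-\Psi)\xi_G$, so that $T_G\circ s_G=0$ --- the other component being immediate), the asserted sequence is exactly $(\ref{eqExact1})$, obtained there by adding the $p$-group sequence $(\ref{eq3.11})$ to the trivial-group sequence $(\ref{newExseq6.3})$. So here nothing new is needed.

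\textbf{The $\mathbb{Q}_p$-$p$-elementary case.} Write $G=C\rtimes P$ and use $R[G]=\bigoplus_m R[m]\circ P$, $R[G_r]=R[C]=\bigoplus_m R[m]$, which split every term of the sequence as a direct sum over $m$; fix $m$ and put $B=R[m]$, $H=H_m$, $A=A_m$ (a cyclic $p$-group, being a quotient of $P$), each factor of $B$ satisfying the Standing Hypotheses. I would apply the $p$-group sequence $(\ref{eqExact1})$ to $B[H]$ and then take $A$-covariants $H_0(A,-)$. The point is that this preserves exactness, because all the occurring module terms --- $B[C_H]$, $H_1(H,B)$, $\Lambda(B)$, $\mathrm{Det}(B[H]^\times)$, $B$ --- are $A$-cohomologically trivial: for $B[C_H]$, $H_1(H,B)$ and $B$ because $B$ is $R[A]$-free via a normal basis generator; for $\Lambda(B)=\Lambda(R_L)$ by Hypothesis~(2) of \S\ref{ss6c}; and for $\mathrm{Det}(B[H]^\times)=\mathrm{Det}(1+I_H)\times B^\times$ from Proposition~\ref{pro56} together with Hypothesis~(2) applied to $\Lambda(R_L)$ and $\mathcal M(R_L)$ ($A$ being a $p$-group). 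Hence $H_0(A,-)$ agrees with $H^0(A,-)$ and has vanishing first derived functor on each term, so $H_0(A,-)$ is exact on $(\ref{eqExact1})$. Finally I would identify the result with the proposition's sequence: $\bigoplus_m H_0(A_m,\mathrm{Det}(B[H_m]^\times))=\bigoplus_m\mathrm{Det}(B[H_m]^\times)^{A_m}=\mathrm{Det}(R[G]^\times)$ by Theorem~\ref{thm54}; $\bigoplus_m H_0(A_m,H_0(H_m,R[m][H_m]))\cong R[C_G]$ by Lemmas~\ref{le66} and \ref{le67}; and $\bigoplus_m H_0(A_m,H_1(H_m,B))\cong H_1(G,R[G_r])$, $\bigoplus_m H_0(A_m,B)\cong H_0(G,R[G_r])$, $\bigoplus_m H_0(A_m,\Lambda(B))\cong\Lambda(R[G])$ by the Hochschild--Serre argument of \S\ref{ss6a} and of Step~2 of the proof of Proposition~\ref{PropNss6e3} (using $H_\ast(C,-)=0$ above degree $0$ since $p\nmid|C|$, and $H_1(A_m,B)=0$). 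One then checks, via the compatibility of $\omega$, $\rho$, $\widetilde{\mathcal L}$ with the trace map $H_0(\mathrm{Tr})$ of Lemma~\ref{le66}, that $s_G$ and $T_G$ are carried to the $A$-covariants of $s_{B[H]}$ and $T_{B[H]}$.

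\textbf{Arbitrary $G$.} Here I would use Brauer induction. All the functors are computable by induction from $\mathcal E_p(G)$: $\varinjlim_{H\in\mathcal E_p(G)}H_0(H,R[H])=H_0(G,R[G])$ and $\varinjlim_H H_1(H,R[H_r])=H_1(G,R[G_r])$ by Lemma~\ref{le73}, with $H_0(G,R[G_r])$ handled by the same reasoning (every $p$-regular class meets a $\mathbb{Q}_p$-$p$-elementary subgroup); $\varinjlim_H\Lambda(R[H])=\Lambda(R[G])$ by Proposition~\ref{PropNss6e3}; and $\mathrm{Det}(R[G]^\times)/\ker(s_G)=\varinjlim_H\mathrm{Det}(R[H]^\times)/\ker(s_H)$ by Lemmas~\ref{lem74} and \ref{le73}(c). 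Since $\upsilon'$, $\xi$, $\theta$, $\omega$, $\rho$, $\widetilde{\mathcal L}$ all commute with $\mathrm{Ind}_H^G$ (for $\upsilon'$ this is the computation in the proof of Theorem~\ref{thm65} with Lemma~\ref{le64}; for $\theta$, $\widetilde{\mathcal L}$ it is Green-module naturality; $\omega$, $\rho$ are manifestly natural), the maps $s_G$, $T_G$ are the direct limits of $s_H$, $T_H$ over $\mathcal E_p(G)$, and a directed colimit of exact sequences is exact, so the proposition follows from the $\mathbb{Q}_p$-$p$-elementary case. Concretely: for $d\in\mathrm{Det}(R[G]^\times)$ write $m\,d=\sum_H n_H\,\mathrm{Ind}_H^G(\theta_H\cdot\mathrm{Res}_H^G d)$ with $m$ prime to $p$; since $m$ acts invertibly on all target groups this reduces both $\mathrm{Im}(s_G)=\ker(T_G)$ and the surjectivity of $T_G$ to subgroups $H\in\mathcal E_p(G)$.

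\textbf{Expected obstacle.} The crux is the second stage: proving that passing to $A_m$-covariants is exact on the whole sequence $(\ref{eqExact1})$ for $B[H_m]$. This is precisely where the additional Hypothesis~(2) of \S\ref{ss6c} is indispensable --- it is what delivers cohomological triviality of $\Lambda(R_L)$ and vanishing of $H^1$ of $\mathcal M(R_L)$, hence cohomological triviality of $\mathrm{Det}(B[H_m]^\times)$ --- whereas in Oliver's setting $R=W$ the unit group is explicit and this difficulty does not arise; this must be combined with the substantial but routine bookkeeping needed to see that $s_G$, $T_G$ and the identifications of Lemmas~\ref{le66}--\ref{le67} are mutually compatible.
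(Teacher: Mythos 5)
Your overall route is the paper's: reduce to $\Q_{p}$-$p$-elementary $G$ by (right-)exactness of the limit over $\mathcal{E}_{p}(G)$ (Lemmas \ref{le73}, \ref{lem74}, Proposition \ref{PropNss6e3}); then, for $G=C\rtimes P$, decompose over $m$, apply the $p$-group sequence (\ref{eqExact1}) to $R[m][H_{m}]$ and descend through the $A_{m}$-action, identifying the terms via Theorem \ref{thm54} and Lemmas \ref{le66}, \ref{le67}. The one structural difference is that the paper passes to $A_{m}$-invariants, whereas you pass to covariants and justify $H_{0}(A_{m},-)=H^{0}(A_{m},-)$ on \emph{every} term by asserting that $\mathrm{Det}(R[m][H_{m}]^{\times})$ is $A_{m}$-cohomologically trivial.

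That assertion is the genuine gap. Since $\mathrm{Det}(R[m][H_{m}]^{\times})=\mathrm{Det}(1+I_{H_{m}})\times R[m]^{\times}$, Proposition \ref{pro56} handles the first factor, but Hypothesis (2) of \S\ref{ss6c} only gives $H^{1}(A_{m},\mathcal{M}(R[m]))=H^{1}(A_{m},\Lambda(R[m]))=\{1\}$, hence at best $H^{1}(A_{m},R[m]^{\times})=\{1\}$; it says nothing about $\widehat{H}^{0}$, i.e. about norm surjectivity on units, which is exactly the difficulty flagged in \S\ref{ss6c} and Appendix A. It genuinely fails for the paper's main examples: for $R=W\{\{t\}\}$ the invariant unit $t$ is not a norm from $R[m]^{\times}$ whenever $A_{m}\neq 1$ (reduce mod $p$: $t$-valuations of norms are divisible by the order of the stabilizer), so $\widehat{H}^{0}(A_{m},R[m]^{\times})\neq 1$ and $H_{0}(A_{m},\mathrm{Det}(R[m][H_{m}]^{\times}))\to\mathrm{Det}(R[m][H_{m}]^{\times})^{A_{m}}$ is not an isomorphism; your identification of the first term with $\mathrm{Det}(R[G]^{\times})$ via Theorem \ref{thm54} therefore does not go through as stated. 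The gap is reparable, because the first term enters only through its image: $H_{0}(A_{m},-)$ is right exact anyway, and cohomological triviality of the middle and right-hand terms makes $\mathrm{Im}(s_{H_{m}})=\ker(T_{H_{m}})$ cohomologically trivial, so its $A_{m}$-invariants are norms and hence lie in $s_{H_{m}}\bigl(\mathrm{Det}(R[m][H_{m}]^{\times})^{A_{m}}\bigr)$ --- but that repair is precisely the paper's invariants argument, not a consequence of the triviality you claim. A second, smaller, point: the identification $\Lambda(R[G])\cong\oplus_{m}\Lambda(R[m][H_{m}])^{A_{m}}$ is not Hochschild--Serre bookkeeping and is not contained in Step 2 of Proposition \ref{PropNss6e3} (which concerns $\Q_{p}$-$l$-elementary groups, $l\neq p$); in the paper it is a substantive step, comparing the $\ker d\cdot\mathrm{Det}$, $\mathcal{M}$, $\mathcal{M}'$ exact sequences and using Theorem \ref{thm54}, the $A_{m}$-cohomological triviality of $\mathrm{Det}(1+I(R[m][H_{m}]))$ and Hypothesis (2), so it needs to be argued, not cited away.
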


\begin{proof} Since formation of direct limits over $\mathcal{E}_{p}(
G) $ (which need not necessarily form a directed system) is a right exact
functor, by Lemma \ref{le73} we are reduced to showing that (\ref{eq6.60}) is exact when
$G$
is $\Q_{p}$-$p$-elementary.

We again use the notation of Section \ref{s5} and write $G=C\rtimes P$. First note
that, as $H_{m}$ is a $p$-group, clearly $H_{m,r}=\{1\}$. For  ease of notation, we will write $H=H_m$, $A=A_m$ but still retain $R[m]$ so that we are reminded of the
corresponding extension of scalars. Next observe that as in (\ref{eqExact1})
 we have the exact sequence:
\begin{eqnarray*}
\ \ \ \ \mathrm{Det}( R[m] [H] ^{\times })
  \xrightarrow{\ s_{H }\ } H_{0}( H ,R[m]
H ] ) \oplus H_{1}( H ,R[m] )
\oplus  {\Lambda} ( R[m] ) \\
\xrightarrow{\ T_{H}\ }H_{1}( H ,R[m] ) \oplus H_{0}(
H , {R}[m] ) \rightarrow 0 \ \ \ \
\end{eqnarray*}%
which we rewrite as
\begin{eqnarray}\label{eq6.61}
\ \ \ \ \mathrm{Det}( R[m] [H ] ^{\times })
  \xrightarrow{\ s_{H }\ }H_{0}( H ,R[m]  [H ] ) \oplus ( H^{\rm ab}\otimes R[m]
) \oplus  {\Lambda} ( R[m] ) \\ \xrightarrow{\ T_{H }\ }( H^{\rm ab}\otimes R[m] ) \oplus
 {R}[m] \rightarrow 0. \ \ \ \ \ \notag
\end{eqnarray}

We then take $A $-fixed points to obtain the sequence:
\begin{eqnarray}\label{eq6.62}
\ \ \ \ \ \ \ \ \ \ \mathrm{Det}( R[m][H] ^{\times }) ^{A}
\xrightarrow{\ s_{H }^{A }\ } H_{0}( H ,R_{m}[H
] ) ^{A }\oplus ( H^{\rm ab}\otimes R[m]
) ^{A }\oplus  {\Lambda} ( R[m] ) ^{A } \\
\xrightarrow{T_{H }^{A }} ( H^{\rm ab}\otimes R[m%
] ) ^{A }\oplus  {R}[m]
^{A }\rightarrow 0 \ \ \ \ \ \ \ \ \ \notag
\end{eqnarray}
and we now wish to show that this sequence is exact. For this we first
appeal to the fact that the two following sequences are left exact:
\begin{eqnarray*}
\ \ \ \ \ 1  \rightarrow  \ker ( s_{H }) ^{A }  \rightarrow \mathrm{Det}%
( R[m][H ] ^{\times }) ^{A }\rightarrow {\rm Im}( s_{H }) ^{A } \ ,
\ \ \ \ \ \ \ \ \ \ \ \ \ \ \ \ \ \ \ \ \ \ \ \ \\
\ \ \ \ \ 1  \rightarrow  \mbox{Im}( s_{H }) ^{A }  \rightarrow
H_{0}( H ,R[m][H ] ) ^{A }\oplus (
H ^{\rm ab}\otimes R[m] ) ^{A_{m}}\oplus  {\Lambda} ( R[m] ) ^{A }\\
 \xrightarrow{T ^{A }} (
H ^{\rm ab}\otimes R[m] ) ^{A }\oplus  {R} [m] ^{A }.\ \ \ \ \ \
\end{eqnarray*}%
Considering the terms in the lower sequence (before we take $A $-fixed
points) we see that, because $R[m] $ is $A=A_{m}$-free, $%
H_{0}( H ,R[m] [H ] ) $,
$H_{1}( H_{m},R[m] ) $ and (by hypothesis) the $%
\Lambda ( R[m] )$ are all $A$-cohomologically
trivial modules, and so ${\rm Im}( s_{H })$ is also $A$-cohomologically trivial. Hence the map from $H^{1}(A,\ker ( s_{H }))$ to $H^{1}(A,\mathrm{Det}%
( R[m][H ] ^{\times }))$ is an isomorphism;
therefore  (\ref{eq6.62}) is indeed seen to be exact.

We claim that $H_{1}( H ,R[m] )
^{A }=H_{1}( G,R[m] )$. By restriction and
Shapiro's lemma (using the fact that $R[m] $ is $A $-free)
\begin{equation*}
H_{1}( G,R[m] ) \cong H_{1}( P,R[m]
) \cong H_{1}( H ,R[m] ^{A })
\end{equation*}
and
\begin{equation*}
H_{1}( H ,R[m] ) ^{A }=( H ^{\rm ab}\otimes
R[m] ) ^{A }=( H ^{\rm ab}\otimes R[m]
^{A }) =H_{1}( H ,R[m] ^{A }) .
\end{equation*}%
 We then sum over $m$ to obtain
\begin{eqnarray*}
\oplus_{m}H_{1}( H_{m},R[m] ) ^{A_{m}}\cong \oplus
_{m}H_{1}( G,R[m] ) \cong   \ \ \ \ \ \ \ \ \ \ \ \ \ \ \ \ \ \ \\
\ \ \ \ \ \ \ \ \ \ \cong H_{1}( G,\oplus _{m}R[m] )  \cong H_{1}( G,R[C] ) \cong H_{1}( G,R%
[G_{r}] ) .
\end{eqnarray*}
By Theorem \ref{thm54} we know that
\begin{equation*}
\oplus _{m}\mathrm{Det}( R[m] [H_{m}] ^{\times
}) ^{A_{m}}=\oplus _{m}\mathrm{Det}( (R[m] \circ
P)^{\times }) =\mathrm{Det}( R[G] ^{\times })
\end{equation*}%
and by Lemma \ref{le67} we know that
\begin{equation*}
\oplus _{m}H_{0}( H_{m},R[m] [H_{m}] )
^{A_{m}}=\oplus _{m}R[m] [C_{H_{m}}] ^{A_{m}}=R[C_{G}]
\end{equation*}
and finally we must show the equality
\begin{equation*}
\oplus _{m}\Lambda ( R[ H_{m}] ) ^{A_{m}}=\Lambda
( R[ G] ).
\end{equation*}
Let $N$ again denote the field of fractions of $R$ and we put
\begin{equation*}
\ker (d_{m})=\ker (d_{G})\cap \G_{0} ( N [ m ] \circ P ) \ \ \
\text{and\ \ \ }{\rm Im}(d_{m})={\rm Im}(d_{G})\cap G_{0} ( N [ m] \circ P )
\end{equation*}
so that
\begin{equation*}
\ker d_{G}=\oplus _{m}\ker d_{m}\ \ \ \text{and\ \ \ }d_{G}=\oplus _{m}{\rm Im} d_{m},
\end{equation*}%
and, as usual, $\varepsilon _{m}$ denotes the augmentation map of $N [ m%
 ]  [ H_{m} ]$. Then, for each divisor $m$ of $\left\vert
C\right\vert , $ we fix an abelian character $\chi $ of order $m$ and write $%
G_{m}=C\rtimes H_{m}$. Then we see that $\ker d_{m} $ is the group
generated by virtual characters of degree zero of the form $\mathrm{Ind}
_{G_{m}}^{G}\chi \cdot  ( \rho -\rho  ( 1 )  ) $ for characters
$\rho $ of $H_{m},$ and ${\rm Im}d_{m}=\Z\cdot \mathrm{Ind}%
_{G_{m}}^{G}\chi \cdot \epsilon _{m}$ where $\epsilon _{m}$ is the trivial
character of $H_{m}$.

We let $I_{P,m}$ denote the 2-sided $R [ m ] \circ P$-ideal
generated by the augmentation ideal $I ( R [ m ]  [ H_{m}]  )$.
 From the proof of Corollary \ref{cor57} we observe that we have
the exact sequence
\begin{equation*}
1\rightarrow \mathrm{Det} ( 1+I_{P,m} ) \rightarrow \mathrm{Det}%
 ( R [ m ] \circ P ) \rightarrow R [ m ]
^{A_{m}\times }\rightarrow 1
\end{equation*}%
where the right-hand map is evaluation on $\mathrm{Ind}_{G_{m}}^{G}\chi
\cdot \epsilon _{m};$ we therefore see that
\begin{eqnarray}\label{neweq6.9}
\ker d_{G}\cdot\mathrm{Det} ( R [ G ] ^{\times } ) \cong
\oplus _{m}\mathrm{Det} ( R [ m ] \circ P^{\times } )|
_{\ker d_{m}}=\ \ \ \ \ \\
\ \ \ \ \ \ =\oplus _{m}\mathrm{Det} ( 1+I_{P,m} )|_{\ker
d_{m}}\cong \oplus _{m}\mathrm{Det} ( 1+I_{P,m} )\notag.
\end{eqnarray}
Hence we have the commutative diagram with exact rows
\begin{equation*}
\begin{array}{ccccc}
   \ker d_{G}\cdot \mathrm{Det} ( R [ G ] ^{\times
} )  & \hookrightarrow    &\mathrm{Det} ( R [ G ] ^{\times
} ) &  \twoheadrightarrow  & \mathrm{Det}^{\prime } ( R [ G ]
^{\times } )  \\
       \downarrow& &\downarrow r  & &
 \downarrow        \\
    \oplus _{m}\mathrm{Det} ( 1+I ( R [ m ] %
 [ H_{m} ]  )  ) ^{A_{m}}   & \hookrightarrow  & \oplus _{m}%
\mathrm{Det} ( R [ m ]  [ H_{m} ] ^{\times } ) ^{A_{m}}
& \twoheadrightarrow  & \oplus _{m}\mathrm{Det}^{\prime } ( R [ m%
 ]  [ H_{m} ] ^{\times } )^{A_{m}}.
\end{array}
\end{equation*}%
Indeed, the top sequence is exact by definition; the lower sequence is exact
by applying the $A_{m}$-fixed point functor to the exact sequence for the
definition of $\mathrm{Det}^{\prime } ( R [ m ]  [ H_{m}%
 ] ^{\times } ) $ and then appealing to Proposition 5.3 which
shows that $\mathrm{Det} ( 1+I ( R [ m ]  [ H_{m} ]
 )  ) $ is $A_{m}$-cohomologically trivial. The central vertical
map is the isomorphism $r=\oplus r_{m}$ of Theorem 5.1; and the left-hand
vertical arrow is an isomorphism by the above discussion and Theorem 5.1
which shows that $r_{m} ( \mathrm{Det} ( 1+I_{P,m} )  ) =%
\mathrm{Det} ( 1+I ( R [ m ]  [ H_{m} ]  )
 ) ^{A_{m}}$; therefore the right-hand vertical is also an isomorphism.

The second pair of exact sequences that we need are
\begin{equation*}
\begin{array}{ccccc}
 \ker d_{G}\cdot \mathrm{Det} ( R [ G ] ^{\times
} )  & \hookrightarrow  & \mathcal{M} ( R [ G ]  )  &
\twoheadrightarrow  & \mathcal{M}^{\prime } ( R [ G ]  )  \\
 \downarrow  &  & \downarrow  &  & \downarrow    \\
 \oplus _{m}\mathrm{Det} ( 1+I( R[ m] [ H_{m}]) ) ^{A_{m}} & \hookrightarrow  & \oplus _{m}%
\mathcal{M}( R[ m]  [ H_{m} ]  ) ^{A_{m}} &
\twoheadrightarrow  & \oplus _{m}\mathcal{M}^{\prime }( R [ m ]  [
H_{m} ] ) ^{A_{m}}.
\end{array}%
\end{equation*}%
Here again the top row is exact by definition; for the lower exact sequence
we note that $\ker d_{H_{m}}\mathrm{Det} ( R [ m ]  [ H_{m}] ^{\times }) =\mathrm{Det}( 1+I( R[ m ] [ H_{m}])) $, as in (\ref{neweq6.9}) above, and so we have the
exact sequence
\begin{equation*}
1\rightarrow \mathrm{Det}( 1+I( R[ m ] [ H_{m}%
 ] )  ) \rightarrow \mathcal{M} ( R [ m ]  [
H_{m} ] ) \rightarrow \mathcal{M}^{\prime } ( R [ m ]
 [ H_{m} ] ) .
\end{equation*}
We again we take $A_{m}$-fixed points. The central vertical map is then an
isomorphism by the definition of $\mathcal{M}$ and the fact that $r\ $is an
isomorphism on $\mathrm{Det} ( R [ G ] ^{\times } ) $.

Comparing the two pairs of exact sequences and using the hypothesis (2) in
\ref{ss6c} we conclude that
\begin{equation*}
\Lambda  ( R [ G ]  ) =\frac{\mathrm{Det}^{\prime } ( R
 [ G ]  ) }{\mathcal{M}^{\prime } (R[G])}=
\oplus_m\frac{\mathrm{Det}^{\prime }(R[m][H_m]^\times)^{A_m}}
{\mathcal{M}^{\prime } ( R [m] [ H_{m} ] ^{\times } ) ^{A_{m}}} =\oplus _{m}\Lambda
 ( R [ m ]  [ H_{m} ]  ) ^{A_{m}}
\end{equation*}%
as required.
\end{proof}

\subsubsection{The maps $\delta_\alpha$ and $\iota$}

Compare to page 288 of [O5]. Given an extension of finite groups
\begin{equation}\label{eq6.63}
1\rightarrow K\rightarrow \widetilde{G} \xrightarrow{\alpha}
G\rightarrow 1
\end{equation}
we have the standard homology sequence (see for instance Theorem 8.1 and
(8.2) on page 202 of [HS]) :%
\begin{eqnarray}\label{eq6.64}
H_{2}( \widetilde{G},R[G_{r}] ) \xrightarrow{H_{2}(
\alpha ) } H_{2}( G,R[G_{r}] )
\xrightarrow{\delta _{\alpha }} K^{\rm ab}\otimes _{\Z[G%
] }R[G_{r}] \rightarrow \\
\xrightarrow{\iota } H_{1}(
\widetilde{G},R[G_{r}] )   \xrightarrow{H_{1}( \alpha
) }H_{1}( G,R[G_{r}] ) \rightarrow 0.\notag
\end{eqnarray}
Recall that here $\widetilde{G}$, $G$ and $K$ all act on the module $R[G_{r}] $ via conjugation on $G_{r}$. We let $\overline{\xi }_{\widetilde G}$
(which we recall is denoted $\overline{\nu }_{\widetilde G}$ in [O5]) denote the
composite
\begin{equation*}
\Kr_1(R[\widetilde G])\xrightarrow{\xi_{\widetilde G}}
H_{1}( \widetilde{G},R [ \widetilde{G}_{r} ]  ) \xrightarrow{
\alpha_{\ast }} H_{1} ( \widetilde{G},R [ G_{r} ]
).  \end{equation*}
For $u\in \SK_{1}( R[ G] ) $ we choose a lift $
\widetilde{u}\in \Kr_{1}( R[ \widetilde{G}] ) $ (see
Lemma \ref{leNew}) and by Lemma \ref{le72} we know that\ $\xi _{G}( u) =1;$
hence we have
\begin{equation}\label{eq6.65}
\overline{\xi }_{\widetilde{G}}( \widetilde{u}) =\alpha _{\ast
}\circ \xi _{\widetilde{G}}( \widetilde{u}) =\xi _{G}(
\alpha ( \widetilde{u}) ) =\xi _{G}( u) =1.
\end{equation}

\subsubsection{The map $\overline{ \tau }_\alpha$}\label{sss6e6}

We continue with the above notations.

\begin{definition}\label{def76}
 Let $A_{\alpha }=\ker ( R[\widetilde{G}]  \xrightarrow{\alpha} R[G] ) $. Then $A_{\alpha }$ is the
$R[\widetilde{G}] $-ideal generated by $( 1-z) $ for all $z\in
K$. We now form the $\widetilde{G}$-homology with respect to the exact sequence
\begin{equation*}
0\rightarrow A_{\alpha }\rightarrow R[\widetilde{G}] \rightarrow
R[G] \rightarrow 0
\end{equation*}
to get
\begin{equation}\label{eq6.66}
H_{1}( \widetilde{G},R[G] ) \xrightarrow{ \partial
^{\alpha  }} H_{0}( \widetilde{G}, A_{\alpha })
\rightarrow H_{0}( \widetilde{G}, R[\widetilde{G}] )
\rightarrow H_{0}( \widetilde{G}, R[G] ) \rightarrow 0
\end{equation}
and we define
\begin{eqnarray*}
\overline{H}_{0}(
\widetilde{G},A_{\alpha }) :&=&\frac{H_{0}( \widetilde{G},A_{\alpha }) }{\partial ^{\alpha }(
H_{1}( \widetilde{G},R[G] ) ) }=\\
&=&\ker ( H_{0}( \alpha ) :H_{0}(
\widetilde{G},R[\widetilde G] ) \rightarrow H_{0}(
\widetilde{G},R[G] )=H_{0}( G,R[G] ) ) .
\end{eqnarray*}
\end{definition}

We now define the map $\tau _{\alpha }:A_{\alpha }\rightarrow K^{\rm ab}\otimes _{
\Z[G] }R[G_{r}] $   by the rule that
$$
s( 1-z) g\mapsto sz\otimes  \alpha ( g) _{r}
$$ for $s\in R$,
$g\in G$, $z\in K$. We see that $\tau _{\alpha }$ induces a further
map, also denoted $\tau _{\alpha }$, on the covariants
\begin{equation*}
\tau _{\alpha }:H_{0}( \widetilde{G},A_{\alpha }) \rightarrow
H_{0}( \widetilde{G},H_{1}( K,R[G_{r}] ) )
=K^{\rm ab}\otimes _{\Z[G] }R[G_{r}] ;
\end{equation*}%
here we use the fact that $H_{1}( K,R[G_{r}] )
=K^{\rm ab}\otimes _{ \Z }R[G_{r}] $ and  for $R[G]$-modules $M$ and $N$ we know that
$H_{0}( \widetilde{G},M\otimes N)=H_{0}( G,M\otimes N) =M\otimes _{\Z[G] }N$.

  Using the above  we obtain the exact top row of
the following diagram:
\begin{equation}\label{eq6.67}
{\small
\begin{array}{ccccccc}
H_{1}( \widetilde{G},R[G] ) & \xrightarrow{
\partial ^{\alpha }} & H_{0}( \widetilde{G},A_{\alpha }) &
\rightarrow & H_{0}( \widetilde{G},R[\widetilde{G}] )
& \rightarrow & H_{0}( G,R[G] ) \\
\downarrow \lambda &  & \downarrow \tau _{\alpha } &  & \downarrow \tau _{%
\widetilde{G}} &  &  \\
H_{2}( G,R[G_{r}] ) &  \xrightarrow{ \delta _{\alpha }}
 & K^{\rm ab}\otimes _{\Z[G] }R[G_{r}%
] &  \xrightarrow{ \iota}  & H_{1}( \widetilde{G},R[
G_{r}] ) &  &
\end{array}}
\end{equation}%
where $\lambda  $ is defined as follows: note that $H_{1}( \widetilde{G}%
,R[G] )  $ is generated by elements $g\otimes sh$ for $s\in R$,
 and $g\in \widetilde{G}$, $h\in G$ with the property that $\alpha
( g) $ and $h$ commute; we define $\lambda ( g\otimes
sh) =\alpha ( g) \wedge h\otimes sh_{r}\in H_{2}^{\rm ab}(
G,R[G_{r}] )$. The lower row in the diagram is exact by
(\ref{eq6.64}). Recall that $\overline{H}_{0}(  \widetilde{G},A_{\alpha }) $ was defined
in Definition \ref{def76} with $\overline{H}_{0}(  \widetilde{G},A_{\alpha }) =
{\rm coker} (\partial _{\alpha })$ and so $\tau _{\alpha }$
induces a map
\begin{equation*}
\overline{\tau }_{\alpha }:\overline{H}_{0}( \widetilde{G},A_{\alpha }) =\ker
( H_{0}( \alpha ) :H_{0}( \widetilde{G},R[\widetilde{G}]
) \rightarrow H_{0}( \widetilde{G},R[G]) ) \xrightarrow{ \ \ }
\frac{K^{\rm ab}\otimes _{\Z[G] }R[G_{r}] }{%
\delta _{\alpha }( H^{\rm ab}_{2}( G,R[G_{r}] ) ) }.
\end{equation*}

\subsubsection{The diagram.}\label{sss6e7}

Noting that by the exact sequence (\ref{eq6.64})
\begin{equation*}
\ker ( H_{1}( \widetilde{G},R[G_{r}] ) \overset{%
H_{1}( \alpha ) }{\rightarrow }H_{1}( G,R[G_{r}]
) ) ={\rm Im}(\iota) ,
\end{equation*}%
we see from the above work that we may now assemble  the following diagram:
\begin{equation}\label{eq6.68}
\begin{array}{ccc}
 \ker ( H_{0}( \widetilde{G},R[\widetilde{G}] )
\xrightarrow{H_{0}( \alpha ) } H_{0}( G,R[G%
] ) )  &  &  \\
 \downarrow  \overline{\tau }_{\alpha } &  &  \\
 { \displaystyle\frac{K^{\rm ab}\otimes_{ \Z[G] } R[G_{r}] }{\delta _{\alpha }(
H_{2}^{\rm ab}( G,R[G_{r}] ) +( 1-\Psi )
H_{2}( G,R[G_{r}] ) ) } }& \xleftarrow{  \delta_{\alpha }  }  &  \overline{H}_{2}( G,R[G_{r}] )_{\Psi} \\
\ \ \ \ \ \ \ \ \ \ \ \ \uparrow ( \Psi -1) \circ \iota ^{-1} &  &  \\
\ker ( H_{1}( \widetilde{G},R[G_{r}] ) \overset{%
H_{1}( \alpha ) }{\rightarrow }H_{1}( G,R[G_{r}]
) ) . &  &
\end{array}
\end{equation}
where by definition (as in the Introduction)
\begin{equation*}
 \overline{H}_{2}( G,R[G_{r}] )\overset{\mathrm{%
def}}{=} {\displaystyle\frac{H_{2}( G,R[G_{r}]
) }{H_{2}^{\rm ab}( G,R[G_{r}] ) }}.
\end{equation*}

\subsubsection{The map $\Theta _{R[G]}$}\label{sss6e8}
We continue to assume that $R$ is as in the beginning of \S \ref{ss6c}.

\begin{theorem}
Choose a group extension as in (\ref{eq6.63}) with the property that the
image of the
map $\delta _{\alpha }:H_{2} ( \widetilde{G},R [ G_{r} ]
 ) \rightarrow H_{2} ( G,R [ G_{r}]) $ is contained in
 $H_{2}^{\mathrm{ab}} ( G,R[ G_{r} ]  ) $. Note that
  such extensions exist by Lemma 8.3.iii in [O5].
 Let $u\in \SK_{1}(R[G])$;  by Lemma \ref{leNew} we may choose
a lift of $u$ denoted $\widetilde{u}\in \Kr_{1}(R[ \widetilde{G}])$. Mapping $u$ to the value
\begin{equation}\label{eq6main}
\Theta _{R[G]} ( u ) =\delta _{\alpha }^{-1}\left( \overline{\tau }%
_{\alpha }\circ \upsilon' _{\widetilde{G}}\left( \widetilde{u}\right) + (
\Psi -1 ) \circ \iota ^{-1} ( \overline{\xi }_{\widetilde{G}} (
\widetilde{u} )  ) \right) \in \overline{H}_{2} ( G,R [
G_{r} ]  )
\end{equation}
yields an isomorphism $\Theta _{R[G]}:\SK_{1}( R[ G])
\rightarrow \overline{H}_{2} ( G, R [ G_{r}]))_\Psi$
which is independent of
the choice of the group extension (\ref{eq6.63}).
\end{theorem}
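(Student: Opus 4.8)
The plan is to follow Oliver's argument for Theorem 12.9 of [O5]: the point is not to re-prove that $\Theta_{R[G]}$ is an isomorphism (this was already done abstractly in \S\ref{ss6a}, Theorem \ref{thm7}), but to show that the explicit formula (\ref{eq6main}) is well defined and computes that same isomorphism, whence in particular it is independent of the chosen extension.

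First I would check that every term on the right-hand side of (\ref{eq6main}) is legitimate. A lift $\widetilde u\in\Kr_1(R[\widetilde G])$ of $u$ exists by Lemma \ref{leNew}. Since $u\in\SK_1(R[G])$ we have $\mathrm{Det}(u)=1$, so by naturality of $\upsilon'$ with respect to $\alpha$ the element $\upsilon'_{\widetilde G}(\widetilde u)$ lies in $\ker H_0(\alpha)=\overline H_0(\widetilde G,A_\alpha)$, and hence $\overline\tau_\alpha$ may be applied to it. By (\ref{eq6.65}), which rests on $\xi_G(u)=1$ (Lemma \ref{le72}), the element $\overline\xi_{\widetilde G}(\widetilde u)$ lies in $\ker H_1(\alpha)=\mathrm{Im}\,\iota$, so $\iota^{-1}$ is defined up to $\ker\iota=\mathrm{Im}\,\delta_\alpha$; after applying $\Psi-1$ the ambiguity becomes $\delta_\alpha\bigl((\Psi-1)H_2(G,R[G_r])\bigr)$, which is zero in the target group of (\ref{eq6.68}). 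Thus, once independence of the lift is shown,
\[
\overline\Theta_{R[G]}(u):=\overline\tau_\alpha\bigl(\upsilon'_{\widetilde G}(\widetilde u)\bigr)+(\Psi-1)\circ\iota^{-1}\bigl(\overline\xi_{\widetilde G}(\widetilde u)\bigr)
\]
is a well-defined element of the quotient of $K^{\rm ab}\otimes_{\Z[G]}R[G_r]$ appearing in (\ref{eq6.68}).

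Independence of the lift is the crux. Two lifts of $u$ differ by an element $v$ of $\ker(\alpha_*\colon\Kr_1(R[\widetilde G])\to\Kr_1(R[G]))$, which by Lemma \ref{le13} is generated by the image of $\GL(R[\widetilde G],A_\alpha)$ (the kernel of such an extension being a $p$-group, $A_\alpha$ lies in the Jacobson radical of $R[\widetilde G]$, so Lemma \ref{le13} applies). Hence it suffices to show $\overline\tau_\alpha(\upsilon'_{\widetilde G}(\mathrm{Det}\,v))+(\Psi-1)\iota^{-1}(\overline\xi_{\widetilde G}(v))=0$ for such $v$. For these, $\upsilon'_{\widetilde G}(\mathrm{Det}\,v)$ lifts canonically to $H_0(\widetilde G,A_\alpha)$ via $\partial^\alpha$ of (\ref{eq6.66}), and the left-hand square of (\ref{eq6.67}), combined with the exact sequence of Proposition \ref{pro75} for $\widetilde G$ (which expresses $\mathrm{Det}(R[\widetilde G]^\times)$ through $\upsilon'_{\widetilde G}\times\xi_{\widetilde G}\times\theta_{\widetilde G}$), yields the identity $\iota(\tau_\alpha(\text{this lift}))=-\,\overline\xi_{\widetilde G}(v)$ modulo the $\Psi$-correction; pushing this through the vertical maps of (\ref{eq6.68}) gives the cancellation. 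I expect this diagram chase — keeping the $\Psi$-twists and the various homology groups correctly aligned — to be the main obstacle; it is the analogue, over the present coefficient ring, of the computation on pp.~288--291 of [O5].

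It then remains to identify $\overline\Theta_{R[G]}$ with $\delta_\alpha$ composed with the isomorphism $\Theta_{R[G]}$ of Theorem \ref{thm7}. All ingredients are natural with respect to induction, so by the $\mathcal{E}_p(G)$-computability statements (Theorem \ref{thm53}, Lemmas \ref{le73} and \ref{LemmaNss6}, Proposition \ref{PropNss6e3}) together with $\SK_1(R[G])=\varinjlim_{H\in\mathcal{E}_p(G)}\SK_1(R[H])$, it suffices to check $\overline\Theta_{R[H]}=\delta_\alpha\circ\Theta_{R[H]}$ for $H$ a $\Q_p$-$p$-elementary group, and by Theorem \ref{thm58} (via the $H_0(A_m,-)$ decomposition of \S\ref{ss6a}) this reduces to $p$-groups. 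For a $p$-group $G$ one has $G_r=\{1\}$ and $R[G_r]=R$; the maps $\overline\tau_\alpha,\delta_\alpha,\iota$ degenerate to those of \S\ref{ss3c}, and Lemma \ref{le70} (giving $\xi_G=d\log$ and $(1-\Psi)\xi_G=\omega_G\circ\upsilon'_G\circ\mathrm{Det}$) lets one match (\ref{eq6main}) term by term with the snake-lemma construction (\ref{eq3.51}), exactly as in Theorem 3 of [O1]. Finally, since $\mathrm{Im}\,H_2(\alpha)\subset H_2^{\rm ab}(G,R[G_r])$ by hypothesis, the exact sequence (\ref{eq6.64}) shows $\delta_\alpha$ is injective on $\overline H_2(G,R[G_r])_\Psi$; hence $\Theta_{R[G]}=\delta_\alpha^{-1}\circ\overline\Theta_{R[G]}$ is given by (\ref{eq6main}). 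Independence of the choice of extension (\ref{eq6.63}) is then automatic, because $\Theta_{R[G]}$ of Theorem \ref{thm7} is intrinsic (alternatively, directly, from the naturality of $\overline\tau_\alpha,\delta_\alpha,\iota,\xi_{\widetilde G},\upsilon'_{\widetilde G}$ under morphisms of extensions).
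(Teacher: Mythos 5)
Your overall plan is the same as the paper's (this is the adaptation of Oliver's Theorem 12.9: check the terms are defined, establish independence of choices by a diagram chase, and prove the isomorphism property by reduction through $\Q_p$-$p$-elementary groups to $p$-groups via Theorem \ref{thm58}, Corollary \ref{cor57}, Theorem \ref{thm43} and the induction Theorem \ref{thm53}). But two specific points in your write-up are genuinely problematic. First, your parenthetical justification that the kernel $K$ of the extension (\ref{eq6.63}) is a $p$-group, so that $A_\alpha$ lies in the Jacobson radical and Lemma \ref{le13} applies, is not available: the extensions supplied by Lemma 8.3.iii of [O5] have no such property, and indeed Lemma \ref{leNew} exists in the paper precisely because $A_\alpha$ need not lie in the radical (if it did, lifting elements of $\SK_1(R[G])$ would be immediate). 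The fact you actually need --- that two lifts of $u$ differ by the class of an element of $\GL(R[\widetilde G],A_\alpha)$ --- does hold, but it comes from exactness of the relative sequence (\ref{eq2.5}) at $\Kr_1(R[\widetilde G])$, not from Lemma \ref{le13}.

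Second, you never show directly that the bracketed element $\overline{\tau}_\alpha\circ\upsilon'_{\widetilde G}(\widetilde u)+(\Psi-1)\circ\iota^{-1}(\overline{\xi}_{\widetilde G}(\widetilde u))$ lies in $\ker\iota=\mathrm{Im}\,\delta_\alpha$, which is what makes $\delta_\alpha^{-1}$ applicable; this is the central computation in the paper's proof, carried out by applying $\iota$, using $i\circ\tau_\alpha=\tau_{\widetilde G}$ from (\ref{eq6.67}) and the factorizations of $\tau_{\widetilde G}$ and $\overline{\xi}_{\widetilde G}$, and then invoking the identity $\omega_{\widetilde G}\circ\upsilon'_{\widetilde G}(\widetilde u)=(1-\Psi)\,\overline{\xi}_{\widetilde G}(\widetilde u)$ coming from the exact sequence of Proposition \ref{pro75} applied to $\widetilde u$ itself (not merely to the difference of two lifts). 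You propose to obtain this membership as a by-product of identifying the formula with the intrinsic isomorphism of Theorem \ref{thm7} by naturality and reduction to $p$-groups; but that reduction compares the formula for a $p$-subgroup computed with the restricted extension $\alpha^{-1}(P)\to P$, whose kernel is again not a $p$-group, against the construction of \ref{ss3c}, which used extensions of $p$-groups --- so the claimed ``degeneration'' to the maps of \ref{ss3c} already presupposes an independence-of-extension statement of the very kind being proved. The direct argument via Proposition \ref{pro75}, as in the paper, removes this circularity and should be inserted before any comparison with the abstract construction of \ref{ss6a}.
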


\begin{remark}
{\rm If $G$ is  a $p$-group, then $G_r=\{1\}$ and it is easily seen
that the above map $\Theta_{R[G]}$ coincides with the map defined in 3.c.}
\end{remark}

\begin{proof}
  First we observe that by the condition on the group extension the map
$\delta _{a}$ (in the diagram (\ref{eq6.68})) is a monomorphism: indeed using the
exact sequence
\begin{equation*}
H_{2}( \widetilde{G},R[ G_{r}] ) \xrightarrow{H_{2} (
\alpha ) } H_{2} ( G, R [ G_{r} ]  )
\xrightarrow{\delta _{\alpha }}  K^{\mathrm{ab}}\otimes_{\Z[G]} R [ G_{r} ]
\end{equation*}%
we see that the induced map (also denoted $\delta _{\alpha })$
\begin{equation*}
\delta _{\alpha }:\overline{H}_{2} ( G,R[ G_{r}])
\rightarrow \frac{K^{\mathrm{ab}}\otimes_{\Z[G]} R [ G_{r} ] }{\delta
_{a} ( H_{2}^{\mathrm{ab}} ( G,R [ G_{r} ]  ) ) }
\end{equation*}%
is injective and hence
\begin{equation*}
\delta _{\alpha }:\overline{H}_{2} ( G,R [ G_{r} ]  )
_{\Psi }\rightarrow \frac{K^{\mathrm{ab}}\otimes_{\Z[G]}  R [ G_{r} ] }{%
\delta _{a} ( H_{2}^{\mathrm{ab}} ( G,R [ G_{r} ] )
+ ( 1-\Psi  ) H_{2} ( G,R [ G_{r} ]  ) ) }
\end{equation*}%
is also injective.

Next we want to show that the terms inside the bracket on the right of
(\ref{eq6main}) are all defined and that
\begin{equation}
\overline{\tau }_{\alpha }\circ \upsilon' _{\widetilde{G}} ( \widetilde{u}%
 ) + ( \Psi -1 ) \circ \iota ^{-1} ( \overline{\xi }_{%
\widetilde{G}} ( \widetilde{u} )  ) \in \ker (i)=\mbox{\rm Im}(\delta
_{a}).
\end{equation}%

First note that $\tau _{G}$ and $\overline{\xi }_{\widetilde G}$ admit factorizations
\begin{eqnarray*}
\tau _{\widetilde{G}} &:&H_{0}  (\widetilde{G}, R [ \widetilde{G}_{r}])\xrightarrow {\omega _{\widetilde{G}}}H_{1}(
\widetilde{G}, R[ \widetilde{G}_{r}]) \xrightarrow{\alpha
_{\ast }} H_{1}( \widetilde{G},R[ G_{r}]) =H_{1}( G,R[ G_{r}]) \\
\overline{\xi }_{\widetilde{G}} &:&\mathrm{Det} ( R [ \widetilde{G}] ^{\times } ) \xrightarrow{\xi _{\widetilde{G}}}
H_{1} ( \widetilde{G},R [ \widetilde{G}_{r}]) \xrightarrow{
\alpha _{\ast }} H_{1} ( \widetilde{G},R[G_r]
 ) =H_{1} ( G,R[G_r]) .
\end{eqnarray*}
Next observe that as in (\ref{eq6.65})
\begin{equation*}
\overline{\xi }_{\widetilde{G}} ( \widetilde{u} ) \in \ker (
H_{1} ( \alpha ) :H_{1} ( \widetilde{G},R [ G_{r} ]
 ) \rightarrow H_{1} ( G,R[G_r]) ) =%
\mbox{\rm Im}(\iota) .
\end{equation*}%
and also note that, as $u\in \SK_{1}(R[G])$, we
have $\mathrm{Det}(u)=1$, and so $\upsilon _{G}^{\prime
} ( u ) =0$ and hence $u\in H_{0} ( G,A_{\alpha } )$. By
the diagram (\ref{eq6.67}) we know that $i\circ \tau _{\alpha }=\tau _{G}$ and so
applying $i$ to the term in the statement of the theorem we get
\begin{equation*}
i( \overline{\tau }_{\alpha }\circ \upsilon' _{\widetilde{G}%
} ( \widetilde{u} ) + ( \Psi -1 ) \circ \iota ^{-1} (
\overline{\xi }_{\widetilde{G}} ( \widetilde{u} ) )
 ) =\overline{\tau }_{\widetilde{G}}\circ \upsilon _{\widetilde{G}%
} ( \widetilde{u} ) + ( \Psi -1 ) ( \overline{\xi }_{%
\widetilde{G}}  ( \widetilde{u} )  ).
\end{equation*}
We claim that the latter term vanishes: by Proposition \ref{pro75} we know $\omega _{\widetilde{G}%
}\circ \upsilon _{\widetilde{G}}^{\prime } ( \widetilde{u} )
=\left( 1-\Psi \right) \overline{\xi }_{\widetilde{G}} ( \widetilde{u}%
 )$; then using the factorizations for $\tau _{\widetilde{G}}$
and $\overline{\xi }_{\widetilde{G}}$ above we see
\begin{equation*}
\overline{\tau }_{\widetilde{G}}\circ \upsilon' _{\widetilde{G}}(
\widetilde{u} ) + ( \Psi -1 ) ( \overline{\xi }_{%
\widetilde{G}} ( \widetilde{u}))=0
\end{equation*}%
as required.
 The fact that this value is independent of choices follows exactly as in the argument provided in the proof of Theorem 12.9 of [O5].

To conclude, we know from Theorem \ref{thm6} that $\Theta _{R[G] }$ is an
isomorphism if $G$ is a $p$-group. We now use Theorem \ref{thm58} and Corollary \ref{cor57}
to show that
$\Theta _{R[G] }$ is an isomorphism if $G$ is $\Q_{p}$-$p$-elementary. The functoriality of   $\Theta _{R[G] }$ together
with the induction Theorem \ref{thm53} will then show that $\Theta _{R[G]
}$ is an isomorphism, in all cases, which agrees with our construction in \S \ref{ss6a}.

Suppose now that $G$ is $\Q_{p}$-$p$-elementary  and we yet again
adopt the notation of Sect. \ref{s5}. By Theorem \ref{thm58} and Corollary \ref{cor57} we then have isomorphisms%
\begin{equation*}
\SK_{1}( R[G] ) \cong \oplus _{m}\SK_{1}( R[m
] \circ P, I_P) \cong \oplus _{m}H_{0}( A_{m},\SK_{1}( R
[m] [H_{m}]) )
\end{equation*}%
and
\begin{equation*}
\overline{H}_{2}( G,R[G_{r}] ) _{\Psi }=\overline{H}%
_{2}( G,R[C] ) _{\Psi }\cong \oplus _{m}\overline{H}%
_{2}( G,R[m] ) _{\Psi }\cong \oplus _{m}H_{0}(
A_{m},\overline{H}_{2}( H_{m},R[m] ) ) _{\Psi }.
\end{equation*}%
The latter isomorphism comes from the composition of functors
spectral sequence for covariants of $G$ as covariants of $H_{m}$
 followed by covariants of  $A_{m}$  and the fact that $R[m] $ is  $A_{m}$-free
 with trivial $H_m$-action.
 The result then
follows from the fact that by Theorem \ref{thm43}, $\Theta_{R[m][H_m]}$ yields a functorial
isomorphism from $\SK_{1}( R[m][H_{m}]) $ to $\overline{H}_{2}( H_{m},R[m] )_{\Psi }$ for each $m$.
\end{proof}

 \bigskip
\bigskip

\section{Appendix A: Adams operations}\label{s7}
\setcounter{equation}{0}

Throughout this Appendix we assume that $R$ satisfies the conditions imposed in \S \ref{ss6c}. The proof of Theorem \ref{thm62} follows the proof of Theorem 1 in [CNT]
and the proof of Theorem 1.2 in [T]. There is one crucial difference between
these proofs and the proof that we now give for the much more general rings $R$;
this occurs in the special case when $G$ is a $p$-group. As in [T]
our proof proceeds in five steps. Four steps proceed essentially in the same
manner as in [CNT] and [T], and so in these cases we often refer the reader
to [T] for details; the fourth step is the case where $G$ is a $p$-group: this is considerably more involved and it  is dealt   with in full detail.

For a finite non-ramified extension $L$ of $\Q_{p}$ we again set $%
R_{L}=R\otimes _{\Z_{p}}\O_{L}$. Note that if $R$ satisfies the standing hypotheses
and the additional above hypothesis, then so does $R_{L}$.

For an integer $h$ we define
\begin{equation}\label{eq7.73}
M_{h}( R[G] ) =\frac{\psi ^{h}( \mathrm{Det}%
( R[G] ^{\times }) ) \mathrm{Det}( R[
G] ^{\times }) }{\mathrm{Det}( R[G] ^{\times
}) }.
\end{equation}%
In order to prove Theorem \ref{thm62} it will suffice to show that $M_{h}( R%
[G] ) =\{1\}$. Our proof proceeds in five steps:\medskip

\noindent\textbf{Step 1.}

\begin{lemma}\label{le79}
If $G$ has order prime to $p,$ then $M_{h}( R[G] )
=\{1\}$ for all $h$.
\end{lemma}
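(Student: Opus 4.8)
The plan is to reduce the statement $M_h(R[G])=\{1\}$ to the inclusion $\psi^h\bigl(\mathrm{Det}(R[G]^\times)\bigr)\subseteq\mathrm{Det}(R[G]^\times)$ (which is clearly equivalent, since $\psi^h$ is a group endomorphism of $\mathrm{Det}(N^c[G])$ and $M_h(R[G])$ is exactly the obstruction to this inclusion), and then to deduce that inclusion from an explicit description of $\mathrm{Det}(R[G]^\times)$ that is available because $R[G]$ is a separable order when $p\nmid|G|$. Indeed, $|G|$ is then an integer prime to $p$, hence a unit of $\Z_p\subseteq R$ (recall $R$ is $p$-adically complete of characteristic zero), so $|G|\in R^\times$; the standard separability idempotent $\tfrac1{|G|}\sum_{g\in G}g\otimes g^{-1}$ then shows that $R[G]$ is a separable $R$-algebra, in particular a maximal $R$-order in the semisimple algebra $N[G]$. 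Lifting idempotents from the semisimple ring $\overline R[G]=(R/pR)[G]$ one writes $R[G]=\prod_i\Lambda_i$ with each $\Lambda_i$ a maximal $R$-order in a simple component $A_i$ of $N[G]$; after the unramified base change to $R\otimes_{\Z_p}\Z_p[\mu_{|G|}]$, which by Lemma 6.1 of [CPT1] is a finite product of rings still satisfying the Standing Hypotheses, each $A_i$ splits and each $\Lambda_i$ becomes a product of full matrix rings over those components.

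Next I would invoke the Fr\"ohlich-type description of determinants of units of such maximal orders: using the Standing Hypotheses, and in particular the vanishing of $\SK_1$ for the relevant rings with unramified coefficients, which forces the reduced norms to surject onto the expected unit groups, one identifies $\mathrm{Det}(R[G]^\times)$ with the group of $\Gal(N^c/N)$-equivariant homomorphisms from the ring $\mathrm{Ch}(G)$ of virtual $N^c$-valued characters of $G$ into $\O_{N^c}^\times$ which are compatible with the block decomposition $R[G]=\prod_i\Lambda_i$. This is precisely the statement established in [CNT] and in Section 9 of [T] for $R$ the valuation ring of a finite non-ramified extension of $\Q_p$, and the point is that every ingredient of that argument (the product decomposition of $R\otimes_{\Z_p}\Z_p[\mu_m]$, $p$-adic completeness, and the surjectivity of the appropriate norm and reduced-norm maps) remains available over the present $R$, so the proof carries over with only notational changes.

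Finally I would observe that the $h$-th Adams operation $\chi\mapsto\psi^h\chi$, with $\psi^h\chi(g)=\chi(g^h)$, is a ring endomorphism of $\mathrm{Ch}(G)$ (it sends virtual characters to virtual characters, by Newton's identities), that it commutes with the action of $\Gal(N^c/N)$ on $\mathrm{Ch}(G)$ (since $\sigma(\chi(g^h))=\chi^\sigma(g^h)$ for every $\sigma$), and that it respects the block decomposition because $\psi^h\chi$ is again an honest virtual character. Hence precomposition with $\psi^h$ carries $\Gal(N^c/N)$-equivariant, $\O_{N^c}^\times$-valued, block-compatible character functions to functions of exactly the same type, and by the preceding paragraph this means $\psi^h\bigl(\mathrm{Det}(R[G]^\times)\bigr)\subseteq\mathrm{Det}(R[G]^\times)$, i.e. $M_h(R[G])=\{1\}$.

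The \emph{main obstacle} is the second paragraph: making precise, and justifying over the general ring $R$ rather than over a discrete valuation ring, the maximal-order description of $\mathrm{Det}(R[G]^\times)$. Everything else is formal once that description is in hand; this is the step where the Standing Hypotheses (and, if needed, the additional hypotheses of \S\ref{ss6c}) genuinely intervene, via the structure of $R\otimes_{\Z_p}\Z_p[\mu_{|G|}]$ and the $\SK_1$-vanishing that controls the reduced norms.
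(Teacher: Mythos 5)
Your proposal is correct and follows essentially the same route as the paper: decompose $R[G]$ as a product of matrix rings over the commutative rings $R\otimes_{\Z_p}\O_i$ with $\O_i$ non-ramified (the paper gets this by tensoring the decomposition of $\Z_p[G]$ with $R$), identify $\mathrm{Det}(R[G]^{\times})$ with $\mathrm{Hom}_{\Omega_p}(\Kr_0(\Q_p^c[G]),(R\otimes_{\Z_p}\O^c)^{\times})$ as in [F, Prop.~22], and observe that $\psi^h$ commutes with the Galois action. The only comment is that your ``main obstacle'' is lighter than you suggest: because each block is a matrix ring over a commutative ring, $\mathrm{Det}$ is the ordinary determinant and surjects onto the unit group for trivial reasons, so no appeal to $\SK_1$-vanishing or reduced-norm surjectivity is needed --- which is exactly why the paper's proof of this step is only a few lines.
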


\begin{proof} Since $G$ has order prime to $p$  we have isomorphisms%
\begin{eqnarray*}
\Z_{p}[G] &\cong &\prod\nolimits_{i}M_{n_{i}}(
\O_{i}) \\
R[G] &\cong &\prod\nolimits_{i}M_{n_{i}}( R\otimes_{\Z_p}
\O_{i})
\end{eqnarray*}
for some non-ramified rings of  $p$-adic integers $\O_{i}$.  Let $\O^{c}$
denote the valuation ring of the chosen algebraic closure $\Q_{p}^{c} $ of $\Q_{p}$ and set $\Omega _{p}=\mathrm{Gal}(
\Q_{p}^{c}/\Q_{p})$. Then, as in Proposition 22 on
page 23 of [F], we have the isomorphism
\begin{equation*}
\mathrm{Det}( R[G] ^{\times }) =\mathrm{Hom}_{\Omega
_{p}}( \Kr_{0}( \Q_{p}^{c}[G] ) ,(
R\otimes_{\Z_p} \O^{c}) ^{\times })
\end{equation*}%
and the right-hand side is clearly stable under $\psi ^{h}$ for any integer $%
h,$ since the actions of $\psi ^{h}$ and $\Omega _{p}$ commute.
\end{proof}
\medskip

\noindent\textbf{Step 2.}

\begin{proposition}\label{pro80}
If $G$ is $\Q_{p}$-$l$-elementary with $l\neq p$, then $M_{h}( R[G] ) $ is killed by a power of $p$.
\end{proposition}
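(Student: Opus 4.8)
The plan is to reduce to the situation already understood for $p$-groups and then use an averaging (norm) argument to bound the exponent. Write $G = (C' \times C_{p^m}) \rtimes L$ where $L$ is an $l$-group, $C_{p^m}$ is cyclic of order $p^m$, and $C'$ is cyclic of order prime to $lp$; put $G' = C' \rtimes L$ so that the natural split surjection $G \to G'$ exhibits $G'$ as the maximal quotient of $G$ of order prime to $p$. Since $|G'|$ is prime to $p$, Step 1 (Lemma \ref{le79}) gives $M_h(R[G']) = \{1\}$, i.e. $\psi^h$ preserves $\mathrm{Det}(R[G']^\times)$ exactly. The first task is therefore to control the ``difference'' between $\mathrm{Det}(R[G]^\times)$ and $\mathrm{Det}(R[G']^\times)$; as in the Second Step of the proof of Proposition \ref{PropNss6e3}, the kernel of $\mathrm{Det}(R[G]^\times) \to \mathrm{Det}(R[G']^\times)$ is $\mathrm{Det}(1+(1-c)R[G])$ for $c$ a generator of $C_{p^m}$, and one filters this subgroup by powers of $(1-c)$, whose successive quotients are built from $\overline{R}[G']$-type modules on which $\psi^h$ acts in an understood way. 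So the whole question is pushed onto the $p$-part, and one wants to say that $\psi^h$ ``almost'' preserves this $p$-primary piece.

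The second and main task is to invoke the $p$-group case. Here is where one uses Brauer induction: by Theorem \ref{thm49} applied to the prime $l$ (or directly to the pair $(\chi, H)$), every character of $G$ can be written as an integral combination of characters induced from subgroups $H$ containing $C' \times C_{p^m}$ on which the relevant abelian character lives, and the $p$-group $C_{p^m}\rtimes(\text{something})$ appears. More usefully, one should exploit that the $p$-Sylow structure of $G$ is itself $\mathbb{Q}_p$-$l$-elementary of a controlled shape, and combine: using Lemma \ref{le64} ($\psi^n$ commutes with induction on determinants), if $m\cdot 1_G = \sum_H n_H \mathrm{Ind}_H^G(\theta_H)$ with $m$ prime to $p$ and the $H$ ranging over subgroups whose ``$p$-group part'' is accessible, then for $\mathrm{Det}(x)\in\mathrm{Det}(R[G]^\times)$ one gets
\begin{equation*}
m\cdot\psi^h\mathrm{Det}(x) = \sum_H n_H\,\mathrm{Ind}_H^G\big(\psi^h(\theta_H\cdot\mathrm{Res}_H^G\mathrm{Det}(x))\big).
\end{equation*}
Each inner term lies in $\mathrm{Det}(R[H]^\times)$ once we know $M_{h'}(R[H])$ is killed by a $p$-power for all the relevant subgroups $H$ and all exponents $h'$; and since the $H$ arising are (up to the prime-to-$p$ part, handled by Step 1) essentially $p$-groups, Theorem \ref{thm62} for $p$-groups — proved in Step 4 of the Appendix — applies to give $M_{h'}(R[H])=\{1\}$. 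Thus $m\cdot\psi^h\mathrm{Det}(x)\in\mathrm{Det}(R[G]^\times)$, and since $m$ is prime to $p$ this forces $\psi^h\mathrm{Det}(x)$ into $\mathrm{Det}(R[G]^\times)$ up to a $p$-power of the class, i.e. $M_h(R[G])$ is killed by a power of $p$.

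The cleanest organization is probably to go through the split quotient $G\to G'$ and argue by induction on $m$ (the $p$-part exponent): the base case $m=0$ is Step 1, and for the inductive step one uses the filtration of $\mathrm{Det}(1+(1-c)R[G])$ together with the $p$-group result applied to the Sylow-type subextensions, exactly paralleling the Second Step of Proposition \ref{PropNss6e3} but keeping track of exponents rather than proving an identity. The hard part will be Step 4 — the $p$-group case of Theorem \ref{thm62} itself, which the excerpt explicitly flags as ``considerably more involved'' and defers to the Appendix, relying crucially on the norm compatibility of Theorem \ref{thm63}; for the present Proposition \ref{pro80} that case is available as a black box, so the only real work here is the bookkeeping of Brauer induction plus the filtration argument, neither of which introduces a genuine obstruction. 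One should be slightly careful that the subgroups $H$ produced by Brauer induction genuinely have the property that their determinant Adams operations are understood — this is where passing through the prime-to-$p$ quotient and Step 1 is used a second time — but this is routine given the machinery already set up.
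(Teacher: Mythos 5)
Your argument has two serious problems. First, it is circular relative to the architecture of the paper: you invoke the $p$-group (and $\Q_{p}$-$p$-elementary) case of Theorem \ref{thm62} ``as a black box'' to prove Proposition \ref{pro80}, but in the paper the $p$-group case (Step 3) itself depends on Proposition \ref{pro80}. Indeed, Step 3 uses Lemma \ref{le82}, whose proof rests on Proposition \ref{pro78}(a) --- the statement that $M_{h}(R[G])$ is $p$-power torsion for arbitrary $G$ --- and the paper stresses that this ``only uses Step 2,'' i.e.\ only the present proposition applied to $\Q_p$-$l$-elementary subgroups for the primes $l\neq p$. So the $p$-group case is not available when Proposition \ref{pro80} is being proved; using it here collapses the induction.

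Second, even granting that black box, your Brauer-induction bookkeeping proves the wrong divisibility. Writing $m\cdot 1_G=\sum_H n_H\,\mathrm{Ind}_H^G(\theta_H)$ with $m$ prime to $p$ and $H$ ranging over $\Q_p$-$p$-elementary subgroups, and knowing $M_{h'}(R[H])=\{1\}$ for those $H$, yields $\psi^h\mathrm{Det}(x)^m\in\mathrm{Det}(R[G]^\times)$, i.e.\ the class of $\psi^h\mathrm{Det}(x)$ in $M_h(R[G])$ is killed by an integer \emph{prime to} $p$ --- that is exactly Proposition \ref{pro78}(b), not the assertion here. Your final inference ``since $m$ is prime to $p$ this forces \dots killed by a power of $p$'' is a non sequitur; to get $p$-power annihilation by induction one would need the multipliers to avoid every prime $l\neq p$, which is what Step 5(a) extracts \emph{from} Proposition \ref{pro80}, not the other way round. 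The paper's own proof is simply a citation to Proposition 2.5 on p.\ 105 of [T], where the argument is a direct congruence argument special to $l\neq p$: after splitting off the cyclic $p$-part of $G=(C'\times C_{p^m})\rtimes L$, one shows that $\psi^h\mathrm{Det}(x)$ differs from an honest determinant by an element congruent to $1$ modulo the radical, hence lying in a pro-$p$ group, which gives killing by a power of $p$ without any appeal to the $p$-group case. Your ``first task'' (the filtration of $\mathrm{Det}(1+(1-c)R[G])$) is the germ of that correct argument, but you abandon it in favour of the induction step that cannot deliver the stated conclusion.
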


\begin{proof} See the proof of Proposition 2.5 on page 105 of [T].
\end{proof}
 \medskip

\noindent\textbf{Step 3.} Here we suppose that $G$ is a $p$-group. Recall that we have the natural
decomposition
\begin{equation*}
\mathrm{Det}( R[G] ^{\times }) =\mathrm{Det}(
1+I( R[G] ) ) \times \mathrm{Det}(
R^{\times }) .
\end{equation*}
The factor $\mathrm{Det}( R^{\times }) $ is clearly stable under
Adams operations, and so it will suffice to show that $\mathrm{Det}(
1+I( R[G] ) ) $ is stable under Adams
operations. For future use we note the isomorphism
\begin{eqnarray}\label{eq7.74}
M_{h}( R[G] ) &=&\frac{\psi ^{h}( \mathrm{Det}
( R[G] ^{\times }) ) \mathrm{Det}( R [
G] ^{\times }) }{\mathrm{Det}( R[G] ^{\times
}) } \\
&\cong &\frac{\psi ^{h}( \mathrm{Det}( 1+I( R[G]
) ) ) \mathrm{Det}( 1+I( R[G]
) ) }{\mathrm{Det}( 1+I( R[G] )
) }.\notag
\end{eqnarray}%
We shall also use the exact sequence%
\begin{equation*}
0\rightarrow p\phi ( \mathcal{A}( R[G] ) )
\rightarrow \mathrm{Det}( 1+I( R[G] ) )
\rightarrow 1+I( R[G^{\rm ab}] ) \rightarrow 1
\end{equation*}
(see (3.8) in [CNT1]). Given $x\in 1+I( R[G] )$,
because $g\mapsto g^{h}$ induces an endomorphism of $R[G^{\rm ab}]$,
 because $\mathcal{A}( R[G] ) \subset I( R%
[G] )$, and because $I( R[G] ) $
is contained in the Jacobson radical of $R[G] ,$ we see that if
we set $x=\sum\nolimits_{g}x_{g}g$ and $y=\sum\nolimits_{g}x_{g}g^{h}\in
1+I( R[G] )$ and if we put
\begin{equation*}
\gamma \overset{\rm defn}{=}\psi ^{h}(\mathrm{Det}( x))\cdot
\mathrm{Det}( y)^{-1},
\end{equation*}
then $\gamma$ is trivial on all abelian characters of $G$.

\begin{lemma}\label{le81}
We have
$\nu _{G}( \gamma ) \in p\phi ( \mathcal{A}( R[G%
] ) ) =\nu _{G}( \mathrm{Det}( 1+\mathcal{A}%
( R[G] ) )   )$.
\end{lemma}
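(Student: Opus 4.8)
The plan is first to dispose of the displayed equality and then to pin down the meaning of $\nu_G(\gamma)$ and evaluate it on characters. The equality $p\phi(\mathcal{A}(R[G])) = \nu_G(\mathrm{Det}(1+\mathcal{A}(R[G])))$ is immediate from Corollary \ref{cor21}, which gives $\mathcal{L} = \nu_G\circ\mathrm{Det}$ on $1+I(R[G])$, together with Theorem \ref{thm23}, which gives $\mathcal{L}(1+\mathcal{A}(R[G])) = p\phi(\mathcal{A}(R[G]))$. For the containment I must first make sense of $\nu_G(\gamma)$, since $\gamma = \psi^h(\mathrm{Det}(x))\cdot\mathrm{Det}(y)^{-1}$ is not a priori known to lie in $\mathrm{Det}(R[G]^{\times})$ (that is precisely what Theorem \ref{thm62} asserts). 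The point is that $\nu_G$, in the form $d\mapsto\phi\circ\log(d^p/\Psi(d))$, extends to the group of character functions which are congruent to $1$ modulo the Jacobson radical in each component and remains a homomorphism there; moreover $\psi^h$ commutes with each of $\phi$, $\log$, $\Psi$ and $F$ in the evident sense, so that, writing $x = \sum_g x_g g$ and $y = \sum_g x_g g^h$, one obtains $\nu_G(\gamma) = \psi^h(\mathcal{L}(x)) - \mathcal{L}(y)$ in $N[C_G]$, where $\psi^h$ acts on $R[C_G]$ as the $R$-linear map sending the class of $g$ to the class of $g^h$.

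Granting this, two observations finish the proof. First, $\nu_G(\gamma)\in pR[C_G]$: by Theorem \ref{thm22} both $\mathcal{L}(x)$ and $\mathcal{L}(y)$ lie in $p\phi(I_G)\subset pR[C_G]$, and the $R$-linear map $\psi^h$ preserves $pR[C_G]$. Second, $\chi(\nu_G(\gamma)) = 0$ for every abelian character $\chi$ of $G$: the evaluation formula recorded in Lemma \ref{le20} (equivalently, the defining formula for $\nu_G$) gives $\chi(\nu_G(\gamma)) = \log\bigl[\gamma(p\chi)\cdot\gamma^F(-\psi^p\chi)\bigr]$, and since $\chi$ is one-dimensional both $p\chi$ and $\psi^p\chi$ are sums of abelian characters; as $\gamma$ is trivial on all abelian characters we get $\gamma(p\chi) = \gamma^F(-\psi^p\chi) = 1$, hence $\chi(\nu_G(\gamma)) = 0$. (Alternatively, one can expand $(\psi^h\chi)(\mathcal{L}(x))$ and $\chi(\mathcal{L}(y))$ via Lemma \ref{le20} and check directly that both equal $\log\bigl[(\sum_g x_g\chi(g)^h)^p\cdot(\sum_g F(x_g)\chi(g)^{ph})^{-1}\bigr]$.)

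Therefore $\nu_G(\gamma)$ lies in the kernel of the natural surjection $N[C_G]\to N[G^{\mathrm{ab}}]$, which equals $\phi(\mathcal{A}(R[G]))\otimes_R N$ because the $R$-linear surjection $R[C_G]\to R[G^{\mathrm{ab}}]$ splits, its kernel being $\phi(\mathcal{A}(R[G]))$. Intersecting with $pR[C_G]$ and using a splitting $R[C_G] = \phi(\mathcal{A}(R[G]))\oplus S$ with $S\cong R[G^{\mathrm{ab}}]$ free, one gets $pR[C_G]\cap(\phi(\mathcal{A}(R[G]))\otimes_R N) = p\phi(\mathcal{A}(R[G]))$, so $\nu_G(\gamma)\in p\phi(\mathcal{A}(R[G]))$, as required. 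I expect the only genuine work to be in the first paragraph: setting up the extension of $\nu_G$ and of the Lemma \ref{le20} formula to the character function $\gamma$, i.e. the bookkeeping that the Adams operation $\psi^h$ commutes past $\phi$, $\log$, $\Psi$ and $F$; once that is in place the rest is the short evaluation-on-abelian-characters argument above.
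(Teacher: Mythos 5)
Your proposal is correct and follows essentially the same route as the paper: evaluation on characters shows $\nu_G(\psi^h\mathrm{Det}(x))=\psi^h(\mathcal{L}(x))$ (so $\nu_G(\gamma)\in pR[C_G]$ by Theorem \ref{thm22}), vanishing on abelian characters places $\nu_G(\gamma)$ in $\phi(\mathcal{A}(R[G]))\otimes N$, the intersection of the two gives $p\phi(\mathcal{A}(R[G]))$, and the displayed equality is Theorem \ref{thm23}. Your intersection/splitting step is just a slightly more explicit version of the paper's final sentence, so no genuinely different ideas are involved.
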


\begin{proof} Since for an abelian character $\chi $ of $G$ we have%
\begin{equation*}
\chi ( \nu _{G}( \gamma ) ) =\log ( \gamma (
p\chi -\psi ^{p}\chi ) ) =0,
\end{equation*}%
it follows that $\nu _{G}( \gamma ) \in p\phi ( \mathcal{A}%
( R[G] ) ) \otimes \Q_{p}$. Let $\nu
_{G}( \mathrm{Det}( x) ) =\sum\nolimits_{c\in
C_{G}}\lambda _{c}c$ with $\lambda _{c}\in pR$ by Theorem \ref{thm22}.
We now show that \begin{equation*}
\nu _{G}( \psi ^{h}\mathrm{Det}( x) )
=\sum\nolimits_{c\in C_{G}}\lambda _{c}c^{h}.
\end{equation*}
 This follows from the fact that, for each character $\chi $ of $G$, we
have the two equalities
\begin{eqnarray*}
\chi ( \nu _{G}( \psi ^{h}\mathrm{Det}( x) )
) &=&\log ( ( \psi ^{h}\mathrm{Det}( x) )
( p\chi -\psi ^{p}\chi ) ) \\
&=&\log ( \mathrm{Det}( x) ( p\psi ^{h}\chi -\psi
^{p}\psi ^{h}\chi ) ) =\log ( \mathrm{Det}( x)
( p\psi ^{h}\chi -\psi ^{ph}\chi ) )
\end{eqnarray*}%
and%
\begin{eqnarray*}
\chi ( \sum\nolimits_{c\in C_{G}}\lambda _{c}c^{h}) &=&\psi
^{h}\chi ( \sum\nolimits_{c\in C_{G}}\lambda _{c}c)
\\
&=&\psi
^{h}\chi ( \nu _{G}( \mathrm{Det}( x) ) )
=\log ( \mathrm{Det}( x) ( p\psi ^{h}\chi -\psi
^{ph}\chi ) ) .\ \ \ \
\end{eqnarray*}
This shows that $\nu _{G}( \gamma ) \in pR[C_{G}] $
and we have seen that $p\phi ( \mathcal{A}( R[G]
) ) \otimes \Q_{p}$ and so $\nu _{G}( \gamma
) \in p\phi ( \mathcal{A}( R[G] ) )$. To conclude we note that by Theorem \ref{thm23} we know that $p\phi (
\mathcal{A}( R[G] ) ) =\nu _{G}( \mathrm{Det}( 1+\mathcal{A}( R[G] ) ) )$.
\end{proof}

\begin{lemma}\label{le82}
The group
\begin{equation*}
\ker ( \nu_G :\psi ^{h}\mathrm{Det}( 1+I( R[G]
) ) \cdot\mathrm{Det}( 1+I( R[G] )
) \rightarrow N[C_{G}] )
\end{equation*}
is $p$-power torsion.
\end{lemma}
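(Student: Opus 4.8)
The plan is to reduce the statement to the assertion that the group $M_{h}(R[G])$ of (\ref{eq7.74}) is annihilated by a power of $p$, and then to prove that assertion. Put $A=\psi^{h}\mathrm{Det}(1+I(R[G]))\cdot\mathrm{Det}(1+I(R[G]))$. Here $\nu_{G}$ is defined on $A$ by the formula $\chi(\nu_{G}(d))=\log\bigl(d(\chi)^{p}\,\Psi(d)(\chi)^{-1}\bigr)$ (which converges for every $d\in A$, since each $d(\chi)\equiv 1$ modulo the maximal ideal), extending the homomorphism of Corollary \ref{cor21}; and, as recorded in the proof of Lemma \ref{le81}, one has $\nu_{G}\circ\psi^{h}=\sigma_{h}\circ\nu_{G}$, where $\sigma_{h}\colon R[C_{G}]\to R[C_{G}]$ is the $R$-linear map induced by $c\mapsto c^{h}$ on conjugacy classes. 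Since $\sigma_{h}$ carries $\phi(I_{G})$ into itself, Theorem \ref{thm22} gives $\nu_{G}(A)\subseteq p\phi(I_{G})\subseteq pR[C_{G}]$, which is $\Z$-torsion free because $R$ is a domain of characteristic zero; hence $A/\ker(\nu_{G}|_{A})$ is torsion free.

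I would then establish two facts. First, $A=\ker(\nu_{G}|_{A})\cdot\mathrm{Det}(1+I(R[G]))$: the group $A$ is generated by $\mathrm{Det}(1+I(R[G]))$ and the elements $\psi^{h}\mathrm{Det}(x)$ with $x=\sum_{g}x_{g}g\in 1+I(R[G])$, and for such an $x$, writing $y=\sum_{g}x_{g}g^{h}\in 1+I(R[G])$ and $\gamma=\psi^{h}\mathrm{Det}(x)\cdot\mathrm{Det}(y)^{-1}$, Lemma \ref{le81} together with Theorem \ref{thm23} produces a $\delta\in\mathrm{Det}(1+\mathcal{A}(R[G]))\subseteq\mathrm{Det}(1+I(R[G]))$ with $\nu_{G}(\delta)=\nu_{G}(\gamma)$, so that $\gamma\delta^{-1}\in\ker(\nu_{G}|_{A})$ and $\psi^{h}\mathrm{Det}(x)=(\gamma\delta^{-1})(\delta\,\mathrm{Det}(y))\in\ker(\nu_{G}|_{A})\cdot\mathrm{Det}(1+I(R[G]))$. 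Second, $\ker(\nu_{G}|_{A})\cap\mathrm{Det}(1+I(R[G]))=\ker(\nu_{G}|_{\mathrm{Det}(1+I(R[G]))})=\mathrm{Det}(G)$, which follows from the exact sequence (\ref{eq3.11}) since $\upsilon=p^{-1}\nu_{G}$; and $\mathrm{Det}(G)$ is a finite $p$-group. Combining the two, the quotient map $A\to M_{h}(R[G])$ of (\ref{eq7.74}) restricts to a surjection $\ker(\nu_{G}|_{A})\twoheadrightarrow M_{h}(R[G])$ with kernel the finite $p$-group $\mathrm{Det}(G)$; thus $\ker(\nu_{G}|_{A})$ is $p$-power torsion precisely when $M_{h}(R[G])$ is.

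It remains to prove that $M_{h}(R[G])$ is $p$-power torsion. If $G$ is abelian this is immediate, since the assignment $g\mapsto g^{h}$ extends to a ring endomorphism $\alpha_{h}$ of $R[G]$ which preserves $I(R[G])$ and satisfies $\psi^{h}\mathrm{Det}(z)=\mathrm{Det}(\alpha_{h}(z))$ for all $z\in R[G]^{\times}$, whence $\psi^{h}\mathrm{Det}(1+I(R[G]))\subseteq\mathrm{Det}(1+I(R[G]))$ and $M_{h}(R[G])=\{1\}$. For $G$ non-abelian I would argue by induction on $|G|$, following the $p$-group case of the proof of Theorem 1.2 in [T]: choose a central commutator $c$ of order $p$, set $\overline{G}=G/\langle c\rangle$, and compare $M_{h}(R[G])$ with $M_{h}(R[\overline{G}])$ by means of the filtration of $1+(1-c)R[G]$ by the powers of $(1-c)$ and the logarithmic lemmas of \S\ref{sslog}. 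Granting the inductive hypothesis for $\overline{G}$, Lemma \ref{le81} confines the kernel of the natural surjection $M_{h}(R[G])\to M_{h}(R[\overline{G}])$ to a $p$-power torsion subquotient arising from $p\phi((1-c)R[G])$ modulo the image of a group logarithm; iterating down a central series of $G$ then bounds the exponent of $M_{h}(R[G])$ by a power of $p$. This induction is the substantive point and the main obstacle, the remainder being formal.
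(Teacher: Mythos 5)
Your first two paragraphs are correct and are, in substance, the paper's own argument made explicit: the identification $\ker(\nu_G|_{\mathrm{Det}(1+I(R[G]))})=\mathrm{Det}(G)$ from (\ref{eq3.11}), the factorization $A=\ker(\nu_G|_A)\cdot\mathrm{Det}(1+I(R[G]))$ via Lemma \ref{le81} and Theorem \ref{thm23}, and the resulting extension of $M_h(R[G])$ by the finite $p$-group $\mathrm{Det}(G)$, reducing the lemma to the assertion that $M_h(R[G])$ is killed by a power of $p$.

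That assertion, however, is exactly where your proposal stops short, and this is a genuine gap. The paper obtains it from Proposition \ref{pro78}(a) (Step 5(a)), i.e.\ by Brauer induction over the $\Q_p$-$l$-elementary subgroups for primes $l\neq p$ combined with Proposition \ref{pro80} (Step 2); since Proposition \ref{pro78}(a) uses only Step 2, it may legitimately be cited inside Step 3 where $G$ is a $p$-group. Your alternative plan --- induction on $|G|$ through a central commutator $c$, comparing $M_h(R[G])$ with $M_h(R[\overline{G}])$ via the $(1-c)$-filtration and the logarithmic lemmas --- is only sketched, and as sketched it does not close: Lemma \ref{le81} (and, relative to $c$, Lemmas \ref{le24} and \ref{le26}) controls only $\nu_G(\gamma)$, so for an element $\gamma$ of the kernel of $M_h(R[G])\to M_h(R[\overline{G}])$ it at best produces a genuine determinant $\mathrm{Det}(w)$, $w\in 1+(1-c)R[G]$, with $\nu_G(\mathrm{Det}(w))=\nu_G(\gamma^{p^a})$ for some small $a$. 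To conclude anything about the class of $\gamma$ in $M_h(R[G])$ you must then dispose of the discrepancy $t=\gamma^{p^a}\mathrm{Det}(w)^{-1}$, which lies precisely in $\ker(\nu_G)$ on the enlarged group $\psi^h\mathrm{Det}(1+I(R[G]))\cdot\mathrm{Det}(1+I(R[G]))$ --- that is, in the group whose $p$-power-torsionness is the statement of Lemma \ref{le82} itself. So the proposed $p$-group induction is circular unless an independent bound on that kernel is supplied; the paper avoids this by deriving the bound on $M_h$ from the prime-to-$p$ (Brauer induction) side rather than from within the $p$-group. The appeal to the $p$-group case of Theorem 1.2 in [T] does not rescue this: there, as here, the $p$-group step is the one that \emph{consumes} the torsion bound (via the torsion group $T_R$ and the norm/non-ramified base-change argument using Theorem \ref{thm63}), not the one that produces it.
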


\begin{proof} In (\ref{eq3.11})   we have seen that
\begin{equation*}
\ker ( \nu_G:\mathrm{Det}( 1+I( R[G] )
) \rightarrow pR[C_{G}] ) =\mathrm{Det}(
G)
\end{equation*}%
which is $p$-power torsion. The result then follows from part (a) of
Proposition \ref{pro78} in Step 5 (which only uses Step 2)
that $M_{h}( R[G] )$ is $p$-power torsion.
\end{proof}
\smallskip

With the above notation we again consider $\gamma =\psi ^{h}(\mathrm{Det}%
( x)) \cdot \mathrm{Det}( y) ^{-1}$. By Lemma \ref{le81} we know
that $\nu_G ( \gamma ) \in p\phi ( \mathcal{A}) =\mathrm{
Det}( 1+\mathcal{A}) $ and so by Lemma \ref{le82} we know that we can
write
\begin{equation*}
\gamma =\mathrm{Det}( z)\cdot t
\end{equation*}
with $z\in 1+{\mathcal A}$, and
\begin{equation*}
t\in \ker ( \nu_G :\psi ^{h}(\mathrm{Det}( 1+I( R[G]
) )\cdot\mathrm{Det}( 1+I( R[G] )
) \rightarrow N[C_{G}] )
\end{equation*}
so that by Lemma \ref{le82} $t$ is $p$-power torsion and is trivial on abelian
characters of $G$. Therefore we may write
\begin{equation*}
\psi ^{h}(\mathrm{Det}( x)) =\mathrm{Det}( zy)\cdot t.
\end{equation*}%
Thus, to prove that $M_{h}( R[G] ) =\{1\}$, it will
suffice to show that $t=1$.

First note that if $\chi $ is a character of $G$, whose degree is denoted $%
\chi ( 1)$, then, as $G$ is a $p$-group, $\chi -\chi (
1) \in \ker d_{p}$ and so as in the methods used for Step 2 (see page
106 in [T]), we know that
\begin{equation*}
\psi ^{h}\mathrm{Det}( x) ( \chi -\chi ( 1)
) =\mathrm{Det}( x) ( \psi ^{h}\chi -\psi ^{h}\chi
( 1) ) \equiv 1\mod P^{c}
\end{equation*}
where $P^{c}$ denotes the $R\otimes _{\Z_{p}}\O^{c}$-ideal generated
by the maximal ideal of $\O^{c}$; therefore, since $\mathrm{Det}(
zy) ( \chi -\chi ( 1) ) \equiv 1\mod P^{c}$,
we deduce that
\begin{equation*}
t( \chi ) =t( \chi -\chi ( 1) ) \equiv 1 \mod P^{c}.
\end{equation*}

Let $T_{R} $ denote the subgroup of $p$-torsion elements $\mathrm{Hom}_{\Omega_p}( \Kr_{0}( \Q_{p}^{c}[G] ) ,(
R\otimes \O^{c}) ^{\times }) $ which are trivial on the abelian
characters of $G$. We can now use the above work to define a homomorphism $
\xi :M_{h}( R[G] ) \rightarrow T_{R}$ by the rule that
\begin{equation*}
\xi ( \psi ^{h}\mathrm{Det}( x) {\rm Det}(R[G])^{\times }) =t.
\end{equation*}%
Note that by the exact sequence (\ref{eq3.11}) we know that
\begin{equation*}
\mathrm{Det}( 1+I( R[G] ) ) \cap T_{R}=%
\mathrm{Det}( G) \cap T_{R}=\{1\}.
\end{equation*}
This map is well-defined and injective. For instance to see that $\xi$ is
injective, with the obvious notation suppose that
\begin{equation*}
\xi ( \psi ^{h}\mathrm{Det}( x) ) =t=\xi ( \psi
^{h}\mathrm{Det}( x^{\prime }) )
\end{equation*}%
then
\begin{equation*}
\psi ^{h}\mathrm{Det}( x) \mathrm{Det}( y)
^{-1}=t=\psi ^{h}\mathrm{Det}( x^{\prime }) \mathrm{Det}(
y^{\prime }) ^{-1}
\end{equation*}%
and so $\psi ^{h}\mathrm{Det}( xx^{\prime -1}) =\mathrm{Det}%
( yy^{\prime -1}) \in \mathrm{Det}( R[G]
^{\times })$.

\begin{lemma}\label{le83}
Let $L$ denote a finite non-ramified extension of $\Q_{p}$ in
$\Q_{p}^{c}$ and put $R_{L}=R\otimes_{\Z_{p}}\O_{L}$. From
Lemma 6.1 in [CNT1] we know that there is a decomposition of $R$-algebras
\begin{equation*}
R_{L}=\prod\nolimits_{i=1}^{n( L) }R_{L,i}
\end{equation*}%
where the $R_{L,i}$ are integral domains which satisfy the Standing
Hypotheses. Then the numbers $n( L) $ are bounded as $L$ ranges
over all finite non-ramified extensions of $\Q_{p}$ in $\Q_{p}^{c}$.
\end{lemma}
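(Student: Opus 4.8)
The plan is to reduce the uniform bound on $n(L)$ to a count of orthogonal idempotents of $R_L$ modulo $p$, and then to apply hypothesis (1) of \S\ref{ss6c}. First I would observe that the decomposition $R_L=\prod_{i=1}^{n(L)}R_{L,i}$ into integral domains satisfying the Standing Hypotheses is exactly the decomposition quoted in the statement (Lemma 6.1 of [CPT1] applied to $R$ and the nonramified ring $\O_L$), so the only thing actually requiring proof is that the integers $n(L)$ stay bounded as $L$ varies.

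Next I would set $f=f(L)=[L:\Q_p]$ and note that $\O_L$ is the ring of Witt vectors of $\mathbb{F}_{p^f}$, so that $R_L=R\otimes_{\Z_p}\O_L$ is a free $R$-module of finite rank; in particular $R_L$ is $p$-adically complete, $pR_L$ is contained in its Jacobson radical, and $R_L/pR_L=\overline{R}\otimes_{\mathbb{F}_p}\mathbb{F}_{p^f}$. Since an integral domain is an indecomposable ring, the factors $R_{L,i}$ correspond to a complete system of $n(L)$ primitive orthogonal idempotents of $R_L$. Using that $R_L$ is $p$-adically complete and that $pR_L$ lies in the Jacobson radical, idempotents lift (uniquely) along $R_L\to R_L/pR_L$, and a complete system of primitive orthogonal idempotents of $R_L/pR_L$ lifts to one of $R_L$; hence $n(L)$ equals the number of primitive orthogonal idempotents of $\overline{R}\otimes_{\mathbb{F}_p}\mathbb{F}_{p^f}$, equivalently the number of connected components of its spectrum.

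Finally I would use that $\overline{R}$ is flat over $\mathbb{F}_p$ and $\mathbb{F}_{p^f}\hookrightarrow\mathbb{F}_p^c$, so the natural map $\overline{R}\otimes_{\mathbb{F}_p}\mathbb{F}_{p^f}\to\overline{R}\otimes_{\mathbb{F}_p}\mathbb{F}_p^c$ is injective and carries any family of pairwise orthogonal nonzero idempotents in the source to such a family in the target. By hypothesis (1) of \S\ref{ss6c} the $\mathbb{F}_p$-algebra $\overline{R}\otimes_{\mathbb{F}_p}\mathbb{F}_p^c$ contains only finitely many orthogonal idempotents, so there is an integer $N$ bounding the cardinality of any family of pairwise orthogonal nonzero idempotents in it; consequently $n(L)\le N$ for every finite nonramified extension $L$ of $\Q_p$, which is the assertion.

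I do not expect a genuine obstacle here: once hypothesis (1) is available the argument is essentially bookkeeping. The one point to handle carefully is the identification of $n(L)$ with the idempotent count of $R_L/pR_L$ — that is, checking that $R_L$ really is $p$-adically complete (which follows from its finite freeness over $R$) so that lifting of idempotents applies, and that the indecomposability of an integral domain legitimately lets one pass between ``number of domain factors'' and ``number of primitive orthogonal idempotents''.
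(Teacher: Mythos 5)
Your proof is correct and takes essentially the same route as the paper: both reduce the boundedness of $n(L)$ to hypothesis (1) of \S\ref{ss6c} by counting orthogonal idempotents modulo $p$ and lifting them. The only difference is organizational — you work at each finite level, identifying $n(L)$ with the idempotent count of $\overline{R}\otimes_{\mathbb{F}_p}\mathbb{F}_{p^f}$ and embedding this into $\overline{R}\otimes_{\mathbb{F}_p}\mathbb{F}_p^c$, whereas the paper passes once and for all to $R\otimes_{\Z_p}\O_p^{\rm nr}$ and invokes the bijection of its idempotents with those of $\overline{R}\otimes\mathbb{F}_p^c$.
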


\begin{proof} Let $\O_{p}^{\rm nr}$ denote the valuation ring of the maximal
non-ramified extension of $\Q_{p}$ in $\Q_{p}^{c}$. By
lifting idempotents (see for instance Theorem 6.7 on page 123 of [CR1]) we
know that there is  natural bijection between the idempotents of $\overline{R}\otimes \mathbb{F}_{p}^{c} $
and $R\otimes_{\Z_{p}}\O_{p}^{\rm nr}$;
therefore, by the additional hypotheses introduced in 6.c, $R\otimes _{\Z_{p}}\O_{p}^{\rm nr} $
contains only a finite number of orthogonal idempotents.
The result then follows.
\end{proof}

\begin{lemma}\label{le84}
Let $\zeta _{p^{n}}$ denote a primitive $p^{n}$-th root of unity in $\Q_{p}^{c}$ for $n>0$.
Suppose $E\subset \Q_{p}( \zeta
_{p^{n}})$. Then

(a) $R\otimes_{\Z_{p}}\O_{E}$ is an integral domain;

(b) $\mu _{p^{\infty }}( R\otimes_{\Z_{p}}\O_{E})
=\mu_{p^{\infty }}( \O_{E})$.
\end{lemma}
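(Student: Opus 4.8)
\textbf{Proof plan for Lemma \ref{le84}.}
The plan is to reduce both statements to the basic structure of the ring $R\otimes_{\Z_p}\O_E$ where $E$ is a subfield of the totally ramified extension $\Q_p(\zeta_{p^n})$ of $\Q_p$, exploiting the Standing Hypotheses on $R$ --- in particular that $pR$ is a prime ideal and $R$ is $p$-adically complete --- and the fact that $\O_E$ is a totally ramified discrete valuation ring over $\Z_p$. First I would record that $\O_E = \Z_p[\pi_E]$ for a uniformizer $\pi_E$, and that the minimal polynomial of $\pi_E$ over $\Q_p$ is an Eisenstein polynomial $f(T)\in\Z_p[T]$ of degree $e = [E:\Q_p]$, so that $R\otimes_{\Z_p}\O_E = R[T]/(f(T))$. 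For part (a) the point is that $f(T)$ remains irreducible, indeed ``Eisenstein-like,'' over $R$: reducing modulo $p$ gives $f(T)\equiv T^e \bmod pR[T]$, and since $pR$ is a prime ideal with $R/pR$ a domain, a standard Eisenstein/Gauss-lemma argument over the domain $R$ (using that $R$ is integrally closed is not even needed here, only that $pR$ is prime and $R$ is a domain) shows $f$ is irreducible in $R[T]$ and that $R[T]/(f(T))$ is a domain; alternatively one observes that $R\otimes_{\Z_p}\O_E$ is $p$-adically complete, $p$-torsion free, and its reduction mod its natural maximal-type ideal is $\overline R$, a domain, and that $\pi_E$ is a nonzerodivisor, so the ring is local with domain quotient and no zero divisors. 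This is essentially Lemma 6.1 of [CPT1] / [CNT1] specialized to the totally ramified case, and I would cite that if a cleaner reference fits.

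For part (b), I want to show that adjoining $\O_E$ to $R$ introduces no new $p$-power roots of unity beyond those already in $\O_E$. The inclusion $\mu_{p^\infty}(\O_E)\subseteq\mu_{p^\infty}(R\otimes_{\Z_p}\O_E)$ is clear, so the content is the reverse inclusion. First I would reduce to $E = \Q_p(\zeta_{p^m})$ for the largest $m$ with $\zeta_{p^m}\in E$ (replacing $E$ by this subfield only shrinks $\O_E$, but any $p$-power root of unity in $R\otimes_{\Z_p}\O_E$ lies in $R\otimes_{\Z_p}\O_{E'}$ for $E'\subseteq E$ with $E'$ ranging over finite subextensions, and $R\otimes\O_{E'}\hookrightarrow R\otimes\O_E$ by flatness/torsion-freeness, so it suffices to treat each $E'$; the key case is $E'=\Q_p(\zeta_{p^m})$ itself). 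Now suppose $\xi\in R\otimes_{\Z_p}\O_E$ satisfies $\xi^{p^k}=1$ with $k$ minimal, $k\geq 1$. Working in the domain $S = R\otimes_{\Z_p}\O_E$ (by part (a)), $\xi$ is a primitive $p^k$-th root of unity, so $\xi - 1$ is a root of the $p^k$-th cyclotomic polynomial shifted by $1$, hence $(\xi-1)^{\varphi(p^k)}\in p S$ up to a unit, i.e. $\xi\equiv 1$ modulo the maximal ideal of the local ring $S$. The plan is then to compare: on one hand $1+\mathfrak{m}_S$ where $\mathfrak m_S$ is the maximal ideal, on the other the known structure. Concretely, $S/pS \cong \overline R[T]/(T^e) = \overline R\otimes_{\Z_p}\O_E/pO_E$ and $(\overline R\otimes_{\Z_p}\O_E/p)^\times$ has $p$-power torsion exactly $\mu_{p^\infty}(\O_E/p\O_E)$ because $\overline R$ is a domain of characteristic $p$ so $\overline R^\times$ has no $p$-torsion and the unipotent part $1 + \mathrm{nil}$ is a $p$-group whose torsion I can control by the ramification. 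Then a $p$-adic successive-approximation/Hensel-type lift, using that $S$ is $p$-adically complete and $p>0$ behaves predictably (note $x\mapsto x^p$ on $1+pS$ has the standard injectivity/$\log$ properties since $pR$, hence $pS$, is prime --- compare Lemma \ref{le60}(a) applied to $S$), shows that the full $\mu_{p^\infty}(S)$ injects into $\mu_{p^\infty}(\O_E/p^N\O_E)$ for all $N$, hence equals $\mu_{p^\infty}(\O_E)$.

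I expect the main obstacle to be part (b), specifically making the last approximation argument airtight: one needs to rule out ``extra'' $p$-power roots of unity that could in principle appear because $R$ itself might contain complicated units. The resolution I have in mind is that $R\otimes_{\Z_p}\O_E$ is a local ring whose residue ring modulo its maximal ideal is (a quotient of) $\overline R$, which is a domain of characteristic $p$ and therefore has no nontrivial $p$-power roots of unity; combined with Lemma \ref{le60}(a) --- valid for $S$ since $S$ is a domain with $pS$ prime by part (a) --- which gives $\mathcal{M}(S)\cap(1+pS)=\{1\}$, this forces any $\xi\in\mu_{p^\infty}(S)$ to be determined by its image in $\O_E/p^N$ for all $N$, and then $p$-adic completeness of $\O_E$ pins it down to lie in $\O_E$. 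I would also double-check the edge case $n=0$ or $E=\Q_p$ (trivial) and the case $p=2$ where $\langle\pm1\rangle$ appears in Lemma \ref{le60}(b) --- but $-1\in\O_E$ always, so this causes no trouble.
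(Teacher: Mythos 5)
Your part (a) is essentially the same statement as the paper's and is workable: the paper proves it by showing $N$ and $E$ are linearly disjoint over $\Q_p$ (so that $R\otimes_{\Z_p}\O_E\subset N\otimes_{\Q_p}E=NE$, a field), the point being that if $[NE:N]<[E:\Q_p]$ then $\mathrm{Norm}_{NE/N}(\pi_E)$ lies in $R$ (normality) with fractional $p$-valuation, contradicting $pR$ prime; your Eisenstein reformulation $R\otimes_{\Z_p}\O_E=R[T]/(f)$ is equivalent. But two caveats: passing from ``no factorization in $R[T]$'' to ``$f$ irreducible in $N[T]$'' does use normality of $R$ (monic factors over $N$ have coefficients integral over $R$), so your parenthetical that integral closedness is not needed for that route is unjustified; and your ``alternative'' criterion ($p$-adically complete, $p$-torsion free, local with domain residue ring, $\pi_E$ a nonzerodivisor) is false: $\Z_p[T]/(T^2-p^2)$ satisfies all of it and is not a domain. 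Also Lemma 6.1 of [CPT1] concerns $R[\zeta_n]$ with $n$ prime to $p$ (the unramified case), so it cannot be cited here.

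Part (b) is where the genuine gap lies. Your argument hinges on the claim that ``$pR$, hence $pS$, is prime'' for $S=R\otimes_{\Z_p}\O_E$; this is false whenever $E\neq\Q_p$, as you yourself record: $S/pS\cong\overline R[T]/(T^e)$ has nilpotents. Consequently Lemma \ref{le60}(a) cannot be ``applied to $S$'' (its proof requires $pS$ prime, and moreover $\mathcal M(S)$ is not even defined here, since the totally ramified ring $\O_E$ carries no Frobenius lift, so neither does $S$ in any canonical way, and $p$-power roots of unity have no evident relation to $\mathcal M$). More fundamentally, the reduction-mod-$p$ (or mod-$p^N$) strategy cannot isolate $\mu_{p^\infty}(\O_E)$: the $p$-power torsion of $(S/pS)^\times$ is huge, because the entire unipotent part $1+\mathrm{nil}$ is $p$-power torsion, so ``controlling the torsion by the ramification'' and ``pinning $\xi$ down to lie in $\O_E$'' are precisely the missing steps rather than consequences of what you have set up. The paper's proof of (b) works in the fraction field instead: by (a), $S\subset NE$ with $[NE:N]=[E:\Q_p]$; if $\zeta_{p^r}\in S$ but $\zeta_{p^r}\notin E$, then $M=E(\zeta_{p^r})$ is a strictly larger totally ramified extension of $\Q_p$ contained in $NE$, and $\mathrm{Norm}_{NE/N}(\pi_M)\in R$ has $p$-valuation $[E:\Q_p]/[M:\Q_p]$ strictly between $0$ and $1$, contradicting the integer-valued valuation on $N$ attached to the prime ideal $pR$. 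Any repair of your argument would have to invoke this valuation-theoretic mechanism in $NE$ rather than reductions modulo powers of $p$.
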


\begin{proof} (a) Recall that we write $N$ for the field of fractions of the normal domain $R$. Since
$R$ and $\O_{E}$ are flat over $\Z_{p}$ we know that $R\otimes_{\Z_{p}}\O_{E}\subset N\otimes_{\Q_{p}}E$ and so it will
suffice  to show that $N$ and $E$ are linearly disjoint over $\Q_{p}$. Let $\pi _{E}$ denote a uniformising parameter for $E$ and suppose
for contradiction that
\begin{equation*}
[NE:N] =m<[E:\Q_{p}] .
\end{equation*}%
Then $\mathrm{Norm}_{NE/N}( \pi _{E}) $ belongs to $R$ and has $p$-valuation $[
NE:N] [E:\Q_{p}] ^{-1}<1$ which contradicts the
fact that $p$ is prime in $R$.

To prove (b) suppose for contradiction that $\zeta _{p^{r}}\in \mu
_{p^{\infty }}( R\otimes _{\Z_{p}}\O_{E}) $ but $\zeta
_{p^{r}}\notin E.$ We put $M=E( \zeta _{p^{r}}) $ which strictly
contains $E$ and is totally ramified over $\Q_{p}$ and we let $\pi
_{M}$ denote a uniformising parameter for $M$. Then using (a) we see that $%
\mathrm{Norm}_{NE/N}( \pi _{M}) $ has $p$-valuation
\begin{equation*}
[NE:N] [M:\Q_{p}] ^{-1}=[E:\Q_{p}] [M: \Q_{p}] ^{-1}<1
\end{equation*}
which again contradicts the fact that $p$ is prime in $R$.\end{proof}
\smallskip

We now conclude the proof of Step 3. Suppose that the torsion group $T_{R}$
has exponent $p^{e},$ let $L^{\prime }/\Q_{p}$ denote a non-ramified
extension with $n( L^{\prime }) $ maximal as in Lemma \ref{le83}. We
write $\mathcal{N}_{L^{\prime }/\Q_{p}}$ for the co-restriction (or
norm map) of $L^{\prime }/\Q_{p}$. Then we can find a non-ramified
extension $L$ of $L^{\prime }$ of degree $p^{e}$. By Lemma \ref{le84} we know that
$T_{R_{L}}=T_{R}$ and so
\begin{equation*}
\mathcal{N}_{L/\Q_{p}}( T_{R_{L}}) =\mathcal{N}
_{L^{\prime }/\Q_{p}}\circ \mathcal{N}_{L/L^{\prime }}(
T_{R_{L}}) =\mathcal{N}_{L^{\prime }/\Q_{p}}(
T_{R_{L}}^{p^{e}}) =1.
\end{equation*}

We then have a diagram%
\begin{equation*}
\begin{array}{ccccc}
1 & \rightarrow & M_{h}( R_{L}[G] ) & \overset{\xi }{%
\rightarrow } & T_{R_{L}} \\
&  & \ \ \ \ \downarrow \mathcal{N}_{L/\Q_{p}} &  & \downarrow 0 \\
1 & \rightarrow & M_{h}( R[G] ) & \overset{\xi }{%
\rightarrow } & T_{R}.%
\end{array}%
\end{equation*}%
But by Theorem \ref{thm63} we know that $\mathcal{N}_{L/\mathbf{Q}_{p}}(
1+I( R_{L}[G] ) ) =\mathcal{N}_{L/\Q_{p}}( 1+I( R[G] ) )$; hence we have $\mathcal{N}_{L/\Q_{p}}( M_{h}( R_{L}[G] ) )
=M_{h}( R[G] )$, and we have therefore shown that for a $p$-group $G$ we have $M_{h}( R[G] ) =\{1\}$. \endproof
\medskip

 \noindent\textbf{Step 4.}

\begin{proposition}
For each $\Q_{p}$-$p$-elementary group $G$, and for all integers $h $,
\begin{equation*}
M_{h}( R[G] ) =\{1\}.
\end{equation*}
\end{proposition}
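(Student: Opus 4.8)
The plan is to reduce the $\mathbb{Q}_p$-$p$-elementary case to the two cases already settled, namely the case of a $p$-group (Step 3) and the case where $G$ has order prime to $p$ (Step 1), together with the structural results proved in Section \ref{s5}. So let $G=C\rtimes P$ be $\mathbb{Q}_p$-$p$-elementary, with $C$ cyclic of order prime to $p$ and $P$ a $p$-group, and use the decomposition
\begin{equation*}
R[G]=\prod_{m}R[m]\circ P
\end{equation*}
over the divisors $m$ of $|C|$, as in Section \ref{s5}. Correspondingly $\mathrm{Det}(R[G]^\times)=\prod_m\mathrm{Det}((R[m]\circ P)^\times)$, and by Theorem \ref{thm54} each factor is identified via the restriction map $r=r_m$ with $\mathrm{Det}(R[m][H_m]^\times)^{A_m}$, where $H_m$ is a $p$-group and $A_m$ is cyclic of order prime to $p$. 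The first thing I would check is that this whole identification is compatible with the Adams operations $\psi^h$: since $r_m$ is induced by restriction of characters along $H_m\hookrightarrow R[m]\circ P$, and $\psi^h$ commutes with restriction (this is essentially the computation recorded just before Lemma \ref{le64}, or Lemma \ref{le64} itself for induction), the operator $\psi^h$ on $\mathrm{Det}(R[G]^\times)$ corresponds to $\oplus_m\psi^h$ acting compatibly on the $A_m$-fixed subgroups $\mathrm{Det}(R[m][H_m]^\times)^{A_m}$.

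Next I would record that each $R[m]$ is, by Lemma 6.1 of [CPT1] (cited in Section \ref{s5}), a product of integral domains each satisfying the Standing Hypotheses and the two extra hypotheses of \ref{ss6c}, with Frobenius lift the tensor of $F_R$ with the Frobenius of $\mathbb{Z}_p[m]$. Hence Step 3 applies to the ring $R[m]$ and the $p$-group $H_m$: we get
\begin{equation*}
\psi^h\bigl(\mathrm{Det}(R[m][H_m]^\times)\bigr)\subset\mathrm{Det}(R[m][H_m]^\times)
\end{equation*}
for every integer $h$, i.e.\ $M_h(R[m][H_m])=\{1\}$. The point is now that $\psi^h$ preserves the $A_m$-action (again because $A_m$ acts through automorphisms of the coefficient ring and $\psi^h$ acts on the character side, so the two commute), so $\psi^h$ maps $\mathrm{Det}(R[m][H_m]^\times)^{A_m}$ into itself. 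Assembling over $m$ and transporting back through the isomorphism $r=\oplus r_m$ of Theorem \ref{thm54}, we conclude
\begin{equation*}
\psi^h\bigl(\mathrm{Det}(R[G]^\times)\bigr)=\psi^h\Bigl(\prod_m\mathrm{Det}(R[m][H_m]^\times)^{A_m}\Bigr)\subset\prod_m\mathrm{Det}(R[m][H_m]^\times)^{A_m}=\mathrm{Det}(R[G]^\times),
\end{equation*}
that is, $M_h(R[G])=\{1\}$.

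I expect the genuinely nontrivial input to be Step 3 (the $p$-group case), which is already proved in the excerpt; the $\mathbb{Q}_p$-$p$-elementary step itself is a bookkeeping argument. The main technical point to be careful about is the compatibility of $\psi^h$ with the restriction isomorphism $r_m$ of Theorem \ref{thm54} and with the $A_m$-action: one must verify that $r_m\circ\psi^h=\psi^h\circ r_m$ as maps of determinant groups (so that $\psi^h$ on the product really does decompose as $\oplus_m\psi^h$ on the $A_m$-fixed parts), and that $\psi^h$ commutes with the Galois action of $A_m$ so that it indeed preserves the fixed subgroup. Both follow from the fact that $\psi^h$ is defined purely on the character side by $\psi^h\mathrm{Det}(x)(\chi)=\mathrm{Det}(x)(\psi^h\chi)$, while restriction, induction and the $A_m$-action all act by substituting characters and commute with the substitution $\chi\mapsto\psi^h\chi$; this is exactly the type of verification already carried out for induction before Lemma \ref{le64}. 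Once this compatibility is in place, the proof is immediate from Step 3 applied to each $R[m]$ and $H_m$.
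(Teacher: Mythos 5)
Your reduction framework (decompose $R[G]=\prod_m R[m]\circ P$, identify each factor with $\mathrm{Det}(R[m][H_m]^\times)^{A_m}$ via Theorem \ref{thm54}, and feed in Step 3 for the $p$-groups $H_m$ over the coefficient rings $R[m]$) is the right general shape, and it is essentially the skeleton of the argument in [T, Ch.\ 9, \S 3] that the paper cites. But there is a genuine gap at the pivotal step where you assert that ``the operator $\psi^h$ on $\mathrm{Det}(R[G]^\times)$ corresponds to $\oplus_m\psi^h$'' on the factors. The block decomposition is the idempotent decomposition of $R[C]$ indexed by the exact order $m$ of the abelian characters of $C$, and $\psi^h$ does \emph{not} respect it unless $h$ is coprime to $|C|$: for $\chi$ in the $m$-block one has $(\psi^h\chi)|_C$ built from the characters $\alpha^h$, whose order is $m/\gcd(m,h)$, so the $m$-component of $\psi^h\mathrm{Det}(x)$ is computed from the component of $\mathrm{Det}(x)$ in the block $m'=m/\gcd(m,h)$. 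That block carries a different $p$-group $H_{m'}\supseteq H_m$ and different coefficients $R[m']$, so you cannot simply apply Step 3 ``blockwise'' and conclude; you need an additional argument transporting a determinant over $R[m'][H_{m'}]$ (or $R[m']\circ P$) to a determinant over $R[m]\circ P$ after precomposition with $\psi^h$. This cross-block step is exactly where the substance of the $\Q_p$-$p$-elementary case lies in [CNT] and [T], and your justification (``restriction, induction and the $A_m$-action act by substituting characters and commute with $\chi\mapsto\psi^h\chi$'') does not address it, because the failure is in the compatibility of $\psi^h$ with the central idempotents, not with induction or restriction.

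Two smaller points. First, even in the block-preserving case $\gcd(h,|C|)=1$, the operation induced on $\mathrm{Det}(R[m][H_m]^\times)^{A_m}$ is not the plain Adams operation of $H_m$: it is $\psi^h_{H_m}$ twisted by the coefficient automorphism of $R[m]$ sending $\zeta_m\mapsto\zeta_m^h$. This is harmless (that map is a ring automorphism of $\Z[\zeta_m]\otimes_{\Z}\Z_p\otimes R$, possibly permuting the domain factors, and determinant groups are stable under such automorphisms), but it should be said, since as written you are applying Step 3 to the wrong operator. Second, using $\psi^{ab}=\psi^a\psi^b$ you may reduce to $h$ prime, which cleanly isolates the problematic case: $h=\ell$ a prime dividing $|C|$ (necessarily $\ell\neq p$). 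For such $\ell$ your argument as it stands proves nothing, and this is the case you must supply (for instance, in the split situation $G=C\times P$ one can check directly that the $m$-component of $\psi^\ell\mathrm{Det}(x)$ is the determinant of the image of a unit under the ring map induced by the block change, but in the twisted case $R[m]\circ P$ this requires the kind of analysis carried out in the cited sources, not a formal commutation argument).
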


\begin{proof} This is essentially the same as the proof of the corresponding statement  given in the
first part of Sect. 3 in Ch. 9 of [T].\end{proof}

\medskip

 \noindent\textbf{Step 5.}

 \begin{proposition} \label{pro78}
(a)\ Given an integer $h$ and a prime number $l\neq p$, then, given a finite
group $G,$ we can find an integer $m_{l},$ which is not divisible by $l,$ so
that $m_{l}M_{h}( R[ G] ) =\{1\}$ for all $h, $ and
so $M_{h}( R[ G] )$ is killed by a power of $p.$

(b)\ Given an integer $h,$ then $M_{h}( R[ G] ) $ is
killed by an integer which is coprime to $p.$

(c)\ $M_{h}( R[ G] ) =\{1\}$ for an arbitrary finite
group $G$.
\end{proposition}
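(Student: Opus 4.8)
The plan is to deduce (a), (b), (c) from Brauer induction (Theorem~\ref{thm49}) together with two structural facts: by \S\ref{ss4a} and Theorem~\ref{thm1} the assignment $G\mapsto \mathrm{Det}(R[G]^{\times})=\mathrm{Det}(\GL(R[G]))$ is a Green module over the Green ring $G\mapsto \G_{0}(\Q_{p}[G])$, and by Lemma~\ref{le64} the Adams operation $\psi^{h}$ commutes with every induction map $\Ind_{J}^{G}$. This last point shows that the induction maps descend to maps $M_{h}(R[J])\to M_{h}(R[G])$, so that $G\mapsto M_{h}(R[G])$ is a functor with induction; the rest is a ``kill by a prime power, then take $\gcd$'s'' descent. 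The one structural subtlety to keep in mind is the order of the implications: part~(a) must use only Steps~1 and~2, because Step~3 (the $p$-group case, through Lemma~\ref{le82}) and then Step~4 both rely on part~(a).

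For part~(a), fix a prime $l\neq p$. By Theorem~\ref{thm49} with $N=\Q_{p}$ and the prime $l$ there is an integer $m$, prime to $l$ and depending only on $G$ and $l$, with $m\cdot 1_{G}=\sum_{J}n_{J}\,\Ind_{J}^{G}(\theta_{J})$ in $\G_{0}(\Q_{p}[G])$, the sum over $\Q_{p}$-$l$-elementary subgroups $J$ of $G$. For $d\in \mathrm{Det}(R[G]^{\times})$ the Frobenius (projection) formula gives $d^{\,m}=\prod_{J}\Ind_{J}^{G}\bigl(\theta_{J}\cdot\Res_{J}^{G}d\bigr)^{n_{J}}$ with $\theta_{J}\cdot\Res_{J}^{G}d\in \mathrm{Det}(R[J]^{\times})$, so applying $\psi^{h}$ and Lemma~\ref{le64} yields
\[
\psi^{h}(d)^{\,m}=\prod_{J}\Ind_{J}^{G}\bigl(\psi^{h}(\theta_{J}\cdot\Res_{J}^{G}d)\bigr)^{n_{J}}.
\]
Since each $J$ is $\Q_{p}$-$l$-elementary with $l\neq p$, Step~2 (Proposition~\ref{pro80}) says $M_{h}(R[J])$ has exponent dividing some $p^{e}$ (taken uniform over the finitely many $J$, and, tracking the argument of Step~2, independent of $h$), hence $\psi^{h}(\theta_{J}\cdot\Res_{J}^{G}d)^{p^{e}}\in \mathrm{Det}(R[J]^{\times})$ and so $\psi^{h}(d)^{\,mp^{e}}\in \mathrm{Det}(R[G]^{\times})$. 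Thus $m_{l}:=mp^{e}$, which is prime to $l$ because $p\neq l$, annihilates $M_{h}(R[G])$. Running this over all primes $l\neq p$, the exponent of $M_{h}(R[G])$ divides $m_{l}$ for each such $l$, so it can be divisible by no prime other than $p$; that is, $M_{h}(R[G])$ is killed by a power of $p$.

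For part~(b), run the identical computation but with Brauer induction over $\Q_{p}$ relative to the prime $p$: Theorem~\ref{thm49} gives an integer $m$ prime to $p$ with $m\cdot 1_{G}=\sum_{J}n_{J}\,\Ind_{J}^{G}(\theta_{J})$, the $J$ now $\Q_{p}$-$p$-elementary. As before $\psi^{h}(d)^{\,m}=\prod_{J}\Ind_{J}^{G}\bigl(\psi^{h}(\theta_{J}\cdot\Res_{J}^{G}d)\bigr)^{n_{J}}$, and now Step~4 gives $M_{h}(R[J])=\{1\}$, so each factor $\psi^{h}(\theta_{J}\cdot\Res_{J}^{G}d)$ already lies in $\mathrm{Det}(R[J]^{\times})$; hence $\psi^{h}(d)^{\,m}\in \mathrm{Det}(R[G]^{\times})$ and $m$ kills $M_{h}(R[G])$, which is the assertion since $m$ is prime to $p$.

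Finally, for part~(c): by (a) the group $M_{h}(R[G])$ is annihilated by some $p^{a}$, and by (b) by some integer $m$ prime to $p$; since $\gcd(p^{a},m)=1$ a B\'ezout relation forces $M_{h}(R[G])=\{1\}$. By the reduction recorded just after \eqref{eq7.73} this establishes Theorem~\ref{thm62}. The genuinely delicate parts of the whole proof lie in Steps~3 and~4 rather than here; within Step~5 the only things to watch are the logical ordering noted above and the uniformity (over $J$ and, if one wants the sharp form of (a), over $h$) of the $p$-power exponent supplied by Step~2.
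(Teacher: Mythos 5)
Your argument is correct and is essentially the paper's own proof (which is only sketched there, citing [T]): Brauer induction for the prime $l$ together with Step 2 gives (a), Brauer induction for $p$ together with Step 4 gives (b), and (a) plus (b) give (c); your explicit use of Frobenius reciprocity, of Lemma \ref{le64}, and your remark that (a) must rest only on Steps 1--2 (since Lemma \ref{le82} in Step 3 quotes part (a)) all match the paper's intent. The only cosmetic point is that Theorem \ref{thm49} as stated covers only the $N$-$p$-elementary case, so for $l\neq p$ one should invoke Serre's Theorem 28 directly, exactly as the paper's proof does.
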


\begin{proof} The arguments are similar to the proof of the corresponding statement
in [T]. For example for (a) we can start the argument by observing that by
Theorem 28 in [S] we can write%
\begin{equation*}
m_{l}=\sum\nolimits_{H\in \mathcal{E}_{l}( \mathbb{Q}_{p}) }n_{H}%
\mathrm{Ind}_{H}^{G}\theta _{H}
\end{equation*}%
and then use Step 2.

Part (b) follows similarly using Step 4, and then (c) follows immediately from
(a) and (b).
\end{proof}

As above, we now see that this implies  Theorem \ref{thm62}.

\section{Appendix B} \label{s9}

\subsection{Proof of Corollary \ref{cor8}}\label{sss6d9}

The notation here is as in the Introduction.
For the convenience of the reader we recall some of the set-up.
We let $\{C_{i}\}_{i\in
I}$ denote the set of $G$-conjugacy classes in $G_{r}$, let $g_{i}$ be a chosen
group element whose conjugacy class lies in $C_{i}$ and let $G_{i}$ denote
the centralizer of $g_{i}$ in $G$; then we have a disjoint union
decomposition $G_{r}=\sqcup _{i}g_{i}^{G/G_{i}}$. Next consider the action
of $\Psi $ on the $\{C_{i}\}_{i\in I}$. We may view this action as an action
on $I$ and we let $J$ denote  the set of orbits of the action of $\Psi
$ on $I$;  for $j\in J$ we let $n_{j}$ denote the cardinality of $j$. We
then obtain a further disjoint union decomposition
\begin{equation*}
G_{r}=\sqcup_{j}\sqcup _{m=1}^{n_{j}}( g_{i_{j}}^{G/G_{i_{j}}})
^{\Psi ^{m}}
\end{equation*}%
where $i_{j}\,$ denotes a chosen element of the orbit $j$. This induces the
decomposition of homology groups%
\begin{eqnarray*}
H_{2}( G,R[G_{r}] ) &=&H_{2}( G,\Z [G_{r}] ) \otimes R \\
&=&\oplus _{i}H_{2}( G_{i},\Z g_{i}) \otimes  R
\end{eqnarray*}%
and hence a decomposition
\begin{eqnarray*}
 H_{2}( G,R[G_{r}] )   _{\Psi } &=&(
\oplus _{j}\oplus _{m=1}^{n_{j}}H_{2}( G_{i_{j}},\Z
g_{i_{j}}^{\Psi ^{m}}) \otimes R) _{\Psi } \\
&=& ( \oplus _{j}( H_{2}( G_{i_{j}},\Z) \otimes
\frac{\Z[\Psi ] }{( \Psi ^{n_{j}}-1) })
\otimes R ) _{\Psi } \\
&=&\oplus _{j}[H_{2}( G_{i_{j}},\Z) \otimes R_{\Psi
^{n_{j}}}] _{\Psi } \\
&=&\oplus _{j}\left[H_{2}( G_{i_{j}},\Z) \otimes \frac{R
}{( F-1) R}\right] .
\end{eqnarray*}
We now show that we have a similar decomposition
\begin{equation}\label{eq6.72}
( H_{2}^{\rm ab}( G,R[G_{r}] ) ) _{\Psi
}=\bigoplus _{j}\left[H_{2}^{\rm ab}( G_{i_{j}},\Z) \otimes
\frac{R}{( F-1) R}\right]
\end{equation}
and hence the decomposition
\begin{equation*}
( \overline{H}_{2}( G,R[G_{r}] ) ) _{\Psi
}=\bigoplus _{j}\left[\overline{H}_{2}( G_{i_{j}},\Z)
\otimes \frac{R}{( F-1) R}\right].
\end{equation*}%
To this end we observe that for an abelian subgroup $A$ of $G,$ if as
previously, we write $\{A_{r}\}=\{a_{1,...,}a_{k}\},$ then, because$\ A$
acts trivially on $A_{r},$ we can write%
\begin{equation*}
H_{2}( A,R[A_{r}] ) =\oplus _{l=1}^{k}H_{2}(
A,Ra_{l}) .
\end{equation*}%

If $a_{l}$ is a conjugate of $ g_{i_{j ( l )} }^{p^{m}}$, with say $%
a_{l}^{h}=g_{i_{j ( l )} }^{p^{m}}$ for $h\in G$; then, since $%
A^{h} $ centralizes $g_{i_{j ( l )} }^{p^{m}},$ we know that $A^{h}$
centralizes $g_{i_{j ( l ) }}$, and so we have  $A^{h}\subset
G_{i_{j ( l ) }} $ and hence
\begin{eqnarray*}
\mathrm{Cor}_{A}^{G}( H_{2}( A,Ra_{l}) )  \ = \ \mathrm{Cor}%
_{A^{h}}^{G}( H_{2}( A^{h},Ra_{l}^{h}) ) &\subset & \ \ \
\ \ \ \ \ \ \ \  \ \\
\subset
H_{2}( G,\sum\nolimits_{\gamma \in G_{i( j) }\backslash
G}\sum\nolimits_{m=1}^{n_{j}}Ra_{l}^{h\gamma \Psi ^{m}}) &=&H_{2}(
G_{i_{j}}, \sum\nolimits_{m=1}^{n_{j}}Rg_{i_{j( l) }}^{\Psi
^{m}}) .
\end{eqnarray*}
Conversely we know that $H_{2}^{\mathrm{ab}} ( G_{i_{j}}, R ) $ is
generated by the images under corestriction of   the $H_{2} ( B,R )
$ $\ $for maximal abelian subgroups $B $ of $G_{i_{j}}$. Since $g_{i_{j}}$
is centralized by such a maximal abelian subgroup of $G_{i_{j}}$, we see
that $B$ must contain $g_{i_{j}}$; therefore we have the inclusion
\[
\mathrm{Cor}_{B}^{G_{i_{j}}}(H_{2} ( B,R )) =\mathrm{Cor}%
_{B}^{G_{i_{j}}}(H_{2} ( B,Rg_{i_{j}} )) \subset H_{2}^{\mathrm{ab}%
} ( G_{i_{j} ( l ) },\ \sum\nolimits_{m=1}^{n_{j}}Rg_{i (
j ) }^{\Psi ^{m}} )
\]
which establishes (\ref{eq6.72}), as required.\endproof

\subsection{Proof of Corollaries \ref{le11}, \ref{le12}}\label{sss6d9}

Recall that $F( t) =t^{p}$.  We start with Cor. \ref{le11}: Note that  $W\{\{t\}\}=W\langle\langle
t^{-1} \rangle  \rangle +$ $tW[[t] ] $.
Since $( 1-F) tW[[t] ] =tW[[t%
] ] $, we see that
\begin{equation*}
\frac{W\langle \langle t^{-1}\rangle \rangle }{(
1-F) W\left\langle \left\langle t^{-1}\right\rangle \right\rangle }%
\cong \frac{W\{\{t\}\}}{( 1-F) W\{\{t\}\}}
\end{equation*}%
and the result follows from Corollary \ref{cor9}.
\endproof
\smallskip

Now we consider the proof of Cor. \ref{le12}:

We have
\begin{equation*}
W\{\{t\}\}=t^{-1}W\left\langle \left\langle t^{-1}\right\rangle
\right\rangle \oplus W[[ t] ]
\end{equation*}
and we get
\begin{equation*}
\frac{W\{\{t\}\}}{( 1-F) W\{t\}\}}=\frac{t^{-1}W\left\langle
\left\langle t^{-1}\right\rangle \right\rangle }{( 1-F)
t^{-1}W\left\langle \left\langle t^{-1}\right\rangle \right\rangle }\oplus
\frac{W}{( 1-F) W}.
\end{equation*}%
This certainly shows that the map
\begin{equation*}
\frac{W[ [ t]] }{( 1-F) W[[ t] ] }\rightarrow \frac{W\{\{t\}\}}{( 1-F) W\{\{t\}\}}
\end{equation*}
is injective.

We write $W_{m}$ for $W/p^{m}W$. We have
\begin{equation*}
W_{m}( ( t) ) =t^{-1}W_{m}[ t^{-1}] \oplus
W_{m}[[ t] ] ,
\end{equation*}%
and we note that
\begin{equation*}
t^{-1}W_{m}[ t^{-1}] =\oplus _{p\nmid k>0}W_{m}t^{-k}\oplus
_{k>0}(1-F)W_{m} t^{-k} =\oplus _{p\nmid
k>0}W_{m}t^{-k}\oplus ( 1-F) t^{-1}W_{m}[ t^{-1}] ;
\end{equation*}
therefore, for each $m>0,$ we have shown
\begin{equation*}
\frac{W_{m}( ( t) ) /( 1-F) W_{m}(( t) ) }{W_{m}[[ t]] /(
1-F) W_{m}[[ t]] }=\oplus _{p\nmid
k>0}W_{m}t^{-k}.
\end{equation*}

Using the Mittag-Leffler condition twice we get
\begin{eqnarray*}
\frac{W\{\{t\}\}/( 1-F) W\{\{t\}\}}{W[ [ t] ] /( 1-F^{n}) W[ [ t] ] } &=&\frac{%
\varprojlim_{m }W_{m}( ( t) ) /( 1-F)
W_{m}( ( t) ) }{\varprojlim_{m}W_{m}[ [ t]] /( 1-F) W_{m}[[ t]] } \\
&=&\varprojlim_{m}\frac{W_{m}(( t) ) /(
1-F) W_{m}( ( t)) }{W_{m}[ [ t] ] /( 1-F) W_{m}[ [ t] ] } \\
&=&\varprojlim_{m}\oplus _{p\nmid k>0}W_{m}t^{-k}
\end{eqnarray*}%
which is torsion free.
\endproof

 \bigskip

\end{document}